\definecolor{myblue}{rgb}{0,0,0.6}
\DeclareMathAlphabet{\mathpzc}{OT1}{pzc}{m}{it}
\newtheorem{theorem}{Theorem}[section]
\newtheorem{corollary}[theorem]{Corollary}
\newtheorem{definition}[theorem]{Definition}
\newenvironment{proof}[1][Proof]{\noindent \emph{#1.} }
{\hfill \ \rule{0.5em}{0.5em}}
\newtheorem{lemma}[theorem]{Lemma}
\newtheorem{proposition}[theorem]{Proposition}
\newtheorem{example}[theorem]{Example}
\newtheorem{remark}[theorem]{Remark}
\numberwithin{equation}{section}
\numberwithin{figure}{section}
\numberwithin{table}{section}
\newcommand{\noi}{\noindent}
\newcommand{\la}{\lambda}
\newcommand{\bzero}{\mathbf{0}}
\newcommand{\R}{\mathbb{R}}
\newcommand{\NN}{\mathbb{N}}
\newcommand{\Z}{\mathbb{Z}}
\newcommand{\cA}{{\cal A}}
\newcommand{\cG}{{\cal G}}
\newcommand{\cH}{{\cal H}}
\newcommand{\cI}{{\cal I}}
\newcommand{\cP}{{\cal P}}
\newcommand{\cQ}{{\cal Q}}
\newcommand{\cR}{{\cal R}}
\newcommand{\cS}{{\cal S}}
\newcommand{\cT}{{\cal T}}
\newcommand{\cK}{{\cal K}}
\newcommand{\cV}{{\cal V}}
\newcommand{\cW}{{\cal W}}
\newcommand{\cN}{{\cal N}}
\newcommand{\cB}{{\cal B}}
\newcommand{\bx}{\mathbf{x}}
\newcommand{\bn}{\mathbf{n}}
\newcommand{\ba}{\mathbf{a}}
\newcommand{\be}{\mathbf{e}}
\newcommand{\bb}{\mathbf{b}}
\newcommand{\by}{\mathbf{y}}
\newcommand{\tby}{\widetilde{\mathbf{y}}}
\newcommand{\bz}{\mathbf{z}}
\newcommand{\tbz}{\widetilde{\mathbf{z}}}
\newcommand{\bze}{\mathbf{0}}
\newcommand{\C}{\mathbb{C}}
\newcommand{\re}{{\rm e}}
\newcommand{\ri}{{\rm i}}
\newcommand{\rd}{{\rm d}}
\newcommand{\beq}{\begin{equation}}
\newcommand{\eeq}{\end{equation}}
\newcommand{\beqs}{\begin{equation*}}
\newcommand{\eeqs}{\end{equation*}}
\newcommand{\bit}{\begin{itemize}}
\newcommand{\eit}{\end{itemize}}
\newcommand{\ben}{\begin{enumerate}}
\newcommand{\een}{\end{enumerate}}
\newcommand{\bal}{\begin{align}}
\newcommand{\eal}{\end{align}}
\newcommand{\bals}{\begin{align*}}
\newcommand{\eals}{\end{align*}}
\newcommand{\bse}{\begin{subequations}}
\newcommand{\ese}{\end{subequations}}
\newcommand{\bpr}{\begin{proposition}}
\newcommand{\epr}{\end{proposition}}
\newcommand{\bre}{\begin{remark}}
\newcommand{\ere}{\end{remark}}
\newcommand{\bpf}{\begin{proof}}
\newcommand{\epf}{\end{proof}}
\newcommand{\ble}{\begin{lemma}}
\newcommand{\ele}{\end{lemma}}
\newcommand{\bco}{\begin{corollary}}
\newcommand{\eco}{\end{corollary}}
\newcommand{\bex}{\begin{example}}
\newcommand{\eex}{\end{example}}
\newcommand{\bth}{\begin{theorem}}
\newcommand{\enth}{\end{theorem}}
\newcommand{\Rea}{\mathbb{R}}
\newcommand{\Com}{\mathbb{C}}
\newcommand{\sign}{\mathop{{\rm sign}}}
\newcommand{\spec}{\mathop{{\rm spec}}}
\newcommand{\conv}{\mathop{{\rm conv}}}
\newcommand{\esssup}{\mathop{{\rm ess} \sup}}
\newcommand{\essinf}{\mathop{{\rm ess} \inf}}
\newcommand{\Oi}{{\Omega_-}}
\newcommand{\pdiff}[2]{\frac{\partial #1}{\partial #2}}
\newcommand{\half}{\frac{1}{2}}
\newcommand{\LtG}{{L^2(\Gamma)}}
\newcommand{\tendi}{\rightarrow \infty}
\newcommand{\tendo}{\rightarrow 0}
\newcommand{\nxy}{|\bx-\by|}
\def\XXint#1#2#3{{\setbox0=\hbox{$#1{#2#3}{\int}$}
     \vcenter{\hbox{$#2#3$}}\kern-.5\wd0}}
\newcommand*{\N}[1]{\left\|#1\right\|}
\newcommand{\tfa}{\text{ for all }}
\newcommand{\tfor}{\text{ for }}
\newcommand{\tas}{\text{ as }}
\newcommand{\tand}{\text{ and }}
\newcommand{\tst}{\text{ such that }}
\newcommand{\tfind}{\text{ find }}
\newcommand{\Hilb}{\cH}
\newcommand{\vertiii}[1]{{\left\vert\kern-0.25ex\left\vert\kern-0.25ex\left\vert #1
    \right\vert\kern-0.25ex\right\vert\kern-0.25ex\right\vert}}
\newcommand{\ess}{\mathrm{ess}}
\newcommand{\newdelta}{{\rho}}
\newcommand{\newalpha}{{\beta}}
\newcommand{\newbx}{\by}
\newcommand{\newbz}{\bz}
\newcommand{\newangle}{\theta}
\definecolor{amcol}{rgb}{0.8,0,0}
\definecolor{escol}{rgb}{0,0,0.8}
\definecolor{estcol}{rgb}{0,0.6,0}
\definecolor{cwcol}{rgb}{0.5,0,0.5}
\definecolor{cwstcol}{rgb}{0,0.6,0.6}
\begin{document}

\title{
Coercivity, essential norms, and the Galerkin method for second-kind integral equations on polyhedral and Lipschitz domains
}

\author{S. N.~Chandler-Wilde\footnotemark[1]\,\,, E. A. Spence\footnotemark[2]}
\date{}



\footnotetext[1]{Department of Mathematics and Statistics, University of Reading,
Whiteknights, PO Box 220, Reading, RG6 6AX, UK, \tt S.N.Chandler-Wilde@reading.ac.uk}
\footnotetext[2]{Department of Mathematical Sciences, University of Bath, Bath, BA2 7AY, UK, \tt E.A.Spence@bath.ac.uk }

\maketitle

\begin{center}
{\em Dedicated to Wolfgang Wendland on the occasion of his 85th birthday}
\end{center}

\begin{abstract}
It is well known that, with a particular choice of norm, the classical double-layer potential operator $D$ has essential norm $<1/2$ as an operator on the natural trace space $H^{1/2}(\Gamma)$ whenever $\Gamma$ is the boundary of a bounded Lipschitz domain. This implies, for the standard second-kind boundary integral equations for the interior and exterior Dirichlet and Neumann problems in potential theory,  convergence of the Galerkin method in $H^{1/2}(\Gamma)$ for any sequence of finite-dimensional subspaces $(\cH_N)_{N=1}^\infty$ that is asymptotically dense in $H^{1/2}(\Gamma)$. Long-standing open questions are whether the essential norm is also $<1/2$ for $D$ as an operator on $L^2(\Gamma)$ for all Lipschitz $\Gamma$ in 2-d; or whether, for all Lipschitz $\Gamma$ in 2-d and 3-d, or at least for the smaller class of Lipschitz polyhedra in 3-d, the weaker condition holds that the operators $\pm \frac{1}{2}I+D$ are compact perturbations of coercive operators -- this a necessary and sufficient condition for the convergence of the Galerkin method for every sequence of subspaces $(\cH_N)_{N=1}^\infty$ that is asymptotically dense in $L^2(\Gamma)$. We settle these open questions negatively. We give examples of 2-d and 3-d Lipschitz domains with Lipschitz constant equal to one for which the essential norm of $D$ is $\geq 1/2$, and examples with Lipschitz constant two for which the operators $\pm \frac{1}{2}I +D$ are not coercive plus compact. We also give, for every $C>0$, examples of Lipschitz polyhedra for which the essential norm is $\geq C$ and for which $\lambda I+D$ is not a compact perturbation of a coercive operator for any real or complex $\lambda$ with $|\lambda|\leq C$. We then, via a new result on the Galerkin method in Hilbert spaces, explore the implications of these results for the convergence of Galerkin boundary element methods in the $L^2(\Gamma)$ setting. Finally, we resolve negatively a related open question in the convergence theory for collocation methods, showing that, for our polyhedral examples, there is no weighted norm on $C(\Gamma)$, equivalent to the standard supremum norm, for which the essential norm of $D$ on $C(\Gamma)$ is $<1/2$.
\end{abstract}

\section{Introduction}\label{sec:intro}

Layer potentials and boundary integral equations have long been an important tool in the mathematics of PDEs (e.g., \cite{Ke:94,Ta:00,MeCo:00,Mc:00,Me:18}), and have been, and continue to be, of equal importance for practical scientific and engineering computation. In particular, numerical methods based on Galerkin, collocation, or numerical quadrature discretisation, coupled with fast matrix-vector multiply and compression algorithms, and iterative solvers such as GMRES, provide spectacularly effective computational tools for solving a range of linear boundary value problems, for example in potential theory, elasticity, and acoustic and electromagnetic wave scattering (e.g., \cite{LaSc:99,BrKu:01,ChSoCuVeHa:04,BoSa:04,XiTaWe:08,GrGuMaRo09,SaSc:11}).

Despite the significant role boundary integral equations (BIEs) play in the analysis of PDEs, and their importance for numerical computation, there remain many open problems for analysis and numerical analysis. Second-kind integral equation formulations, dating back to Gauss and the work of Carl Neumann (see \cite{Co:07, We:09}), continue to be hugely popular in computational practice because they lead naturally to well-conditioned linear systems that can be solved by iterative methods in a small number of iterations (see, e.g., \cite{Rokhlin83,At:97,LaSc:99,BrKu:01,ContETAL02,EpGr09,EpGrON13,ChDaLo:17}). However, even for the classical second-kind integral equations of potential theory there exists no complete convergence theory for Galerkin methods for general Lipschitz domains (or even for general Lipschitz polyhedra in 3-d), set in the Hilbert space of $L^2$ functions on the boundary $\Gamma$, carrying out integration against test functions using the natural $L^2(\Gamma)$ inner product,  despite the utility of such Galerkin methods for large-scale computations (e.g., \cite{LaSc:99,BoSa:04,XiTaWe:08}). Before giving further details, including details of the open questions that we tackle in this paper, we introduce some of the notation that we use.

Throughout, $\Omega_- \subset \Rea^d$, $d=2,3$, is a bounded Lipschitz domain\footnote{We refer to a subset $\Omega\subset \R^d$ as a \emph{domain} if it is open; we do not require additionally that it is connected. A Lipschitz domain is one for which, in some neighbourhood of each point on the boundary, $\Gamma$ can be written, in some rotated coordinate system, as the graph of a Lipschitz continuous function with the domain only on one side of $\Gamma$ (see, e.g., \cite[Definition 3.28]{Mc:00} for details).}, with boundary $\Gamma$ and outward-pointing unit normal vector $\bn$, and
$\Omega_+:= \R^d\setminus \overline{\Omega_-}$ is the exterior of $\overline{\Omega_-}$, also a Lipschitz domain with boundary $\Gamma$. The interior and exterior (in $\Omega_-$ and $\Omega_+$) Dirichlet and Neumann problems for Laplace's equation can be reformulated as BIEs involving the operators
\beq\label{eq:secondkindBIEs}
\half I \pm D \quad\tand\quad \half I \pm D'
\eeq
(see Table \ref{tab:bies}), where the \emph{double-layer operator} $D$ and the \emph{adjoint double-layer operator} $D'$ are defined by
\beq\label{eq:DD'}
D \phi(\bx) = \int_\Gamma \pdiff{\Phi(\bx,\by)}{n(\by)} \phi(\by)\, \rd s(\by) \quad \tand\quad
D' \phi(\bx) = \int_\Gamma \pdiff{\Phi(\bx,\by)}{n(\bx)} \phi(\by)\, \rd s(\by),
\eeq
for $\phi\in \LtG$ and almost all $\bx \in \Gamma$, where $\Phi(\bx,\by)$ is the fundamental solution for Laplace's equation,
\beq\label{eq:fund}
\Phi(\bx,\by):=
\left\{
\begin{array}{cc}
\displaystyle{\frac{1}{2\pi} \log \left(\frac{1}{\nxy}\right),} & d= 2,\\
\displaystyle{\frac{1}{4\pi \nxy}},& d=3,
\end{array}
\right.
\eeq
for $\bx,\by\in \R^d$ with $\bx\neq \by$. Explicitly,
\beq\label{eq:DD'2}
D \phi(\bx) = \frac{1}{c_d}\int_\Gamma \frac{(\bx-\by)\cdot \bn(\by)}{|\bx-\by|^d}\, \phi(\by) \,\rd s(\by) \quad \tand\quad
D' \phi(\bx) = \frac{1}{c_d}\int_\Gamma  \frac{(\by-\bx)\cdot \bn(\bx)}{|\bx-\by|^d}\, \phi(\by)\, \rd s(\by),
\eeq
where $c_d$ is the surface measure of the unit sphere in $\R^d$ ($c_2=2\pi$, $c_3=4\pi$).

For general Lipschitz $\Gamma$, the integrals in the definitions of $D$ and $D'$ are understood as Cauchy principal values, and $D$ and $D'$  are bounded
 on $L^2(\Gamma)$ by  the results on boundedness of the Cauchy integral on Lipschitz $\Gamma$ of Coifman, McIntosh, and Meyer \cite{CoMcMe:82}, following earlier work by Calder\'on \cite{Ca:77} on boundedness for $\Gamma$ with small Lipschitz constant (in the sense of Definition \ref{def:Lipschitz} below). As shown by Verchota \cite{Ve:84} (and see \cite[Appendix A]{El:92}, \cite{MitreaD:97}, \cite[Thm.~2.25]{ChGrLaSp:12}) the operators in \eqref{eq:secondkindBIEs} are also Fredholm of index zero on $L^2(\Gamma)$. Indeed, when $\Gamma$ is connected, $\half I - D$ and $\half I- D'$ are invertible on $L^2(\Gamma)$ and $\half I+ D'$ is invertible on $L^2_0(\Gamma)$, the set of $\phi\in L^2(\Gamma)$ with mean value zero, so that one-rank perturbations of $\half I+ D'$ and $\half I+ D$ are invertible on $L^2(\Gamma)$ \cite{Ve:84}. More generally (see \cite[\S5.15]{Me:18} and \cite{MitreaD:97,KulkarniEtal:05,St:08,SaSc:11,Kr:14}), whatever the topology of $\Gamma$, the interior and exterior Dirichlet problems can be formulated as BIEs of the form
 \begin{equation} \label{eq:GenBIE}
 A \phi = g \quad \mbox{where} \quad A :=  \half I + D_*, \quad \phi, g\in L^2(\Gamma),
 \end{equation}
 the operator $D_*$ is a finite rank or compact perturbation of $\pm D$ or $\pm D'$ as indicated in Table \ref{tab:bies}, and $A:L^2(\Gamma)\to L^2(\Gamma)$ is invertible. The same holds for the Neumann problems provided the Neumann data are square integrable\footnote{For example, when $\Omega_-$ is connected, the interior Neumann problem with data $g\in L_0^2(\Gamma)$ can be formulated as \eqref{eq:GenBIE} with $D_*=D'+P$, where $P$ is orthogonal projection onto the orthogonal complement of $L_0^2(\Gamma)$, and $A:L^2(\Gamma)\to L^2(\Gamma)$ given by \eqref{eq:GenBIE} is invertible (see, e.g., proof of \cite[Thm.~2.25]{ChGrLaSp:12}).}.

\begin{table}
\begin{center}
\begin{tabular}{|c|c|c|c|c|}
\hline
&Interior Dirichlet   & Interior Neumann & Exterior Dirichlet  & Exterior Neumann  \\
& problem & problem & problem & problem \\
\hline
 Direct   & $\frac{1}{2}I- D^\prime$   & $\frac{1}{2}I+ D$  &  $\frac{1}{2}I+ D'$ & $ \frac{1}{2}I- D$\\
 \hline
 Indirect & $ \frac{1}{2}I- D$ & $ \frac{1}{2}I+ D'$& $\frac{1}{2}I+ D$ & $ \frac{1}{2}I- D'$\\
\hline
     \end{tabular}
     \end{center}
     \caption{
\emph{The integral operators involved in the standard second-kind integral-equation formulations of the interior and exterior Dirichlet and Neumann problems for Laplace's equation. Direct and indirect have the standard meanings (e.g.\ \cite{SaSc:11}): direct formulations are those obtained from a Green's representation theorem by taking traces; indirect formulations are those obtained by using a layer potential as an ansatz and taking traces.
}   \label{tab:bies}}
\end{table}

The Galerkin method for \eqref{eq:GenBIE} in $L^2(\Gamma)$ requires first choosing a sequence of finite-dimensional approximation spaces $(\cH_N)_{N=1}^\infty$ in $L^2(\Gamma)$ that is asymptotically dense in $L^2(\Gamma)$, meaning that
$$
\inf_{\phi_N\in \cH_N} \|\psi-\phi_N\|_{L^2(\Gamma)} \to 0 \quad\tas \quad N\to \infty,
$$
for every $\psi\in L^2(\Gamma)$. Then, for each $N$, we seek an approximation $\phi_N\in \cH_N$ such that
\begin{equation} \label{eq:GalL2}
(A\phi_N,\psi_N)_{L^2(\Gamma)} = (g, \psi_N)_{L^2(\Gamma)} \quad \mbox{for all } \psi_N\in \cH_N.
\end{equation}
We say that this Galerkin method converges if, for some $N_0\in \NN$, $\phi_N$ is well-defined by \eqref{eq:GalL2} for all $N\geq N_0$,  and all $g\in \LtG$,
and $\phi_N\to \phi=A^{-1}g$ in $L^2(\Gamma)$ as $N\to \infty$ for all $g\in L^2(\Gamma)$.

It follows from existing, general results on the Galerkin method in a Hilbert space setting (Theorem \ref{thm:Galerkin} below) that the Galerkin method \eqref{eq:GalL2} converges for every asymptotically dense approximation sequence $(\cH_N)_{N=1}^\infty\subset \LtG$ if and only if $A$ can be written as the sum of a coercive and a compact operator (coercive in the sense of \eqref{eq:coer} below). In particular, $A$ is coercive plus compact if $\|D\|_{L^2(\Gamma), \ess} < 1/2$, where
\beq\label{eq:essnorm}
\big\|D\big\|_{L^2(\Gamma), \ess} := \inf_{K \text{ compact }} \big\| D - K \big\|_{\LtG}
\eeq
is the essential norm of $D$ as an operator on $L^2(\Gamma)$, for then $D=D_\dag + K$ with $\|D_\dag\|_{\LtG}<1/2$ and $K$ compact, so that $\half I+D_\dag$ is coercive.

Wendland \cite{We:09} has reviewed the state-of-the-art in numerical analysis of Galerkin and collocation methods for solution of \eqref{eq:GenBIE}, and the state-of-the-art in related analysis questions for the double-layer potential operator $D$, with some emphasis on the case when $\Gamma$ is Lipschitz polyhedral (meaning that $\Omega_-$ is a Lipschitz polyhedron). As he notes, most of the existing proofs of convergence for the Galerkin method \eqref{eq:GalL2} (all the proofs in 2-d) rely on establishing that $\|D\|_{\LtG,\ess} < 1/2$. These comprise the cases where:
\begin{itemize}
\item[(i)] $\Gamma$ is $C^1$, when $D$ (and its $L^2(\Gamma)$ adjoint $D'$) are compact by \cite{FaJoRi:78}, so that $\|D\|_{\LtG,\ess} = 0$;
\item[(ii)] $\Gamma$ is a 2-d curvilinear polygon with each side $C^{1,\alpha}$ for some $0<\alpha<1$ and with each corner angle in the range $(0,2\pi)$; this result was announced in \cite{Sh:69}, with details of the proof given in \cite{Sh:91}, and with the analogous result for polygons following from the result of \cite[\S3]{Ch:84}, (see, e.g., \cite[Lemma 1.5]{BoMaNiPa:90}).
\end{itemize}
Additionally, Wendland suggests that $\|D\|_{\LtG, \ess}<1/2$ if the Lipschitz character of $\Gamma$ (in the sense of Definition \ref{def:Lipschitz})  is small enough ``due to the results of \cite{Mi:99}". The results in \cite[Lemma 1, Page 392]{Mi:99} concern the essential spectral radius but the arguments can be adapted to prove that $\|D\|_{\LtG, \ess}<1/2$ if the Lipschitz character of $\Gamma$ is small enough, and we do this below in Corollary \ref{cor:smallM}.

As Wendland notes,  the Galerkin method \eqref{eq:GalL2} has also been studied by Elschner \cite{El:92a,El:95}, who analyses spline-Galerkin methods when $\Gamma$ is Lipschitz polyhedral and $A=\half I-D$. Importantly, Elschner's analysis does not assume that $\|D\|_{\LtG,\ess}<1/2$; indeed he announces in \cite[Remark 3.4(i)]{El:95} that, even in the case when $\Omega_-$ is a convex polyhedron, it can hold that  $\|D\|_{\LtG,\ess}>1/2$. Nevertheless, he is able to prove convergence, for a certain class of Lipschitz polyhedra, of $h$- and $hp$-Galerkin boundary element methods (in \cite{El:92a} and \cite{El:95}, respectively), with approximation spaces carefully adapted to the singularities of the solution at corners and edges of the polyhedron. His analysis reduces proof of stability and convergence to a requirement of injectivity of $\half I - D$ either on the spaces $L^2(\Gamma_*^{(j)})$ or on the spaces $L^\infty(\Gamma_*^{(j)})$, where $\Gamma_*^{(j)}$ is an infinite cone associated to the $j$th corner of $\Gamma$ but with strips along the edges of the cone deleted \cite[Equation (2.3)]{El:92a}. This requirement is satisfied if $\|D\|_{\LtG,\ess}<1/2$, but also if $\|D\|_{C(\Gamma),\ess}<1/2$, so in particular (see the discussion below \eqref{eq:ineq2}) when $\Omega_-$ is convex. Convergence of these Galerkin methods would hold for all Lipschitz polyhedra if we could show that $A$ is coercive plus compact on $L^2(\Gamma)$ whenever $\Gamma$ is Lipschitz polyhedral\footnote{Additionally, as Elschner makes clear (\cite[Remark 4.4]{El:92a}, \cite[Remark 3.6]{El:95}), the injectivity condition \cite[Equation (2.3)]{El:92a} is satisfied if there exists a weight function $w\in L^\infty(\Gamma)$ satisfying \eqref{eq:wbound} below for some $c_->0$ such that $\|D\|_{C_w(\Gamma),\ess}<1/2$.  Thus a positive answer to either of the open questions Q2 or Q3 below would complete a proof of convergence of Elschner's Galerkin methods for all Lipschitz polyhedral $\Gamma$.}.

\subsection{The open questions we address} \label{sec:open}

Wendland \cite[\S1, \S3.2, \S4.2]{We:09} (and see Elschner \cite[Remark A.3]{El:92}) flags the following long-standing open questions that are the focus of our paper:

\bit
\item[Q1.] Is $\|D\|_{\LtG, \ess}<1/2$ in 2-d whenever $\Gamma$ is Lipschitz?
\item[Q2.] Does the Galerkin method \eqref{eq:GalL2} converge for every asymptotically dense sequence of finite-dimensional approximation spaces $(\cH_N)_{N=1}^\infty\subset \LtG$ whenever $\Gamma$ is Lipschitz ($d=2$ or $3$), in particular whenever $\Gamma$ is Lipschitz polyhedral ($d=3$)?
\eit

As discussed above, as a consequence of Theorem \ref{thm:Galerkin}, Q2 can be rephrased equivalently as:

\bit
\item[Q2$^\prime$.] Can the operators $\half I\pm D$ and $\half I\pm D'$ be written as the sum of a coercive operator and a compact operator whenever $\Gamma$ is Lipschitz ($d=2$ or $3$), in particular whenever $\Gamma$ is Lipschitz polyhedral ($d=3$)?
\eit
As we have noted above, a positive answer to Q1 implies a positive answer to Q2$^\prime$, equivalently a positive answer to Q2.

We address Q2 and Q2$^\prime$ via a further reformulation of these questions in terms of $W_\ess(D)$, the essential numerical range of $D$ (the definitions of the numerical range and essential numerical range of a linear operator are recalled below in \eqref{eq:numrangedef} and \eqref{eq:essnumrangedef}). As a consequence of a general property of bounded linear operators on Hilbert spaces that we recall in Corollary \ref{cor:essnum}, and since $D'$ is the $L^2(\Gamma)$ adjoint of $D$ so that $W_\ess(D')=W_\ess(D)$, Q2 can also be rephrased as follows:

\bit
\item[Q2$^{\prime\prime}$.] Are the points $\pm 1/2$ outside the essential numerical range $W_{\ess}(D)$ of $D$ on $L^2(\Gamma)$ when $\Gamma$ is Lipschitz ($d=2$ or 3), in particular when $\Gamma$ is Lipschitz polyhedral ($d=3$)?
\eit
We note that Q2$^{\prime\prime}$ has a positive answer if $w_\ess(D)<1/2$, where
$$
w_\ess(D) := \sup_{z\in W_\ess(D)} |z|
$$
is the essential numerical radius of $D$.

There are at least two reasons for anticipating that the above questions might have positive answers.
Firstly, thanks to Steinbach and Wendland \cite{StWe:01} (and see \cite[Remark A.3]{El:92a}), provided we equip the natural trace space $H^{1/2}(\Gamma)$ with the appropriate norm, $D$ has essential norm $<1/2$ as an operator on $H^{1/2}(\Gamma)$ for every Lipschitz $\Gamma$. Thus also the Galerkin method \eqref{eq:GalL2} converges if we replace the $L^2(\Gamma)$ inner product in \eqref{eq:GalL2} by an inner product on $H^{1/2}(\Gamma)$. This Galerkin method, using a non-local $H^{1/2}(\Gamma)$ inner product, is less attractive for numerical computation, but these positive answers to Q1 and Q2 for $H^{1/2}(\Gamma)$ might encourage a hope of positive answers also for $\LtG$.

Secondly, there has been progress on a related long-standing open problem
concerning the essential spectrum of $D$ as an operator on $\LtG$,  $\spec_{\LtG,\ess}(D)$, and specifically the essential spectral radius $r_{\LtG,\ess}(D) := \sup_{z\in \spec_\ess(D)} |z|$. This open problem \cite[Problem 3.2.12]{Ke:94}\footnote{The phrasing in Kenig \cite[Problem 3.2.12]{Ke:94} is different, namely that [in the case when $\Gamma$ is connected] the spectral radius of $D'$ as an operator on $L^2_0(\Gamma)$ is $<1/2$. But, recalling that $r_{\LtG,\ess}(D)=r_{\LtG,\ess}(D')$, this is equivalent to \eqref{eq:ress}, since any eigenvalues of $D'$ lie in $[-1/2,1/2]$  \cite[Theorem 1.1]{FaSaSe:92}, and $\pm \half I + D'$ is known to be Fredholm on $L^2(\Gamma)$, and also invertible on $L_0^2(\Gamma)$ as long as $\Gamma$ is connected \cite[Theorem 4.1]{MitreaD:97}.} is to show that
\begin{equation} \label{eq:ress}
r_{\LtG,\ess}(D) < \half \quad \mbox{for all Lipschitz $\Gamma$.}
\end{equation}
This bound has been shown to hold when $\Omega_-$ is convex \cite{FaSaSe:92} (and see \cite{ChLe08} for extensions to locally convex domains), for all $\Gamma$ with sufficiently small Lipschitz character \cite{Mi:99}, and for $\Gamma$ Lipschitz polyhedral \cite[Theorem 4.1]{El:92} (and see \cite{GrMa88,Ma12,GrMa13}). Since
\begin{equation} \label{eq:ineq}
r_{\LtG,\ess}(D) \leq w_\ess(D) \leq \|D\|_{\LtG,\ess},
\end{equation}
the bound \eqref{eq:ress} also holds for the cases cited above where it is known that $\|D\|_{\LtG,\ess}<1/2$. One might hope that, at least in some of the cases where it is known that $r_{\LtG,\ess}(D) < \half$, it holds also that $\|D\|_{\LtG,\ess}<1/2$, or at least that $w_\ess(D)<1/2$, either of these enough to give a positive answer to Q2.

The final long-standing open question that we address, flagged by Wendland \cite[\S4.1]{We:09} (and see \cite{KrWe86,AnKlKr88,KrWe88,Ha:01}), is concerned specifically with the case when $\Gamma$ is Lipschitz polyhedral, in which case $D$ is well-defined also as a bounded operator on $C(\Gamma)$. To explain this conjecture we note the following general relationship, in a Banach space $X$ equipped with a norm $\|\cdot\|_X$, between the essential spectral radius $r_{X,\ess}(\cA)$ of a bounded linear operator $\cA$ and its essential norm $\|\cA\|_{X,\ess}$. Generalising \eqref{eq:ineq} it trivially holds that $r_{X,\ess}(\cA) \leq \|\cA\|_{X,\ess}$. But it can also be shown \cite{GoMa60} that, for every $\varepsilon>0$ there exists an equivalent norm $\|\cdot\|^\prime$ on $X$ such that
\begin{equation} \label{eq:ineq2}
\|\cA\|_{X^\prime,\ess} \leq r_{X,\ess}(\cA)+\varepsilon,
\end{equation}
where $X^\prime$ denotes $X$ equipped with $\|\cdot\|^\prime$.

We can apply this observation to the case that $\Gamma$ is Lipschitz polyhedral. In that case (see \cite{We:09}) $\|D\|_{C(\Gamma),\ess}<1/2$ when $\Omega_-$ is convex, but not for all non-convex $\Omega_-$. However, $r_{C(\Gamma),\ess}(D)<1/2$ (see \cite[Theorem 0.1]{Ra:92}, \cite{Ra:95}, and cf.\ \cite{GrMa88,Ma12,GrMa13}), so that there must exist a norm $\|.\|^\prime$ on $C(\Gamma)$, equivalent to the standard maximum norm, for which the induced essential norm of $D$ is also $<1/2$. Motivated by the numerical analysis of collocation methods for \eqref{eq:GenBIE} in the case $D_*=\pm D$, Kr\'al and Wendland \cite{KrWe86} consider, specifically, weighted norms equivalent to the standard maximum norm. Given $w\in L^\infty(\Gamma)$ with, for some $c_->0$,
\begin{equation} \label{eq:wbound}
c_-\leq w(x) \leq 1, \quad \mbox{for almost all } x\in \Gamma,
\end{equation}
they define the norm $\|\cdot\|_{C_w(\Gamma)}$ by
\begin{equation} \label{eq:wnormDef}
\|\psi\|_{C_w(\Gamma)} := \esssup_{x\in \Gamma} |\psi(x)/w(x)|, \quad \psi\in C(\Gamma).
\end{equation}
(Of course $\|\cdot\|_{C_1(\Gamma)}$ is the standard supremum norm, and $\|\cdot\|_{C_w(\Gamma)}$ and $\|\cdot\|_{C_1(\Gamma)}$ are equivalent by \eqref{eq:wbound}.) In \cite{KrWe86} they construct examples of Lipschitz (and non-Lipschitz) polyhedral $\Gamma$ and $w$ for which  $\|D\|_{C_w(\Gamma),\ess}<1/2$ (although $\|D\|_{C_1(\Gamma),\ess}>1/2$). Generalising these examples, Angell et al.\ \cite{AnKlKr88} and Kr\'al and Wendland \cite{KrWe88} show that, whenever $\Omega_-$ is a so-called rectangular domain, meaning that each side of $\Gamma$ lies in one of the Cartesian coordinate planes, a piecewise constant weight $w$ can be constructed so that  $\|D\|_{C_w(\Gamma),\ess}<1/2$. Extending further, Hansen developed in \cite{Ha:01} a procedure for general polyhedral $\Gamma$ to systematically generate piecewise constant weight functions; the class of polyhedral $\Gamma$ for which this procedure generates a $w$ with $\|D\|_{C_w(\Gamma),\ess}<1/2$ is termed {\em Hansen's class} in \cite{We:09}.  As Wendland \cite{We:09} notes: ``It is still not clear whether Hansen's procedure always provides a weight function with $\|\half I+D\|_{C_w(\Gamma),\ess}<1$ for any arbitrary polyhedral domain. Hence the stability and convergence of the collocation method for piecewise smooth $\Gamma$ is in part open.'' This, and the discussion above of the convergence theory for Elschner's spline-Galerkin methods,  motivate the final open question that we address in this paper:
\bit
\item[Q3.] For every Lipschitz polyhedral $\Gamma$, does there exist a weight function $w\in L^\infty(\Gamma)$ satisfying \eqref{eq:wbound} for some $c_->0$ such that $\|D\|_{C_w(\Gamma),\ess}<1/2$?
\eit

\subsection{The main results and their implications}

Our main results are that we answer, in the negative, questions Q1--Q3, and hence we also answer Q2$^\prime$ and Q2$^{\prime\prime}$ negatively. Our first main result addresses Q1 (we recall the definition of the Lipschitz constant of a Lipschitz domain in Definition \ref{def:Lipschitz} below).

\begin{theorem}[Answer to Q1] \label{thm:Q1} For every $M>0$ there exists, for both $d=2$ and $3$ (i.e., in both 2-d and 3-d), a bounded Lipschitz domain $\Omega_d^{M}$, with Lipschitz constant $M$, such that if $\Gamma$ is the boundary of $\Omega_d^{M}$ then
$$
\|D\|_{\LtG,\ess} \geq M/2.
$$
In particular if $\Gamma$ is the boundary of $\Omega_d^{1}$, which has Lipschitz constant one, then $\|D\|_{\LtG,\ess} \geq 1/2$.
\end{theorem}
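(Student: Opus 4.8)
The plan is to build the domains $\Omega_d^M$ explicitly and to lower-bound $\|D\|_{\LtG,\ess}$ by exhibiting a singular part of the kernel that cannot be removed by any compact operator. The natural candidate is a domain with a conical vertex (in 2-d, a corner; in 3-d, an edge or a cone point) of "large" opening: the double-layer kernel $(\bx-\by)\cdot\bn(\by)/|\bx-\by|^d$ on a straight piece of boundary vanishes, but near a corner it contributes a genuinely bounded-below non-compact operator whose norm grows with the corner angle, and hence (since the Lipschitz constant of a corner of half-angle $\theta$ scales like $\tan\theta$ or $\cot\theta$) with $M$. Concretely, I would take $\Omega_2^M$ to be (a smoothed, bounded version of) a wedge of appropriate opening angle, and $\Omega_3^M$ to be a bounded domain that agrees, near one boundary point, with a two-dimensional wedge times an interval, so that the 3-d computation localises to the 2-d one.

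The key steps, in order, are: (i) reduce to a local statement, using that $D = D_{\mathrm{loc}} + K$ where $K$ is compact (the far-field part of the kernel is smooth, hence its contribution is compact, and modifying $\Gamma$ away from the distinguished vertex changes $D$ by a compact operator — this is standard, via the smoothness of $\Phi(\bx,\by)$ off the diagonal); (ii) on the model wedge, diagonalise or explicitly compute the action of the localised double-layer operator. For a planar corner of interior angle $\pi\alpha$ (so two half-lines meeting at the origin), the restriction of $D$ to functions supported near the vertex is, after the Mellin transform along the rays, a convolution whose symbol is an explicit elementary function of the Mellin variable and $\alpha$; the essential norm of $D_{\mathrm{loc}}$ equals the supremum of the modulus of this symbol. (iii) Evaluate that supremum and show it is $\geq M/2$ for the chosen relation between $\alpha$ and $M$; in particular, for the corner giving Lipschitz constant exactly one (a right-angle-type corner, or the precise angle for which $M=1$) the supremum is $\geq 1/2$. (iv) For $d=3$, tensor the 2-d model with an interval: the relevant operator becomes, after Fourier transform in the edge variable, a family of operators on the 2-d cross-section that degenerate (as the edge frequency $\to\infty$) to exactly the 2-d corner operator, so $\|D\|_{\LtG,\ess}$ in 3-d is bounded below by the 2-d essential norm; then patch the model edge into a bounded Lipschitz domain as in (i). (v) Finally, verify the claimed Lipschitz constant of the constructed $\Omega_d^M$ directly from Definition \ref{def:Lipschitz} — the smoothing and truncation must be done so as not to increase it.

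The main obstacle I expect is step (ii)–(iii): getting a \emph{clean, provable lower bound} on the essential norm of the localised corner operator, as opposed to just its essential spectral radius. The point of the paper's negative answer is precisely that these differ — $r_{\LtG,\ess}(D)<1/2$ in many of these cases while $\|D\|_{\LtG,\ess}\geq 1/2$ — so one cannot argue via spectrum; one must find an actual near-maximising sequence of $L^2$ functions (equivalently, exhibit the symbol of the Mellin/Fourier-reduced operator and its supremum norm, and check this supremum is attained in the limit, not merely that the spectrum is large). Concretely I would construct an explicit sequence $\phi_n\in L^2(\Gamma)$, localised near the vertex and oscillating at frequency $n$ in the appropriate (logarithmic-radial) variable, with $\|\phi_n\|=1$, $\phi_n\rightharpoonup 0$ weakly, and $\|D\phi_n\|\to$ (symbol supremum) $\geq M/2$; weak convergence to zero then kills any compact perturbation, giving $\|D-K\|\geq M/2$ for all compact $K$. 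Making the bounded-domain truncation and the smoothing of the corner compatible with this oscillating test sequence — so that the truncation error stays $o(1)$ — is the delicate bookkeeping, but it is routine given the localisation in (i). I would expect to state the precise relationship between the model angle and $M$ as a short lemma, with the symbol computation as its proof, and then Theorem \ref{thm:Q1} follows by assembling (i)–(v).
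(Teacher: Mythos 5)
The proposal has the right high-level outline — localise, compute on a model, construct a weakly-null sequence that pins down the essential norm — and steps (i) and (v) are sound. But the core of your plan, steps (ii)–(iv), rests on a single corner (a wedge), and this cannot work. As the paper itself records in its introduction (citing Shelepov \cite{Sh:69,Sh:91} and Chandler \cite{Ch:84}), for any 2-d curvilinear polygon with $C^{1,\alpha}$ sides and corner angles in $(0,2\pi)$ one has $\|D\|_{\LtG,\ess} < 1/2$. A ``smoothed, bounded version of a wedge'' is precisely such a curvilinear polygon, and a single corner with interior angle in $(0,2\pi)$ is the most general Lipschitz corner there is; the Lipschitz constant can be made arbitrarily large while the essential norm remains $<1/2$. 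Equivalently, in your Mellin picture, the sup-norm of the Mellin symbol of the corner operator is $<1/2$ uniformly over angles in $(0,2\pi)$, so your near-maximising oscillatory sequence would yield a limit strictly below $1/2$, not $M/2$. For $M\geq 1$ this is already a contradiction. The same problem then propagates to your 3-d tensor construction: if the 2-d cross-section operator has essential norm $<1/2$, so does the wedge-times-interval edge.

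The missing idea is that the bad behaviour comes not from a single corner but from an \emph{accumulation} of corners. The paper's 2-d domain $\Omega_2^M$ (Definition \ref{def:OmegaM2d}) is constructed so that, near a single boundary point $\bze$, the boundary consists of a self-similar cascade of scaled copies of the \emph{periodic sawtooth} graph $\Gamma^M$ of slope $\pm M$ (Definition \ref{def:GammaM}). Restricting $D$ on the sawtooth to piecewise constants per tooth gives a bi-infinite Toeplitz (Laurent) operator whose symbol $e(t)$ has a jump of height $M/2$ at $t=0$ (Lemma \ref{lem:ANDLPrime}, eq.~\eqref{eq:symb}); the norm of such an operator is $\|e\|_\infty \geq M/2$, and the shift-invariance of the sawtooth makes the operator's essential norm equal its norm (Lemma \ref{lem:DM}, via Lemma \ref{lem:Tlemma}). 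The dilation-invariance of the cascade inside $\Omega_2^M$ then transports this lower bound to the essential norm of $D$ on the whole boundary (Theorem \ref{lem:local}, Lemma \ref{lem:dil2}). The 3-d case (Definition \ref{def:OmegaM3d}, Lemma \ref{lem:3dvANDLPrime}) does tensor with an interval, roughly as you suggest, but the 2-d profile is the sawtooth cascade, not a single corner, and the Toeplitz-symbol computation is run in the limit $a\to\infty$ of the tube half-width rather than via a Fourier transform in the edge variable. So: keep your localisation and weakly-null-sequence mechanism, but replace the single-corner model with a periodic, self-similar oscillation whose slope carries $M$, and replace the Mellin symbol by a Toeplitz symbol whose jump carries the $M/2$.
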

Our method of proof is constructive. Indeed, the particular domain $\Omega_d^{M}$ that we use to prove this result is shown, for $d=2$ and $M=1$, in Figure \ref{fig:OmegaM}, and is specified in Definitions \ref{def:OmegaM2d} and \ref{def:OmegaM3d}, for the 2-d and 3-d cases, respectively. We note that, complementing this result, we show below in Corollary \ref{cor:smallM} that, for every $M_0> 0$ and every bounded Lipschitz domain $\Omega_-$ with Lipschitz character $M\leq M_0$,
$$
\|D\|_{\LtG,\ess} \leq C\,M,
$$
where the constant $C$ depends only on $d$ and $M_0$ (but must be $\geq 1/2$ by the above theorem), so that $\|D\|_{\LtG,\ess} < 1/2$ if the Lipschitz character $M$ of $\Omega_-$ is small enough.

The same domains $\Omega_d^{M}$ provide a negative answer to Q2$^{\prime\prime}$, and so also a negative answer to Q2$^\prime$ and Q2.

\begin{theorem}[Answer to Q2, Q2$^\prime$, Q2$^{\prime\prime}$] \label{thm:Q2} For $M>0$ and $d=2,3$, if $\Gamma$ is the boundary of $\Omega_d^{M}$, which has Lipschitz constant $M$, then
$$
\big\{\lambda\in \C:|\lambda| \leq M/4\big\} \subset W_\ess(D),
$$
so that $\lambda I + D$ and $\lambda I + D'$ are not coercive plus compact for any $\lambda$ with $|\lambda|\leq M/4$.
In particular, if $\Gamma$ is the boundary of $\Omega_d^{2}$, which has Lipschitz constant two, then $\half I \pm D$ and $\half I \pm D'$ cannot be written as the sum of coercive and compact operators, so that there exists an asymptotically dense sequence of finite-dimensional spaces $(\cH_N)_{N=1}^\infty\subset \LtG$ such that the Galerkin method \eqref{eq:GalL2} does not converge.
\end{theorem}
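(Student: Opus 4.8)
\emph{Proof plan.}
The plan is to locate inside $\LtG$ an infinite sequence of (essentially) two-dimensional subspaces on each of which $D$ behaves, up to an error that vanishes along the sequence, like the nilpotent $2\times2$ matrix $\left(\begin{smallmatrix}0 & M/2\\ 0 & 0\end{smallmatrix}\right)$, whose numerical range is exactly the closed disc of radius $M/4$. The domain $\Omega_d^{M}$ of Definitions \ref{def:OmegaM2d} and \ref{def:OmegaM3d} is built so that $\Gamma=\partial\Omega_d^{M}$ contains a sequence of pairs of disjoint boundary patches $(\Gamma_k^+,\Gamma_k^-)_{k\in\NN}$ of geometrically decreasing diameter, all shrinking to a single ``bad'' point $\bx_0\in\Gamma$, and so that, after rescaling by the diameter of $\Gamma_k^\pm$, the geometry near $\bx_0$ converges to a fixed (unbounded) model configuration $\Gamma_\infty$. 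Let $e_k^\pm\in\LtG$ be the $\LtG$-normalised indicator functions of $\Gamma_k^\pm$ (any fixed smooth bump profile would serve equally well), so that $\|e_k^\pm\|_{\LtG}=1$, $\langle e_k^+,e_k^-\rangle_{\LtG}=0$, and, since $|\Gamma_k^\pm|\to 0$, $e_k^\pm\rightharpoonup 0$ weakly in $\LtG$ as $k\to\infty$. All four pairings $\langle De_k^\sigma,e_k^\tau\rangle_{\LtG}$ ($\sigma,\tau\in\{+,-\}$) are real, since $D$ and the $e_k^\pm$ are real-valued.

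The core of the argument is the asymptotics, as $k\to\infty$, of these four pairings. Since the kernel of $D$ couples only $\bx$ and $\by$, and both $e_k^\pm$ are supported near $\bx_0$, each pairing depends only on the geometry of $\Gamma$ in a small neighbourhood of $\bx_0$; rescaling and using that $D$ is unitarily equivalent to its dilate about $\bx_0$, the pairings converge to the corresponding quantities for $\Gamma_\infty$ and its double-layer operator. Using the explicit kernel \eqref{eq:DD'2} on $\Gamma_\infty$ one finds
\[
\langle De_k^-,e_k^+\rangle_{\LtG}\ \longrightarrow\ \tfrac{M}{2}
\]
(up to a sign, which is irrelevant below), while the other three pairings tend to $0$: the diagonal terms $\langle De_k^\pm,e_k^\pm\rangle_{\LtG}\to0$ because each $\Gamma_k^\pm$ is asymptotically flat, so that the double layer of a density supported on $\Gamma_k^\pm$ is negligible on $\Gamma_k^\pm$ (as in the $C^1$ case), and the ``back-transfer'' term $\langle De_k^+,e_k^-\rangle_{\LtG}\to0$ by the chosen orientation of the two patches, which makes only one of the two transfer directions large. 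These are essentially the estimates already underlying Theorem \ref{thm:Q1}, which extracts $\|D\|_{\LtG,\ess}\geq M/2$ from $\|De_k^-\|_{\LtG}\approx M/2$; the genuinely new input is only that the diagonal and back-transfer pairings vanish, i.e.\ that the limiting $2\times2$ block is nilpotent rather than, say, a weighted shift, which would force the larger radius $M/2$.

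Granting these asymptotics, fix $\phi\in[0,\pi/4]$ and $\psi\in[0,2\pi)$ and set $v_k:=\cos\phi\,e_k^+ + \re^{\ri\psi}\sin\phi\,e_k^-$. Then $\|v_k\|_{\LtG}=1$, $v_k\rightharpoonup 0$, and, expanding $\langle Dv_k,v_k\rangle_{\LtG}$ sesquilinearly and inserting the pairing limits,
\[
\langle Dv_k,v_k\rangle_{\LtG}\ \longrightarrow\ \tfrac{M}{2}\,\re^{\ri\psi}\cos\phi\sin\phi \;=\; \tfrac{M}{4}\,\re^{\ri\psi}\sin(2\phi).
\]
As $\phi,\psi$ vary over the stated ranges the right-hand side sweeps out the whole closed disc $\{\lambda\in\C:|\lambda|\leq M/4\}$, so each such $\lambda$ lies in $W_\ess(D)$; hence $\{\lambda\in\C:|\lambda|\leq M/4\}\subset W_\ess(D)$, and since $D$ has a real kernel, $W_\ess(D)$ is invariant under complex conjugation and coincides with $W_\ess(D')$. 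Finally, by Corollary \ref{cor:essnum} a bounded operator $A$ on $\LtG$ is the sum of a coercive and a compact operator if and only if $0\notin W_\ess(A)$; applying this to $A=\lambda I+D$ and $A=\lambda I+D'$, whose essential numerical ranges are $\lambda+W_\ess(D)$, we conclude that for $|\lambda|\leq M/4$ we have $0\in\lambda+W_\ess(D)$, so neither operator is coercive-plus-compact. Specialising to $M=2$ and $\lambda=\pm\half$ gives the statement for $\half I\pm D$ and $\half I\pm D'$, and Theorem \ref{thm:Galerkin}, applied to any of these four operators, then produces an asymptotically dense sequence $(\cH_N)_{N=1}^\infty\subset\LtG$ for which the corresponding Galerkin method \eqref{eq:GalL2} fails to converge.

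\emph{Main obstacle.} The crux is the scale-invariant limit $\langle De_k^-,e_k^+\rangle_{\LtG}\to M/2$ together with the vanishing of the diagonal and back-transfer pairings: this is precisely where the detailed self-similar geometry of $\Omega_d^{M}$ enters, and where one must cleanly separate the dilation-invariant leading contribution from the boundary-smooth remainder and from inter-scale cross terms (and, for $d=3$, handle the extra transverse directions of the patches). Everything afterwards --- the weak nullity of $v_k$, the elementary numerical-range identity for $2\times2$ nilpotents, and the consequences for coercivity and for the Galerkin method --- is routine given the results quoted above.
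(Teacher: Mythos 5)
The overall scaffolding of your plan --- localise at the distinguished point $\bze\in\Gamma$ via Theorem \ref{lem:local}, rescale to a fixed self-similar model, and build weakly-null test functions supported on shrinking patches --- matches the paper. But the central quantitative claim, that the limiting $2\times 2$ block is the nilpotent matrix $\left(\begin{smallmatrix}0 & M/2\\0 & 0\end{smallmatrix}\right)$, is wrong, and it is wrong in a way that makes the radius $M/4$ inaccessible to your argument.

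The kernel of $D$ is $\frac{(\bx-\by)\cdot\bn(\by)}{c_d|\bx-\by|^d}$, and if $\Gamma^+_k$ and $\Gamma^-_k$ are adjacent faces of the sawtooth/wedge shrinking to $\bze$, then in the closing-wedge limit both ``transfer'' pairings $\langle De^-_k,e^+_k\rangle$ and $\langle De^+_k,e^-_k\rangle$ tend to the \emph{same} nonzero value (in the paper's notation, both entries converge to $\pm d'_1$; see \eqref{eq:djmDef2} and the Toeplitz symmetry $d'_{j-m}=d'_{|j-m|}$ in Lemma \ref{lem:ANDLPrime}). There is no geometry-induced asymmetry that kills the back-transfer: the back-transfer is controlled by the angle that $\Gamma^-$ subtends at points of $\Gamma^+$, which approaches $\pi$ just as the forward one does. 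Consequently the actual $2\times 2$ Galerkin block is, in the limit, a real \emph{symmetric} matrix with zero diagonal and equal off-diagonal entries, whose numerical range is an \emph{interval} $[-d'_1,d'_1]\subset\R$, not a disc. A rank-one/nilpotent structure is simply incompatible with the wedge geometry. (You cannot obtain a genuinely two-dimensional numerical range from a two-dimensional subspace of this form: the $2\times 2$ block is self-adjoint, so its numerical range is real.)

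What the paper does instead is pass to $N\times N$ Galerkin matrices $D_N$ (on $N$ consecutive sawtooth segments), where $D_N=C_N$ in the notation of \eqref{eq:CNdef}. For odd $N\geq 3$ the model matrix $B_N$ of \eqref{eq:aBdef} already has numerical range containing the disc $\{|z|\leq\sqrt{(N-1)/2}\}$ (Lemma \ref{lem:AN}(ii)), and the disc of radius $M/4$ comes out in the limit $N\to\infty$ via the identification of $D_{V^M}$ with a Toeplitz operator and a jump computation for its piecewise-continuous symbol $h^\theta$ in Lemma \ref{lem:ANDLPrime}(c) (see \eqref{eq:hsymb}--\eqref{eq:finalW}: $\|h^\theta\|_\infty\geq M/4$ because the symbol has a jump of magnitude $M/2$ at $t=0$). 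You need $N\geq 3$ at minimum to see any two-dimensional piece of the numerical range, and the full Toeplitz-symbol analysis to get the sharp radius $M/4$; the $2\times 2$ route cannot be salvaged. Your final steps --- the passage to $W_\ess(D')=W_\ess(D)$, Corollary \ref{cor:essnum}, and Theorem \ref{thm:Galerkin}(c) --- are all fine once the disc inclusion is established.
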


The domains $\Omega_d^M$ that feature in the above results can be thought of as curvilinear polygons with infinitely many sides when $d=2$ and curvilinear polyhedra with infinitely many sides when $d=3$;
see Figure \ref{fig:OmegaM} for $\Omega_2^1$, and Figures \ref{fig:OmegaM3d} and \ref{fig:GM3} for the key portions of $\Omega_3^2$.
 In the 3-d case similar results can be obtained (though without such an explicit dependence on the Lipschitz constant) for the case when $\Gamma$ is Lipschitz polyhedral. The next result features the family of Lipschitz polyhedra, $\{\Omega_{\theta,n}:0<\theta \leq \pi, n \in \{2,3,...\}\}$, specified in Definition \ref{def:Gamma3-d}, that we term {\em open-book polyhedra}; precisely, we refer to $\Omega_{\theta,n}$ as the {\em open-book polyhedron with $n$ pages and opening angle $\theta$}. Figure \ref{fig:book} shows $\Omega_{\theta,n}$ for $\theta=\pi/2$ and $n=4$ (and see Figure \ref{fig:book2}). In the following theorem and hereafter, $\conv(T)$ denotes the convex hull of $T\subset \C$.

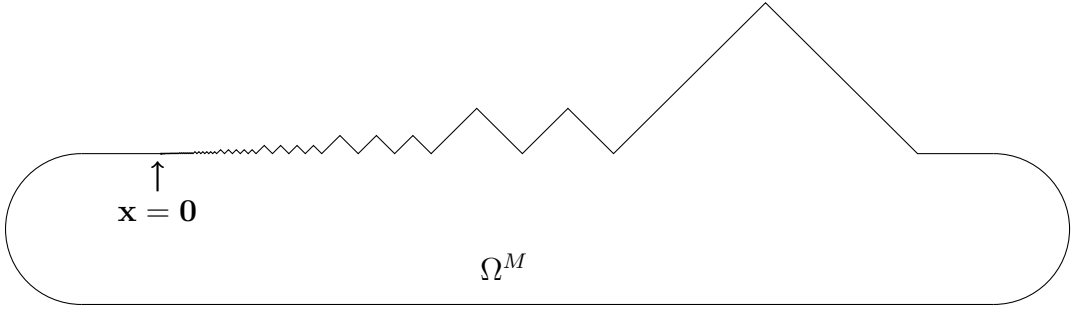
\begin{figure}
\begin{tikzpicture}[scale=10]
\draw (-0.1,0) arc(90:270:0.1);
\draw (1.1,-0.2) arc(-90:90:0.1);
\draw plot coordinates {(-0.1,-0.2) (1.1,-0.2)};
\draw plot coordinates {(-0.1,0)
(0,0) (0.00047018,0) (0.00048063,1.0449e-05) (0.00049108,0) (0.00050153,1.0449e-05) (0.00051198,0) (0.00052243,1.0449e-05) (0.00053288,0) (0.00054332,1.0449e-05) (0.00055377,0) (0.00056422,1.0449e-05) (0.00057467,0) (0.00058512,1.0449e-05) (0.00059557,0) (0.00060602,1.0449e-05) (0.00061646,0) (0.00062691,1.0449e-05) (0.00063736,0) (0.00064781,1.0449e-05) (0.00065826,0) (0.00066871,1.0449e-05) (0.00067916,0) (0.0006896,1.0449e-05) (0.00070005,0) (0.0007105,1.0449e-05) (0.00072095,0) (0.0007314,1.0449e-05) (0.00074185,0) (0.0007523,1.0449e-05) (0.00076274,0) (0.00077319,1.0449e-05) (0.00078364,0) (0.0008023,1.8658e-05) (0.00082096,0) (0.00083962,1.8658e-05) (0.00085827,0) (0.00087693,1.8658e-05) (0.00089559,0) (0.00091425,1.8658e-05) (0.00093291,0) (0.00095156,1.8658e-05) (0.00097022,0) (0.00098888,1.8658e-05) (0.0010075,0) (0.0010262,1.8658e-05) (0.0010449,0) (0.0010635,1.8658e-05) (0.0010822,0) (0.0011008,1.8658e-05) (0.0011195,0) (0.0011381,1.8658e-05) (0.0011568,0) (0.0011755,1.8658e-05) (0.0011941,0) (0.0012128,1.8658e-05) (0.0012314,0) (0.0012501,1.8658e-05) (0.0012688,0) (0.0012874,1.8658e-05) (0.0013061,0) (0.0013396,3.3489e-05) (0.001373,0) (0.0014065,3.3489e-05) (0.00144,0) (0.0014735,3.3489e-05) (0.001507,0) (0.0015405,3.3489e-05) (0.001574,0) (0.0016075,3.3489e-05) (0.001641,0) (0.0016744,3.3489e-05) (0.0017079,0) (0.0017414,3.3489e-05) (0.0017749,0) (0.0018084,3.3489e-05) (0.0018419,0) (0.0018754,3.3489e-05) (0.0019089,0) (0.0019424,3.3489e-05) (0.0019758,0) (0.0020093,3.3489e-05) (0.0020428,0) (0.0020763,3.3489e-05) (0.0021098,0) (0.0021433,3.3489e-05) (0.0021768,0) (0.0022372,6.0466e-05) (0.0022977,0) (0.0023582,6.0466e-05) (0.0024186,0) (0.0024791,6.0466e-05) (0.0025396,0) (0.0026,6.0466e-05) (0.0026605,0) (0.002721,6.0466e-05) (0.0027814,0) (0.0028419,6.0466e-05) (0.0029024,0) (0.0029628,6.0466e-05) (0.0030233,0) (0.0030838,6.0466e-05) (0.0031442,0) (0.0032047,6.0466e-05) (0.0032652,0) (0.0033256,6.0466e-05) (0.0033861,0) (0.0034466,6.0466e-05) (0.003507,0) (0.0035675,6.0466e-05) (0.003628,0) (0.0037379,0.00010994) (0.0038478,0) (0.0039578,0.00010994) (0.0040677,0) (0.0041777,0.00010994) (0.0042876,0) (0.0043975,0.00010994) (0.0045075,0) (0.0046174,0.00010994) (0.0047274,0) (0.0048373,0.00010994) (0.0049472,0) (0.0050572,0.00010994) (0.0051671,0) (0.005277,0.00010994) (0.005387,0) (0.0054969,0.00010994) (0.0056069,0) (0.0057168,0.00010994) (0.0058267,0) (0.0059367,0.00010994) (0.0060466,0) (0.0062482,0.00020155) (0.0064497,0) (0.0066513,0.00020155) (0.0068528,0) (0.0070544,0.00020155) (0.0072559,0) (0.0074575,0.00020155) (0.007659,0) (0.0078606,0.00020155) (0.0080622,0) (0.0082637,0.00020155) (0.0084653,0) (0.0086668,0.00020155) (0.0088684,0) (0.0090699,0.00020155) (0.0092715,0) (0.009473,0.00020155) (0.0096746,0) (0.0098761,0.00020155) (0.010078,0) (0.010451,0.00037325) (0.010824,0) (0.011197,0.00037325) (0.011571,0) (0.011944,0.00037325) (0.012317,0) (0.01269,0.00037325) (0.013064,0) (0.013437,0.00037325) (0.01381,0) (0.014183,0.00037325) (0.014557,0) (0.01493,0.00037325) (0.015303,0) (0.015676,0.00037325) (0.01605,0) (0.016423,0.00037325) (0.016796,0) (0.017496,0.00069984) (0.018196,0) (0.018896,0.00069984) (0.019596,0) (0.020295,0.00069984) (0.020995,0) (0.021695,0.00069984) (0.022395,0) (0.023095,0.00069984) (0.023795,0) (0.024494,0.00069984) (0.025194,0) (0.025894,0.00069984) (0.026594,0) (0.027294,0.00069984) (0.027994,0) (0.029327,0.001333) (0.03066,0) (0.031993,0.001333) (0.033326,0) (0.034659,0.001333) (0.035992,0) (0.037325,0.001333) (0.038658,0) (0.039991,0.001333) (0.041324,0) (0.042657,0.001333) (0.04399,0) (0.045323,0.001333) (0.046656,0) (0.049248,0.002592) (0.05184,0) (0.054432,0.002592) (0.057024,0) (0.059616,0.002592) (0.062208,0) (0.0648,0.002592) (0.067392,0) (0.069984,0.002592) (0.072576,0) (0.075168,0.002592) (0.07776,0) (0.082944,0.005184) (0.088128,0) (0.093312,0.005184) (0.098496,0) (0.10368,0.005184) (0.10886,0) (0.11405,0.005184) (0.11923,0) (0.12442,0.005184) (0.1296,0) (0.1404,0.0108) (0.1512,0) (0.162,0.0108) (0.1728,0) (0.1836,0.0108) (0.1944,0) (0.2052,0.0108) (0.216,0) (0.24,0.024) (0.264,0) (0.288,0.024) (0.312,0) (0.336,0.024) (0.36,0) (0.42,0.06) (0.48,0) (0.54,0.06) (0.6,0) (0.8,0.2) (1,0)
(1.1,0)};
\draw (0.5,-0.15) node[anchor=east] {\large $\Omega^M$};
\draw [thick,->] (0,-0.05) -- ++(90:0.04);
\draw (0,-0.05) node[anchor=north] {\large $\bx = \bze$};
\end{tikzpicture}
\caption{\label{fig:OmegaM} The domain $\Omega_d^M$ in 2-d (i.e., $d=2$), as specified in Definition \ref{def:OmegaM2d}, for Lipschitz constant $M=1$ (and $\beta=0.6$). When $\Gamma = \partial \Omega_d^M$ is the boundary of this domain, $\|D\|_{\LtG,\ess}\geq 1/2$ because of the oscillatory, self-similar geometry locally to the point $\bx=\bze \in\Gamma$ indicated by the arrow.}
\end{figure}

\begin{figure}
\includegraphics[width=.5\textwidth]{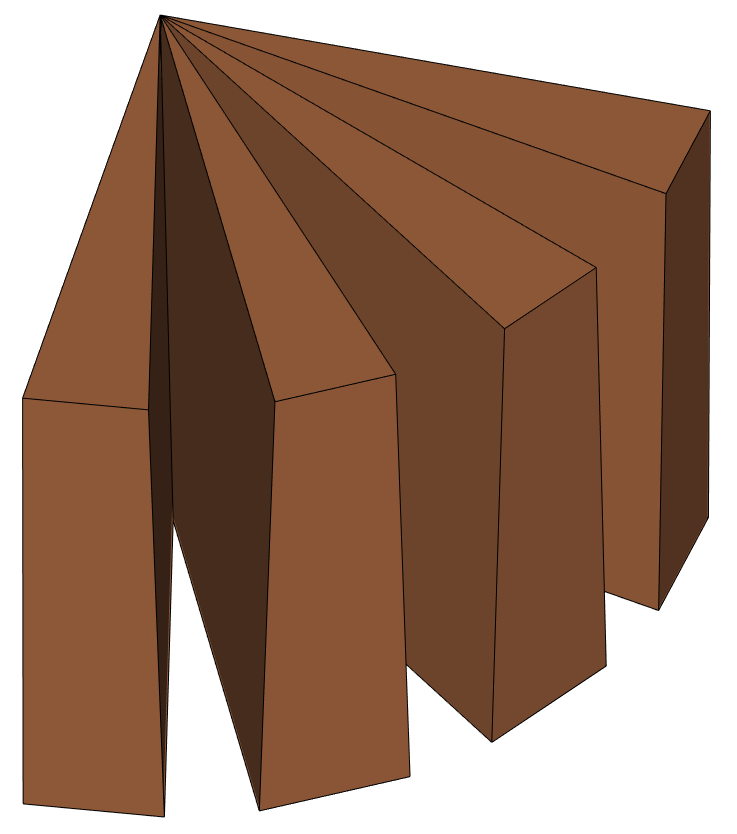}\includegraphics[width=.5\textwidth]{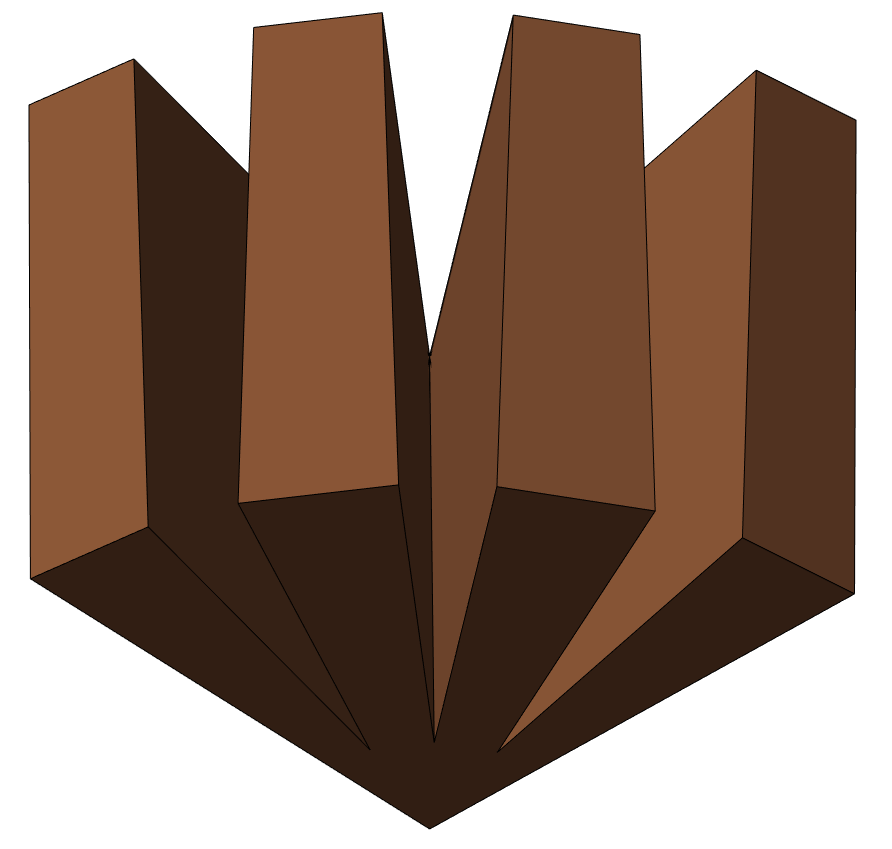}
\caption{\label{fig:book} Views from above and below of the open-book polyhedron $\Omega_{\theta,n}$, with $n=4$ pages and opening angle $\newangle = \pi/2$}.
\end{figure}

\begin{theorem}[Answer to Q2, Q2$^\prime$, Q2$^{\prime\prime}$ for Lipschitz polyhedral $\Gamma$] \label{thm:Q2Poly} Suppose that $n\in \NN$ with $n\geq 2$, and that $\Gamma = \Gamma_{\theta,n}$, the boundary of the Lipschitz polyhedron $\Omega_{\theta,n}$ (the open-book polyhedron with $n$ pages and opening angle $\theta$) given by Definition \ref{def:Gamma3-d}. Then, for every $\epsilon>0$, there exists $\theta_0\in (0,\pi]$ such that
\begin{eqnarray*}
\conv\Big([-\sqrt{n}/2+\epsilon,\sqrt{n}/2-\epsilon] \cup \{\lambda\in \C: |\lambda| \leq \sqrt{n-1}/2-\epsilon\}\Big) \subset W_\ess(D) \quad \mbox{and}\\
\|D\|_{\LtG,\ess} \geq w_\ess(D)\geq \sqrt{n}/2-\epsilon \quad \mbox{for }0<\theta\leq \theta_0.
\end{eqnarray*}
Thus, for all sufficiently small $\theta\in (0,\pi]$, $\half I \pm D$ and $\half I \pm D'$ cannot be written as the sum of coercive and compact operators, so that there exists an asymptotically dense sequence of finite-dimensional spaces $(\cH_N)_{N=1}^\infty\subset \LtG$ such that the Galerkin method \eqref{eq:GalL2} does not converge.
\end{theorem}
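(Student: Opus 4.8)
\smallskip
\noindent The plan is to localise to the common spine edge $e$ of the open-book polyhedron $\Omega_{\theta,n}$, to reduce the behaviour of $D$ near $e$ to a one-parameter family of $2n\times 2n$ Mellin-symbol matrices, and then to analyse these matrices in the closed-book limit $\theta\to0$. \emph{Step 1 (reduction to the spine edge operator).} By Definition~\ref{def:Gamma3-d}, near the interior of the spine $e$ the surface $\Gamma_{\theta,n}$ is invariant under translations along $e$ and under dilations about $e$, and a cross-section transverse to $e$ is a fixed configuration $\Sigma_{\theta,n}$ of $2n$ rays from a point -- the $2n$ flat faces meeting $e$, two for each of the $n$ pages, all lying in an angular window of width $\theta$. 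I would use the singular-sequence description of $W_\ess$: $\lambda\in W_\ess(D)$ whenever there are $\phi_m\in\LtG$ with $\|\phi_m\|_{\LtG}=1$, $\phi_m\rightharpoonup0$ and $(D\phi_m,\phi_m)_{\LtG}\to\lambda$. For each $\xi\in\R$ and unit $c\in\C^{2n}$ I would build such a sequence from densities supported in a shrinking tube about (the interior of) $e$, of product form [a fixed smooth profile along $e$]~$\times$~[a profile on $\Sigma_{\theta,n}$ carrying amplitudes $c_1,\dots,c_{2n}$ on the $2n$ faces, with a log-radial oscillation of frequency $\xi$ spread out in the radial variable so that $\phi_m\rightharpoonup0$]. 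Because $D$ has a weakly singular, dilation-covariant kernel and $\Gamma_{\theta,n}$ is flat near $e$, the action of $D$ on such densities is, up to errors vanishing as the tube shrinks, that of the translation-invariant, radially homogeneous \emph{edge operator} on $L^2(\R\times\Sigma_{\theta,n})$, which a Fourier transform along $e$ together with a Mellin transform in the radial variable diagonalise into a family $\{\widehat D(\xi;\theta)\}_{\xi\in\R}$ of $2n\times 2n$ matrices; then $(D\phi_m,\phi_m)_{\LtG}\to\langle\widehat D(\xi;\theta)c,c\rangle$. Hence $W(\widehat D(\xi;\theta))\subseteq W_\ess(D)$ for every $\xi$, and, since $W_\ess(D)$ is closed and convex, $\overline{\conv}\big(\bigcup_{\xi\in\R}W(\widehat D(\xi;\theta))\big)\subseteq W_\ess(D)$.

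\smallskip
\noindent\emph{Step 2 (the wedge symbol and the closed-book limit).} The entries of $\widehat D(\xi;\theta)$ are the classical straight-wedge double-layer symbols: for two half-lines through a point meeting at dihedral angle $\omega\in(0,2\pi)$, the Mellin symbol of the corresponding off-diagonal block of $D$ is multiplication by
\[
\sigma_\omega(\xi)=\frac{\sin\!\big(\tfrac{\pi-\omega}{2}\big)\cosh\!\big((\pi-\omega)\xi\big)-\ri\,\cos\!\big(\tfrac{\pi-\omega}{2}\big)\sinh\!\big((\pi-\omega)\xi\big)}{2\cosh(\pi\xi)},
\]
the self-block of a straight half-line being $0$ and the orientation-reversed block being $-\sigma_\omega$; one has $|\sigma_\omega(\xi)|\le\tfrac12$ and $\sigma_\omega(\xi)\to\pm\tfrac12$ as $\omega\to0^+$ (resp.\ $2\pi^-$). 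As $\theta\to0$ all the angular gaps among the $2n$ faces meeting $e$ tend to $0$, so $\widehat D(\xi;\theta)$ has explicit limits in the two relevant Mellin regimes: for $\xi=O(1)$ one gets $\tfrac12M_n$, where $M_n$ is the $2n\times 2n$ matrix with zero diagonal and $\pm1$ off-diagonal entries fixed by the incidence-and-orientation data of $\Sigma_{\theta,n}$ from Definition~\ref{def:Gamma3-d}; and at frequencies $\xi$ of order $\theta^{-1}$, where the angular separation of the pages is resolved, the rescaled matrices $\widehat D(\eta\theta^{-1};\theta)$ converge, for each fixed $\eta\in\R$, to a further explicit matrix $N_n(\eta)$.

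\smallskip
\noindent\emph{Step 3 (numerical ranges and conclusion).} The remaining input is linear-algebraic. First, $\tfrac12\,W(M_n)\supseteq[-\sqrt n/2,\sqrt n/2]$: since $M_n$ is real, $W(M_n)$ contains the spectrum of its Hermitian part $\tfrac12(M_n+M_n^{\top})$, which one computes to have extreme eigenvalues $\pm\sqrt n$ (e.g.\ for small $n$ the square of $\tfrac12(M_n+M_n^{\top})$ is $n$ times the identity). Second, $\bigcup_{\eta\in\R}W(N_n(\eta))\supseteq\{\lambda\in\C:|\lambda|\le\sqrt{n-1}/2\}$: the family $N_n(\eta)$ is far from normal, and its numerical ranges should sweep out this disk. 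Granting these, Step~1 and the convexity of $W_\ess(D)$ put $\conv\big([-\sqrt n/2,\sqrt n/2]\cup\{|\lambda|\le\sqrt{n-1}/2\}\big)$ into $\overline{W_\ess(D)}=W_\ess(D)$; since the limits above are attained up to errors that vanish as $\theta\to0$, for each $\epsilon>0$ there is $\theta_0\in(0,\pi]$ with
\[
\conv\big([-\sqrt n/2+\epsilon,\sqrt n/2-\epsilon]\cup\{\lambda:|\lambda|\le\sqrt{n-1}/2-\epsilon\}\big)\subseteq W_\ess(D)\quad\text{for }0<\theta\le\theta_0,
\]
whence $w_\ess(D)\ge\sqrt n/2-\epsilon$ and so $\|D\|_{\LtG,\ess}\ge\sqrt n/2-\epsilon$ by \eqref{eq:ineq}. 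This set is invariant under complex conjugation, so it also lies in $W_\ess(D')=\{\bar\lambda:\lambda\in W_\ess(D)\}$; and, for $n\ge2$ and $\epsilon$ small, it contains $\pm\tfrac12$. Hence $\pm\tfrac12\in W_\ess(D)$ and $\pm\tfrac12\in W_\ess(D')$, so by Corollary~\ref{cor:essnum} none of $\half I\pm D$, $\half I\pm D'$ is coercive plus compact, and by Theorem~\ref{thm:Galerkin} there is an asymptotically dense $(\cH_N)_{N=1}^\infty\subset\LtG$ for which the Galerkin method \eqref{eq:GalL2} does not converge.

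\smallskip
\noindent\emph{Main obstacle.} The hard part will be the rigorous localisation in Step~1: constructing honest $L^2(\Gamma_{\theta,n})$ singular sequences and showing that restricting to the spine and freezing the geometry to $\R\times\Sigma_{\theta,n}$ change $D$ only by operators that are negligible for $W_\ess$ (so that indeed $W(\widehat D(\xi;\theta))\subseteq W_\ess(D)$) -- a commutator/compactness argument that must cope with the surface being only asymptotically the model cone and the kernel only asymptotically scale-invariant, and that must also be uniform enough to handle the rescaled regime $\xi\sim\theta^{-1}$. If the general correspondence between $W_\ess(D)$ and edge model operators has already been established in the course of proving Theorems~\ref{thm:Q1}--\ref{thm:Q2}, then Step~1 is essentially quoted and the real content is the symbol computation and the matrix numerical-range estimates of Steps~2--3.
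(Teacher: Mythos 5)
Your proposal rests on a geometric misconception: you localise at a ``common spine edge $e$'' along which you suppose all $2n$ faces of the pages meet, and then Fourier-transform along $e$ to reduce to a one-parameter family of $2n\times 2n$ matrices. But the open-book polyhedron has no such edge. Inspecting Definitions~\ref{def:top}--\ref{def:bottom}, only the ``front of Page 1'' and ``back of Page $n$'' share the segment from $\bze$ to $(0,0,-1)$; the fronts/backs of the remaining pages have bottom vertices $\bz^{3j-2}$ perturbed off the line $\{(0,0,t)\}$ by a factor $r_2>0$ (needed precisely so that the quadrilateral faces are planar, Lemma~\ref{lem:planes}(iii)). The only point where all $2n$ page-faces meet is the \emph{vertex} $\bze$. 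This kills Step~1 as stated: there is no translation direction to Fourier-transform along, and the Mellin symbol at a conical vertex is an operator on $L^2$ of a spherical cross-section, not a $2n\times 2n$ matrix. You correctly flag Step~1 as the main obstacle and suggest it might already be ``quoted'' from earlier in the paper, but the localisation that \emph{is} available (Theorem~\ref{thm:loccon}) is a vertex localisation to a cone, not an edge localisation to a wedge, so it does not produce the matrix family $\widehat D(\xi;\theta)$ your Steps~2--3 require.

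The paper's actual route is substantially simpler. Theorem~\ref{thm:loccon} gives $W_\ess(D)=\bigcup_{\bx\in V}\overline{W(D_\bx)}\supset W(D_{\bze})$, with $D_{\bze}$ the double-layer operator on the cone $\Gamma_{\bze}$. One then restricts to the union $\Gamma^*=\bigcup_{m=1}^{N}\widetilde\Gamma_m$ of $N=2n-1$ page-faces (taking an \emph{odd} number of faces, crucial for Lemma~\ref{lem:AN}(ii)), forms the Galerkin matrix $D_N$ with piecewise-constant basis functions on the $\widetilde\Gamma_m$, and verifies (Lemma~\ref{lem:ANDLP3-d}) that $2D_N\to B_N$ as $\theta\to 0$, where $B_N$ is the explicit alternating-sign matrix \eqref{eq:aBdef} --- this because, as the book closes, each face collapses onto the unit square, so the pairwise solid angles tend to $2\pi$. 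Corollary~\ref{cor:CN} then yields the displayed disc and interval directly. No Mellin transform, no edge symbol, and no uniform control over a rescaled frequency regime $\xi\sim\theta^{-1}$ are needed. Your linear-algebra step also differs in detail: you work with a $2n\times 2n$ matrix $M_n$ and claim $\bigl(\frac12(M_n+M_n^\top)\bigr)^2 = nI$; already for the analogous $3\times3$ matrix $B_3$ one has $(B_3^0)^2\neq 2I$ (its eigenvalues are $0,\pm\sqrt2$), so this identity is not what makes the extreme eigenvalues $\pm\sqrt n$ --- the paper instead bounds the column norms of $B_N^\theta$ (Lemma~\ref{lem:AN}(ii)) and invokes Lemma~\ref{lem:CNprops}(iii) to pass from the numerical abscissa to $W(B_N)\cap\R$.
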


Our proofs of the above results depend on:
\bit
\item[(a)] the equivalence of  Q2, Q2$^\prime$, and Q2$^{\prime\prime}$, discussed in a general Hilbert space context in \S\ref{sec:GM};
\item[(b)] localisation results for the essential norm and essential numerical range of $D$ that we prove as Theorems \ref{lem:local} and \ref{thm:loccon}, adapting essential spectrum localisation arguments, in particular those from \cite{Mi:99} (and see \cite{FaSaSe:92,El:92,Ra:92,ChLe08,LeCoPer:21});
\item[(c)] the simple observation that, if $D^\dag$ is a restriction of $D$ from $L^2(\Gamma)$ to some subspace, then $\|D\|_{\LtG}\geq \|D^\dag\|_{\LtG}$ and the numerical range $W(D)\supset W(D^\dag)$ (see \S\ref{sec:CompNR});
\item[(d)] that if $D$ and $\widetilde D$ are the double-layer potential operators on surfaces $\Gamma$ and $\widetilde \Gamma$ that are geometrically similar ($\widetilde \Gamma$ is the image of $\Gamma$ under an affine transformation), then $D$ and $\widetilde D$ are unitarily equivalent, so that $\|D\|_\LtG = \|\widetilde D\|_{L^2(\widetilde \Gamma)}$ and $W(D)=W(\widetilde \Gamma)$ (Lemmas \ref{lem:Visometry}, \ref{lem:Vcommute}, and \ref{lem:dil2});
\item[(e)] for Theorems \ref{thm:Q1} and \ref{thm:Q2}, lower bounds for the norm and numerical range of $D$ in the case when $\Gamma$ is the graph $\Gamma^M$ of a particular Lipschitz continuous function with Lipschitz constant $M$ (see Definition \ref{def:GammaM} and Figure \ref{fig:Saw}), these lower bounds obtained by relating $D$ on a subspace of $\LtG$ to particular infinite Toeplitz matrices with piecewise continuous symbols, and computing the jumps in those symbols (see \S\ref{sec:pergraph});
\item[(f)]  for Theorem \ref{thm:Q2Poly}, explicit computations of the asymptotics of particular finite-dimensional discretisations of $D$ for the open-book polyhedron $\Omega_{\theta,n}$ in the ``closing-the-book'' limit $\theta\to 0$ (see \S\ref{sec:Thm13Proof}).
\eit

The implications of Theorems \ref{thm:Q2} and \ref{thm:Q2Poly} for the numerical analysis of the Galerkin method \eqref{eq:GalL2} for the standard second-kind integral equations of potential theory are significant. The negative answers that these results give to Q2$^\prime$ mean that there is no longer hope of proving convergence of particular Galerkin methods for all Lipschitz $\Gamma$, or even for all Lipschitz polyhedral $\Gamma$, by showing that the operators $\half I \pm D$ or $\half I \pm D'$ are coercive plus compact on $L^2(\Gamma)$; this contrasts with the situation for the same operators on $H^{\pm 1/2}(\Gamma)$, and for the situation for the standard first kind integral equations of potential theory \cite{Co:88,Mc:00}.

On the other hand the implications for the Galerkin method in computational practice are at first sight more modest: our proofs, that use the equivalence of Q2, Q2$^\prime$, and Q2$^{\prime\prime}$, show initially only that  the Galerkin method \eqref{eq:GalL2} does not converge for every sequence  $(\cH_N)_{N=1}^\infty\subset \LtG$. This leaves open the possibility that Galerkin methods used in practice, notably all Galerkin methods based on boundary element method discretisation \cite{St:08,SaSc:11}, are in fact convergent.

Our next main result clarifies that this is not the case. As a corollary of a new general result for the Galerkin method in Hilbert spaces that we prove as Theorem \ref{thm:GalGenNew} below, drawing inspiration from arguments of Markus \cite{Ma:74} used to prove the equivalence of Q2 and Q2$^\prime$, we obtain the following result.

\begin{theorem} \label{thm:Q2Ext} Suppose that $\Gamma$ is one of the geometries in Theorem \ref{thm:Q2} or \ref{thm:Q2Poly} for which $\half I \pm D$ and $\half I \pm D'$ cannot be written as the sum of coercive and compact operators, and that $(\cH^*_N)_{N=1}^\infty$ is a sequence of finite-dimensional subspaces of $\LtG$ with $\cH^*_1\subset \cH^*_2\subset ...$ that is asymptotically dense in $\LtG$, i.e.
$$
\LtG = \overline{\cup_{N=1}^\infty \cH^*_N}.
$$
Then there exists a sequence $(\cH_N)_{N=1}^\infty$ of finite-dimensional subspaces of $\LtG$ with $\cH_1\subset \cH_2\subset ...$ such that the Galerkin method \eqref{eq:GalL2} is not convergent but, for each $N\in \NN$,
\begin{equation} \label{eq:embed}
\cH_N^*\subset \cH_N \subset \cH^*_{M_N} \quad \mbox{for some } M_N\in \NN.
\end{equation}
\end{theorem}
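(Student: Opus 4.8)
The plan is to obtain Theorem~\ref{thm:Q2Ext} as a corollary of the general Hilbert-space result Theorem~\ref{thm:GalGenNew}, applied with $\cH=\LtG$ and $A=\half I+D_*$ the operator of \eqref{eq:GenBIE}. Two hypotheses have to be checked. First, $A:\LtG\to\LtG$ is bounded and invertible; this is recorded around \eqref{eq:GenBIE}. Second, $A$ is \emph{not} a compact perturbation of a coercive operator: by the running assumption, $\half I\pm D$ and $\half I\pm D'$ fail to be coercive-plus-compact for the geometries of Theorems~\ref{thm:Q2} and~\ref{thm:Q2Poly}, and since $D_*$ differs from one of $\pm D$, $\pm D'$ by a finite-rank or compact operator, and the property ``coercive plus compact'' is stable under compact perturbations, $A$ is not coercive-plus-compact either. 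All the substance is therefore in the abstract statement Theorem~\ref{thm:GalGenNew}.

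For that abstract statement I would argue as follows. By the general fact underlying the equivalence of Q2 and Q2$^\prime$ (Corollary~\ref{cor:essnum}), $A$ fails to be coercive-plus-compact exactly when $0\in W_\ess(A)$, which means there is an orthonormal -- hence weakly null -- sequence $(u_j)$ in $\cH$ with $\langle Au_j,u_j\rangle\to 0$. Since $A$ and $A^*$ are weak-to-weak continuous, for each fixed vector $h$ we have $\langle Au_j,h\rangle\to 0$ and $\langle Ah,u_j\rangle\to 0$, and $P_Fu_j\to 0$ for every finite-dimensional $F$. I would then build $(\cH_N)$ inductively, interleaving two kinds of step: \textbf{(i)} \emph{absorbing} more of the prescribed sequence, forcing $\cH^*_N\subseteq\cH_N$ so as to secure asymptotic density and the lower inclusion in \eqref{eq:embed}; and \textbf{(ii)} \emph{inserting a small bad block} -- finitely many vectors taken far out along $(u_j)$ and Gram--Schmidt-orthogonalised against the finite-dimensional part of $\cH_N$ already built. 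Choosing the block's vectors far enough along $(u_j)$ makes them nearly orthonormal, nearly $\langle A\,\cdot,\cdot\rangle$-orthogonal to everything already present and to one another, with all relevant inner products bounded by $\eps_k/\dim(\cH_N)$ at the $k$-th bad block. Finally, since $\cup_M\cH^*_M$ is dense, each of these vectors can be replaced by a nearby element of some $\cH^*_M$ (the estimates surviving by continuity of $A$), so that $\cH_N\subseteq\cH^*_{M_N}$ for a suitable $M_N$ -- the upper inclusion in \eqref{eq:embed}; nestedness is built in.

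It then remains to see that this $(\cH_N)$ defeats the Galerkin method. For a unit vector $v$ lying in the $k$-th bad block, $P_{\cH_N}Av$ has, in a near-orthonormal basis of $\cH_N$, all coordinates of size $O(\eps_k)$, so the discrete inf--sup constant $\gamma_N:=\inf_{0\neq v\in\cH_N}\|P_{\cH_N}Av\|/\|v\|$ satisfies $\gamma_N\leq C\eps_k$; taking $\eps_k\to 0$ gives $\liminf_N\gamma_N=0$. Since $(\cH_N)$ is nested and asymptotically dense and $A$ is invertible, the standard characterisation of Galerkin convergence (Theorem~\ref{thm:Galerkin}: convergence for every right-hand side forces $\liminf_N\gamma_N>0$, via uniform boundedness of the Galerkin solution operators) shows the method does not converge. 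I expect the main difficulty to be organising the induction so that the three competing demands -- asymptotic density, the two-sided sandwich \eqref{eq:embed}, and the collapse of $\gamma_N$ -- hold at once: density pushes $\cH_N$ to swallow everything, which is precisely what would restore stability, so one must keep creating ``room'' for a fresh bad direction. Weak-nullity of the sequence witnessing $0\in W_\ess(A)$ is exactly what makes this possible, finite-rank projections annihilating it so that a new bad block is invisible to the finitely many directions already in play; the rest is the bookkeeping controlling the $\dim(\cH_N)$-dependent error factors and the Gram--Schmidt perturbations.
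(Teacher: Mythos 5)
Your plan to obtain Theorem~\ref{thm:Q2Ext} as a corollary of Theorem~\ref{thm:GalGenNew} is indeed the paper's route, and your verification of the two hypotheses on $A$ (invertibility, failure of coercive-plus-compactness, the latter because this property is closed under compact perturbation) is correct. However, you overlook a third hypothesis of Theorem~\ref{thm:GalGenNew}: that the Galerkin method \emph{converges} for the prescribed sequence $(\cH^*_N)$. Theorem~\ref{thm:Q2Ext} only assumes asymptotic density, so the reduction needs the dichotomy the paper spells out just before Theorem~\ref{thm:GalGenNew}: either the Galerkin method already fails for $(\cH^*_N)$ (take $\cH_N=\cH^*_N$, $M_N=N$), or it converges and Theorem~\ref{thm:GalGenNew} applies. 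Without this case split the reduction is incomplete.

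Your sketched proof of the abstract statement Theorem~\ref{thm:GalGenNew} is genuinely different from the paper's. The paper takes the orthonormal sequence $(\phi_m)$ witnessing $0\in W_\ess(\cA)$, strengthens (via a diagonal argument) to $\cR_n\cA\phi_n\to 0$, replaces $(\phi_m)$ by a Riesz sequence $(\psi_m)\subset\cup_N\cH^*_N$ with geometric error, and then proves the uniform estimate $\cQ_n\cA\psi_n\to 0$; the nontrivial heart of that argument is Lemma~\ref{lem:cc} on collectively compact families of self-adjoint projections, needed precisely because the $\psi_m$ are not orthogonal and the bad space $\cW_n$ grows. Your construction instead inserts a fresh bad block inductively at each chosen stage $N_k$, with the block vectors taken far enough along $(u_j)$ \emph{after} $\cH^*_{N_k}$ and the earlier blocks are known. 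This sidesteps the collectively-compact machinery entirely, because at each stage the subspace already built is a fixed finite-dimensional set and weak nullity of $u_j$ (and of $\cA'u_j$) controls all the relevant inner products $(\cA u_j,w)$ with $w$ in that set, as well as the off-diagonal block entries when the $j_l$ are chosen with increasing gaps. This is a legitimate alternative, and arguably conceptually simpler. That said, two steps deserve to be made explicit if you were to write this out: (i) the near-$A$-orthogonality of the block vectors \emph{to one another} requires the inductive choice $j_1\ll j_2\ll\cdots$ inside the block, using both $u_{j_{l'}}\rightharpoonup 0$ and $\cA'u_{j_l}$ fixed, which you assert but do not justify; (ii) the Gram--Schmidt step and the replacement by nearby elements of $\cH^*_M$ each perturb the estimates, and their sizes must be tied to $\eps_k/\dim\cH_{N_k}$ so that the resulting inf-sup bound $\|\cP_{N_k}\cA v\|\lesssim\eps_k$ is not swamped by the growing dimension. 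Finally, the reference for ``non-convergence follows from $\liminf_N\gamma_N=0$'' is the inf-sup criterion \eqref{eq:lower}, not the coercivity results of Theorem~\ref{thm:Galerkin}.
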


We can apply this result in the case that $(\cH^*_N)_{N=1}^\infty$ is an asymptotically dense sequence of boundary element subspaces, in which case $(\cH_N)_{N=1}^\infty$, satisfying \eqref{eq:embed}, is also a sequence of boundary element subspaces (since $\cH_N\subset \cH_{M_N}^*$) and is also asymptotically dense (since $\cH^*_N\subset \cH_N$). Thus this result implies that there exist Lipschitz and polyhedral boundaries $\Gamma$ for which there are Galerkin methods \eqref{eq:GalL2} based on asymptotically dense sequences $(\cH_N)_{N=1}^\infty$ of boundary element subspaces that do not converge.

We present elsewhere \cite{ChSp:12} alternative second-kind integral equations for the interior and exterior Laplace Dirichlet problems for general Lipschitz domains. These take the form $A\phi=g$, with $A$ coercive on $\LtG$, so that the Galerkin method \eqref{eq:GalL2} converges for every asymptotically dense sequence $(\cH_N)_{N=1}^\infty$, indeed the C\'ea's lemma estimate of Theorem \ref{thm:Galerkin}(b) applies. Other convergent methods for general Lipschitz domains  have been developed by Dahlberg and Verchota \cite{DaVe:90} and by Adolfsson et al.\ \cite{AdGoJaLe:92}, based on Galerkin solution of second-kind integral equations on the boundaries of a sequence of smooth domains approximating $\Gamma$.

Our final main result answers Q3 negatively, again using the open-book polyhedra as counterexamples.

\begin{theorem}[Answer to Q3] \label{thm:Q3}
For every  $n\in \{2,3,...\}$ and $\epsilon>0$ there exists  $\theta_0\in (0,\pi/4]$ such that, if $\Gamma$ is the boundary of the open-book Lipschitz polyhedron $\Omega_{\theta,n}$ and $0<\theta\leq \theta_0$, then
$$
\|D\|_{C_w(\Gamma),\ess}\geq \frac{2n-1}{4}-\epsilon,
$$
for every weight function $w\in L^\infty(\Gamma)$ satisfying \eqref{eq:wbound} for some $c_->0$.
\end{theorem}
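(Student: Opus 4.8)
The plan is to localise $\|D\|_{C_w(\Gamma),\ess}$ to the spine $S$ of the open-book polyhedron $\Omega_{\theta,n}$ of Definition~\ref{def:Gamma3-d} (the common edge of the $n$ pages), reduce the question there to a finite-dimensional Perron eigenvalue problem whose data are the dihedral angles along $S$, and then let $\theta\to0$. The structural fact underneath everything is that, modulo compact operators, $D$ near $S$ is a \emph{self-similar} (degree-$(-1)$ homogeneous) Mellin convolution system; it is exactly this self-similarity that forbids any weight equivalent to the supremum norm --- as opposed to a general equivalent norm, cf.\ \eqref{eq:ineq2} --- from killing the singular behaviour at $S$.

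I would first establish, for $C_w(\Gamma)$, the analogue of the localisation results Theorems~\ref{lem:local} and \ref{thm:loccon}: for $w$ as in \eqref{eq:wbound} and any compact $K$ on $C_w(\Gamma)$, $\|D-K\|_{C_w(\Gamma)}\ge\limsup_m\|(D-K)\mu_m\|$ for suitable bounded ``singular sequences'' $(\mu_m)$. I would run this on the dual side, testing $D^\ast$ against measures $\mu_m=\chi_{B_{\delta_m}(\bx_m)}\,\rd s$, which (unlike continuous functions) are insensitive to the roughness of $w$; with $\bx_m\to p$ for a fixed interior $p\in S$, $\delta_m\ll\dist(\bx_m,S)\to0$, and the balls disjoint, this yields that $\|D\|_{C_w(\Gamma),\ess}$ is at least the $\limsup$ over $m$ of the ratio of $\int_{B_{\delta_m}(\bx_m)}\bigl(\int_\Gamma|k(\bx,\by)|w(\by)\,\rd s(\by)\bigr)\rd s(\bx)$ to $\int_{B_{\delta_m}(\bx_m)}w(\bx)\,\rd s(\bx)$, where $k$ is the double-layer kernel. (That no cancellation is lost in replacing $|\int k\,\rd s(\bx)|$ by $\int|k|\,\rd s(\bx)$ uses that $(\bx-\by)\cdot\bn(\by)$ is proportional to $\sin\alpha_{jl}$, hence of fixed sign, as $\bx$ ranges over one face.)

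To evaluate this I would compute the model of $D$ near $S$: in transverse polar coordinates about $S$ and the coordinate $y$ along $S$, the boundary is $\gamma_0\times\R$ with $\gamma_0$ a union of $N=N(n)$ half-lines (traces of the faces at $S$), and integrating the Laplace kernel in $y$ gives the homogeneous kernels $\tilde k_{jl}(s,t)=\frac{s\sin\alpha_{jl}}{2\pi(s^2-2st\cos\alpha_{jl}+t^2)}$, $\alpha_{jl}=\alpha_{jl}(\theta)$. The scale-invariant mass $m_{jl}(\theta)=\int_0^\infty|\tilde k_{jl}(s,t)|\,\rd t$ is elementary --- for the relevant angles it is $\tfrac12(1-\pi^{-1}d_{jl}(\theta))$, $d_{jl}(\theta)\in[0,\pi]$ the angular separation --- and forms a symmetric irreducible nonnegative matrix $M(\theta)=(m_{jl}(\theta))$, $M(\theta)\to M_0$ as $\theta\to0$. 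Choosing $\bx_m$ on the face $j$ and at the transverse scale realising $\max_j(M(\theta)\omega)_j/\omega_j$, and passing to a subsequence along which the relevant local $|k|$-weighted means of $w$ over the faces converge (by compactness of $[c_-,1]^N$) to a single vector $\omega=\omega(w)\in[c_-,1]^N$, one gets, for \emph{every} admissible $w$,
\[
\|D\|_{C_w(\Gamma),\ess}\ \ge\ \max_{1\le j\le N}\frac{(M(\theta)\omega)_j}{\omega_j}\ \ge\ \rho\bigl(M(\theta)\bigr),
\]
the last step being Collatz--Wielandt for the Perron root $\rho$; the dependence on $w$ is gone. It then remains to verify $\rho(M_0)=\tfrac{2n-1}{4}$ from the geometry of the closed book, after which continuity of $\rho$ gives $\rho(M(\theta))\ge\tfrac{2n-1}{4}-\epsilon$ for $\theta$ below some $\theta_0\in(0,\pi/4]$, which is Theorem~\ref{thm:Q3}. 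The matrices $M(\theta)$ and their $\theta\to0$ asymptotics are of the kind already handled in \S\ref{sec:Thm13Proof} for Theorem~\ref{thm:Q2Poly}.

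The hard part is the interaction of an \emph{arbitrary} $L^\infty$ weight with the multi-scale self-similar geometry at $S$: one must choose the balls $B_{\delta_m}(\bx_m)$, the faces, and the octave-range of scales from which $\omega$ is extracted so that, in the limit, the ratio above is genuinely governed by $\max_j(M(\theta)\omega)_j/\omega_j$ --- i.e.\ so that a wildly oscillating $w$ cannot simultaneously shrink the numerator and inflate the denominator. (If $w$ oscillates densely between $c_-$ and $1$ the weighted and unweighted essential norms coincide and the bound is immediate; the genuine case is the ``tame'' one, and self-similarity is exactly what supplies the supply of scales needed there.) The remaining items, all subsumed in the localisation estimate, are the negligibility of faces not meeting $S$ and of the endpoints of $S$, and the explicit Perron-root computation $\rho(M_0)=\tfrac{2n-1}{4}$.
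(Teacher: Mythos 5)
Your route is genuinely different from the paper's and, unfortunately, it has two substantive gaps: one geometric and one numerical.

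\textbf{The geometric gap.} You model the local operator near the ``spine'' $S$ as a translation-invariant prism $\gamma_0\times\R$, with $\gamma_0$ a union of $N$ half-lines, and reduce to a 1-variable Mellin system. But for the open-book polyhedron $\Omega_{\theta,n}$ of Definition~\ref{def:Gamma3-d} with $\theta>0$, the $2n$ faces $\widetilde\Gamma_1,\dots,\widetilde\Gamma_{2n}$ do \emph{not} share a common edge: only $\Gamma_1$ and $\Gamma_{3n}$ meet along $\{(0,0,z):-1\le z\le 0\}$; the remaining inter-face edges run from $\bze$ to the distinct points $\bz^{3j-2}$ and coincide only in the degenerate limit $\theta=0$. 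For finite $\theta$ the singularity where all $2n$ faces meet is the single vertex $\bze$, and localisation (Theorem~\ref{thm:loccon} in the $L^2$ setting, the formula \eqref{eq:essCG}/\eqref{eq:ENweighted} in the $C$/$C_w$ setting) places the essential norm at the vertex cone $\Gamma_\bze$, not at a spine. A prism model cannot be substituted for a cone: the cone is not $\gamma_0\times\R$, the ``scale-invariant mass'' from face $l$ to a point of face $j$ depends on the angular as well as the radial coordinate, and a finite matrix $M(\theta)$ with a single scalar $m_{jl}$ per face pair does not capture it without further averaging.

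\textbf{The numerical gap.} Taking your formulas at face value, $m_{jl}(\theta)=\tfrac12(1-\pi^{-1}d_{jl}(\theta))\to\tfrac12$ as $\theta\to0$, since the angular separations $d_{jl}(\theta)\to0$. The limit matrix $M_0$ of size $N$ with zero diagonal and off-diagonal entries $\tfrac12$ has Perron root $(N-1)/2$, not $(2n-1)/4$: for $N=2n$ this is $(2n-1)/2$, for $N=2n-1$ it is $n-1$. So the claim $\rho(M_0)=\tfrac{2n-1}{4}$ contradicts the preceding calculation by a factor of (at least) two. The extra factor of two is exactly the difference between the solid angle $2\pi$ subtended by an \emph{infinite} half-plane through a point and the solid angle $\pi$ subtended by a \emph{bounded} disc at its own boundary. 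The paper's argument is forced to truncate to a bounded $\Gamma^\times\subset\Gamma_{\bze}\cap\Gamma$ (the localisation ball is bounded), and the $\inf$ over $\Gamma^\times$ of $\int_{\Gamma^\times}|\partial\Phi/\partial n|$ is attained at the boundary of $\Gamma^\times$, where each of the other $2n-1$ discs contributes only $\pi/(4\pi)=1/4$; hence $(2n-1)/4$.

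\textbf{What the paper actually does, and why it is simpler.} The proof has three ingredients: (i) Theorem~\ref{thm:ENweighted}, an extension of Kr\'al--Medkov\'a's explicit formula for $\|D\|_{C_w(\Gamma),\ess}$ to general $w\in L^\infty(\Gamma)$, proved by carefully splitting a finite-rank perturbation of $D$ into continuous and discrete parts and interchanging $\esssup$ with $\sup$ via Lemma~\ref{lem:swap} and Lusin's theorem -- this is the non-trivial piece that your ``dual-side testing against $\chi_{B_{\delta_m}}\rd s$'' gestures at but does not establish; (ii) Lemma~\ref{lem:mostwork}, which exploits the dilation invariance of the cone $\Gamma_\bze$: scaling $\Gamma^\times$ to $\beta\Gamma^\times\subset B_{\delta/2}(\bze)$ and taking $\esssup_{\bx}$ makes the weight cancel through $\esssup_{\bx}(1/w(\bx))\cdot\essinf_{\by}w(\by)=1$, giving the weight-independent lower bound $\inf_{\bx\in\Gamma^\times}\int_{\Gamma^\times}|\partial\Phi/\partial n|$ with no Perron--Frobenius machinery at all; (iii) the elementary solid-angle computation described above. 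Your instinct that the self-similarity at the singular point is ``exactly what forbids any weight equivalent to the supremum norm from killing the singularity'' is correct in spirit, and Collatz--Wielandt is a clever device to make a bound weight-independent; but the paper gets the same decoupling from $w$ by a one-line $\essinf/\esssup$ cancellation. If you want to pursue your version, you would need (a) to replace the prism by the genuine vertex cone $\Gamma_\bze$ and face up to the fact that the ``mass matrix'' then depends on the angular variable, and (b) to reconcile $\rho(M_0)$ with the solid-angle geometry; absent those fixes, the argument as written does not establish the stated bound.
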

Since $(2n-1)/4 \geq 3/4>1/2$ for $n\geq 2$, this result, applied with any $n\geq 2$, means that Hansen's class of polyhedra does not include all polyhedra. Related to this observation, this negative result also closes off the main potential route to completing convergence proofs, for all polyhedra, of collocation and quadrature methods for \eqref{eq:GenBIE} (see \cite[p.~172]{Ra:92}, \cite[Remark 2.3]{Ra:93}, \cite[\S4]{Ha:01}, \cite[\S4.1]{We:09}), and closes off a main potential route for finalising the proof of convergence for all polyhedra of Elschner's spline-Galerkin methods (see \cite[Remark 4.4]{El:92a}, \cite[Remark 3.6]{El:95} and the discussion above in \S\ref{sec:intro}). Our proof of this result depends on a non-trivial extension of a result of Kr\'al and Medkov\'a \cite{KrMe:00} which we prove as Theorem \ref{thm:ENweighted}, showing that the localisation formula \eqref{eq:ENweighted} for $\|D\|_{C_w(\Gamma),\ess}$ that Kr\'al and Medkov\'a prove for lower semicontinuous $w$ satisfying \eqref{eq:wbound} holds more generally for $w\in L^\infty(\Gamma)$ satisfying \eqref{eq:wbound}.

\subsubsection{Implications for other second-kind boundary integral equations and their Galerkin solution} \label{sec:OtherImp}

We have discussed the implications of our results for the boundary integral equations \eqref{eq:GenBIE} for the Dirichlet and Neumann problems in potential theory and their Galerkin solution \eqref{eq:GalL2}. Our results also have implications for BIE formulations of other elliptic boundary value problems.

\paragraph{Transmission problems.} The equation $A\phi=g$, with
\begin{equation} \label{eq:Alambda}
A = \lambda I-D^\prime
\end{equation}
and $\lambda\in \C\setminus\{-\half,\half\}$, arises in the transmission problem for the Laplace equation (e.g., \cite[\S5.12]{Me:18}), which in turn arises as the low-frequency (quasi-static) limit of transmission problems for the Maxwell system \cite{AmDeMi:16}. The case when $\lambda\in \R$ with $|\lambda|>1/2$ (e.g., \cite[\S5.12]{Me:18}), is classical, and $A$ is known to be invertible on $\LtG$ for $\lambda$ in this range \cite[Theorem 1.3]{FaSaSe:92}. More recently,
complex $\lambda$ with $|\lambda|< 1/2$ have been studied, motivated by applications to nanoparticle plasmonic resonances \cite{AmDeMi:16}, specifically the case where the particle has negative permittivity. This application has prompted much work on computation of the spectrum of $D^\prime$ on $H^{-1/2}(\Gamma)$, $\LtG$ and other spaces for particular geometries (e.g., \cite{AmDeMi:16,HePe:17,Schnitzer:20,LeCoPer:21}).

Theorems \ref{thm:Q2} and \ref{thm:Q2Poly} make clear that, for any $\lambda\in \C$, there exist Lipschitz polyhedra and 2-d and 3-d Lipschitz domains with Lipschitz constant as small as $4|\lambda|$, such that $A=\lambda I-D^\prime$ is not coercive plus compact. Thus, when $A$ is given by \eqref{eq:Alambda} with $\lambda\in \C\setminus\{-\half,\half\}$,  similar conclusions to those of Theorem \ref{thm:Q2Ext} follow regarding the non-convergence of the Galerkin method \eqref{eq:GenBIE} for all asymptotically dense subsequences $(\cH_N)_{N=1}^\infty$, as a corollary of Theorem \ref{thm:GalGenNew}. (We note, on the other hand, that, for the particular sequence of approximating subspaces proposed by Elschner \cite{El:92a}, the Galerkin method \eqref{eq:GalL2} has been shown  to converge \cite{El:92a} for every Lipschitz polyhedral $\Gamma$ for all but finitely many (unknown, $\Gamma$-dependent) values of $\lambda$ with $|\lambda|\geq 1/2$, indeed for all $\lambda \neq -1/2$ with $|\lambda|\geq 1/2$ if $\Omega_-$ is a convex polyhedron.)

\paragraph{Helmholtz problems.} Theorems \ref{thm:Q2} and \ref{thm:Q2Poly} exhibit geometries for which $A = \half I + D_*$ is not coercive plus compact whenever $D_*$ is a compact perturbation of $\pm D$ or $\pm D'$. Second-kind BIEs of the form \eqref{eq:GenBIE} with such $D_*$ are widely used for computation of the solution of interior and exterior boundary value problems for the Helmholtz equation $\Delta u + k^2u=0$, modelling time-harmonic acoustic and electromagnetic problems (e.g., \cite{CoKr:83}, \cite[\S2.5]{ChGrLaSp:12}). In particular, the standard BIEs to solve the exterior Dirichlet Helmholtz problem, due to  Brakhage and Werner \cite{BrWe:65}, Leis \cite{Le:65}, Panich \cite{Pa:65}, and Burton and Miller \cite{BuMi:71}, take the form \eqref{eq:GenBIE} with
\begin{equation} \label{eq:D*k}
D_* = D_k-\ri \eta k S_k \quad \mbox{ or } \quad D_* = D_k'-\ri \eta k S_k.
\end{equation}
Here $\eta>0$ is a positive constant and $S_k$ and $D_k$ are the standard acoustic single- and double-layer potential operators, defined by
\begin{equation} \label{eq:DLPSLPk}
D_k \phi(\bx) = \int_\Gamma \pdiff{\Phi_k(\bx,\by)}{n(\by)} \phi(\by)\, \rd s(\by) \quad \tand\quad S_k \phi(\bx) = \int_\Gamma \Phi_k(\bx,\by) \phi(\by)\, \rd s(\by), \quad \bx \in \Gamma,
\end{equation}
where $\Phi_k(\bx,\by)$ is the Helmholtz fundamental solution, given in 3-d ($d=3$) by
\beq\label{eq:fundk}
\Phi_k(\bx,\by):= \frac{\re^{\ri k|\bx-\by|}}{4\pi \nxy}, \quad \bx,\by\in \R^d, \quad \bx\neq \by.
\eeq
The operator $D_k'$ is the adjoint of $D_k$ with respect to the real $L^2$-inner product, given by the formula \eqref{eq:DD'} for $D'$ with $\Phi$ replaced by $\Phi_k$.

It is well-known that $D_k$, $D_k'$, and $S_k$ are bounded operators on $\LtG$ (e.g.\ \cite{TorWel93}, \cite[Theorem 2.17]{ChGrLaSp:12}). Indeed \cite{TorWel93}, $D_k-D$, $D_k'-D'$, and $S_k$ are integral operators with weakly singular kernels and so are compact on $\LtG$.  Moreover, if $A=\half+D_*$ and $D_*$ is given by \eqref{eq:D*k}, then $A$ is invertible on $\LtG$ (for the general Lipschitz case see \cite[Theorem 2.7]{ChLa:07} or \cite[Theorem 2.27]{ChGrLaSp:12}). But $A$ may not be coercive plus compact.  Since  $D_*$ given by \eqref{eq:D*k} is a compact perturbation of $D$ or $D'$, we have the following corollary of Theorems \ref{thm:Q2} and \ref{thm:Q2Poly}.

\begin{corollary} \label{cor:Helm}
Suppose that $d=2$ or $3$ and $\Gamma$ is the boundary of $\Omega_d^M$, for some $M\geq 2$, or that $d=3$ and $\Gamma=\Gamma_{\theta,n}$ is the boundary of the Lipschitz open-book polyhedron $\Omega_{\theta,n}$, for some $n\geq 2$. Then, provided also that $\theta>0$ is sufficiently small in the case $\Gamma=\Gamma_{\theta,n}$,
$$
A = \half I + D_*,
$$
with $D_*$ given by \eqref{eq:D*k}, is not the sum of a coercive and a compact operator for any $k>0$ and $\eta>0$.
\end{corollary}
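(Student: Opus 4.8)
The plan is to reduce the claim immediately to Theorems \ref{thm:Q2} and \ref{thm:Q2Poly}, using only that the property ``is the sum of a coercive and a compact operator'' is stable under compact perturbations. First I would record this stability: if $A_0$ is a bounded operator on $\LtG$ and $K$ is compact, and if $A_0+K=C+K'$ with $C$ coercive and $K'$ compact, then $A_0=C+(K'-K)$ with $K'-K$ compact, so $A_0+K$ is coercive plus compact if and only if $A_0$ is. Then I would invoke the facts recalled just before the corollary, namely that $D_k-D$, $D_k'-D'$, and $S_k$ are integral operators with weakly singular kernels and hence compact on $\LtG$. Thus, for either choice in \eqref{eq:D*k}, $D_*$ differs from $D$ (resp.\ from $D'$) by a compact operator $K=K(k,\eta)$, so that $A=\half I+D_*=\half I+D+K$ or $A=\half I+D'+K$; by the stability observation, $A$ is the sum of a coercive and a compact operator if and only if $\half I+D$ (resp.\ $\half I+D'$) is.

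Finally I would apply the geometric hypotheses. If $\Gamma=\partial\Omega^M$ with $M\ge 2$, then $1/2\le M/4$, so Theorem \ref{thm:Q2} applied with $\lambda=1/2$ shows that neither $\half I+D$ nor $\half I+D'$ is coercive plus compact; and if $d=3$ and $\Gamma=\Gamma_{\theta,n}$ with $n\ge 2$ and $\theta$ sufficiently small, Theorem \ref{thm:Q2Poly} gives the identical conclusion. Combining this with the previous paragraph yields that $A=\half I+D_*$ is not the sum of a coercive and a compact operator, for every $k>0$ and $\eta>0$, which is the assertion of the corollary.

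I do not expect any genuine obstacle here: the entire mathematical content sits in Theorems \ref{thm:Q2} and \ref{thm:Q2Poly} together with the classical compactness of the weakly singular corrections $D_k-D$, $D_k'-D'$ and of $S_k$, and the corollary merely transfers those operator-theoretic obstructions from the Laplace layer potentials to the Helmholtz ones. The only point requiring (trivial) care is to record explicitly that ``coercive plus compact'' is a compact-perturbation-invariant property, and to note that the value $\lambda=1/2$ is covered by Theorem \ref{thm:Q2} precisely because $M\ge 2$.
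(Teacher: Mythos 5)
Your proposal is correct and follows exactly the paper's reasoning: the paper's own justification is the single sentence preceding the corollary, noting that $D_*$ is a compact perturbation of $D$ or $D'$ so that the conclusion follows from Theorems \ref{thm:Q2} and \ref{thm:Q2Poly}. You have simply spelled out the (trivial but worth recording) fact that the ``coercive plus compact'' property is invariant under compact perturbation, and the observation that $M\geq 2$ puts $\lambda=1/2$ within the scope of Theorem \ref{thm:Q2}.
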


Surprisingly, this corollary is relevant to conjectures in the literature regarding the  large-$k$ limit. When $\Omega_-$ is Lipschitz and star-shaped (in the sense of Definition \ref{def:ss}), e.g. a convex polyhedron, and when $\Gamma$ is $C^\infty$ and non-trapping (in the sense of \cite[Definition 1.1]{BaSpWu:16}),  it has been shown \cite{ChMo:08,BaSpWu:16} that $\|A^{-1}\|_{\LtG}=O(1)$ as $k\to\infty$ with $\eta$ fixed. When $\Omega_-$ is $C^3$ and is strictly convex with strictly positive curvature, and with $\Gamma$ piecewise analytic, the stronger result  that $A$ is coercive on $\LtG$, uniformly in $k$, has been shown; precisely, by \cite[Theorem 1.2]{SpKaSm:15}, there exist $\eta_0$, $k_0$, and $\alpha_0>0$ such that
\begin{equation} \label{eq:coercivityk}
|(A\phi,\phi)_{\LtG}| \geq \alpha_0\|\phi\|^2_\LtG, \quad \mbox{for all } \quad \phi\in\LtG, \;\; \eta \geq \eta_0, \;\; k\geq k_0.
\end{equation}

This strong result is surprising as the Helmholtz equation is considered a prime example of an indefinite problem (see the discussion in \cite{MoSp:14}) for which the most one can hope is that the associated operators are compact perturbations of coercive operators. The bound \eqref{eq:coercivityk} guarantees, as part (ii) of Theorem \ref{thm:Galerkin} makes clear,  that the Galerkin equations \eqref{eq:GalL2} are well-posed, moreover with stability constants that are independent of $k$ for $k\geq k_0$, and independent of the chosen approximation space sequence $(\cH_N)_{N=1}^\infty$. This is highly attractive for high-frequency numerical solution methods as discussed in \cite{ChGrLaSp:12,SpKaSm:15}.

It is natural to speculate whether \eqref{eq:coercivityk} holds for more general classes of $\Gamma$, and to investigate this by numerical experiment, and this was done via  approximate computation of the numerical range of $A$ in \cite{BeSp:11} (using the equivalence of a) and d) in Lemma \ref{lem:coer_equiv}), leading to conjectures that:
\begin{itemize}
\item[(i)] The bound \eqref{eq:coercivityk} holds (with the choice $\eta_0=1$) for all non-trapping domains \cite[Conjecture 6.2]{BeSp:11};
\item[(ii)] $A$ is coercive (with the choice $\eta_0=1$) for all positive $k>0$ that are not close, in the sense of \cite[Remark 5.2]{BeSp:11}, to resonances of the exterior Helmholtz Dirichlet problem \cite[Conjecture 6.1]{BeSp:11}.
\end{itemize}
The open-book polyhedra are star-shaped with respect to a ball (Definition \ref{def:ss}), and so certainly non-trapping in the sense of \cite{BeSp:11}. Thus Corollary \ref{cor:Helm}, which shows that there exist open-book polyhedra for which $A$ (for all $k>0$ and $\eta>0$) is not even a compact perturbation of a coercive operator, establishes that these conjectures do not hold in the 3-d case\footnote{A quite different counterexample to the first of these conjectures is given in \cite[\S6.3.2]{ChSpGiSm:20}. The example constructed (in both 2-d and 3-d) is a $\Gamma$ that is $C^\infty$ and  non-trapping for which $\inf_{\phi\in \LtG, \|\phi\|_{\LtG}=1}|(A\phi,\phi)_{\LtG}|\to 0$ as $k\to \infty$ at least as fast as $k^{-1}$, so that the bound \eqref{eq:coercivityk} is not satisfied with an $\alpha_0$ independent of $k$. (But it might still be the case for the counterexample in \cite[\S6.3.2]{ChSpGiSm:20} that $A$ is coercive for each fixed $k$.)}.

\section{The Galerkin method and essential numerical range in Hilbert spaces} \label{sec:GM}

In this section we recall, in a Hilbert space setting, the definitions of the coercivity, numerical range, and essential numerical range of a bounded linear operator, the relationships between these concepts, and their role in the convergence theory of the Galerkin method. These results provide the justification for the equivalence of the open questions Q2, Q2$^\prime$,  and  Q2$^{\prime\prime}$ in \S\ref{sec:open}. In particular we attack questions Q2 (about convergence of the Galerkin method for the standard 2nd kind integral equations of potential theory) and Q2$^\prime$ (coercivity plus compactness of the related operators), via the  reformulation Q2$^{\prime\prime}$ in terms of the essential numerical range of the double-layer potential operator. The equivalence of Q2$^\prime$  and  Q2$^{\prime\prime}$ is provided by Corollary \ref{cor:essnum} below, and that between Q2 and Q2$^\prime$ by Theorem \ref{thm:Galerkin}. The main new result in this section is Theorem \ref{thm:GalGenNew}, a significant strengthening of part (c) of Theorem \ref{thm:Galerkin}, that further explores the relationship between the coercivity plus compactness of an operator $\cA$ and convergence of the associated Galerkin method.

We say that a linear operator $\cA:\cH \rightarrow \cH$, where $\cH$ is a Hilbert space, is \emph{coercive}
\footnote{In the literature, the property \eqref{eq:coer} (and its analogue for operators $A: \cH\rightarrow \cH'$, where $\cH'$ is the dual of $\cH$) is sometimes called ``$\Hilb$-ellipticity" (as in, e.g., \cite[Page 39]{SaSc:11}, \cite[\S3.2]{St:08}, and \cite[Definition 5.2.2]{HsWe:08}) or ``strict coercivity'' (e.g., \cite[Definition 13.28]{Kr:14}),  with ``coercivity" then used to mean \emph{either} that $\cA$ is the sum of a coercive operator and a compact operator (as in, e.g., \cite[\S3.6]{St:08} and \cite[\S5.2]{HsWe:08}) \emph{or} that $\cA$ satisfies a G\aa rding inequality (as in \cite[Definition 2.1.54]{SaSc:11}).} if there exists an $\alpha>0$ such that
\beq\label{eq:coer}
\big|(\cA\phi,\phi)_{\cH}\big|\geq \alpha \N{\phi}^2_{\cH} \quad \tfa \phi \in \cH.
\eeq
A closely related concept is positive definiteness. We say that a bounded linear operator $\cB:\cH\to \cH$ is \emph{strictly} \emph{positive definite} if it is self-adjoint and if, for some $\beta>0$, $(\cB\phi,\phi)_{\cH} \geq \beta \|\phi\|_\cH^2$, for all $\phi\in \cH$.

Importantly, coercivity of an operator is also related to its numerical range. Recall that the \emph{numerical range} (also known as the \emph{field of values}) of a bounded linear operator $\cA:\cH\rightarrow \cH$ is defined by
\beq \label{eq:numrangedef}
W(\cA):= \Big\{
(\cA \psi,\psi)_\cH\,:\, \N{\psi}_\cH =1
\Big\},
\eeq
and the \emph{essential numerical range} is defined by
\beq\label{eq:essnumrangedef}
W_{\ess}(\cA) = \bigcap_{\cK \text{ compact }} \overline{W(\cA+\cK)}.
\eeq
For every bounded linear operator $\cA$, $W(\cA)$ and $W_\ess(\cA)$ are convex, bounded sets (e.g., \cite{GuRa:97,BDav,bonsall1973numerical}); we call
$$
w(\cA) := \sup_{z\in W(\cA)} |z| \quad \mbox{and} \quad w_\ess(\cA) := \sup_{z\in W_\ess(\cA)} |z|
$$
the numerical radius and essential numerical radius of $\cA$, and note that \cite[\S1.3]{GuRa:97}
\begin{equation} \label{eq:NormEssNorm}
\half \|\cA\|_\cH \leq w(\cA) \leq \|\cA\|_\cH \quad \mbox{so that also} \quad \half \|\cA\|_{\cH,\ess} \leq w_\ess(\cA) \leq \|\cA\|_{\cH,\ess},
\end{equation}
where
$$
\|\cA\|_{\cH,\ess} := \inf_{\cK \text{ compact }} \big\| \cA - \cK \big\|_{\cH}
$$
is the essential norm of $\cA$.
We recall that the spectrum of $\cA$ is the set of $\lambda\in \Com$ such that $\cA-\lambda\cI$ is not invertible, and the essential spectrum of $\cA$ is the
set of $\lambda\in \Com$ such that $\cA-\lambda\cI$ is not Fredholm.
We recall also that (e.g., \cite{GuRa:97,BDav,bonsall1973numerical}) $\overline{W(\cA)}$ contains the spectrum of $\cA$ and $W_\ess(\cA)$ its essential spectrum, so that also
$$
r_{\cH}(\cA) \leq w(\cA) \quad \mbox{and} \quad r_{\cH,\ess}(\cA) \leq w_\ess(\cA),
$$
where $r_{\cH}(\cA):= \max_{z\in \spec(\cA)}|z|$ and $r_{\cH,\ess}(\cA):= \max_{z\in \ess \, \spec(\cA)}|z|$ are the spectral radius and essential spectral radius of $\cA$. We note also the following elementary but key observation: if $\cV$ is a closed subspace of $\cH$, $\cP:\cH\to\cV$ is orthogonal projection, and $\widetilde \cA:= \cP\cA|_\cV$, then
\begin{equation} \label{eq:subspaces}
W(\widetilde \cA) \subset W(\cA), \;\;\;  W_\ess(\widetilde \cA) \subset W_\ess(\cA), \;\;\; \|\widetilde \cA\|_\cV \leq \|\cA\|_\cH, \;\;\; \mbox{and } \;\;\|\widetilde \cA\|_{\cV,\ess} \leq \|\cA\|_{\cH,\ess}.
\end{equation}

\begin{lemma}[Equivalent formulations of coercivity] \label{lem:coer_equiv} Given a bounded linear operator $\cA:\cH \rightarrow \cH$ on a Hilbert space $\cH$, the following are equivalent:
\begin{enumerate}
\item[a)] $\cA$ is coercive;
\item[b)] $\cA=c(\cB+\ri \cG)$, for some $c\in \C\setminus \{0\}$ and bounded linear operators $\cB$ and $\cG$ such that $\cG$ is self-adjoint and $\cB$ is strictly positive definite;
\item[c)] $\cA=c(\cI+\cB)$, for some $c\in \C\setminus \{0\}$ and bounded linear operator $\cB$ such that $\|\cB\|_\cH<1$;
\item[d)] $0 \notin \overline{W(\cA)}$;
\item[e)] For some $\theta\in \R$ and $\alpha >0$, $\Re \left[\re^{\ri\theta} (\cA\phi,\phi)_\cH \right]\geq \alpha \|\phi\|_\cH^2$, for all $\phi\in \cH$.
\end{enumerate}
\end{lemma}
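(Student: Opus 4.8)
The plan is to prove the five statements equivalent via the cyclic chain a) $\Rightarrow$ d) $\Rightarrow$ e) $\Rightarrow$ b) $\Rightarrow$ c) $\Rightarrow$ a). Only the step d) $\Rightarrow$ e) is genuinely geometric, resting on convexity of the numerical range; the other four are short computations. Before starting I would dispose of the trivial case $\cH=\{0\}$ (all five assertions then hold, with $W(\cA)$ empty and the coercivity inequality vacuous), so that subsequently strictly positive definite operators can be assumed nonzero.

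For a) $\Rightarrow$ d): if $|(\cA\phi,\phi)_\cH|\geq\alpha\|\phi\|_\cH^2$ for all $\phi$, then every element of $W(\cA)$ has modulus at least $\alpha$; as $\{z\in\C:|z|\geq\alpha\}$ is closed the same bound holds on $\overline{W(\cA)}$, so $0\notin\overline{W(\cA)}$. (The converse d) $\Rightarrow$ a), not needed for the cycle but worth recording, is the same: $\alpha:=\dist(0,\overline{W(\cA)})>0$, and homogeneity of $\phi\mapsto(\cA\phi,\phi)_\cH$ upgrades $|(\cA\psi,\psi)_\cH|\geq\alpha$ on the unit sphere to \eqref{eq:coer}.) For d) $\Rightarrow$ e), the crucial input is that $W(\cA)$, hence $\overline{W(\cA)}$, is convex (Toeplitz--Hausdorff). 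Viewing $\C$ as the Euclidean plane, $\overline{W(\cA)}$ is a nonempty closed convex set not containing $0$; let $z_0$ be its point of least modulus. The variational inequality for nearest-point projection onto a convex set gives $\Re(\overline{z_0}\,z)\geq|z_0|^2$ for all $z\in\overline{W(\cA)}$; writing $z_0=|z_0|\re^{-\ri\theta}$ and taking $z=(\cA\psi,\psi)_\cH$ with $\|\psi\|_\cH=1$ yields $\Re[\re^{\ri\theta}(\cA\psi,\psi)_\cH]\geq|z_0|>0$, so e) holds with $\alpha=|z_0|$ after rescaling. This separating-line argument is the main (though standard) obstacle; everything else is routine.

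The remaining implications are algebraic. For e) $\Rightarrow$ b): put $\cC:=\re^{\ri\theta}\cA$ and split it into self-adjoint and skew parts, $\cB:=\tfrac12(\cC+\cC^*)$ and $\cG:=\tfrac1{2\ri}(\cC-\cC^*)$, so that $\cC=\cB+\ri\cG$ with $\cB,\cG$ self-adjoint; since $(\cB\phi,\phi)_\cH=\Re(\cC\phi,\phi)_\cH=\Re[\re^{\ri\theta}(\cA\phi,\phi)_\cH]\geq\alpha\|\phi\|_\cH^2$, $\cB$ is strictly positive definite and b) holds with $c=\re^{-\ri\theta}$. For b) $\Rightarrow$ c): with $\cC:=\cB+\ri\cG$ (nonzero, since $\cB=\tfrac12(\cC+\cC^*)\neq 0$) and $\beta$ the positive-definiteness constant of $\cB$, I would look for $t>0$ with $\|t\cC-\cI\|_\cH<1$; for $\|\phi\|_\cH=1$,
\[
\|(t\cC-\cI)\phi\|_\cH^2 = t^2\|\cC\phi\|_\cH^2 - 2t\,\Re(\cC\phi,\phi)_\cH + 1 \leq t^2\|\cC\|_\cH^2 - 2t\beta + 1,
\]
using $\Re(\cC\phi,\phi)_\cH=(\cB\phi,\phi)_\cH\geq\beta$, and the choice $t=\beta/\|\cC\|_\cH^2$ makes the bound $1-\beta^2/\|\cC\|_\cH^2<1$. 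Then $t\cC=\cI+\cB'$ with $\cB':=t\cC-\cI$, $\|\cB'\|_\cH<1$, so $\cA=c\cC=(c/t)(\cI+\cB')$, which is c). For c) $\Rightarrow$ a): $|(\cA\phi,\phi)_\cH|=|c|\,\big|\|\phi\|_\cH^2+(\cB\phi,\phi)_\cH\big|\geq|c|(1-\|\cB\|_\cH)\|\phi\|_\cH^2$ with $|c|(1-\|\cB\|_\cH)>0$, so $\cA$ is coercive, closing the cycle.
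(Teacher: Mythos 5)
The paper gives no proof of its own for this lemma; it simply cites three references, one for each of the equivalences a)$\Leftrightarrow$d), b)$\Leftrightarrow$c)$\Leftrightarrow$d), and d)$\Leftrightarrow$e), so in effect it proves the lemma by hub-and-spoke through condition d). Your proof is a self-contained cyclic chain a)$\Rightarrow$d)$\Rightarrow$e)$\Rightarrow$b)$\Rightarrow$c)$\Rightarrow$a), which is a genuinely different organisation of the argument, and it is correct throughout: the use of the nearest-point variational inequality on the convex set $\overline{W(\cA)}$ for d)$\Rightarrow$e), the Cartesian decomposition of $\re^{\ri\theta}\cA$ for e)$\Rightarrow$b), the explicit choice $t=\beta/\|\cC\|_\cH^2$ for b)$\Rightarrow$c), and the reverse triangle inequality for c)$\Rightarrow$a) are all sound. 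Your preliminary disposal of $\cH=\{0\}$ is careful (and necessary, since in b)$\Rightarrow$c) you need $\cC\neq 0$ to form $t=\beta/\|\cC\|_\cH^2$). The payoff of your arrangement is that the only nontrivial input (Toeplitz--Hausdorff convexity of $W(\cA)$) is isolated in a single link, while the paper's version hides this inside the cited references; the cost is that you prove all four remaining links by hand, but each is a one-line computation. One small embellishment worth making explicit in b)$\Rightarrow$c): $\beta\le\|\cB\|_\cH\le\|\cC\|_\cH$, so the bound $1-\beta^2/\|\cC\|_\cH^2$ lies in $[0,1)$ rather than merely being $<1$.
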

\begin{proof}[References for the proof] The equivalence of b)-d) is shown, for example, in the discussion around \cite[Chapter II, Lemma 5.1]{GoFe:74}, and the equivalence of a) and d), for example, in \cite[Lemma 3.3]{BeSp:11}. That d) and e) are equivalent is immediate from the convexity of $W(\cA)$ (e.g., \cite[Theorem 1.1-2]{GuRa:97}).
\end{proof}

We are interested, in particular, in the equivalence of a) and d) in the above lemma, and in the following straightforward corollary of that equivalence, which implies the equivalence of Q2$^\prime$ and Q2$^{\prime\prime}$ in \S\ref{sec:open}. For completeness we include the short proof.

\begin{corollary} \label{cor:essnum}  If $\cA:\cH\to \cH$ is a bounded linear operator, then $\cA$ is the sum of a coercive operator plus a compact operator if and only if $0 \notin W_{\ess}(\cA).$
\end{corollary}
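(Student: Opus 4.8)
The plan is to deduce both implications directly from the equivalence of conditions a) and d) in Lemma~\ref{lem:coer_equiv} together with the definition \eqref{eq:essnumrangedef} of the essential numerical range. The only genuinely ``new'' ingredient is the trivial logical observation that a point fails to lie in an intersection of sets precisely when it is absent from at least one of the sets in the family; everything else is bookkeeping with the identity $\cA = (\cA+\cK) - \cK$ and the fact that $\cK$ is compact iff $-\cK$ is.

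First I would prove the ``if'' direction. Suppose $0\notin W_{\ess}(\cA)$. Since, by \eqref{eq:essnumrangedef}, $W_{\ess}(\cA)=\bigcap_{\cK \text{ compact}}\overline{W(\cA+\cK)}$, the assumption $0\notin W_{\ess}(\cA)$ means there exists at least one compact operator $\cK$ with $0\notin \overline{W(\cA+\cK)}$. By the equivalence of d) and a) in Lemma~\ref{lem:coer_equiv}, applied to the bounded operator $\cA+\cK$, this forces $\cA+\cK$ to be coercive. Writing $\cA = (\cA+\cK) + (-\cK)$ then exhibits $\cA$ as the sum of a coercive operator and a compact operator.

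Next I would prove the ``only if'' direction. Suppose $\cA = \cC + \cK$ with $\cC$ coercive and $\cK$ compact. Then $-\cK$ is compact and $\cA + (-\cK) = \cC$ is coercive, so by the equivalence of a) and d) in Lemma~\ref{lem:coer_equiv} we have $0\notin \overline{W(\cA+(-\cK))}$. Since $-\cK$ is one of the compact operators over which the intersection in \eqref{eq:essnumrangedef} is taken, it follows that $0$ cannot lie in $\bigcap_{\cK' \text{ compact}}\overline{W(\cA+\cK')} = W_{\ess}(\cA)$, which completes the proof.

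There is essentially no obstacle here: the content has been entirely front-loaded into Lemma~\ref{lem:coer_equiv} and the definition \eqref{eq:essnumrangedef}. The only point requiring a moment's care is the first step of the ``if'' direction, where one must not confuse ``$0$ lies outside the intersection'' (which is the hypothesis and gives a single good $\cK$) with the stronger ``$0$ lies outside every $\overline{W(\cA+\cK)}$'' (which is false in general). Once that is handled correctly, the argument is a two-line application of Lemma~\ref{lem:coer_equiv} in each direction.
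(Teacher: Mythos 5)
Your proof is correct and follows essentially the same route as the paper's own proof: both directions are obtained by combining the equivalence of coercivity with $0\notin\overline{W(\cdot)}$ (parts a) and d) of Lemma~\ref{lem:coer_equiv}) with the definition \eqref{eq:essnumrangedef} of $W_{\ess}$ as an intersection over compact perturbations. Your write-up is merely a little more explicit about the bookkeeping with $-\cK$ than the paper's terse version.
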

\begin{proof}
If $\cA+\cK$ is coercive and $\cK$ is compact, then $0\not\in \overline{W(\cA+\cK)}$ by the above lemma, and so $0 \notin W_{\ess}(\cA).$ Conversely, if $0 \notin W_{\ess}(\cA)$ then, for some compact $\cK$, $0\not\in \overline{W(\cA+\cK)}$, so that by the above lemma $\cA+\cK$ is coercive.
\end{proof}

\subsection{The Galerkin method in Hilbert spaces}
Recall the definition of the \emph{Galerkin method} for approximating solutions of the operator equation $\cA\phi = g$, where $\phi, g \in \cH$, $\cA:\cH\rightarrow \cH$ is a bounded linear operator, and $\cH$ is a Hilbert space: given a sequence $(\cH_N)_{N=1}^\infty$ of finite-dimensional subspaces of $\cH$ with $\dim(\cH_N)\tendi$ as $N\tendi$,
\beq\label{eq:G}
\tfind \phi_N \in\cH_N \tst \big(\cA\phi_N,\psi_N)_\cH=\big(g,\psi_N\big)_\cH \quad \tfa \psi_N\in \cH_N.
\eeq
The Galerkin method \emph{is convergent for the sequence $(\cH_N)_{N=1}^\infty$} if, for every $g\in \cH$, the Galerkin equations \eqref{eq:G} have a unique solution for all sufficiently large $N$ and $\phi_N\to \cA^{-1}g$ as $N\to\infty$.

Closely related to this definition, $(\cH_N)_{N=1}^\infty$ is \emph{asymptotically dense in $\cH$} or \emph{converges to $\cH$} if, for every $\phi\in \cH$,
$$
\inf_{\psi_N\in \cH_N}\|\phi-\psi_N\|_\cH \to 0 \quad \mbox{as} \quad N\to \infty.
$$
It is easy to see that if the Galerkin method converges then $(\cH_N)_{N=1}^\infty$ converges to $\cH$. Indeed, a standard necessary and sufficient condition (e.g., \cite[Chapter II, Theorem 2.1]{GoFe:74}) for convergence of the Galerkin method is that $(\cH_N)_{N=1}^\infty$ converges to $\cH$ and that, for some $N_0\in \NN$ and $c>0$,
\begin{equation} \label{eq:lower}
\frac{\|\cP_N\cA\phi_N\|_\cH}{\|\phi_N\|_\cH} \geq c, \quad \mbox{for all non-zero } \phi_N\in \cH_N \mbox{ and } N\geq N_0,
\end{equation}
where $\cP_N$ is orthogonal projection of $\cH$ onto $\cH_N$.

The equivalence of Q2 and Q2$^\prime$ in \S\ref{sec:open} is given by Part (c) of the following theorem.

\begin{theorem}[The Main Abstract Theorem on the Galerkin Method.]\label{thm:Galerkin}

\

(a) If $\cA$ is invertible then there exists a sequence $(\cH_N)_{N=1}^\infty$ for which the Galerkin method converges.

(b)  If $\cA$ is coercive (i.e.~\eqref{eq:coer} holds) then, for every sequence $(\cH_N)_{N=1}^\infty$, the Galerkin equations \eqref{eq:G} have a unique solution $\phi_N$ for every $N$ and
\beqs
\big\|\phi-\phi_N\big\|_\cH \leq \frac{\|\cA\|_\cH}{\alpha} \,  \inf_{\psi \in \cH_N}\big\|\phi-\psi\big\|_\cH,
\eeqs
so $\phi_N\to \cA^{-1}g$ as $N\to\infty$ if $(\cH_N)_{N=1}^\infty$ converges to $\cH$.

(c) If $\cA$ is invertible then the following are equivalent:
\begin{itemize}
\item The Galerkin method converges for every sequence $(\cH_N)_{N=1}^\infty$ that converges to $\cH$.
\item $\cA=\cA_0+\cK$ where $\cA_0$ is coercive and $\cK$ is compact.
\end{itemize}
\end{theorem}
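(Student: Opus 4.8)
The plan is to prove the non-trivial implication of part (c) — that if the Galerkin method converges for every sequence $(\cH_N)_{N=1}^\infty$ converging to $\cH$, then $\cA = \cA_0 + \cK$ with $\cA_0$ coercive and $\cK$ compact — by contraposition, while part (a), part (b), and the easy direction of (c) are either standard or immediate from C\'ea's lemma together with the necessary and sufficient condition \eqref{eq:lower}. For (a), since $\cA$ is invertible, pick any orthonormal basis $(e_n)$ of $\cH$, set $\cH_N = \spann\{e_1,\dots,e_N\}$; then $(\cH_N)$ converges to $\cH$ and, because $\cA$ is bounded and boundedly invertible, a direct argument (or appeal to \cite[Chapter II, Theorem 2.1]{GoFe:74}) shows \eqref{eq:lower} holds, so the method converges. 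For (b), if $\cA$ is coercive with constant $\alpha$, then for $\phi_N \in \cH_N$ we have $\alpha\|\phi_N\|^2 \leq |(\cA\phi_N,\phi_N)| = |(\cP_N\cA\phi_N,\phi_N)| \leq \|\cP_N\cA\phi_N\|\,\|\phi_N\|$, giving \eqref{eq:lower} with $c=\alpha$ and $N_0=1$; uniqueness and existence of $\phi_N$ follow, and the quasi-optimality estimate is the usual C\'ea argument, using Galerkin orthogonality $\cP_N\cA(\phi-\phi_N)=0$ and coercivity to write $\alpha\|\phi-\phi_N\|^2 \leq |(\cA(\phi-\phi_N),\phi-\phi_N)| = |(\cA(\phi-\phi_N),\phi-\psi_N)| \leq \|\cA\|\,\|\phi-\phi_N\|\,\|\phi-\psi_N\|$ for arbitrary $\psi_N\in\cH_N$.

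For part (c), the forward implication (coercive-plus-compact $\Rightarrow$ convergence for all sequences) is the classical result; given $\cA = \cA_0 + \cK$ with $\cA_0$ coercive and $\cK$ compact, and $\cA$ invertible, one combines the C\'ea estimate for $\cA_0$ with a compactness/collectively-compact argument — using that $\|(\cI-\cP_N)\cK\|\to 0$ on compact operators as $\cP_N$ converges strongly to the identity — to verify \eqref{eq:lower} for large $N$; this is, e.g., \cite[Chapter II]{GoFe:74} and I would simply cite it. The substance is the reverse implication. I would prove the contrapositive: if $\cA$ is invertible but \emph{not} of the form coercive-plus-compact, then by Corollary \ref{cor:essnum} we have $0 \in W_\ess(\cA)$, and I must construct a sequence $(\cH_N)$ converging to $\cH$ for which \eqref{eq:lower} fails, so the Galerkin method does not converge.

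The key step is the following: $0 \in W_\ess(\cA)$ means that for every compact $\cK$, $0 \in \overline{W(\cA+\cK)}$; equivalently (this is the standard characterisation of the essential numerical range) there is an orthonormal sequence $(\psi_j)_{j=1}^\infty$ in $\cH$ with $(\cA\psi_j,\psi_j)_\cH \to 0$ as $j\to\infty$. Starting from such a sequence I would extract, by a diagonal/Gram–Schmidt argument, a subsequence — still called $(\psi_j)$ — that is ``almost orthogonal'' to the action of $\cA$ in the sense that one can build finite-dimensional spaces $\cH_N$ on which $\cP_N \cA$ is nearly singular. Concretely, using that $\psi_j \rightharpoonup 0$ weakly, so $\cA^*\psi_j \rightharpoonup 0$ and hence $(\cA\psi_i,\psi_j) = (\psi_i,\cA^*\psi_j)\to 0$ for each fixed $i$ as $j\to\infty$, I can pass to a subsequence so that the infinite matrix $[(\cA\psi_i,\psi_j)]_{i,j}$ has diagonal tending to $0$ and is arbitrarily small off a finite block; then, incorporating the $\psi_j$ into an orthonormal basis $(e_n)$ of $\cH$ so that the resulting nested spaces $\cH_N = \spann\{e_1,\dots,e_N\}$ converge to $\cH$, I arrange that some $\cH_N$ contains a unit vector $\phi_N$ (a suitable combination of finitely many $\psi_j$ with large indices, or a single $\psi_j$) with $\|\cP_N\cA\phi_N\|$ as small as we like. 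This makes \eqref{eq:lower} fail for infinitely many $N$, so by the necessary-and-sufficient criterion the Galerkin method does not converge for this sequence.

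The main obstacle is the careful bookkeeping in the last step: we need a \emph{single} nested sequence $(\cH_N)$ that simultaneously (i) converges to $\cH$ and (ii) violates \eqref{eq:lower} along a subsequence of $N$'s, and the offending unit vectors $\phi_N$ must genuinely lie in $\cH_N$ while $\cP_N\cA\phi_N$ stays small — which requires controlling not just $(\cA\phi_N,\phi_N)$ but the full projection $\cP_N\cA\phi_N$. The trick is to choose the $\phi_N$ to involve $\psi_j$'s with indices so large that their $\cA$-images have negligible component on the earlier basis vectors $e_1,\dots,e_{N}$ that span $\cH_N$; weak convergence $\cA^*\psi_j\rightharpoonup 0$ makes $(\cA\psi_j, e_i)=(\psi_j,\cA^*e_i)\to 0$ for each fixed $i$, which is exactly the leverage needed, and an inductive construction interleaving ``filling-in'' steps (to guarantee density) with ``spoiler'' steps (inserting a bad $\psi_j$ and checking $\|\cP_N\cA\psi_j\|$ is small) delivers the sequence. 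I would organise this induction explicitly, defining at stage $N$ both the next basis vector and, when $N$ is a spoiler index, recording the bad vector, and then invoking \eqref{eq:lower} in its contrapositive form to conclude.
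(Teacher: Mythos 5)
Your handling of part (b) is correct, and your sketch of the hard direction of (c) is in the right spirit --- the reduction via Corollary \ref{cor:essnum} to $0\in W_\ess(\cA)$, the extraction of an orthonormal sequence $(\psi_j)$ with $(\cA\psi_j,\psi_j)_\cH\to 0$, and the interleaved ``filling-in/spoiler'' construction are essentially Markus's argument, and closely parallel the more quantitative version the paper proves as Theorem \ref{thm:GalGenNew}. Note, though, that the paper does not supply its own proof of Theorem \ref{thm:Galerkin}: it cites Markus \cite{Ma:74} (Theorems 1, 2) and Gohberg--Fel'dman \cite{GoFe:74} for (a) and (c), and C\'ea \cite{Ce:64} for (b). So what you are supplying is a reconstruction of known proofs, not a comparison with a proof in the paper.

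The proof you give for part (a) is, however, wrong. You claim that since $\cA$ is boundedly invertible, taking $\cH_N=\spann\{e_1,\dots,e_N\}$ for an \emph{arbitrary} orthonormal basis $(e_n)$ yields \eqref{eq:lower}. This cannot be right: if it were, then every invertible $\cA$ would have a convergent Galerkin method for every nested orthonormal-basis sequence, which would force $\cA$ to be coercive plus compact by part (c) --- but there exist invertible (indeed unitary) $\cA$ that are not coercive plus compact, as the paper itself records (e.g.\ the bilateral shift, discussed right after Lemma \ref{lem:Tlemma}, has $W_\ess = \{|z|\le 1\}\ni 0$). A concrete failure: let $\cA$ on $\ell^2$ swap $e_{2k-1}$ and $e_{2k}$ for every $k$; this is unitary. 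With $\cH_N=\spann\{e_1,\dots,e_N\}$ and $N$ odd, $e_N\in\cH_N$ but $\cA e_N = e_{N+1}\perp\cH_N$, so $\cP_N\cA e_N=0$ and \eqref{eq:lower} fails for all odd $N$. Invertibility alone does not give \eqref{eq:lower} for arbitrary subspace sequences; it only guarantees the \emph{existence} of a good one. In the swap example the good choice is $\cH_N=\spann\{e_1,\dots,e_{2N}\}$, which is $\cA$-invariant; in general the subspaces have to be built adaptively to $\cA$, which is precisely the non-trivial content of Markus's Theorem 1 and why (a) cannot be dismissed as an appeal to \cite[Chapter II, Theorem 2.1]{GoFe:74} (that result is the necessary-and-sufficient criterion \eqref{eq:lower}, not a proof that it holds). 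Also be a little more careful in (c) about what must lie in $\cH_N$: once you put $\psi_{j_N}$ into $\cH_N$, controlling $\cP_N\cA\psi_{j_N}$ requires controlling not only $(\cA\psi_{j_N},e_i)$ for the filler vectors $e_i$ but also $(\cA\psi_{j_N},\psi_{j_N})$ and, if you carry more than one spoiler at a time, the cross-terms $(\cA\psi_{j_N},\psi_{j_M})$; the paper's proof of Theorem \ref{thm:GalGenNew} shows the extra estimates this needs.
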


\bpf[References for the proof]
Part (a) was first proved in \cite[Theorem 1]{Ma:74}; see also \cite[Chapter II, Theorem 4.1]{GoFe:74}.
Part (b) is C\'ea's Lemma; see \cite{Ce:64}.
Part (c) was first proved in \cite[Theorem 2]{Ma:74}, with this result building on results in \cite{Va:65}; see also \cite[Chapter II, Lemma 5.1 and Theorem 5.1]{GoFe:74}
\epf

Part (c) of the above theorem implies that, if $\cA$ is invertible but not coercive plus compact, then there exists {\em at least one} sequence of finite-dimensional subspaces  $(\cH_N)_{N=1}^\infty$ converging to $\cH$ for which the Galerkin method is not convergent. The following stronger result, Theorem \ref{thm:GalGenNew}, which implies Theorem \ref{thm:Q2Ext}, builds on the arguments in \cite{Ma:74} to show that, if $\cA$ is not coercive plus compact, then the Galerkin method fails for many subspace sequences: precisely, given a sequence of finite-dimensional subspaces $(\cH^*_N)_{N=1}^\infty$ converging to $\cH$, with $\cH^*_1\subset \cH^*_2 \subset ...$, either:

i) the Galerkin method is not convergent for the sequence $(\cH^*_N)_{N=1}^\infty$; or

ii) the Galerkin method for $(\cH^*_N)_{N=1}^\infty$ does converge but there exists a sequence $(\cH_N)_{N=1}^\infty$, sandwiched by the sequence $(\cH^*_N)_{N=1}^\infty$ (i.e.\ satisfying (b) in Theorem \ref{thm:GalGenNew}), such that the Galerkin method is not convergent for $(\cH_N)_{N=1}^\infty$.

Note that, in Theorem \ref{thm:GalGenNew}, $(\cH_N)_{N=1}^\infty$ {\em is} a sequence that converges to $\cH$. This follows from condition (b) and the convergence of the Galerkin method for $(\cH^*_N)_{N=1}^\infty$. For the proof of this theorem we need the following lemma.

\begin{lemma} \label{lem:cc}
If $(K_n)_{n=1}^\infty$ is a collectively compact sequence of bounded linear operators, meaning that \cite{An:71}
\begin{equation} \label{eq:cc}
\bigcup_{n\in \NN}\{K_n\phi:\|\phi\|_\cH \leq 1\}
\end{equation}
is relatively compact, and if each $K_n$ is self-adjoint, then $\phi_n\rightharpoonup 0$ as $n\to \infty$ implies that  $K_n\phi_n\to 0$, for every sequence $(\phi_n)_{n=1}^\infty \subset \cH$.
\end{lemma}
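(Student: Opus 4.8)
The plan is to prove Lemma~\ref{lem:cc} by a subsequence argument in which self-adjointness is used to convert $\|K_n\phi_n\|^2_\cH$ into a pairing of the weakly-null sequence $(\phi_n)$ against a \emph{strongly} convergent sequence, so that the weak convergence $\phi_n\rightharpoonup 0$ can be exploited.

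First I would record two preliminary observations. Since $\phi_n\rightharpoonup 0$, the uniform boundedness principle gives a constant $C$ with $\|\phi_n\|_\cH\le C$ for all $n$; and since the set in \eqref{eq:cc} is relatively compact it is in particular bounded, so $M:=\sup_n\|K_n\|_\cH<\infty$. Moreover, for any \emph{fixed} $\psi\in\cH$ the orbit $\{K_n\psi:n\in\NN\}$ lies in $\|\psi\|_\cH$ times the set in \eqref{eq:cc} (rescaling $\psi$ to a unit vector, the case $\psi=0$ being trivial), hence is relatively compact. I will apply this both to $\psi=\phi_n$ (which has norm $\le C$) and, later, to a fixed limit vector.

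To prove $K_n\phi_n\to 0$ it suffices to show that every subsequence has a further subsequence along which $K_n\phi_n\to 0$; so I fix a subsequence, relabelled $(K_n\phi_n)$. By relative compactness I pass to a further subsequence (relabelled again) so that $K_n\phi_n\to\chi$ for some $\chi\in\cH$, and then, applying relative compactness once more to the fixed vector $\chi$, pass to a still further subsequence so that also $K_n\chi\to\eta$ for some $\eta\in\cH$. The heart of the argument is the identity, valid because each $K_n=K_n^*$,
\[
\|K_n\phi_n\|_\cH^2 = (\phi_n,K_n^2\phi_n)_\cH,
\]
together with the fact that $K_n^2\phi_n\to\eta$ strongly: indeed $K_n^2\phi_n = K_n\chi + K_n(K_n\phi_n-\chi)$, where $K_n\chi\to\eta$ and $\|K_n(K_n\phi_n-\chi)\|_\cH\le M\,\|K_n\phi_n-\chi\|_\cH\to 0$. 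Splitting $(\phi_n,K_n^2\phi_n)_\cH = (\phi_n,\eta)_\cH + (\phi_n,K_n^2\phi_n-\eta)_\cH$, the first term tends to $0$ because $\phi_n\rightharpoonup 0$, and the second is bounded by $C\,\|K_n^2\phi_n-\eta\|_\cH\to 0$; hence $\|K_n\phi_n\|_\cH\to 0$, forcing $\chi=0$. Since the initial subsequence was arbitrary, $K_n\phi_n\to 0$.

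I do not expect a genuine obstacle; the only points needing care are the triple nested extraction of subsequences (first for $(K_n\phi_n)$, then for $(K_n\chi)$), and the observation that the single hypothesis \eqref{eq:cc} supplies, after rescaling, the compactness needed for both. It is worth flagging why self-adjointness is essential: without it one only has $\|K_n\phi_n\|^2_\cH = (\phi_n,K_n^*K_n\phi_n)_\cH$, and collective compactness of $(K_n^*)$ does not turn $K_n^*(K_n\phi_n)$ into a strongly convergent sequence that may be paired against the weakly-null $(\phi_n)$. It is precisely the self-adjoint identity $K_n^2\phi_n=K_n(K_n\phi_n)$, combined with $K_n\phi_n\to\chi$ and $K_n\chi\to\eta$, that produces the required strong convergence.
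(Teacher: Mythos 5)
Your proof is correct, and it takes a genuinely different route from the paper's. The paper leans on Anselone's result \cite[Corollary 5.7]{An:71} that a collectively compact sequence of self-adjoint operators is relatively compact \emph{in operator norm}; it extracts a subsequence with $\|K_n-\widehat K\|_\cH\to 0$, notes that the limit $\widehat K$ is compact and hence completely continuous, and concludes via $\|K_n\phi_n\|_\cH\le\|K_n-\widehat K\|_\cH\|\phi_n\|_\cH+\|\widehat K\phi_n\|_\cH$. You instead exploit self-adjointness pointwise through $\|K_n\phi_n\|_\cH^2=(\phi_n,K_n^2\phi_n)_\cH$ together with a double subsequence extraction (first $K_n\phi_n\to\chi$, then $K_n\chi\to\eta$) to manufacture a strongly convergent sequence $K_n^2\phi_n\to\eta$ against which the weakly-null $\phi_n$ can be paired. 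Your argument is more elementary and self-contained --- no auxiliary operator-norm compactness theorem is needed, only the set-level hypothesis \eqref{eq:cc}, uniform boundedness, and the rescaling observation that $\{K_n\psi\}$ is relatively compact for every fixed $\psi$ --- at the modest cost of the nested extraction; the paper's route is shorter once the auxiliary result is granted. One small imprecision in your closing remark: if $(K_n^*)$ were itself assumed collectively compact, the same double-extraction scheme applied to $K_n^*K_n\phi_n$ would in fact succeed without self-adjointness (one still has $\sup_n\|K_n^*\|=\sup_n\|K_n\|<\infty$). The real point is that collective compactness of $(K_n)$ need not pass to the adjoints, and absent some such control the conclusion genuinely fails, as the paper records right after the lemma via the operator sequence of \cite[Example 5.5]{An:71}.
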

\begin{proof} Suppose that $\phi_n\rightharpoonup 0$, in which case $(\phi_n)$ is bounded. To show that $K_n\phi_n\to 0$ it is enough to show that each subsequence of $(K_n\phi_n)$ has a subsequence converging to zero. But if $(K_n)_{n=1}^\infty$ is collectively compact and each $K_n$ is self-adjoint then, by \cite[Corollary 5.7]{An:71}, $\{K_n:n\in \NN\}$ is relatively compact as a subset of the Banach space of bounded linear operators on $\cH$. So take a subsequence of $(K_n\phi_n)$, denoted again by $(K_n\phi_n)$. This has a subsequence, denoted again by $(K_n\phi_n)$, such that $\|K_n-\widehat K\|_\cH\to 0$, for some bounded linear operator $\widehat K$, and the relative compactness of \eqref{eq:cc} implies that $\widehat K$ is compact \cite[\S1.4]{An:71}, and so completely continuous. Thus $\|K_n\phi_n\|_\cH \leq \|K_n-\widehat K\|_\cH \|\phi_n\|_\cH + \|\widehat K\phi_n\|_\cH \to 0$.
\end{proof}

Surprisingly to us, if self-adjointness is dropped, then the above lemma does not hold, indeed it need not hold even that $K_n\phi_n\rightharpoonup 0$.  A counterexample is the operator sequence of \cite[Example 5.5]{An:71}.

\begin{theorem} \label{thm:GalGenNew} Suppose that $\cA$ is invertible but $\cA$ cannot be written in the form $\cA=\cA_0+\cK$, where $\cA_0$ is coercive and $\cK$ is compact, and that $(\cH^*_N)_{N=1}^\infty$ is a sequence of finite-dimensional subspaces of $\cH$, with $\cH^*_1\subset \cH^*_2 \subset ...$, for which the Galerkin method is convergent.
Then there exists a sequence $(\cH_N)_{N=1}^\infty$ of finite-dimensional subspaces of $\cH$, with $\cH_1\subset \cH_2 \subset ...$, such that:
\begin{enumerate}
\item[(a)] the Galerkin method is not convergent for the sequence $(\cH_N)_{N=1}^\infty$; and
\item[(b)] for each $N\in \NN$,
$\cH^*_N \subset \cH_N \subset \cH^*_{M_N}$, for some $M_N\in \NN$.
\end{enumerate}
\end{theorem}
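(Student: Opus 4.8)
The plan is to argue by contradiction: assuming $\cA$ is \emph{not} coercive-plus-compact, I construct the sandwiched sequence $(\cH_N)_{N=1}^\infty$ explicitly and show that the standard stability inequality \eqref{eq:lower} must fail along a subsequence, so that the Galerkin method for $(\cH_N)$ cannot converge. The starting point is Corollary \ref{cor:essnum}: since $\cA$ is not the sum of a coercive and a compact operator, $0\in W_\ess(\cA)$, and so (by the standard characterisation of the essential numerical range via orthonormal sequences) there is an orthonormal sequence $(e_n)\subset\cH$ with $e_n\rightharpoonup 0$ and $(\cA e_n,e_n)_\cH\to 0$. Because the Galerkin method converges for $(\cH^*_N)$, the subspace $\cH^*_\infty:=\bigcup_{M}\cH^*_M$ is dense in $\cH$ and $\dim\cH^*_N\to\infty$; approximating each $e_n$ by an element of $\cH^*_\infty$ and renormalising, I obtain a sequence $(\widetilde e_n)\subset\cH^*_\infty$ of unit vectors with $\|\widetilde e_n-e_n\|_\cH\to0$, hence with $\widetilde e_n\rightharpoonup 0$ and $(\cA\widetilde e_n,\widetilde e_n)_\cH\to 0$. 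The goal is a nested $(\cH_N)$ with $\cH^*_N\subset\cH_N\subset\cH^*_{M_N}$ along which $\|\cP_N\cA\phi_N\|_\cH/\|\phi_N\|_\cH$ fails to stay bounded below.

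I build $(\cH_N)$ recursively by "polluting'' $(\cH^*_N)$ at a sparse increasing set of stages $N_1<N_2<\cdots$. Fix a null sequence $\delta_j\downarrow 0$ with $\delta_1<1$. Having chosen $N_1,\dots,N_j$ and unit vectors $v_1,\dots,v_j\in\cH^*_\infty$, set $\cH_N:=\cH^*_N+\spann\{v_1,\dots,v_j\}$ for $N_j\le N<N_{j+1}$; then, choosing $N_{j+1}>N_j$ arbitrarily, let $Y:=\cH^*_{N_{j+1}}+\spann\{v_1,\dots,v_j\}$ --- a \emph{fixed} finite-dimensional space once $N_{j+1}$ is fixed --- and pick $v_{j+1}:=\widetilde e_n$ for an index $n$ so large that $\|\cP_Y\cA\widetilde e_n\|_\cH<\delta_{j+1}$, $\|\cP_Y\widetilde e_n\|_\cH<\delta_{j+1}$, and $|(\cA\widetilde e_n,\widetilde e_n)_\cH|<\delta_{j+1}$; such $n$ exists because $Y$ is finite-dimensional, $\widetilde e_n\rightharpoonup 0$, and $(\cA\widetilde e_n,\widetilde e_n)_\cH\to0$. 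Finally $\cH_{N_{j+1}}:=Y+\spann\{v_{j+1}\}=\cH^*_{N_{j+1}}+\spann\{v_1,\dots,v_{j+1}\}$. By construction $\cH_1\subset\cH_2\subset\cdots$ and $\cH^*_N\subset\cH_N$; moreover $\spann\{v_1,\dots,v_j\}\subset\cH^*_{M}$ for some finite $M$ (a finite subset of $\cH^*_\infty$ lies in some $\cH^*_M$), so $\cH_N\subset\cH^*_{M_N}$ for a suitable $M_N$; and $\dim\cH_N\to\infty$. This gives condition (b).

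For (a), fix $j$ and write $v:=v_{j+1}$, $N:=N_{j+1}$, $v=\cP_Y v+v_\perp$ with $v_\perp\perp Y$. Since $\|\cP_Y v\|_\cH<1=\|v\|_\cH$ we have $v\notin Y$, so $\cH_N=Y\oplus\spann\{v_\perp\}$ is an \emph{orthogonal} direct sum, $\cP_N=\cP_Y+\cP_{\spann\{v_\perp\}}$, and hence
\[
\cP_N\cA v=\cP_Y\cA v+\frac{(\cA v,v_\perp)_\cH}{\|v_\perp\|^2_\cH}\,v_\perp .
\]
Here $\|\cP_Y\cA v\|_\cH<\delta_{j+1}$, $\|v_\perp\|_\cH\ge 1-\delta_{j+1}$, and $|(\cA v,v_\perp)_\cH|\le|(\cA v,v)_\cH|+\|\cA\|_\cH\|\cP_Y v\|_\cH\le\delta_{j+1}(1+\|\cA\|_\cH)$, so $\|\cP_N\cA v\|_\cH\le\delta_{j+1}+\delta_{j+1}(1+\|\cA\|_\cH)/(1-\delta_{j+1})\to 0$ as $j\to\infty$, while $\|v\|_\cH=1$. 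Thus no pair $(N_0,c)$ with $c>0$ satisfies $\|\cP_N\cA\phi_N\|_\cH\ge c\|\phi_N\|_\cH$ for all $N\ge N_0$ and $\phi_N\in\cH_N$; by the necessary-and-sufficient condition \eqref{eq:lower} recalled above (applicable since $\cA$ is invertible), the Galerkin method is not convergent for $(\cH_N)$, which is (a).

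The main obstacle --- the reason the construction adjoins exactly one new vector per pollution stage --- is that $\cP_N$ projects onto the \emph{sum} $\cH^*_N+\spann\{v_1,\dots,v_j\}$, which is generally not an orthogonal sum of the two pieces, so $\|\cP_N\cA v\|_\cH$ cannot be bounded by controlling the component along each summand separately (a small perturbation of one summand can make the angle between the summands degenerate). Adjoining a single vector $v$ relative to the \emph{fixed} finite-dimensional space $Y$ resolves this: $Y+\spann\{v\}=Y\oplus\spann\{v-\cP_Y v\}$ is automatically an orthogonal decomposition, and the quantities $(\cA v,v)_\cH$, $\cP_Y\cA v$, $\cP_Y v$ are all made small simply because $Y$ is finite-dimensional and $v$ is taken far out along the weakly null sequence $(\widetilde e_n)$. (Markus's original argument \cite{Ma:74}, together with the collectively compact self-adjoint operator sequences of Lemma \ref{lem:cc}, provides an alternative route to the same mechanism; the single-vector construction above is more elementary and self-contained.)
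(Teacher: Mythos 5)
Your proof is correct and takes a genuinely different — and more elementary — route than the paper's. Both proofs start the same way: $0\in W_{\ess}(\cA)$ by Corollary~\ref{cor:essnum}, whence an orthonormal sequence with $(\cA\phi_m,\phi_m)_\cH\to 0$, which is then approximated by a unit-norm sequence in $\cup_N \cH^*_N$. From there the two approaches diverge. The paper fixes the approximating sequence $(\psi_m)$ up front, proves it is a Riesz sequence, establishes $\cQ_n\cA\psi_n\to 0$ (where $\cQ_n$ projects onto $\spann\{\psi_1,\dots,\psi_n\}$, a growing subspace) by a two-parameter splitting $\cQ_n=\cQ_{m,n}+(\cQ_n-\cQ_{m,n})$ requiring the collectively compact operator machinery of Lemma~\ref{lem:cc}, and then augments by $\cH^*_N$, again via a collective compactness argument. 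Your construction interleaves the choice of the polluting vectors with the construction of the subspaces: at each sparse stage $N_{j+1}$ you adjoin a \emph{single} vector $v_{j+1}=\widetilde e_n$ chosen far enough along the weakly null sequence so that $\cP_Y v_{j+1}$, $\cP_Y\cA v_{j+1}$, and $(\cA v_{j+1},v_{j+1})_\cH$ are all small relative to the \emph{already-fixed} finite-dimensional space $Y=\cH^*_{N_{j+1}}+\spann\{v_1,\dots,v_j\}$. Since $Y$ is frozen when $v_{j+1}$ is picked, ordinary compactness of the finite-rank projection $\cP_Y$ suffices; there is no need for collective compactness, Riesz sequences, or the auxiliary estimate $\cR_n\cA\phi_n\to 0$. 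The one-vector-at-a-time adjunction also makes $\cH_{N_{j+1}}=Y\oplus\spann\{v_\perp\}$ an \emph{orthogonal} direct sum (your correct observation that $\|\cP_Y v\|<1$ rules out $v\in Y$), which gives a closed-form expression for $\cP_{N_{j+1}}\cA v$ and a clean bound $\|\cP_{N_{j+1}}\cA v\|_\cH\le \delta_{j+1}+\delta_{j+1}(1+\|\cA\|_\cH)/(1-\delta_{j+1})\to 0$, so that \eqref{eq:lower} fails along $N_{j+1}$. The nesting $\cH_1\subset\cH_2\subset\cdots$ and the sandwich property~(b) are immediate from the construction and the fact that any finite set of vectors in $\cup_N\cH^*_N$ lies in some $\cH^*_M$. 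The only gap is cosmetic — you should state the base case of the recursion explicitly (e.g.\ $N_1=1$, $Y=\cH^*_1$, and $v_1$ chosen by the same rule) — but this does not affect correctness. In short: same key idea (exploit $0\in W_{\ess}(\cA)$ to find, inside $\cup_N\cH^*_N$, weakly null unit vectors on which $\cA$ is nearly null in the relevant quadratic form), but a diagonal construction that trades the paper's collectively compact machinery for a fixed-subspace compactness argument; the resulting proof is shorter and more self-contained.
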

\begin{proof}
We prove this result by constructing a sequence $(\cH_N)_{N=1}^\infty$ satisfying (b) for which
$$
\inf_{\phi\in \cH_N\setminus\{0\}}\frac{\|\cP_N\cA\phi\|_\cH}{\|\phi\|_\cH}\to 0 \quad \mbox{as} \quad N\to\infty,
$$
where $\cP_N$ denotes orthogonal projection onto $\cH_N$,
so that \eqref{eq:lower}, a necessary condition for convergence of the Galerkin method, fails.

Using that $0\in W_{\ess}(\cA)$ by Corollary \ref{cor:essnum}, \cite[Lemma 2]{Ma:74} (or \cite[Chapter II, Lemma 5.2]{GoFe:74}) shows that there exists an orthonormal sequence $(\phi_m)_{m=1}^\infty$ in $\cH$ such that $(\cA\phi_m,\phi_m)_\cH\to 0$ as $m\to\infty$. Further, arguing exactly as done to prove (5.4) in \cite[Chapter II, Lemma 5.2]{GoFe:74}, noting that every orthonormal sequence is weakly convergent to zero, so that $\phi_m\rightharpoonup 0$ and
$$
(\cA\phi_m,\phi)_\cH = (\phi_m,\cA'\phi)_\cH \to 0
$$
as $m\to\infty$, for every $\phi\in \cH$, where $\cA'$ is the adjoint of $\cA$,
we see that, by taking subsequences if necessary, we can choose the orthonormal sequence so that
\begin{equation} \label{eq:Rn}
\cR_n \cA\phi_n \to 0 \quad \mbox{as} \quad n\to\infty,
\end{equation}
where $\cR_n$ is orthogonal projection onto the subspace $\cV_n := \mathrm{span}\{\phi_1,...,\phi_n\}$.

Now choose a sequence $(\psi_m)_{m=1}^\infty$ such that, for every $m\in \NN$, $\psi_m\subset \cH^*_n$, for some $n\in \NN$, and
\begin{equation} \label{eq:close}
\|\phi_m-\psi_m\| \leq 2^{-m}, \quad m\in \NN.
\end{equation}
(This is possible since the Galerkin method is convergent for $(\cH^*_N)_{N=1}^\infty$ so that this sequence converges to $\cH$.) We note that $(\psi_m)_{m=1}^\infty$ is a Riesz sequence. Indeed, if, for some $m,n\in \NN$ with $n\geq n$ and some $\underline{a}=(a_m,...,a_{n})\in \C^{n+1-m}$,
$$
\psi := \sum_{j=m}^{n} a_{j} \psi_j \quad \mbox{ and } \quad \phi := \sum_{j=m}^{n} a_{j} \phi_j,
$$
then, by \eqref{eq:close} and Cauchy-Schwarz, where $\|\underline{a}\|_2:= (\sum_{j=m}^{n} |a_j|^2)^{1/2}$,
\begin{equation} \label{eq:psiphi}
\|\psi-\phi\|_\cH \leq \frac{\|\underline{a}\|_2}{2^{m-1}\sqrt{3}}\leq \frac{\|\underline{a}\|_2}{\sqrt{3}}, \quad \mbox{ so that} \quad \frac{\sqrt{3}-1}{\sqrt{3}} \, \|\underline{a}\|_2 \leq \|\psi\|_\cH \leq \frac{\sqrt{3}+1}{\sqrt{3}} \, \|\underline{a}\|_2
\end{equation}
since $\|\phi\|_\cH=\|\underline{a}\|_2$. Note also that, since \eqref{eq:close} holds and $\phi_m\rightharpoonup 0$ as $m\to\infty$, we have also that $\psi_m\rightharpoonup 0$.

For $m,n\in \NN$ with $m\leq n$, let $\cQ_{m,n}$ denote orthogonal projection onto the subspace $\cW_{m,n} := \mathrm{span}\{\psi_m,...,\psi_n\}$, and $\cQ_n$ orthogonal projection onto $\cW_n:= \cW_{1,n}$. We now show that \eqref{eq:Rn} holds with the orthonormal sequence $(\phi_n)$ replaced by its approximation $(\psi_n)$, precisely that
\begin{equation} \label{eq:Pn}
\cQ_n \cA\psi_n \to 0 \quad \mbox{as} \quad n\to\infty.
\end{equation}

For $m\in \NN$, let $\cW^m$ denote the subspace of those $\psi\in \cH$ that have the representation
$$
\psi = \sum_{j=m}^\infty a_j \psi_j \quad \mbox{with} \quad \sum_{j=m}^\infty|a_j|^2 < \infty;
$$
this is a closed subspace by the second set of inequalities in \eqref{eq:psiphi}. Let $\cQ^m$ denote orthogonal projection onto $\cW^m$. We note that $\cQ_{m,n}$ converges strongly to $\cQ^m$ as $n\to\infty$, i.e., for every $m\in \NN$,
\begin{equation} \label{eq:strongproj}
\cQ_{m,n} \phi \to \cQ^m\phi, \quad \mbox{as } n\to\infty, \tfa \phi\in \cH.
\end{equation}
As a consequence, for each fixed $m\in \NN$, the family of operators $\{\cQ_n-\cQ_{m,n}: n\geq m\}$ is collectively compact.
For if $(p_j)_{j=1}^\infty\subset \cH$ is a bounded sequence and, for each $j\in \NN$, $n_j\in \NN$ with $n_j\geq m$, then, for some coefficients $\underline{a}^j=(a_{1,j},...,a_{n_j,j})\in \C^{n_j}$, with $\|\underline{a}^j\|_2 \leq \sqrt{3}/(\sqrt{3}-1)\|p_j\|_\cH$,
$$
\cQ_{n_j}p_j = \sum_{n=1}^{n_j} a_{n,j}\psi_n, \mbox{ so that } (\cQ_{n_j}-\cQ_{m,n_j})p_j = \sum_{n=1}^{m-1} a_{n,j} (\psi_n-\cQ_{m,n_j}\psi_n).
$$
Since $\{|a_{n,j}|:j,n\in \NN, \, n\leq m-1\}$ is bounded, it follows by Bolzano-Weierstrass and \eqref{eq:strongproj} that $((\cQ_{n_j}-\cQ_{m,n_j})p_j)_{j=1}^\infty$ has a convergent subsequence. Thus $\{\cQ_n-\cQ_{m,n}: n\geq m\}$ is collectively compact; moreover, orthogonal projection operators are self-adjoint (e.g., \cite[Theorem 12.14]{Rud}).  Since $(\psi_n)_{n=1}^\infty$ is weakly convergent to zero, it follows from Lemma \ref{lem:cc} that, for every $m\in \NN$,
\begin{equation} \label{eq:small}
(\cQ_n-\cQ_{m,n})\cA\psi_n \to 0 \quad \mbox{as} \quad n\to\infty.
\end{equation}
To complete the proof of \eqref{eq:Pn} we show, using \eqref{eq:close} and \eqref{eq:psiphi}, that
\begin{equation} \label{eq:Qmn}
\cQ_{m,n}\cA\psi_n \to 0 \quad \mbox{as} \quad m\to\infty,
\end{equation}
uniformly in $n\geq m$. It is easy to see that this, together with \eqref{eq:small}, implies \eqref{eq:Pn}.

To see that \eqref{eq:Qmn} holds, note first that, by \eqref{eq:close}, it is enough to show that $\cQ_{m,n}\cA\phi_n \to 0$ as $m\to\infty$,
uniformly in $n\geq m$.  For each $m,n\in \NN$ with $n\geq m$ let
$$
\chi_{m,n} := \sum_{j=m}^n a_{j} \phi_j,
$$
where $(a_{j})_{j=m}^n$ is the unique set of coefficients such that
\begin{equation} \label{eq:Qmncoeff}
\cQ_{m,n}\cA\phi_n = \sum_{j=m}^n a_{j} \psi_j,
\end{equation}
and let $\cR_{m,n}$ denote orthogonal projection onto the subspace $\cV_{m,n} := \mathrm{span}\{\phi_m,...,\phi_n\}$. Then, for $m,n\in \NN$ with $n\geq m$,
$$
\|\cQ_{m,n}\cA\phi_n\|_\cH^2 + \|\cA\phi_n -\cQ_{m,n}\cA\phi_n\|_\cH^2 = \|\cA\phi_n\|_\cH^2 = \|\cR_{m,n}\cA\phi_n\|_\cH^2 + \|\cA\phi_n -\cR_{m,n}\cA\phi_n\|_\cH^2
$$
and, since $\|\cR_{m,n}\cA\phi_n\|_\cH\leq \|\cR_{n}\cA\phi_n\|_\cH$ and $\|\cA\phi_n -\cR_{m,n}\cA\phi_n\|_\cH\leq \|\cA\phi_n -\chi_{m,n}\|_\cH$ (since $\cR_{m,n}\cA\phi_n$ is the best approximation to $\cA\phi_n$ from $\cV_{m,n}$), it follows that
\begin{eqnarray*}
\|\cQ_{m,n}\cA\phi_n\|_\cH^2 &\leq&    \|\cR_{n}\cA\phi_n\|_\cH^2 + (\|\cA\phi_n -\chi_{m,n}\|_\cH+\|\cA\phi_n -\cQ_{m,n}\cA\phi_n\|_\cH) \times\\
& & \hspace{15ex}   (\|\cA\phi_n -\chi_{m,n}\|_\cH-\|\cA\phi_n -\cQ_{m,n}\cA\phi_n\|_\cH)\\
&\leq&    \|\cR_{n}\cA\phi_n\|_\cH^2 + (2\|\cA\phi_n\|_\cH+\|\chi_{m,n} -\cQ_{m,n}\cA\phi_n\|_\cH) \|\chi_{m,n}-\cQ_{m,n}\cA\phi_n\|_\cH.
\end{eqnarray*}
Now, where the coefficients $\underline{a}=(a_m,...,a_n)$ are defined by \eqref{eq:Qmncoeff},
$$
 \|\chi_{m,n}-\cQ_{m,n}\cA\phi_n\|_\cH= \left\|\sum_{j=m}^na_j(\phi_j -\psi_j)\right\|_\cH \leq \frac{\|a\|_2}{2^{m-1}\sqrt{3}} \leq \frac{\|\cQ_{m,n}\cA\phi_n\|_\cH}{2^{m-1}(\sqrt{3}-1)},
$$
using \eqref{eq:psiphi}. Since $\|\cQ_{m,n}\cA\phi_n\|_\cH\leq \|\cA\phi_n\|_\cH\leq \|\cA\|_\cH$, we see that we have shown that
\begin{eqnarray*}
\|\cQ_{m,n}\cA\phi_n\|_\cH^2
&\leq&    \|\cR_{n}\cA\phi_n\|_\cH^2 + \|\cA\|_\cH^2\left(2+1/(\sqrt{3}-1)\right) \frac{1}{2^{m-1}(\sqrt{3}-1)}.
\end{eqnarray*}
But this right hand side tends to zero as $m\to\infty$, uniformly in $n\geq m$, by \eqref{eq:Rn}.

We have shown that \eqref{eq:Pn} holds, where, for each $n\in\NN$, $\cQ_n$ is orthogonal projection onto $\cW_n$. Thus \eqref{eq:lower} fails for the sequence $(\cW_n)_{n=1}^\infty$, and note also that, by definition of $(\psi_n)$, it holds for every $n$ that $\cW_n\subset \cH^*_m$ for some $m\in\NN$. To achieve our initial aim and complete the proof we augment our subspaces $\cW_n$ in such a way that \eqref{eq:lower} still fails while, additionally, our modified sequence of subspaces satisfies (b).

For $n,N\in \NN$ let $\cP_n^N$ denote orthogonal projection from $\cH$ onto the subspace $\cW_n^N:=\cH_N^*+\cW_n$, and $L_n^N$ orthogonal projection onto the orthogonal complement in $\cW_n^N$ of $\cW_n$, a subspace of $\cH_N^*$. Then $\cP_n^N= \cQ_n+L_n^N$ and, clearly, for each $N$, $\{L_n^N: n\in \NN\}$ is collectively compact, for it is uniformly bounded and the range of $L_n^N$ is contained in the finite-dimensional space $\cH_N^*$, for every $n\in \NN$. Thus, and since $\psi_n\rightharpoonup 0$ and each $L^N_n$ is self-adjoint, $L^N_n\cA\psi_n\to 0$ as $n\to\infty$, for every $N\in \NN$, by Lemma \ref{lem:cc}. It follows, using \eqref{eq:Pn}, that, for every $N\in \NN$,
$$
\cP_n^N \cA \psi_n =\cQ_n \cA \psi_n + L^N_n \cA \psi_n \to 0 \quad \mbox{as} \quad n\to \infty.
$$
Choose an increasing sequence $(n_N)_{N=1}^\infty\subset \NN$ such that $\|\cP_{n_N}^N \cA \psi_{n_N}\|\leq N^{-1}$, and set $\cH_N := \cW_{n_N}^N=\cH_N^*+\cW_{n_N}$ and $\cP_N :=  \cP_{n_N}^N$, for $N\in \NN$, so that $\cP_N$ is orthogonal projection onto $\cH_N$. Then $\cH_1\subset \cH_2\subset ...$, the sequence $(\cH_N)_{N=1}^\infty$ satisfies (b), and, recalling \eqref{eq:close}, we see that
$$
\inf_{\phi\in \cH_N\setminus\{0\}}\frac{\|\cP_N\cA\phi\|_\cH}{\|\phi\|_\cH}\leq \frac{\|\cP_N\cA\psi_{n_N}\|_\cH}{\|\psi_{n_N}\|_\cH}\leq \frac{N^{-1}}{1-2^{-n_N}}\to 0 \quad \mbox{as} \quad N\to\infty,
$$
so that \eqref{eq:lower} fails and the proof is complete.
\end{proof}

\subsection{Calculating the essential numerical range and essential norm}

Our proofs of Theorems \ref{thm:Q1}--\ref{thm:Q2Poly} depend on localisation results for the essential norm and essential numerical range of $D$ (Theorems \ref{lem:local} and \ref{thm:loccon}). These in turn depend on the following general Hilbert space results. The first of these is a useful characterisation of the essential norm (cf.\ Weyl's criterion (e.g., \cite[Theorem 7.2]{HiSi96}, \cite[Lemma 4.3.15]{BDav}) for membership of the essential spectrum).

\begin{lemma} \label{lem:ess} Let $\cH$ be a separable Hilbert space and $\cA$ a bounded linear operator on $\cH$. Then
$$
\|\cA\|_{\cH,\mathrm{ess}} = \sup_{(\phi_n)\in \cS} \,\liminf_{n\to\infty} \|\cA\phi_n\|_\cH= \sup_{(\phi_n)\in \cS} \limsup_{n\to\infty} \|\cA\phi_n\|_\cH,
$$
where $\cS$ is the set of sequences $(\phi_n)_{n=1}^\infty$ such that $\|\phi_n\|_\cH=1$ and $\phi_n\rightharpoonup 0$ as $n\to\infty$.
\end{lemma}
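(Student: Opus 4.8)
The plan is to set $a := \|\cA\|_{\cH,\ess}$ and to write $L$ and $U$ for the two suprema on the right-hand side (over $\liminf$ and over $\limsup$, respectively). Since $\liminf_n x_n \le \limsup_n x_n$ for every real sequence, we automatically have $L \le U$, so it suffices to prove the two inequalities $U \le a$ and $L \ge a$; the chain $a \le L \le U \le a$ then forces equality throughout. At the outset I would dispose of the case $\dim \cH < \infty$, which is degenerate: then $\cS$ is empty, so $L = U = 0$ under the convention $\sup\emptyset = 0$, and every operator on $\cH$ is compact, so $a = 0$. Thus assume $\dim\cH = \infty$ from now on.

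For $U \le a$, I would fix $(\phi_n) \in \cS$ and an arbitrary compact operator $\cK$ on $\cH$. Since $\phi_n \rightharpoonup 0$ and $\cK$ is compact, $\cK\phi_n \to 0$ in norm; hence from $\|\cA\phi_n\|_\cH \le \|\cA - \cK\|_\cH\|\phi_n\|_\cH + \|\cK\phi_n\|_\cH$ together with $\|\phi_n\|_\cH = 1$ we get $\limsup_n \|\cA\phi_n\|_\cH \le \|\cA - \cK\|_\cH$. Taking first the infimum over compact $\cK$ and then the supremum over $(\phi_n) \in \cS$ yields $U \le a$.

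For $L \ge a$, the substantive direction, I would exploit separability: fix an orthonormal basis $(e_j)_{j=1}^\infty$ of $\cH$, let $P_N$ denote orthogonal projection onto $\mathrm{span}\{e_1,\dots,e_N\}$, and set $Q_N := I - P_N$. The two key observations are: (i) $\cA P_N$ has finite rank, hence is compact, so $a \le \|\cA - \cA P_N\|_\cH = \|\cA Q_N\|_\cH$; and (ii) since $\dim\mathrm{ran}(Q_N) = \infty$, for each $N$ one may choose $\psi_N \in \mathrm{ran}(Q_N)$ with $\|\psi_N\|_\cH = 1$ and $\|\cA\psi_N\|_\cH \ge \|\cA Q_N\|_\cH - 1/N \ge a - 1/N$. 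It then remains to verify that $(\psi_N) \in \cS$, i.e.\ that $\psi_N \rightharpoonup 0$: for each fixed $j$ one has $\langle \psi_N, e_j\rangle_\cH = 0$ as soon as $N \ge j$, so $\langle \psi_N, e_j\rangle_\cH \to 0$ for every basis vector, and boundedness of $(\psi_N)$ promotes this to $\psi_N \rightharpoonup 0$. Consequently $L \ge \liminf_N \|\cA\psi_N\|_\cH \ge \liminf_N (a - 1/N) = a$, completing the proof.

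I do not anticipate a genuine obstacle: the lemma is the operator-norm analogue of Weyl's criterion for the essential spectrum, and the argument runs parallel to it. The one point needing a little care is the construction in the last paragraph — one must notice that it is the \emph{head} projections $\cA P_N$ that are compact (which is what delivers the lower bound $\|\cA Q_N\|_\cH \ge a$ on the norm of the \emph{tail} $\cA Q_N$), while unit vectors drawn from the ranges of the tail projections $Q_N$ are automatically weakly null because they are orthogonal to ever-longer initial segments of a fixed orthonormal basis. The degenerate finite-dimensional case, and the convention $\sup\emptyset = 0$ needed there, are the only other things worth flagging.
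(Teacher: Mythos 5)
Your proof is correct and follows essentially the same route as the paper's: the easy direction via compactness killing $\cK\phi_n$, and the hard direction via finite-rank head projections $\cA P_N$ forcing $\|\cA Q_N\|_\cH \ge \|\cA\|_{\cH,\ess}$, together with near-maximising unit vectors from $\mathrm{ran}(Q_N)$ that are automatically weakly null. The only cosmetic differences are that you instantiate the projections concretely from a fixed orthonormal basis (the paper works with an arbitrary strongly convergent sequence of finite-rank orthogonal projections, and verifies $Q_n\phi_n\rightharpoonup 0$ via $\|Q_n\psi\|\to 0$ rather than coordinate-wise), and that you explicitly dispose of the finite-dimensional case, which the paper leaves implicit.
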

\begin{proof}
If $(\phi_n)\in \cS$ and $T$ is compact, then
$$
\|\cA+T\|_\cH  \geq \limsup_{n\to\infty}\|(\cA+T)\phi_n\|_\cH = \limsup_{n\to\infty}\|\cA\phi_n\|_\cH,
$$
since $T\phi_n\to 0$ as $n\to\infty$. Thus
\beq\label{eq:ess1}
\|\cA\|_{\cH,\mathrm{ess}} \geq  \sup_{(\phi_n)\in \cS} \limsup_{n\to\infty} \|\cA\phi_n\|_\cH.
\eeq
We now prove that
\beq\label{eq:ess2}
\|\cA\|_{\cH,\mathrm{ess}} \leq \sup_{(\phi_n)\in \cS} \liminf_{n\to\infty} \|\cA\phi_n\|_\cH,
\eeq
and the result follows from \eqref{eq:ess1} and \eqref{eq:ess2}.

Let $\cP_n$ be a sequence of orthogonal projection operators converging strongly to the identity, each with finite-dimensional range, and let $\cQ_n:= \cI-\cP_n$. Then, since $\cA \cP_n$ is compact, the definition of the essential norm implies that
$$
\|\cA\|_{\cH,\mathrm{ess}} \leq \liminf_{n\to\infty} \|\cA \cQ_n\|_\cH.
$$
By the definition of the norm, and since $\cQ_n$ is a projection operator so that $\|\cA \cQ_n\|_\cH$ coincides with the norm of $\cA$ restricted to the range of $\cQ_n$, we can find, for each $n$, $\phi_n$ in the range of $\cQ_n$ such that  $\|\phi_n\|_\cH=\|\cQ_n\phi_n\|_\cH= 1$ and $\|\cA \cQ_n\phi_n\|_\cH \geq \|\cA \cQ_n\|_\cH -1/n$.
Therefore
$$
\|\cA\|_{\cH,\mathrm{ess}} \leq \liminf_{n\to\infty} \|\cA \cQ_n\|_\cH = \liminf_{n\to\infty} \|\cA \cQ_n\phi_n\|_\cH.
$$
The inequality \eqref{eq:ess2} then follows from the facts that $\|\cQ_n\phi_n\|_\cH=1$ for all $n$ and $\cQ_n\phi_n \rightharpoonup 0$ as $n\to\infty$; indeed, if $\psi\in \cH$ then $|(\psi,\cQ_n\phi_n)_\cH| = |(\cQ_n\psi,\cQ_n\phi_n)_\cH|\leq \|\cQ_n\psi\|_\cH\to 0$, since $\cQ_n$ converges strongly to zero.
\end{proof}

The following lemma has something of the flavour of limit operator arguments and results (e.g., \cite{Li:06}, \cite[\S5.3]{ChLi11}). We apply this as a component in our localisation results (e.g.\ in Lemma \ref{lem:loccon}). Moreover, application of this lemma in combination with \eqref{eq:subspaces} leads to a rather direct attack on Q2$^{\prime\prime}$: to show that a particular set $\Delta$ is contained in $W_\ess(D)$ it is enough, by \eqref{eq:subspaces}, to show that $\Delta$ is contained in $W_\ess(D^\dag)$, where $D^\dag$ is $D$ restricted to a subset $\Gamma^\dag$ of $\Gamma$. If the following lemma applies with $\widetilde \cA=D^\dag$ then it is, moreover, enough to show that $\Delta \subset W(D^\dag)$ to conclude. And we detail methods  in \S\ref{sec:CompNR}, based on \eqref{eq:subspaces}, to show that a set $\Delta \subset W(D^\dag)$.

\begin{lemma}\label{lem:Tlemma} Let $\cH$ be a Hilbert space and $\cA$ a bounded linear operator on $\cH$. If there exists an isometry $\cT$ on $\cH$ (so that $(\cT \phi,\cT\psi)_{\cH} = (\phi,\psi)_\cH$ for all $\phi,\psi\in \cH$) such that (i) $\cT$ commutes with $\cA$ and (ii) $\cT^n\to 0$ weakly as $n\to\infty$ (i.e.~ $\cT^n\phi\rightharpoonup 0$ for every $\phi\in \cH$), then $W_\ess(\cA) = \overline{W(\cA)}$ and $\|\cA\|_{\cH,\ess} = \|\cA\|_\cH$. If also $\cV$ is a closed subspace of $\cH$ and $\cQ\cT^n\to 0$ strongly as $n\to\infty$ (i.e.~ $\cQ\cT^n\phi\to 0$ for every $\phi\in \cH$), where $\cQ:=\cI-\cP$ and $\cP:\cH\to \cV$ is orthogonal projection, then $W_\ess(\widetilde \cA) = \overline{W(\widetilde \cA)}= \overline{W(\cA)}$ and $\|\widetilde \cA\|_{\cV,\ess} = \|\widetilde \cA\|_\cV=\|\cA\|_\cH$, where $\widetilde \cA := \cP\cA|_\cV$.
\end{lemma}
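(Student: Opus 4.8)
The plan is to use the isometry $\cT$ as a ``shift'': conjugating a fixed vector by $\cT^n$ preserves every norm and inner product that involves $\cA$ (since $\cT$ is an isometry commuting with $\cA$), while $\cT^n\to0$ weakly turns that fixed vector into a sequence converging weakly to zero, and such sequences are precisely the ones that ``see'' the essential norm and essential numerical range but are annihilated by compact operators.

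\emph{First part.} The inclusions $W_\ess(\cA)\subseteq\overline{W(\cA)}$ and $\|\cA\|_{\cH,\ess}\le\|\cA\|_\cH$ are automatic, taking the compact perturbation in \eqref{eq:essnorm}, \eqref{eq:essnumrangedef} to be $0$, so I only need the reverse. For the essential norm: given a compact $\cK$ and $\eps>0$, I would fix $\phi$ with $\|\phi\|_\cH=1$ and $\|\cA\phi\|_\cH\ge\|\cA\|_\cH-\eps$ and set $\phi_n:=\cT^n\phi$; then $\|\phi_n\|_\cH=1$, $\cA\phi_n=\cT^n\cA\phi$ so $\|\cA\phi_n\|_\cH=\|\cA\phi\|_\cH$ for all $n$, and $\phi_n\rightharpoonup0$ forces $\cK\phi_n\to0$, whence $\|(\cA-\cK)\phi_n\|_\cH\to\|\cA\phi\|_\cH$ and so $\|\cA-\cK\|_\cH\ge\|\cA\|_\cH-\eps$; letting $\eps\to0$ and taking the infimum over $\cK$ gives $\|\cA\|_{\cH,\ess}\ge\|\cA\|_\cH$ (one may instead invoke Lemma~\ref{lem:ess} with the sequence $(\cT^n\phi)$). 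For the essential numerical range: given $z=(\cA\psi,\psi)_\cH$ with $\|\psi\|_\cH=1$ and a compact $\cK$, the sequence $\psi_n:=\cT^n\psi$ has $((\cA+\cK)\psi_n,\psi_n)_\cH=(\cT^n\cA\psi,\cT^n\psi)_\cH+(\cK\psi_n,\psi_n)_\cH=z+(\cK\psi_n,\psi_n)_\cH\to z$, so $z\in\overline{W(\cA+\cK)}$; since this holds for every compact $\cK$, $W(\cA)\subseteq W_\ess(\cA)$, and then $\overline{W(\cA)}\subseteq W_\ess(\cA)$ because $W_\ess(\cA)$ is closed (an intersection of closed sets).

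\emph{Second part.} Here the extra hypothesis $\cQ\cT^n\to0$ strongly is what lets me transplant these sequences into $\cV$ with asymptotically negligible error. For $\phi\in\cH$ I set $w_n:=\cP\cT^n\phi$. Writing $\cT^n\phi=w_n+\cQ\cT^n\phi$ and using $\cQ\cT^n\phi\to0$, the isometry identity $\|\cT^n\phi\|_\cH=\|\phi\|_\cH$, boundedness of $\cA$, self-adjointness of $\cP$, and $\cP w_n=w_n$, one checks (Pythagoras for the first) that $\|w_n\|_\cH\to\|\phi\|_\cH$, that $\widetilde\cA w_n=\cP\cT^n\cA\phi-\cP\cA\cQ\cT^n\phi$ so $\|\widetilde\cA w_n\|_\cH\to\|\cA\phi\|_\cH$, and that $(\widetilde\cA w_n,w_n)_\cH=(\cA w_n,w_n)_\cH\to(\cA\phi,\phi)_\cH$ (expand both slots of $\cA w_n$ about $w_n=\cT^n\phi-\cQ\cT^n\phi$; the three terms carrying a factor $\cQ\cT^n\phi$ vanish in the limit). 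Normalising, $\phi_n:=w_n/\|w_n\|_\cH$ — defined for large $n$ — then satisfies $\|\phi_n\|_\cH=1$, $\|\widetilde\cA\phi_n\|_\cH\to\|\cA\phi\|_\cH$, $(\widetilde\cA\phi_n,\phi_n)_\cH\to(\cA\phi,\phi)_\cH$, and $\phi_n\rightharpoonup0$ in $\cV$ (for $\chi\in\cV$, $(\phi_n,\chi)_\cH=(\cT^n\phi,\chi)_\cH/\|w_n\|_\cH\to0$). Running the two arguments of the first part with $(\phi_n)$ in place of $(\cT^n\phi)$, now with $\cK$ compact on $\cV$, gives $\|\widetilde\cA\|_{\cV,\ess}\ge\|\cA\phi\|_\cH$ and $(\cA\phi,\phi)_\cH\in W_\ess(\widetilde\cA)$ for every unit $\phi\in\cH$; taking the supremum and closure yields $\|\cA\|_\cH\le\|\widetilde\cA\|_{\cV,\ess}$ and $\overline{W(\cA)}\subseteq W_\ess(\widetilde\cA)$. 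Combined with the trivial bounds $\|\widetilde\cA\|_{\cV,\ess}\le\|\widetilde\cA\|_\cV\le\|\cA\|_\cH$ and $W_\ess(\widetilde\cA)\subseteq\overline{W(\widetilde\cA)}\subseteq\overline{W(\cA)}$ from \eqref{eq:subspaces}, all the inequalities collapse to the asserted equalities.

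\emph{Main obstacle.} The computations are routine once the right sequences are written down; the delicacy all lives in the second part. I will need to confirm that $w_n=\cP\cT^n\phi$ is eventually nonzero (it is, since $\|w_n\|_\cH\to\|\phi\|_\cH$), that the renormalisation does not spoil weak convergence to zero \emph{relative to $\cV$} rather than to $\cH$ (it does not, since weak convergence in a closed subspace is tested only against vectors of that subspace), and — the real point — that every error term produced genuinely carries a factor $\cQ\cT^n(\cdot)$, so that the strong convergence $\cQ\cT^n\to0$, rather than mere weak convergence, is exactly what is required to eliminate it. No separability of $\cH$ is needed if one argues directly as above rather than through Lemma~\ref{lem:ess}.
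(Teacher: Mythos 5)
Your proposal is correct and follows essentially the same approach as the paper's own proof: both use $\cT^n\phi$ to manufacture weakly-null sequences on which $\cA$'s norm and quadratic form are preserved exactly, and in the second part both exploit $\cQ\cT^n\to 0$ strongly to show that replacing $\cT^n\phi$ by its projection $\cP\cT^n\phi$ introduces only $o(1)$ errors. The only cosmetic difference is that you normalise $\cP\cT^n\phi$ up front and rerun the first-part argument verbatim, whereas the paper works directly with the Rayleigh-type quotient $((\widetilde\cA+\cK)\cP\cT^n\phi,\cP\cT^n\phi)_\cH/\|\cP\cT^n\phi\|_\cH^2$; the computations are the same.
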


To get a concrete sense of situations where Lemma \ref{lem:Tlemma} can be applied (this is similar to the application we make in Lemma \ref{lem:loccon}), consider the case (e.g., \cite{BottSilb98,Li:06}) where $\cH=\ell^2(\Z)$, and $\cA$ and $\cT=\cA$ are both the right shift operator. Clearly, $\cT$ is an isometry and $\cA$ commutes with $\cT$. Further, $\cT^n\to 0$ weakly, so that, applying this lemma, $W_\ess(\cA) = \overline{W(\cA)}$. Since $\cA=\cT$ is a normal operator (its adjoint is the left shift operator) we have additionally \cite[\S1.4]{GuRa:97} that $\overline{W(\cA)}=\conv(\spec(\cA))$, which is the closed unit disk $\{z:|z|\leq 1\}$. Set $\cV= \ell^2(\NN)\subset \ell^2(\Z)$, $\cP:\cH\to \cV$ to be orthogonal projection, and $\widetilde \cA := \cP\cA|_\cV$. Then $\widetilde \cA$ (which is non-normal) is the restriction of $\cA$ to $\ell^2(\NN)$ that acts by multiplication by the infinite Toeplitz matrix which is zero except for $1$'s on the first subdiagonal. We see, by applying the second part of the lemma, that $W_\ess(\widetilde \cA)$ and $\overline{W(\widetilde \cA)}$ are also the closed unit disk.

\

\bpf[Proof of Lemma \ref{lem:Tlemma}]
We prove the statements about the numerical range and essential numerical range; the proofs of the statements about the norm and essential norm are analogous, using that
$$
\|\cA\|_\cH = \sup_{\phi, \,\psi\in \cH\setminus\{0\}} \frac{|(\cA\phi,\psi)_\cH|}{\|\phi\|_\cH\|\psi\|_\cH}.
$$

The definition of the essential numerical range implies immediately that $W_\ess(\cA)\subset \overline{W(\cA)}$ for any bounded linear operator $\cA$.
Conversely, suppose that $z\in W(\cA)$. Then, for some $\phi\in \cH$ with $\|\phi\|_\cH=1$, and all $n\in \mathbb{N}$,
\beqs
z = (\cA\phi,\phi)_\cH = (\cT^n\cA\phi,\cT^n\phi)_\cH = (\cA \cT^n\phi,\cT^n\phi)_\cH,
\eeqs
where we have used the facts that $\cT$ is an isometry and commutes with $\cA$.
Given a compact operator $\cK$, let
\beqs
z_n := \big((\cA+\cK)\cT^n\phi,\cT^n\phi\big)_\cH = z + (\cK \cT^n\phi,\cT^n\phi)_\cH.
\eeqs
Since $\cT^n \phi\rightharpoonup 0$, we have $\cK \cT^n \phi \tendo$, so $z_n\to z$. But $z_n\in W(\cA+\cK)$ for each $n$, so $z\in \overline{W(\cA+\cK)}$. Since this holds for all compact $\cK$, $z\in W_\ess(\cA)$. We have therefore shown that
$W(\cA)\subset W_\ess(\cA)$, so $\overline{W(\cA)} \subset W_\ess(\cA)$ since $W_\ess(\cA)$ is closed.

Now suppose that $\cV$ is a closed subspace of $\cH$ and that $\cQ\cT^n\to 0$ strongly as $n\to\infty$, where $\cQ:=\cI-\cP$ and $\cP:\cH\to \cV$ is orthogonal projection. We have that $W_\ess(\widetilde \cA) \subset \overline{W(\widetilde \cA)} \subset \overline{W(\cA)}$ by definition and \eqref{eq:subspaces}. To see that also $\overline{W(\cA)} \subset W_\ess(\widetilde \cA)$, suppose that $z\in W(\cA)$, in which case $z=(\cA\phi,\phi)_\cH$, for some $\phi\in \cH$ with $\|\phi\|_\cH=1$. Then $\cQ\cT^n\phi \to 0$ as $n\to \infty$ so that
\begin{equation} \label{eq:PTn}
\|\cP\cT^n\phi\|_\cH = \|\cT^n\phi\|_\cH + o(1) = \|\phi\|_\cH + o(1) =1 + o(1), \quad \mbox{as } n\to\infty.
\end{equation}
Thus, for some $N\in \NN$, $\cP\cT^n\phi \neq 0$ for $n\geq N$, so that, given any compact $\cK:\cV\to\cV$,
$$
z_n := \frac{((\widetilde \cA+\cK)\cP\cT^n\phi,\cP\cT^n\phi)_\cH}{\|\cP\cT^n\phi\|_\cH^2} \in W(\widetilde \cA+\cK), \quad n\geq N.
$$
Further, as $n\to \infty$, using \eqref{eq:PTn}, the definition of $\widetilde A$, that $\cQ\cT^n\phi \to 0$, and that $\cT^n\phi\rightharpoonup 0$ so that $\cK \cP\cT^n\phi \to 0$ (since $\cK\cP:\cH\to \cV$ is compact), it follows that
\begin{eqnarray*}
z_n = ((\widetilde \cA+\cK)\cP\cT^n\phi,\cP\cT^n\phi)_\cH + o(1)&=&(\cA\cP\cT^n\phi,\cP\cT^n\phi)_\cH + (\cK\cP\cT^n\phi,\cP\cT^n\phi)_\cH + o(1)\\
&=& (\cA\cT^n\phi,\cT^n\phi)_\cH + o(1)= z + o(1),
\end{eqnarray*}
since $\cA$ commutes with $\cT$ and $\cT$ is unitary.
Thus $z\in \overline{W(\widetilde \cA+\cK)}$. Since this holds for every compact $\cK$, $z\in W_\ess(\widetilde A)$. Thus $W(\cA)\subset W_\ess(\widetilde \cA)$; indeed $\overline{W(\cA)}\subset W_\ess(\widetilde \cA)$ since $W_\ess(\widetilde \cA)$ is closed.
\epf

\subsection{Calculating the numerical range and norm} \label{sec:CompNR}

Question Q1 requires estimation of the essential norm of $D$ and Q2$^{\prime\prime}$ requires determining membership of its  essential numerical range. Our localisation results in \S\ref{sec:localisation} below (and see Lemma \ref{lem:Tlemma} above) reduce these questions to determining norms and membership of the numerical range for local representatives of $D$. In this section we present  the results that we use for these calculations.

Expanding on \eqref{eq:subspaces}, suppose that $(\cH_N)_{N=1}^\infty$ is a sequence of finite-dimensional subspaces of the Hilbert space $\cH$, $\cP_N:\cH\to \cH_N$ is orthogonal projection and $\widetilde \cA_N := \cP_N \cA|_{\cH_N}$. Then
$$
W(\cA) \supset W(\widetilde \cA_N) = \{(\cA\psi,\psi)_\cH : \psi\in \cH_N, \; \|\psi\|_\cH=1\}.
$$
Further, suppose that $\dim(\cH_N)=N$ and  $\{\psi_1,...,\psi_N\}$ is an orthonormal basis for $\cH_N$, and construct the matrix $A_N\in \C^{N\times N}$ by
\begin{equation} \label{eq:ANGendef}
(A_N)_{jm} := (\cA\psi_m,\psi_j)_\cH, \quad 1\leq j,m\leq N.
\end{equation}
(Note that this is precisely the Galerkin matrix that we obtain on rewriting the Galerkin equations \eqref{eq:G} as a linear system using the basis $\{\psi_1,...,\psi_N\}$.)
Then, for every $\ba=(a_1,...,a_N)^T\in \C^{N\times 1}$, where $(\cdot,\cdot)_2$ denotes the standard inner product on $\C^N$ and $\|\cdot\|_2$ the associated norm (and also the induced operator norm),
$$
\ba^H A_N \ba = (A_N\ba,\ba)_2 = (\cA\phi,\phi)_{\cH}, \quad \|A_N\ba\|_2 = \|\cA\phi\|_{\cH}, \quad \mbox{and} \quad \|\ba\|_2 = \|\phi\|_{L^2(\Gamma)},
$$
where $\phi := \sum_{j=1}^N a_j \psi_j$, so that
\begin{equation} \label{eq:ANIncl}
\|A_N\|_2 = \|\widetilde \cA_N\|_\cV \leq \|\cA\|_\cH  \quad \mbox{and} \quad  W(A_N) = W(\widetilde \cA_N) \subset W(\cA).
\end{equation}
Further, if $\cH_1\subset \cH_2 \subset ...$ and $(\cH_N)_{N=1}^\infty$ is asymptotically dense in $\cH$, i.e.
$
\cH = \overline{\cup_{N=1}^\infty \cH_N},
$
then
\begin{equation} \label{eq:NRlimits}
\|A_1\|_2 \leq \|A_2\|_2 \leq ..., \;\;\; W(A_1)\subset W(A_2) \subset ..., \;\;\; \|\cA\|_\cH = \lim_{N\to\infty} \|A_N\|_2, \;\;\; \overline{W(\cA)} = \overline{\cup_{N=1}^\infty W(A_N)};
\end{equation}
(see, e.g., \cite[Theorem 9.3.4]{BDav} for this last result).

The equations \eqref{eq:NRlimits} reduce computing the norm and numerical range of an operator to computing the limits of norms and numerical ranges of finite matrices. In particular, it turns out that we need later to compute the norms and numerical ranges of real $N\times N$ matrices $C_N$ that have entries given by
\begin{equation} \label{eq:CNdef}
(C_N)_{jm} = (B_N)_{jm} e_{jm}, \quad 1\leq j,m \leq N,
\end{equation}
where, for some $e_0\geq 1$,
\begin{equation} \label{eq:ejm}
0< e_{jm} \leq e_0, \quad 1\leq j,m \leq N,
\end{equation}
and $B_N\in \R^{N\times N}$ has entries
\beq\label{eq:aBdef}
\big( B_N\big)_{jm}:= (-1)^{m+1} \sign(m-j), \quad 1\leq j,m\leq N,
\eeq
where $\mathrm{sign}(s) := 1$ for $s>0$, $:=-1$ for $s<0$, and $:=0$ for $s=0$. For a square matrix $E$, let $w_r(E)$ denote the numerical abscissa of $E$, i.e.\ $w_r(E) := \sup_{z\in W(E)} \Re(z)$. Since the numerical range is convex,
\begin{equation} \label{eq:WCN}
W(C_N) = \bigcap_{0\leq \theta \leq 2\pi} \left\{z: \Re(\re^{\ri\theta}z) \leq w_r\left( \re^{\ri \theta} C_N\right)\right\},
\end{equation}
and $w_r(\re^{\ri \theta} C_N)$, the numerical abscissa of $\re^{\ri \theta} C_N$, is also the largest eigenvalue of the Hermitian matrix $C_N^\theta := (\re^{\ri\theta}C_N + \re^{-\ri\theta}C_N^T)/2$ (e.g., \cite[Theorem 9.3.10]{BDav}).

\begin{lemma} \label{lem:CNprops} If $C_N\in \R^{N\times N}$ is defined by \eqref{eq:CNdef}, with $B_N$ defined by \eqref{eq:aBdef} and $e_{jm}$ satisfying \eqref{eq:ejm}, then:
\begin{itemize}
\item[(i)] $w(C_N)\leq \|C_N\|_2 \leq e_0(N-1)$.
\item[(ii)] $W(C_N)$ is symmetric about the real axis, so that $w_r(\re^{\ri \theta} C_N)=w_r(\re^{-\ri \theta} C_N)$, $\theta\in \R$.
\item[(iii)] If $e_{jm}=e_{mj}$ whenever $m-j$ is even, then
$$
W(C_N)\cap \R = [-w_r(C_N),w_r(C_N)]=[-\|C_N^0\|_2,\|C_N^0\|_2],
$$
so that $W(C_N)\cap\R \supset  [-a,a]$ for $N\geq 3$, with equality when $N=3$, where
$$
a := \half\sqrt{(e_{12}+e_{21})^2+(e_{23}+e_{32})^2}.
$$
\item[(iv)] If  $e_{jm}=e_{mj}$ for all $1\leq j,m\leq N$, then, for $\theta \in \R$,
\begin{equation} \label{eq:CNtheta}
(C^\theta_N)_{jm} = \frac{\sign(m-j)e_{jm}}{2}\left(\re^{-\ri \theta}(-1)^j-\re^{\ri \theta}(-1)^m\right), \quad 1\leq j,m \leq N,
\end{equation}
and $w_r(\re^{\ri \theta} C_N)=w_r(\re^{\ri (\theta+\pi)} C_N)= \|C_N^\theta\|_2$.
\item[(v)] If, for some $\epsilon >0$ and $N\geq 2$, $|e_{jm}-1|\leq \epsilon/(N-1)$ for $1\leq j,m\leq N$, then
\begin{equation} \label{eq:bigcapINCL}
\bigcap_{0\leq \theta \leq 2\pi} \left\{z: \Re(\re^{\ri\theta}z) \leq w_r\left( \re^{\ri \theta} B_N\right)-\epsilon\right\} \subset W(C_N)
\end{equation}
and
$$
W(C_N) \supset [\epsilon-b, b-\epsilon],
$$
where $b= \|B_N^0\|_2$.
\end{itemize}
\end{lemma}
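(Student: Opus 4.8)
The plan is to base all five parts on one structural fact: conjugation by the diagonal sign matrix $P:=\mathrm{diag}\big((-1)^1,\dots,(-1)^N\big)$ sends the Hermitian part $C_N^\theta=\half(\re^{\ri\theta}C_N+\re^{-\ri\theta}C_N^T)$ of $\re^{\ri\theta}C_N$ to $-\overline{C_N^\theta}=-(C_N^\theta)^T$, so that $\mathrm{spec}(C_N^\theta)$ is symmetric about $0$; everything else is elementary matrix-norm bookkeeping, Weyl's inequality for Hermitian matrices, and the support-function description \eqref{eq:WCN}. For \emph{(i)}: every entry of $C_N$ has modulus $\le e_0$ and the diagonal vanishes (as $\sign 0=0$), so all row and column sums of $|C_N|$ are $\le e_0(N-1)$, whence $\|C_N\|_2\le(\|C_N\|_1\|C_N\|_\infty)^{1/2}\le e_0(N-1)$, and $w(C_N)\le\|C_N\|_2$ by \eqref{eq:NormEssNorm}. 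For \emph{(ii)}: $C_N$ is real, so $z\mapsto\overline z$ maps $W(C_N)$ into itself (replace the unit vector by its entrywise conjugate); hence $W(C_N)$ is symmetric about $\R$, and $w_r(\re^{\ri\theta}C_N)=\sup_{z\in W(C_N)}\Re(\re^{\ri\theta}z)=\sup_{z\in W(C_N)}\Re(\re^{-\ri\theta}z)=w_r(\re^{-\ri\theta}C_N)$.

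For \emph{(iv)}, expanding $C_N^\theta$ using $(C_N)_{mj}=-(-1)^{j+1}\sign(m-j)e_{mj}$ and $e_{jm}=e_{mj}$ gives formula \eqref{eq:CNtheta}. A sign count on \eqref{eq:CNtheta}, using $(-1)^{j+m}\big(\re^{-\ri\theta}(-1)^j-\re^{\ri\theta}(-1)^m\big)=-\overline{\re^{-\ri\theta}(-1)^j-\re^{\ri\theta}(-1)^m}$, yields $PC_N^\theta P=-\overline{C_N^\theta}$; since $C_N^\theta$ is Hermitian, $\overline{C_N^\theta}=(C_N^\theta)^T$ has the same spectrum as $C_N^\theta$, so $\mathrm{spec}(C_N^\theta)=-\mathrm{spec}(C_N^\theta)$. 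Consequently $\|C_N^\theta\|_2=\rho(C_N^\theta)=\lambda_{\max}(C_N^\theta)=w_r(\re^{\ri\theta}C_N)$, and since $\re^{\ri(\theta+\pi)}C_N$ has Hermitian part $-C_N^\theta$ we get $w_r(\re^{\ri(\theta+\pi)}C_N)=\lambda_{\max}(-C_N^\theta)=-\lambda_{\min}(C_N^\theta)=\lambda_{\max}(C_N^\theta)$.

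For \emph{(iii)} only $\theta=0$ is needed: the hypothesis ($e_{jm}=e_{mj}$ for $m-j$ even) makes $C_N^0$ real symmetric and supported on the odd-difference entries, where $(-1)^{j+m}=-1$, so $PC_N^0P=-C_N^0$ and $\mathrm{spec}(C_N^0)$ is symmetric about $0$. Since $W(C_N)$ is convex and (by (ii)) symmetric about $\R$, its intersection with $\R$ is the interval between its extreme points in the horizontal direction, namely $[\lambda_{\min}(C_N^0),\lambda_{\max}(C_N^0)]$; combining with the spectral symmetry and $\lambda_{\max}(C_N^0)=w_r(C_N)=\rho(C_N^0)=\|C_N^0\|_2$ gives $W(C_N)\cap\R=[-w_r(C_N),w_r(C_N)]=[-\|C_N^0\|_2,\|C_N^0\|_2]$. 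Finally, the leading $3\times3$ principal submatrix of $C_N^0$ is independent of $N$, equal to the symmetric tridiagonal matrix with super-/subdiagonal entries $-\half(e_{12}+e_{21})$ and $\half(e_{23}+e_{32})$, whose eigenvalues are $0,\pm a$; by the Rayleigh quotient characterisation (embedding $\R^3\hookrightarrow\R^N$), $\lambda_{\max}(C_N^0)\ge a$ for $N\ge3$, with equality for $N=3$, so $W(C_N)\cap\R\supset[-a,a]$ with equality for $N=3$.

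For \emph{(v)}, write $C_N=B_N+E$ with $E_{jm}=(B_N)_{jm}(e_{jm}-1)$; then $|E_{jm}|\le\epsilon/(N-1)$ with zero diagonal, so $\|E\|_2\le\epsilon$ by the estimate in (i). For each $\theta$ the Hermitian part $E^\theta$ of $\re^{\ri\theta}E$ has $\|E^\theta\|_2\le\|E\|_2\le\epsilon$ and $C_N^\theta=B_N^\theta+E^\theta$, so Weyl's inequality gives $w_r(\re^{\ri\theta}C_N)=\lambda_{\max}(C_N^\theta)\ge\lambda_{\max}(B_N^\theta)-\epsilon=w_r(\re^{\ri\theta}B_N)-\epsilon$; substituting into \eqref{eq:WCN} for $C_N$ yields the first inclusion. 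Since $B_N$ satisfies the hypothesis of (iii) with all $e_{jm}=1$, one has $\lambda_{\max}(B_N^0)=b$ and $\lambda_{\min}(B_N^0)=-b$; Weyl's inequality applied to $C_N^0=B_N^0+E^0$ then gives $\lambda_{\max}(C_N^0)\ge b-\epsilon$ and $\lambda_{\min}(C_N^0)\le\epsilon-b$, whence $W(C_N)\cap\R=[\lambda_{\min}(C_N^0),\lambda_{\max}(C_N^0)]\supset[\epsilon-b,b-\epsilon]$. None of this is deep; the single idea is the conjugation by $P$, which forces $\mathrm{spec}(C_N^\theta)$ to be symmetric about $0$. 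The points needing care in a full write-up are the sign bookkeeping in \eqref{eq:CNtheta} and the identification $w_r(\re^{\ri\theta}C_N)=\lambda_{\max}(C_N^\theta)=\|C_N^\theta\|_2$, and the fact that in (v) the two asserted inclusions are logically independent — for $N=2$ the set on the left of the first inclusion is empty whereas $[\epsilon-b,b-\epsilon]$ need not be — so each requires its own argument.
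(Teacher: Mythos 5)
Your proof is correct and follows essentially the same route as the paper's: (i) via row/column sums, (ii) via real entries, (iii)–(iv) via the eigenvalue-symmetry observation (the paper sends an eigenvector $\ba$ to $\bb$ with $b_j=(-1)^j a_j$, resp.\ $b_j=(-1)^j\overline{a_j}$, which is exactly your conjugation by $P=\mathrm{diag}((-1)^j)$, just written out componentwise), and (v) via the perturbation bound $\|C_N-B_N\|_2\le\epsilon$ combined with Weyl's inequality and \eqref{eq:WCN}. Packaging the spectral symmetry as the identity $PC_N^\theta P=-\overline{C_N^\theta}$ is a tidy reformulation but not a different idea, and your closing remark that the two inclusions in (v) are logically independent (the left-hand set in \eqref{eq:bigcapINCL} is empty for $N=2$) is a correct and useful observation not made explicit in the paper.
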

\begin{proof}
Item (i) follows by \eqref{eq:NormEssNorm} and since the maximum row and column sums are both $\leq e_0(N-1)$. Item (ii) holds since the entries of $C_N$ are real, so that if $\lambda = \ba^H C_N  \ba$ then $\bar \lambda = \bar \ba^H C_N \bar \ba$.

\sloppy To see (iii), note that, by (ii) and since $W(C_N)$ is closed and convex, $W(C_N)\cap \R = [-w_r(-C_N), w_r(C_N)]$. Further, if $e_{jm}=e_{mj}$ when $j-m$ is even, and $\lambda$ is an eigenvalue of $C_N^0$ with eigenvector $\ba=(a_1,...,a_N)^T$, then $-\lambda$ is also an eigenvalue, with eigenvector $\bb=(b_1,...,b_N)^T$, where $b_j := (-1)^ja_j$, $j=1,...,N$. Thus, and since $w_r(C_N)$ and $-w_r(-C_N)$ are the largest and smallest eigenvalues of $C_N^0$, it follows that $\|C_N^0\|_2=w_r(-C_N)=w_r(C_N)$. The final claim of (iii) follows from \eqref{eq:subspaces} and since $a$ is the largest eigenvalue of $C_N^0$ when $N=3$.

The formula \eqref{eq:CNtheta} is immediate from the definition of $C_N^\theta$, and the rest of (iv) follows from the observation that, if  $e_{jm}=e_{mj}$,  $1\leq j,m\leq N$, and $\lambda$ is an eigenvalue of $C_N^\theta$ with eigenvector $\ba=(a_1,...,a_N)^T$, then $-\lambda$ is also an eigenvalue, with eigenvector $\bb=(b_1,...,b_N)^T$, where $b_j := (-1)^j\overline{a_j}$, $j=1,...,N$.

If, for some $\epsilon >0$ and $N\geq 2$, $|e_{jm}-1|\leq \epsilon/(N-1)$ for $1\leq j,m\leq N$, then $\|B_N-C_N\|_2 \leq \epsilon$ by part (i). We have \eqref{eq:CNtheta} and that $w_r(\re^{\ri \theta}C_N)$ and $w_r(\re^{\ri \theta}B_N)$ are the largest eigenvalues of the Hermitian matrices $C_N^\theta$ and $B_N^\theta$, respectively. Since $\|C_N^\theta-B_N^\theta\|_2 \leq \epsilon$, it follows that $w_r(\re^{\ri \theta}C_N)\geq w_r(\re^{\ri \theta}B_N)-\epsilon$, and \eqref{eq:bigcapINCL} follows by \eqref{eq:WCN}. Since also, by (iii), $W(B_N)\cap \R=[-b,b]$ where $b= \|B_N^0\|_2=w_r(\pm B_N)$ and, as a consequence of (ii), $W(C_N)\cap \R=[-w_r(-C_N),w_r(C_N)]$, it follows also that $[\epsilon-b, b-\epsilon]\subset W(C_N)$.
\end{proof}

Part (v) of the above lemma relates properties of $C_N$ to those of $B_N$ when $\|C_N-B_N\|_2$ is small. This is helpful as we can compute properties of $B_N$ explicitly, which we do in the following lemma. We see later that matrices of the form $C_N/2$  serve as approximations, in some sense, to the double-layer potential operator $D$ on $L^2(\Gamma)$, and that $C_N$ is approximated by $B_N$ for certain limiting geometries, so that $B_N/2$ is, in some sense, an approximation for $D$. This motivates the calculation of $\spec(B_N)$, though we do not use those calculations hereafter.

\ble[Properties of $B_N$]\label{lem:AN}

\

\begin{itemize}
\item[(i)] $W(B_1)\subset W(B_2) \subset \ldots$.
\item[(ii)] If $N\geq 3$ is odd then $w_r(B_N)\geq \sqrt{(N+1)/2}$, with equality when $N=3$, and $w_r(\re^{\ri \theta}B_N) \geq \sqrt{(N-1)/2}$, $0\leq \theta \leq 2\pi$, so that
\beqs
\conv\left(\left[-\sqrt{(N+1)/2},\sqrt{(N+1)/2}\right]\bigcup \left\{ z \in \Com: |z|< \sqrt{\frac{N-1}{2}}\right\}\right)
 \subset W(B_N).
\eeqs
\item[(iii)] If $N\geq 2$ then
$$
\mathrm{spec}(B_N) = \left\{\begin{array}{cc} \{-1,1\}, & \mbox{ if } N \mbox{ is even},\\ \{-1,0,1\}, & \mbox{ if } N \mbox{ is odd}.\end{array}\right.
$$
\end{itemize}
\ele
\bpf
Item (i) is an instance of \eqref{eq:subspaces}. From Lemma \ref{lem:CNprops}(iv), $w_r(\re^{\ri \theta}B_N) = \|B_N^\theta\|_2$, $0\leq \theta \leq 2 \pi$. The $2$-norm of $B_N^\theta$ is no smaller than the $2$-norm of any of its columns. When $N\geq 3$ is odd, using \eqref{eq:CNtheta} with $e_{jm}=1$, we see that the $2$-norm of the first column of $B_N^\theta$ is $\sqrt{(N-1)/2}$ and the $2$-norm of the second column of $B_N^0$ is $\sqrt{(N+1)/2}$, so that $\|B_N^0\|_2\geq \sqrt{(N+1)/2}$, with equality when $N=3$ by Lemma \ref{lem:CNprops}(iii). Thus (ii) follows since $W(B_N)\cap \R = [-\|B_N^0\|_2,\|B_N^0\|_2]$, by Lemma \ref{lem:CNprops}(iii), and using \eqref{eq:WCN} and that the numerical range is convex.

To see (iii), let $B_N(\lambda) = \lambda I_N - B_N$. Then, for $N=3,4,...$, $\det(B_N(\lambda)) = (\lambda^2-1)\det(B_{N-2}(\lambda))$, so that
$$
\det(\lambda I_N-B_N) = \left\{\begin{array}{cc} (\lambda^2-1)^\nu, & \mbox{ if } N=2\nu,\\ \lambda(\lambda^2-1)^\nu, & \mbox{ if } N=2\nu+1,\end{array}\right.
$$
for $\nu\in \mathbb{N}$, and the result follows.
\epf

Combining Lemma \ref{lem:CNprops}(v) and Lemma \ref{lem:AN}(ii), we obtain the following corollary.
\begin{corollary} \label{cor:CN} If $N\geq 3$ is odd and, for some $\epsilon >0$, $|e_{jm}-1|\leq \epsilon/(N-1)$ for $1\leq j,m\leq N$, then
\begin{equation} \label{eq:bigcapINCL2}
\left\{ z \in \Com: |z|\leq \sqrt{\frac{N-1}{2}}-\epsilon\right\} \subset W(C_N)
\end{equation}
and
$$
\left[\epsilon-\sqrt{(N+1)/2}, \sqrt{(N+1)/2}-\epsilon\right] \subset W(C_N).
$$
\end{corollary}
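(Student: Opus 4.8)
The plan is to obtain both inclusions by feeding the lower bounds on the numerical abscissa of $B_N$ from Lemma~\ref{lem:AN}(ii) into the two conclusions of Lemma~\ref{lem:CNprops}(v); no fresh computation is needed, the corollary being a bookkeeping combination of the two lemmas.

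For the disk \eqref{eq:bigcapINCL2}: since $N\geq 3\geq 2$ and $|e_{jm}-1|\leq \epsilon/(N-1)$, the first conclusion \eqref{eq:bigcapINCL} of Lemma~\ref{lem:CNprops}(v) applies, so it suffices to show that $\{z\in\C:|z|\leq \sqrt{(N-1)/2}-\epsilon\}$ is contained in $\bigcap_{0\leq\theta\leq2\pi}\{z:\Re(\re^{\ri\theta}z)\leq w_r(\re^{\ri\theta}B_N)-\epsilon\}$. Because $N$ is odd, Lemma~\ref{lem:AN}(ii) gives $w_r(\re^{\ri\theta}B_N)\geq \sqrt{(N-1)/2}$ for every $\theta$, so each half-plane on the right contains the corresponding half-plane $\{z:\Re(\re^{\ri\theta}z)\leq \sqrt{(N-1)/2}-\epsilon\}$, and hence so does the intersection over $\theta$. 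The elementary identity $\bigcap_{0\leq\theta\leq2\pi}\{z:\Re(\re^{\ri\theta}z)\leq c\}=\{z\in\C:|z|\leq c\}$, which holds for any $c\in\R$ since $\sup_{\theta}\Re(\re^{\ri\theta}z)=|z|$, then finishes this part (taking $c=\sqrt{(N-1)/2}-\epsilon$; when $c<0$ both sides are empty).

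For the interval I would use the second conclusion of Lemma~\ref{lem:CNprops}(v), namely $W(C_N)\supset[\epsilon-b,b-\epsilon]$ with $b=\|B_N^0\|_2$; applying Lemma~\ref{lem:CNprops}(iii) to $B_N$ (whose weights $e_{jm}$ are all equal to $1$) gives $b=w_r(B_N)$, and Lemma~\ref{lem:AN}(ii) then gives $b\geq\sqrt{(N+1)/2}$. Since $[\epsilon-\sqrt{(N+1)/2},\sqrt{(N+1)/2}-\epsilon]$ is centred at $0$ with half-width no larger than that of $[\epsilon-b,b-\epsilon]$, it is contained in the latter, hence in $W(C_N)$. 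There is essentially no obstacle here; the only point needing a moment's care is the direction of the half-plane inclusions — enlarging $w_r(\re^{\ri\theta}B_N)$ enlarges the half-plane, so replacing it by the smaller quantity $\sqrt{(N-1)/2}$ only shrinks the set on the left of \eqref{eq:bigcapINCL} — together with the harmless edge case where $\epsilon$ exceeds $\sqrt{(N\pm1)/2}$, in which the asserted sets are empty and the inclusions are trivial.
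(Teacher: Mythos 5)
Your proof is correct and follows exactly the route the paper indicates (the paper states the corollary follows by ``combining Lemma~\ref{lem:CNprops}(v) and Lemma~\ref{lem:AN}(ii)'' and gives no further details). You supply the two bookkeeping steps the paper leaves implicit — the identity $\bigcap_{\theta}\{z:\Re(\re^{\ri\theta}z)\leq c\}=\{z\in\C:|z|\leq c\}$ for the disk inclusion, and $b=\|B_N^0\|_2=w_r(B_N)\geq\sqrt{(N+1)/2}$ for the interval inclusion — and both are handled correctly, including the degenerate cases.
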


\section{Localisation of the essential norm and essential numerical range of the double-layer operator}

\subsection{Localisation arguments}\label{sec:localisation}

In this section we prove a localisation result (Theorem \ref{lem:local}) for the essential norm and essential numerical range, adapting and extending localisation arguments used by I.~Mitrea \cite[Lemma 1, Page 392]{Mi:99} who showed (using the notations below, including  \eqref{eqn:defDbxdelta} and that $B_\delta(\bx)$ is the open ball of radius $\delta$ with centre $\bx$) that  if, for some $\delta>0$ and $\lambda\in \C$, $\la P_\delta(\bx) - D_{\bx,\delta}$ is Fredholm of index zero on $L^2(B_\delta(\bx))$, for every $\bx\in \Gamma$, then $\la I- D$ is Fredholm of index zero on $\LtG$. Key tools in our arguments are the Weyl-type characterisation of the essential norm, Lemma \ref{lem:ess} above,
and that $D$ can be written as its local action plus a compact operator (see \eqref{eqn:Dphil2}), thanks to the compactness of the commutator $[w,D]$ when $w$ is a Lipschitz continuous function on $\Gamma$. We then specialise this localisation result, including to the case when $\Gamma$ is Lipschitz polyhedral, in Theorem \ref{thm:loccon} below.

\begin{definition}[Lipschitz constant and Lipschitz character]\label{def:Lipschitz}
A bounded Lipschitz domain $\Omega_-$ is {\em Lipschitz with constant $M$} if, for any $\bx\in \Gamma$, locally near $\bx$ the boundary $\Gamma$ is (in some rotated coordinate system) the graph of a Lipschitz function with Lipschitz constant $M$. The infimum of all $M\geq 0$ for which $\Omega_-$ is Lipschitz with constant $M$ is called the {\em Lipschitz character} of $\Omega_-$.
\end{definition}

For $\delta>0$ and $\bx\in \Gamma$, let $P_\delta(\bx)$ be the orthogonal projection operator on $L^2(\Gamma)$ that is multiplication by the characteristic function of $B_\delta(\bx)$ restricted to $\Gamma$.

\begin{theorem}[Localisation result for general Lipschitz case] \label{lem:local} Suppose that $\Omega_-$ is a bounded Lipschitz domain and let
\beq\label{eqn:defDbxdelta}
D_{\bx,\delta}:=P_\delta(\bx)D P_\delta(\bx).
\eeq
\noi (a) For some $\bx^*\in \Gamma$,
\beq\label{eqn:locall}
\|D\|_{L^2(\Gamma),\mathrm{ess}}= \lim_{\delta\to 0} \,\sup_{\bx\in \Gamma} \|D_{\bx,\delta}\|_{L^2(\Gamma)} = \sup_{\bx\in \Gamma} \,\lim_{\delta\to 0}  \|D_{\bx,\delta}\|_{L^2(\Gamma)} =  \lim_{\delta\to 0}  \|D_{\bx^*,\delta}\|_{L^2(\Gamma)}.
\eeq

\noi (b) For some $\bx^*\in \Gamma$,
\begin{align} \nonumber
W_{\mathrm{ess}}(D)= \bigcap_{\delta> 0} \mathrm{conv}\left(\overline{\bigcup_{\bx\in \Gamma} W(D_{\bx,\delta})}\right)
&= \mathrm{conv}\left(\bigcap_{\delta> 0} \overline{\bigcup_{\bx\in \Gamma}W(D_{\bx,\delta})}\right)
\\
&= \bigcup_{\bx\in \Gamma}\bigcap_{\delta> 0} \overline{W(D_{\bx,\delta})}=\bigcap_{\delta>0}\overline{ W(D_{\bx^*,\delta})}.
\label{eqn:erlocall}
\end{align}
\end{theorem}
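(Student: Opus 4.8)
My plan is to prove Theorem~\ref{lem:local} from two ingredients. The first is a \emph{diagonalisation of $D$ modulo compacts at scale $\delta$}. Fix $\delta>0$, cover $\Gamma$ by finitely many balls $B_\delta(\bx_1),\dots,B_\delta(\bx_J)$, and choose a Lipschitz partition of unity $\{\chi_j\}_{j=1}^J$ with $\supp\chi_j\subset B_\delta(\bx_j)$ and $\sum_j\chi_j^2\equiv1$ on $\Gamma$. Writing $M_\chi$ for multiplication by $\chi$ and using that the commutators $[\chi_j,D]$ are compact (each $\chi_j$ being Lipschitz), one gets $D=\Sigma_\delta+K_\delta$ with $K_\delta:=\sum_j M_{\chi_j}[\chi_j,D]$ compact and $\Sigma_\delta:=\sum_j M_{\chi_j}DM_{\chi_j}$. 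Since $\supp\chi_j\subset B_\delta(\bx_j)$ one has $M_{\chi_j}DM_{\chi_j}=M_{\chi_j}D_{\bx_j,\delta}M_{\chi_j}$, and hence $\Sigma_\delta=V^*\big(\bigoplus_{j=1}^J D_{\bx_j,\delta}\big)V$, where $V\phi:=(\chi_j\phi)_{j=1}^J$ is an isometry of $\LtG$ into its $J$-fold direct sum (because $\sum_j\|\chi_j\phi\|^2=\|\phi\|^2$). Therefore $\|\Sigma_\delta\|\le\max_j\|D_{\bx_j,\delta}\|$ and $W(\Sigma_\delta)\subset W\big(\bigoplus_j D_{\bx_j,\delta}\big)=\conv\big(\bigcup_j W(D_{\bx_j,\delta})\big)$, and, as $D-\Sigma_\delta$ is compact, this yields for every $\delta>0$
\[
\|D\|_{\LtG,\ess}\le\sup_{\bx\in\Gamma}\|D_{\bx,\delta}\|,\qquad
W_\ess(D)\subset\conv\Big(\overline{\bigcup_{\bx\in\Gamma}W(D_{\bx,\delta})}\Big).
\]
The second ingredient is Lemma~\ref{lem:ess}, together with the elementary fact (the easy half of the analogous criterion for the essential numerical range) that if $\phi_n\rightharpoonup0$, $\|\phi_n\|=1$ and $(D\phi_n,\phi_n)\to z$ then $z\in W_\ess(D)$.

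For the reverse estimates I would use test functions concentrating at a point. Given $\bx\in\Gamma$, pick $\phi_n$ supported in $B_{1/n}(\bx)$ with $\|\phi_n\|=1$ and $\|D_{\bx,1/n}\phi_n\|\ge\|D_{\bx,1/n}\|-1/n$; since the supports shrink, $\phi_n\rightharpoonup0$, and $\|D\phi_n\|\ge\|P_{1/n}(\bx)D\phi_n\|=\|D_{\bx,1/n}\phi_n\|$, so Lemma~\ref{lem:ess} gives $\|D\|_{\LtG,\ess}\ge f(\bx):=\lim_{\delta\to0}\|D_{\bx,\delta}\|$ (the limit existing since $\|D_{\bx,\delta}\|$ is non-decreasing in $\delta$, as $D_{\bx,\delta_1}$ is a compression of $D_{\bx,\delta_2}$ for $\delta_1\le\delta_2$). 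The same construction, now forcing $(D\phi_n,\phi_n)=(D_{\bx,1/n}\phi_n,\phi_n)$ to approach a prescribed $z\in\overline{W(D_{\bx,\delta})}$, shows $W_\bx:=\bigcap_{\delta>0}\overline{W(D_{\bx,\delta})}\subset W_\ess(D)$. Allowing the base point to vary with $n$ and extracting (by compactness of $\Gamma$) a convergent subsequence of base points, so that the supports still shrink to a single limit point, yields $\bigcap_{\delta>0}\overline{\bigcup_{\bx}W(D_{\bx,\delta})}=\bigcup_{\bx}W_\bx$. Finally a routine nested-compact-sets argument (Carath\'eodory in $\C\cong\R^2$, plus the fact that $\overline{\bigcup_\bx W(D_{\bx,\delta})}$ decreases as $\delta\downarrow0$) gives $\bigcap_{\delta>0}\conv\big(\overline{\bigcup_\bx W(D_{\bx,\delta})}\big)=\conv\big(\bigcap_{\delta>0}\overline{\bigcup_\bx W(D_{\bx,\delta})}\big)$. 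Assembling these with the inclusions above produces $\|D\|_{\LtG,\ess}=\lim_{\delta\to0}\sup_{\bx}\|D_{\bx,\delta}\|$ and
\[
W_\ess(D)=\bigcap_{\delta>0}\conv\Big(\overline{\bigcup_\bx W(D_{\bx,\delta})}\Big)=\conv\Big(\bigcap_{\delta>0}\overline{\bigcup_\bx W(D_{\bx,\delta})}\Big)=\conv\Big(\bigcup_\bx W_\bx\Big).
\]

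Next I would interchange the limit and supremum and locate a worst point. If $\sup_\bx\|D_{\bx,\delta}\|$ stayed above $\sup_\bx f(\bx)+\eps$ for all $\delta$, one could choose base points $\bx_{1/n}$ with $\|D_{\bx_{1/n},1/n}\|>\sup_\bx f(\bx)+\eps$, pass to a subsequence $\bx_{1/n}\to\bx_\infty$, and, using $B_{1/n}(\bx_{1/n})\subset B_{2\eta}(\bx_\infty)$ for $n$ large (hence $\|D_{\bx_{1/n},1/n}\|\le\|D_{\bx_\infty,2\eta}\|$), contradict $f(\bx_\infty)\le\sup_\bx f(\bx)$; so $\lim_{\delta\to0}\sup_\bx\|D_{\bx,\delta}\|=\sup_\bx f(\bx)$. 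The same ball-inclusion estimate shows $f$ is upper semicontinuous on the compact set $\Gamma$, so attains its supremum at some $\bx^*$, giving $\|D\|_{\LtG,\ess}=\lim_{\delta\to0}\|D_{\bx^*,\delta}\|$; likewise $\bx\mapsto W_\bx$ is upper semicontinuous (has closed graph), so $\bigcup_\bx W_\bx$ is closed.

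The step I expect to be the main obstacle is the last equality of part~(b): that $\bigcup_\bx W_\bx$ is already convex and equals $\bigcap_{\delta>0}\overline{W(D_{\bx^*,\delta})}$ for a single point $\bx^*$. A union of convex sets varying upper-semicontinuously over a compact (even connected) set need not be convex, so this cannot come from the abstract framework above; it is equivalent to the family $\{W_\bx\}_{\bx\in\Gamma}$ of local essential numerical ranges being directed upward by inclusion, i.e.\ to its having a greatest element $W_{\bx^*}$ (upper semicontinuity and compactness of $\Gamma$ then deliver the attainment, and, since $\conv(\bigcup_\bx W_\bx)=W_\ess(D)$, the convexity). This directedness has to be extracted from the structure of $D$ at edge and vertex points — from the dilation-invariant wedge/cone local models, in the spirit of the essential-spectrum localisation of \cite{Mi:99} and the commuting-dilation argument of Lemma~\ref{lem:Tlemma} — and this is where the substantive work lies.
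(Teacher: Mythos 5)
Your strategy matches the paper's on every point up to the last one: a Lipschitz partition of unity together with compactness of the commutators $[\chi,D]$ gives the outer bounds $\|D\|_{\LtG,\ess}\le\sup_\bx\|D_{\bx,\delta}\|$ and $W_\ess(D)\subset\conv\big(\overline{\bigcup_\bx W(D_{\bx,\delta})}\big)$; concentrated test functions with Lemma~\ref{lem:ess} give the inner bounds; and compactness of $\Gamma$ with the ball-inclusion monotonicity (if $B_\delta(\bx)\subset B_{\delta'}(\bx')$ then $\|D_{\bx,\delta}\|\le\|D_{\bx',\delta'}\|$ and $W(D_{\bx,\delta})\subset W(D_{\bx',\delta'})$) handles the $\lim/\sup$ interchange and locates $\bx^*$. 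Your direct-sum/isometry packaging $D=V^*\big(\bigoplus_j D_{\bx_j,\delta}\big)V+(\mbox{compact})$, with $V\phi=(\chi_j\phi)_j$ and $\sum_j\chi_j^2\equiv1$, is a tidy reformulation of the paper's identity $D\phi=\sum_\bx w_\bx D(w_\bx\phi)+T\phi$ with $w_\bx=\sqrt{\chi_\bx}$ and $\sum_\bx\chi_\bx\equiv 1$ — the same normalisation $\sum_\bx w_\bx^2\equiv1$, and the convex combination $\sum_\bx\theta_\bx(D\psi_\bx,\psi_\bx)$ with $\theta_\bx=\|w_\bx\phi\|^2$ is exactly your $W(\Sigma_\delta)\subset\conv\big(\bigcup_j W(D_{\bx_j,\delta})\big)$ seen differently.

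Where you overshoot is the closing paragraph. You anticipate that the final equality of part~(b), $\bigcup_\bx W_\bx = W_{\bx^*}$ for a single $\bx^*$, must be extracted from the dilation-invariant wedge/cone structure at edges and vertices, in the spirit of Lemma~\ref{lem:Tlemma} and \cite{Mi:99}. The paper does nothing of the kind here: no local model analysis occurs anywhere in the proof of Theorem~\ref{lem:local}. Having already established $\bigcup_\bx W_\bx\subset W_\ess(D)\subset\conv(S_\infty)$ with $S_\infty:=\bigcap_{\delta>0}\overline{\bigcup_\bx W(D_{\bx,\delta})}$, the paper reduces the entire chain to proving $S_\infty\subset W_{\bx^*}$ for some $\bx^*$, and does so by the same soft compactness-plus-monotonicity device as the $\lim/\sup$ interchange: given $z\in S_\infty$ and a decreasing null sequence $\delta_n$, take $\bx_n\in\Gamma$ and $z_n\in W(D_{\bx_n,\delta_n})$ with $z_n\to z$, pass to a subsequence with $\bx_n\to\bx^*$ and $\epsilon_n:=\delta_n+|\bx_n-\bx^*|$ decreasing; then $B_{\delta_n}(\bx_n)\subset B_{\epsilon_n}(\bx^*)$ forces $z_n\in G_n:=\overline{W(D_{\bx^*,\epsilon_n})}$, and since the $G_n$ are nested compact sets and $z_n\to z$, one concludes $z\in\bigcap_n G_n=W_{\bx^*}$. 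Convexity of $W_{\bx^*}$ then closes the chain of equalities. The commuting dilation isometry of Lemma~\ref{lem:Tlemma} and the cone local models do appear in the paper, but only in the subsequent sharper localisation for locally-dilation-invariant $\Gamma$ (Lemma~\ref{lem:loccon} and Theorem~\ref{thm:loccon}), not in the general Lipschitz Theorem~\ref{lem:local}. So you have correctly isolated the one step that needs an extra idea, but the idea the paper uses is far lighter than the structural analysis you reached for.
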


\begin{proof}
(a)
The definition \eqref{eqn:defDbxdelta} implies that,
\beq\label{eq:inclusionpropD}
\text{if } \bx,\bx'\in \Gamma \tand B_{\delta}(\bx)\subset B_{\delta'}(\bx'), \text{ then }
\|D_{\bx,\delta}\|_{L^2(\Gamma)} \leq \|D_{\bx',\delta'}\|_{L^2(\Gamma)}.
\eeq
In particular
 $\|D_{\bx,\delta}\|_{L^2(\Gamma)}\leq \|D_{\bx,\delta^\prime}\|_{L^2(\Gamma)}$ for $0<\delta<\delta^\prime$ and $\bx\in \Gamma$, so that the limits in \eqref{eqn:locall} are well-defined. To see the last two equalities in \eqref{eqn:locall}, for $\delta>0$ let
\beq\label{eq:Cdelta}
C_\delta:=\sup_{\bx\in \Gamma} \|D_{\bx,\delta}\|_{L^2(\Gamma)},
\eeq
and let
$$
L_1 := \lim_{\delta\to 0}C_\delta = \lim_{\delta\to 0}\,\sup_{\bx\in \Gamma} \|D_{\bx,\delta}\|_{L^2(\Gamma)}, \quad L_2 := \sup_{\bx\in \Gamma} \,\lim_{\delta\to 0}  \|D_{\bx,\delta}\|_{L^2(\Gamma)}.
$$
Now choose a sequence $\delta_n>0$ such that $\delta_n\to 0$ as $n\to\infty$, and choose a sequence of points $\bx_n\in \Gamma$ such that
$$
L_1 = \lim_{n\to\infty}\|D_{\bx_n,\delta_n}\|_{L^2(\Gamma)}.
$$
By passing to a subsequence if necessary, we can assume also that, for some $\bx^*\in \Gamma$, $\bx_n\to \bx^*$ as $n\to\infty$. Then \eqref{eq:inclusionpropD} implies that, if $\epsilon_n := \delta_n + |\bx_n-\bx^*|$, then $\|D_{\bx_n,\delta_n}\|_{L^2(\Gamma)}\leq \|D_{\bx^*,\epsilon_n}\|_{L^2(\Gamma)}$ for all $n$ so that
\beq\label{eqn:intermediate}
L_1 \leq \lim_{n\to\infty}\|D_{\bx^*,\epsilon_n}\|_{L^2(\Gamma)} \leq  L_2.
\eeq
Similarly, we can choose a sequence of points $\bx_n\in \Gamma$ and an $\bx^\dag\in \Gamma$ such that $\epsilon_n:=|\bx^\dag-\bx_n|\to 0$ as $n\to\infty$ and
\begin{eqnarray*}
L_2 &=& \lim_{n\to\infty}\, \lim_{\delta\to 0}\|D_{\bx_n,\delta}\|_{L^2(\Gamma)} \leq \lim_{n\to\infty}\, \lim_{\delta\to 0}\|D_{\bx^\dag,\delta+\epsilon_n}\|_{L^2(\Gamma)} \leq
L_1.
\end{eqnarray*}
Thus $L_1=L_2$, i.e.~the second equality in \eqref{eqn:locall} holds, and then \eqref{eqn:intermediate} implies that the last equality in \eqref{eqn:locall} holds.

We now prove the first inequality in \eqref{eqn:locall}.
Suppose that $\delta>0$, choose $\varepsilon<\delta/3$ and, for $\bx\in \Gamma$, let $U_\bx:= B_\varepsilon(\bx)$. Let $G$ be a finite subset of $\Gamma$ such that $\mathcal{U}:= \{U_\bx:\bx \in G\}$ is a cover for $\Gamma$. For $\bx\in G$, let  $V_\bx$ denote the union of all those $U_\by\in \mathcal{U}$ that intersect $U_\bx$, note that $V_\bx \subset B_{\delta}(\bx)$, and let $\eta_\bx\in L^\infty(\Gamma)$ denote the characteristic function of $V_\bx \cap \Gamma$.  Let $(\chi_\bx)_{\bx\in G}$ be a partition of unity for $\Gamma$ subordinate to $\mathcal{U}$, so that $\chi_\bx\in C_0^\infty(\R^d)$, $0\leq \chi_\bx\leq 1$, $\chi_\bx$ is supported in $U_\bx$, and
\begin{equation} \label{eqn:pou2}
\sum_{\bx\in G}\chi_\bx = 1 \qquad \mbox{on } \Gamma.
\end{equation}
Note that $w_\bx:=\sqrt{\chi_\bx}$ is at least Lipschitz continuous\footnote{This claim follows easily from the following one-dimensional result: if $F\in C^2(\R)$ and, for some $m\geq 0$, $F(x)\geq 0$ and $|F^{\prime\prime}(x)|\leq m$ for $x\in \R$, then, arguing as in \cite[Lemma 1]{Gl:63}, $(F^\prime(x))^2\leq 2mF(x)$, $x\in \R$. This implies that $G(x):= \sqrt{F(x)}$, $x\in \R$, is Lipschitz continuous with Lipschitz constant $L:=\sqrt{m/2}$. For $G\in C(\R)$ and is  differentiable at every $x$ where $F(x)>0$, with $|G^\prime(x)|\leq L$. Thus, if $a<b$ and $F(x)>0$ for $a< x< b$, $|G(b)-G(a)|\leq L|b-a|$ by the mean value theorem (MVT). If $F(x)=0$ for some $x\in(a,b)$ then there exist $c,d\in [a,b]$ with $a\leq c\leq d\leq b$, $F(c)=F(d)=0$, and $F(x)>0$ for $a<x<c$ and $d<x<b$ so that, again by the MVT, $|G(b)-G(a)| \leq |G(c)-G(a)| + |G(b)-G(d)| \leq L|c-a|+L|b-d| \leq L|b-a|$.}. We observe further that if $w\in C^{0,1}(\Gamma)$ then the commutator $[w,D]$, defined by $[w,D]\phi:=wD\phi-D(w\phi)$ for $\phi\in L^2(\Gamma)$, is compact since it is an integral operator  on $L^2(\Gamma)$ with a weakly-singular kernel (e.g.,~\cite[Theorems 2.29 and 4.13]{Kr:14}).

For $\phi\in L^2(\Gamma)$,
\begin{equation} \label{eqn:Dphil2}
D\phi = \sum_{\bx\in G} D(\chi_\bx\phi)= \sum_{\bx\in G} w_\bx D (w_\bx\phi) -\sum_{\bx\in G } [w_\bx,D] (w_\bx\phi) = \sum_{\bx\in G } w_\bx D(w_\bx\phi) + T\phi,
\end{equation}
where $T$ is compact.
Now
\begin{eqnarray} \nonumber
\big((D-T)\phi,D\phi\big)_{L^2(\Gamma)} &=& \sum_{\bx\in G }\big(w_\bx D(w_\bx\phi),D\phi\big)\\ \nonumber
&=& \sum_{\bx\in G }\big(\eta_\bx D(w_\bx\phi), D(w_\bx\phi) + [w_\bx,D]\phi\big)\\
&= & \sum_{\bx\in G }\|\eta_\bx D(w_\bx\phi)\|^2_{L^2(\Gamma)} + \widetilde{T}(\phi),\label{eqn:bound1a}
\end{eqnarray}
where
$$
\widetilde{T}(\psi):= \sum_{\bx\in G }\big(\eta_\bx D(w_\bx\psi),[w_\bx,D]\psi\big) \quad \tfor \psi \in L^2(\Gamma).
$$
Since $V_\bx\cap \Gamma\subset B_\delta(\bx)$,
$$
\|\eta_\bx D(w_\bx\phi)\|_{L^2(\Gamma)} \leq \|D_{\bx, \delta}\|_{L^2(\Gamma)}\|w_\bx\phi\|_{L^2(\Gamma)}\leq C_\delta \|w_\bx\phi\|_{L^2(\Gamma)},
$$
where $C_\delta$ is defined by \eqref{eq:Cdelta}.
Using this last bound in \eqref{eqn:bound1a}, we have
\beq\label{eqn:bound2}
\left|((D-T)\phi,D\phi)_{L^2(\Gamma)}\right|\leq  C_\delta^2\sum_{\bx\in G } \|w_\bx\phi\|_{L^2(\Gamma)}^2 + |\widetilde{T}(\phi)| =  C_\delta^2\|\phi\|_{L^2(\Gamma)}^2 + |\widetilde{T}(\phi)|.
\eeq
Now suppose that $(\phi_n)\subset \cS$, where $\cS$ is defined in Lemma \ref{lem:ess}. Then, since $\|\phi_n\|=1$, $\phi_n \rightharpoonup 0$, and $T$ and each $[w_\bx,D]$ are compact, $T\phi_n\to 0$ and $\widetilde{T}(\phi_n)\to 0$. Thus, by \eqref{eqn:bound2},
$$
\limsup_{n\to\infty}\|D\phi_n\|_{L^2(\Gamma)}^2 \leq  C_\delta^2,
$$
and by Lemma \ref{lem:ess},
$\|D\|_{L^2(\Gamma), \,\mathrm{ess}} \leq C_\delta$. Since we have shown this for all sufficiently small $\delta$, and $L_1:=\lim_{\delta\tendo}C_\delta$, we have
$$
\|D\|_{L^2(\Gamma), \,\mathrm{ess}} \leq L_1.
$$
Now let $\bx^*\in \Gamma$ be such that the last equality in \eqref{eqn:locall} holds and choose a sequence $\delta_n>0$ such that $\delta_n\to 0$ as $n\to\infty$. Then
$$
L_1 = \lim_{n\to\infty}\|D_{\bx^*,\delta_n}\|_{L^2(\Gamma)} = \lim_{n\to\infty}\|D_{\bx^*,\delta_n}\phi_n\|_{L^2(\Gamma)},
$$
for some $\phi_n\in L^2(\Gamma)$, supported in $B_{\delta_n}(\bx^*)$, such that $\|\phi_n\|_{L^2(\Gamma)}=1$.
Since the support of $\phi_n$ is contained in $B_{\delta_n}(\bx^*)$, $\phi_n \rightharpoonup 0$ as $n\to \infty$. Thus, by Lemma \ref{lem:ess},
$$
\|D\|_{L^2(\Gamma), \,\mathrm{ess}} \geq \limsup_{n\to\infty}\|D\phi_n\|_{L^2(\Gamma)} \geq \lim_{n\to\infty}\|D_{\bx^*,\delta_n}\phi_n\|_{L^2(\Gamma)} = L_1,
$$
and we have proved the first inequality in \eqref{eqn:locall}.

(b) Similar to \eqref{eq:inclusionpropD}, the definition of $D_{\bx,\delta}$, \eqref{eqn:defDbxdelta}, implies that
\beq\label{eq:inclusionpropW}
\text{if } \bx,\bx'\in \Gamma \tand B_{\delta}(\bx)\subset B_{\delta'}(\bx'), \text{ then }
W(D_{\bx,\delta}) \subset W(D_{\bx',\delta'}),
\eeq
so that, in particular,  $W(D_{\bx,\delta}) \subset W(D_{\bx,\delta'})$ for $0<\delta<\delta^\prime$ and $\bx\in \Gamma$.

For $\delta >0$ and $\bx\in \Gamma$ let
$$
S_\delta := \overline{\bigcup_{\bx\in \Gamma}W(D_{\bx,\delta})}, \quad S_\infty := \bigcap_{\delta>0} S_\delta, \quad \mbox{and} \quad W_\bx := \bigcap_{\delta>0}\overline{W(D_{\bx,\delta})}.
$$
With these notations, we can abbreviate \eqref{eqn:erlocall} as
\beq \label{eqn:erlocall2}
W_{\mathrm{ess}}(D)= \bigcap_{\delta> 0} \mathrm{conv}\left(S_\delta\right) = \mathrm{conv}\left(S_\infty\right)= \bigcup_{\bx\in \Gamma}W_\bx=W_{\bx^*}.
\eeq
Observe that
$S_\delta \subset S_{\delta^\prime}$ for $0<\delta <\delta^\prime$,
and that $S_\infty$ and each $S_\delta$ are compact, from which it follows that
\beq\label{eq:intersection}
\conv(S_\infty) = \bigcap_{\delta>0} \conv (S_\delta).
\eeq
If $\phi\in L^2(\Gamma)$ and $\|\phi\|_{L^2(\Gamma)}=1$, in which case $\sum_{\bx\in G} \theta_\bx=1$, where
$\theta_\bx := \|w_\bx\phi\|_{L^2(\Gamma)}^2$, then, from \eqref{eqn:Dphil2},
\begin{equation} \label{eqn:conv}
((D-T)\phi,\phi) = \sum_{\bx\in G } (w_\bx D(w_\bx\phi),\phi) = \sum_{\bx\in G }\theta_\bx (D\psi_\bx,\psi_\bx),
\end{equation}
where $\psi_\bx := w_\bx\phi/\sqrt{\theta_\bx}$. Clearly $\|\psi_\bx\|_{L^2(\Gamma)}=1$ and $\mathrm{supp}(\psi_\bx)\subset V_\bx\subset B_\delta(\bx)\cap \Gamma$, so that $(D\psi_\bx,\psi_\bx)\in W(D_{\bx,\delta})$.
Therefore \eqref{eqn:conv} implies that
 \beqs
W(D-T) \subset
\conv \left( S_\delta\right) \,\, \text{ for every $\delta>0$},
\eeqs
and hence, since the convex hull of a compact set is compact,
 \begin{equation} \label{eqn:oneside}
W_{\mathrm{ess}}(D)
\subset
\bigcap_{\delta>0}
\mathrm{conv}\left(S_\delta\right).
\end{equation}

On the other hand, given $\bx\in \Gamma$, let $z\in W_\bx$, in which case $z\in \overline{W(D_{\bx,\delta_n})}$ for every positive null sequence $\delta_n$. So suppose that $\delta_n>0$ and $\delta_n\to 0$. Then, for each $n$ there exists $\psi_n\in L^2(\Gamma)$ with $\|\psi_n\|_{L^2(\Gamma)}=1$ and $\mathrm{supp}(\psi_n)\subset \Gamma\cap B_{\delta_n}(\bx)$ such that
\beqs
\big|(D_{\bx,\delta_n}\psi_n,\psi_n) - z\big|<\frac{1}{n}.
\eeqs
Since $(D_{\bx,\delta_n}\psi_n,\psi_n)=(D\psi_n,\psi_n)$ and
$\psi_n\rightharpoonup 0$, $z=\lim_{n\to\infty}((D+T)\psi_n,\psi_n)$ for every compact operator $T$, so that $z\in W_{\mathrm{ess}}(D)$.
Therefore
\beqs
W_\bx\subset W_{\mathrm{ess}}(D) \text{ for every $\bx \in \Gamma$},
\eeqs
so that
\beq\label{eqn:secondside}
\bigcup_{\bx\in \Gamma}W_\bx\subset W_{\ess}(D).
\eeq

Combining \eqref{eq:intersection}, \eqref{eqn:oneside} and \eqref{eqn:secondside}, we see that we have shown that
$$
\bigcup_{\bx\in \Gamma}W_\bx \subset W_{\mathrm{ess}}(D)\subset \bigcap_{\delta> 0} \mathrm{conv}\left(S_\delta\right) = \mathrm{conv}\left(S_\infty\right).
$$
Since $W_{\bx^*}$ is convex, to complete the proof of
\eqref{eqn:erlocall2} it is enough to show that
\beq \label{eqn:finalsub}
S_\infty \subset W_{\bx^*},
\eeq
for some $\bx^*\in \Gamma$.

So suppose that $z\in S_\infty$ and that the positive decreasing sequence $\delta_n\to 0$. Then $z\in S_{\delta_n}$ for each $n$, so that there exists $\bx_n\in \Gamma$ and $z_n\in W(D_{\bx_n,\delta_n})$ such that $|z_n-z|\leq 1/n$. Arguing using the compactness of $\Gamma$ as in the proof of Part (a), by passing to a subsequence if necessary we can assume that, for some $\bx^*\in \Gamma$, $\bx_n\to\bx^*$ as $n\to\infty$, with $\epsilon_n:= \delta_n+|\bx^*-\bx_n|$ a decreasing sequence. By \eqref{eq:inclusionpropW} we have that $z_n\in G_n:=\overline{W(D_{\bx^*,\epsilon_n})}$. Since $z_n\to z$, each $G_n$ is compact, and $G_{n+1}\subset G_n$ for $n\in \mathbb{N}$,
$$
z\in \bigcap_{n\in \mathbb{N}} G_n = W_{\bx^*}.
$$
\end{proof}

The following result is well-known, following, e.g., from \cite[Theorem 1.10]{Ho:94} (note that the space of Lipschitz continuous functions is continuously embedded in the BMO space $I_1(\mathrm{BMO})$ used in \cite{Ho:94}).

\begin{theorem} \label{thm:bounded} If $\Gamma$ is a Lipschitz graph with Lipschitz constant $M$, and $D$ is the double-layer potential operator on $\Gamma$, then there exists an absolute constant $\mu>0$, and a constant $C_d$ depending only on the dimension $d$, such that
$$
\|D\|_{L^2(\Gamma)} \leq C_d M(1+M)^\mu.
$$
\end{theorem}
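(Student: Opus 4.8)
The plan is to reduce the statement to a quantitative singular-integral bound on $\R^{d-1}$ and then invoke a $T(b)$-type estimate with explicit dependence on the Lipschitz constant.

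First I would pass from $\Gamma$ to its graphing function. Write $\Gamma=\{(y,\psi(y)):y\in\R^{d-1}\}$ with $\psi$ Lipschitz, $\|\nabla\psi\|_{L^\infty}\le M$, and let $J(y):=(1+|\nabla\psi(y)|^2)^{1/2}\in[1,(1+M^2)^{1/2}]$ be the surface Jacobian. Pulling $D$ back along $\iota(y):=(y,\psi(y))$ gives $(Df)\circ\iota=\widetilde D(f\circ\iota)$, where $\widetilde D$ is the integral operator on $L^2(\R^{d-1})$ with kernel
\[
\widetilde K(x,y)\;=\;\frac{1}{c_d}\,\frac{\bigl(\psi(x)-\psi(y)\bigr)-(x-y)\cdot\nabla\psi(y)}{\bigl(|x-y|^2+(\psi(x)-\psi(y))^2\bigr)^{d/2}}.
\]
Since $1\le J\le(1+M^2)^{1/2}$, the composition $f\mapsto f\circ\iota$ distorts $L^2$-norms by at most a factor $(1+M^2)^{1/4}$, so $\|D\|_{L^2(\Gamma)}\le(1+M^2)^{1/4}\,\|\widetilde D\|_{L^2(\R^{d-1})}$, and it remains to bound $\|\widetilde D\|_{L^2(\R^{d-1})}$ in terms of $M$. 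The key structural observation is that the numerator of $\widetilde K$ is the \emph{second difference} $\psi(x)-[\psi(y)+\nabla\psi(y)\cdot(x-y)]$, which vanishes when $\psi$ is affine; hence $\widetilde D$ is a Calderón-type singular integral associated to $\psi$ that is zero on affine graphs, which is precisely why the final bound will carry a leading factor proportional to a seminorm of $\nabla\psi$.

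Next I would apply the quantitative boundedness theorem of Hofmann \cite[Theorem 1.10]{Ho:94} to $\widetilde D$. Lipschitz functions embed continuously into $I_1(\mathrm{BMO})$: indeed $\|\nabla\psi\|_{\mathrm{BMO}}\le 2\|\nabla\psi\|_{L^\infty}\le 2M$. Hofmann's theorem, applied with graphing function $\psi$, then yields a bound of the form $\|\widetilde D\|_{L^2(\R^{d-1})}\le C_d\,\|\nabla\psi\|_{\mathrm{BMO}}\,(1+\|\nabla\psi\|_{\mathrm{BMO}})^{\mu_0}$ with $\mu_0$ absolute and $C_d$ depending only on $d$, the leading factor $\|\nabla\psi\|_{\mathrm{BMO}}$ coming from the second-difference structure noted above. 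Substituting $\|\nabla\psi\|_{\mathrm{BMO}}\le 2M$ and absorbing $(1+M^2)^{1/4}$ and all constants gives $\|D\|_{L^2(\Gamma)}\le C_d\,M(1+M)^{\mu}$ with $\mu$ absolute, as claimed.

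The substantive content — and the main obstacle — is exactly the quantitative bound hidden inside Hofmann's theorem. For a self-contained argument I would split into two regimes. When $\|\nabla\psi\|_{L^\infty}$ is below a dimensional threshold $1/C_d$, expand $(1+u^2)^{-d/2}=\sum_{k\ge0}c_k u^{2k}$ (the $|c_k|$ grow only polynomially in $k$) in the denominator of $\widetilde K$; this expresses $\widetilde D$ as a norm-convergent series of generalized Calderón commutators, the $k$-th of which has $L^2$-norm $\le C_d^{\,k}(1+k)^{\alpha_d}M^{k+1}$ by the Coifman--McIntosh--Meyer estimates in their polynomial-in-$k$ form, and summing gives $\|\widetilde D\|_{L^2}\le C_d M$. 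When $\|\nabla\psi\|_{L^\infty}\ge1/C_d$ the series diverges and one must instead invoke the full $T(b)$ theorem (David--Journ\'e--Semmes, McIntosh--Meyer) on the Lipschitz graph, which gives $\|\widetilde D\|_{L^2}\le C_d(1+M)^{\mu'}$; since $M$ is then bounded below, $(1+M)^{\mu'}\le C_d\,M(1+M)^{\mu'}$, and combining the two regimes gives the stated bound with $\mu=\max(\mu',1)$. The genuinely hard technical points are the polynomial (rather than exponential) growth of the Calderón-commutator norms in the commutator order, and the explicit polynomial-in-$M$ dependence from $T(b)$ — both known but delicate, which is why I would ultimately just cite \cite{Ho:94}.
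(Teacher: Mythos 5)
Your proposal is correct and takes essentially the same route as the paper: the paper's proof is just the citation to Hofmann \cite[Theorem 1.10]{Ho:94} together with the observation that Lipschitz graphing functions embed continuously into $I_1(\mathrm{BMO})$, which is exactly your step two. The extra material you provide -- the explicit pullback to $\R^{d-1}$, the Jacobian factor, and the sketch of the small-$M$ commutator series versus large-$M$ $T(b)$ dichotomy hidden inside \cite{Ho:94} -- is a useful unpacking of what the paper treats as black-boxed, but it is not a different argument.
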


Combining Theorems \ref{thm:bounded} and \ref{lem:local} we obtain.

\begin{theorem} \label{thm:essnorm} Suppose that $\Omega_-$ has Lipschitz character $M$ and $D$ is the double-layer potential on $\Oi$. Then, where $C_d$ and $\mu$ are as in Theorem \ref{thm:bounded},
\beq\label{eq:smallMbound}
\|D\|_{L^2(\Gamma), \,\mathrm{ess}} \leq  C_d M(1+M)^\mu.
\eeq
\end{theorem}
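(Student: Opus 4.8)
The plan is to combine the localisation identity of Theorem~\ref{lem:local}(a),
\[
\|D\|_{L^2(\Gamma),\mathrm{ess}} = \lim_{\delta\to 0}\ \sup_{\bx\in\Gamma}\|D_{\bx,\delta}\|_{L^2(\Gamma)},
\]
with the graph bound of Theorem~\ref{thm:bounded}. The bridge between the two is the observation that, for $\delta$ small, the local operator $D_{\bx,\delta}=P_\delta(\bx)DP_\delta(\bx)$ only ``sees'' a piece of $\Gamma$ that coincides with an infinite Lipschitz graph whose Lipschitz constant can be taken arbitrarily close to the Lipschitz character $M$.

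In detail, I would first fix any $M'>M$. Since $M$ is the Lipschitz character of $\Omega_-$, the domain is Lipschitz with constant $M'$, so each $\bx\in\Gamma$ has a neighbourhood on which, in a suitably rotated coordinate system, $\Gamma$ is the graph of a scalar Lipschitz function with Lipschitz constant $\le M'$; by compactness of $\Gamma$ a single radius $r_0>0$ (depending on $M'$, $d$ and $\Omega_-$) works uniformly over $\bx\in\Gamma$. Extending the associated Lipschitz function from its domain to all of $\R^{d-1}$ without increasing its Lipschitz constant (McShane's extension), I obtain for each $\bx\in\Gamma$ an infinite Lipschitz graph $\Gamma_\bx$, with Lipschitz constant $\le M'$, that agrees with $\Gamma$ inside $B_{r_0}(\bx)$. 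Since the double-layer kernel $\partial\Phi(\bz,\by)/\partial n(\by)=c_d^{-1}(\bz-\by)\cdot\bn(\by)/|\bz-\by|^d$ depends only on $\bz$, $\by$ and the normal $\bn(\by)$, all of which coincide for $\Gamma$ and $\Gamma_\bx$ at a.e.\ point of $B_{r_0}(\bx)$, for $0<\delta<r_0$ the operator $D_{\bx,\delta}$ on $L^2(\Gamma)$ and the operator $\widetilde P_\delta(\bx)\widetilde D_\bx\widetilde P_\delta(\bx)$ on $L^2(\Gamma_\bx)$ act identically on the common subspace $L^2(B_\delta(\bx)\cap\Gamma)$ and vanish on its orthogonal complement, where $\widetilde D_\bx$ is the double-layer operator on $\Gamma_\bx$ and $\widetilde P_\delta(\bx)$ the corresponding characteristic-function projection; hence $\|D_{\bx,\delta}\|_{L^2(\Gamma)}=\|\widetilde P_\delta(\bx)\widetilde D_\bx\widetilde P_\delta(\bx)\|_{L^2(\Gamma_\bx)}$. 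By \eqref{eq:subspaces} this is $\le\|\widetilde D_\bx\|_{L^2(\Gamma_\bx)}$, and by Theorem~\ref{thm:bounded} this is $\le C_dM'(1+M')^\mu$. Taking the supremum over $\bx\in\Gamma$ (valid for every $0<\delta<r_0$) and then $\delta\to 0$ in the localisation identity gives $\|D\|_{L^2(\Gamma),\mathrm{ess}}\le C_dM'(1+M')^\mu$; since $M'>M$ was arbitrary, letting $M'\downarrow M$ yields \eqref{eq:smallMbound}.

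The only genuinely delicate point is the localisation step that identifies $D_{\bx,\delta}$ with a truncated operator on the infinite graph $\Gamma_\bx$: one must verify that compactness of $\Gamma$ yields a uniform graph radius $r_0$, that the Lipschitz extension does not enlarge the constant, and that, for $\delta<r_0$, the cut-off geometry matches exactly so that the two truncated principal-value integral operators have literally the same kernel on the same set equipped with the same surface measure. All of these are standard, so I do not anticipate a serious obstacle; the argument is essentially bookkeeping that assembles Theorems~\ref{lem:local} and \ref{thm:bounded}.
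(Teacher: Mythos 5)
Your proposal is correct and follows essentially the same route as the paper: localise via Theorem~\ref{lem:local}(a), identify $D_{\bx,\delta}$ (for small $\delta$) with a truncation of the double-layer operator on an infinite Lipschitz graph of constant $\le M'$, and then apply \eqref{eq:subspaces} together with Theorem~\ref{thm:bounded} before letting $M'\downarrow M$. The only cosmetic difference is that you work with the $\lim_{\delta\to0}\sup_{\bx\in\Gamma}$ form of \eqref{eqn:locall} (which forces the compactness argument for a uniform graph radius $r_0$), whereas the paper's proof uses the equivalent single-point form $\lim_{\delta\to0}\|D_{\bx^*,\delta}\|$ for some fixed $\bx^*$ and therefore only needs the graph representation in one neighbourhood; both are equally valid and lead to the same estimate.
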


\bpf
By Theorem \ref{lem:local},
$\|D\|_{L^2(\Gamma),\mathrm{ess}}= \lim_{\delta\to 0}  \|D_{\bx^*,\delta}\|_{L^2(\Gamma)}$ for some $\bx^*\in \Gamma$. Since $\Omega_-$ has Lipschitz character $M$, for all $M'>M$ there exists $\delta>0$ such that, by \eqref{eq:subspaces} and Theorem \ref{thm:bounded}, $\|D_{\bx^*,\delta}\|_{L^2(\Gamma)}\leq C_d M'(1+M')^\mu$, and the result follows.
\epf

\begin{corollary}[$\|D\|_{\LtG, \ess}<1/2$ if the Lipschitz character is small enough.]\label{cor:smallM}
Denote the Lipschitz character of $\Omega_-$ by $M$.
There exists $M_0>0$ such that if $0\leq M<M_0$ then $\|D\|_{\LtG, \ess} <1/2$. Therefore, for this same range of $M$, $\lambda I+D$ is the sum of a coercive operator and a compact operator when $|\lambda|\geq 1/2$.
\end{corollary}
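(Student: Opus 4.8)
The plan is to deduce both assertions directly from the quantitative essential-norm bound already established as Theorem~\ref{thm:essnorm}. For the first assertion, Theorem~\ref{thm:essnorm} gives $\|D\|_{\LtG,\ess}\le C_d M(1+M)^\mu$, with $C_d$ and $\mu$ absolute (depending at most on $d$). Since the map $M\mapsto C_d M(1+M)^\mu$ is continuous, increasing, and vanishes at $M=0$, there exists $M_0>0$ with $C_d M_0(1+M_0)^\mu<1/2$; the exact value is immaterial. Then for every $\Omega_-$ with Lipschitz character $M$ satisfying $0\le M<M_0$ we get $\|D\|_{\LtG,\ess}\le C_d M(1+M)^\mu\le C_d M_0(1+M_0)^\mu<1/2$, as claimed.

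For the second assertion, fix such an $M$ and choose $\epsilon>0$ with $\|D\|_{\LtG,\ess}+\epsilon<1/2$. By the definition \eqref{eq:essnorm} of the essential norm, there is a compact operator $K$ on $\LtG$ with $\|D-K\|_{\LtG}<\|D\|_{\LtG,\ess}+\epsilon<1/2$. Set $D_\dag:=D-K$, so $D=D_\dag+K$ with $\|D_\dag\|_{\LtG}<1/2$. Now let $\lambda\in\C$ with $|\lambda|\ge 1/2$, and write $\lambda I+D=(\lambda I+D_\dag)+K$. Here $\lambda I+D_\dag=\lambda(I+\lambda^{-1}D_\dag)$ with $\|\lambda^{-1}D_\dag\|_{\LtG}=|\lambda|^{-1}\|D_\dag\|_{\LtG}<1$, since $|\lambda|\ge 1/2>\|D_\dag\|_{\LtG}$. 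By the equivalence of a) and c) in Lemma~\ref{lem:coer_equiv} (with $c=\lambda$ and $\cB=\lambda^{-1}D_\dag$), $\lambda I+D_\dag$ is coercive; hence $\lambda I+D$ is the sum of a coercive operator and a compact operator.

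All the genuine content here lies upstream, in Theorem~\ref{thm:essnorm}, i.e.\ in combining the localisation identity \eqref{eqn:locall} of Theorem~\ref{lem:local} with the explicit dependence of $\|D\|_{\LtG}$ on the Lipschitz constant supplied by Theorem~\ref{thm:bounded}. The passage recorded above is routine; the only point needing a little care is the choice of $\epsilon$, ensuring that after removing a compact part $K$ the remainder $D_\dag$ has norm strictly below $1/2$ (and a fortiori strictly below $|\lambda|$), which is exactly what lets Lemma~\ref{lem:coer_equiv} apply.
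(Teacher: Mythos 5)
Your proposal is correct and matches the paper's (implicit) reasoning: the paper states Corollary~\ref{cor:smallM} without a separate proof, as an immediate consequence of Theorem~\ref{thm:essnorm}, and the decomposition $D=D_\dag+K$ with $\|D_\dag\|_{\LtG}<1/2$ followed by the coercivity of $\lambda I+D_\dag$ via Lemma~\ref{lem:coer_equiv}(c) is exactly the argument sketched in \S\ref{sec:intro} below \eqref{eq:essnorm}. Your care in choosing $\epsilon$ so that $\|D_\dag\|_{\LtG}<1/2\leq|\lambda|$ is the only point that requires attention, and you handle it correctly.
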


We make three remarks about Theorems \ref{lem:local}, \ref{thm:bounded}, and \ref{thm:essnorm} and Corollary \ref{cor:smallM}.
\bit
\item If $\Gamma$ is $C^1$, then $M=0$, and the bound \eqref{eq:smallMbound} reproduces the result \cite[Theorem 1.2(c)]{FaJoRi:78} that $D$ is compact.
\item Since the essential spectral radius of $D$ is less than its essential norm, Corollary \ref{cor:smallM} reproduces the result \cite[Theorem 1]{Mi:99} (stated for the elasto- and hydro-static analogues of the double-layer potential, but holding also for $D$) that
$\lambda I+D$ is Fredholm of index zero for $|\lambda|\geq 1/2$ if the Lipschitz character is small enough.
\item Theorems \ref{lem:local} and \ref{thm:bounded} make clear that the size of the essential norm depends only on the local Lipschitz character of $\Gamma$ when $\Gamma$ is the boundary of a bounded Lipschitz domain. In contrast, the norm of $D$ depends very much on the global geometry of $\Gamma$.
\eit

\subsection{Locally dilation invariant surfaces and polyhedra}\label{sec:locallyconical}

Our localisation result, Theorem \ref{lem:local}, relates the essential norm and essential numerical range of the double-layer potential on $\Gamma$ to the norm and numerical range of local representatives of $D$ at $\bx\in \Gamma$, specifically to the limits as $\delta\to 0$ of these properties of $D$ restricted to $B_\delta(\bx)$. In certain cases, the local geometry is sufficiently simple that we can characterise these limits rather explicitly. This is the case if $\Omega_-$ is a polyhedron as we note below, but it is also the case for a more general class of domains that we make use of later and introduce now.

\begin{definition}[Locally dilation invariant]\label{def:locdi}
When $\Gamma$ is the boundary of a bounded Lipschitz domain $\Omega_-$, $\Gamma$ is {\em locally dilation invariant at $\bx\in \Gamma$ with scale factor $\newalpha\in (0,1)$} if, for some $\delta>0$,
\begin{equation} \label{eqn:locdi}
\newalpha(B_\delta(\bx)\cap \Gamma-\bx) = B_{\newalpha\delta}(\bx)\cap \Gamma-\bx.
\end{equation}
$\Gamma$ is {\em locally dilation invariant at $\bx\in \Gamma$ if it is locally dilation invariant at $\bx\in \Gamma$ for some scale factor $\newalpha\in (0,1)$.}
$\Gamma$ is {\em locally dilation invariant} if $\Gamma$ is locally dilation invariant at each $\bx\in \Gamma$.
\end{definition}

The proof of the following lemma providing equivalent characterisations is straightforward and is omitted.

\begin{lemma} \label{lem:locdi} Suppose that $\Gamma$ is the boundary of a bounded Lipschitz domain $\Omega_-$ and that $\bx\in \Gamma$ and $\newalpha\in (0,1)$. Then the following are equivalent:
\begin{itemize}
\item[(i)]  $\Gamma$ is locally dilation invariant at $\bx$ with scale factor $\newalpha$.
\item[(ii)] $\newalpha(B_\delta(\bx)\cap \Gamma-\bx) \subset B_{\newalpha\delta}(\bx)\cap \Gamma-\bx$, for some $\delta>0$.
\item[(iii)] There exists some rotated coordinate system with origin at $\bx$ and a Lipschitz continuous function $f:\R^{d-1}\to\R^{d-1}$ with $f(\newalpha \by')=\newalpha f(\by')$, $\by'\in \R^{d-1}$, such that, in this rotated coordinate system, $B_\delta(\bx)\cap \Gamma= B_\delta(\bx)\cap \Gamma_\bx$, where
    $
    \Gamma_\bx := \{(\by',f(\by')):\by\in \R^{d-1}\}
    $ is the graph of $f$.
\item[(iv)] For some $\delta>0$,
\begin{equation} \label{eqn:locdi2}
B_\delta(\bx)\cap \Gamma =  \{\bx\}\cup\Big\{\bx+ \newalpha^n(\by-\bx):n\in \mathbb{N}_0, \, \by\in \big(B_\delta(\bx)\cap \Gamma\big)\setminus B_{\newalpha\delta}(\bx)\Big\}.
\end{equation}
\end{itemize}
\end{lemma}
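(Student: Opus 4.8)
The plan is to prove the cycle (iii) $\Rightarrow$ (i) $\Rightarrow$ (ii) $\Rightarrow$ (iii) together with the separate equivalence (i) $\Leftrightarrow$ (iv). After translating we may assume $\bx$ is the origin, so that the dilation $\by\mapsto\bx+\newalpha(\by-\bx)$ is just scalar multiplication $\by\mapsto\newalpha\by$ (and commutes with any rotation); write $\Gamma_r:=B_r(\bx)\cap\Gamma$, so that (i) reads $\newalpha\Gamma_\delta=\Gamma_{\newalpha\delta}$ and (ii) reads $\newalpha\Gamma_\delta\subset\Gamma_{\newalpha\delta}$. The implication (i) $\Rightarrow$ (ii) is then immediate. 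For (iii) $\Rightarrow$ (i): if $\Gamma_\bx=\mathrm{graph}(f)$ with $f(\newalpha\by')=\newalpha f(\by')$, then $\Gamma_\bx$ is exactly invariant under $\by\mapsto\newalpha\by$, and also under its inverse (apply the homogeneity relation with argument $\newalpha^{-1}\by'$); hence, from $\Gamma_\delta=B_\delta(\bx)\cap\Gamma_\bx$ and $\newalpha B_\delta(\bx)=B_{\newalpha\delta}(\bx)$, we get $\newalpha\Gamma_\delta=B_{\newalpha\delta}(\bx)\cap\Gamma_\bx$, and intersecting the same identity with $B_{\newalpha\delta}(\bx)\subset B_\delta(\bx)$ shows this also equals $\Gamma_{\newalpha\delta}$, which is (i).

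For (i) $\Leftrightarrow$ (iv), first observe that (i) for a given $\delta$ iterates to $\newalpha^n\Gamma_\delta=\Gamma_{\newalpha^n\delta}$ for all $n\in\mathbb{N}_0$ (induction, intersecting with successively smaller balls). To obtain (iv) from (i), decompose $\Gamma_\delta\setminus\{\bx\}$ as the disjoint union over $n\ge 0$ of $\Gamma_{\newalpha^n\delta}\setminus B_{\newalpha^{n+1}\delta}(\bx)$ — each point $\neq\bx$ lies in exactly one annulus $B_{\newalpha^n\delta}(\bx)\setminus B_{\newalpha^{n+1}\delta}(\bx)$ since $\newalpha^n\delta\to0$ — and rewrite each summand as $\newalpha^n(\Gamma_\delta\setminus B_{\newalpha\delta}(\bx))$ using the iterated (i); this is precisely the right-hand side of \eqref{eqn:locdi2}. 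Conversely, given (iv), intersect both sides with $B_{\newalpha\delta}(\bx)$: the $n=0$ term $\Gamma_\delta\setminus B_{\newalpha\delta}(\bx)$ is disjoint from $B_{\newalpha\delta}(\bx)$, while for $n\ge 1$ and any $\by\in\Gamma_\delta$ one has $\newalpha^n\by\in B_{\newalpha\delta}(\bx)$ (since $|\newalpha^n\by|<\newalpha^n\delta\le\newalpha\delta$), so $\Gamma_{\newalpha\delta}$ equals $\{\bx\}$ together with the $n\ge 1$ terms, i.e.\ $\newalpha\Gamma_\delta$, which is (i).

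I expect (ii) $\Rightarrow$ (iii) to be the only step requiring real work. Since $\Omega_-$ is a Lipschitz domain, fix a rotated coordinate system centred at $\bx$ and a Lipschitz $g$ with $g(0)=0$ such that $\Gamma_\delta=B_\delta(\bx)\cap\mathrm{graph}(g)$ for all sufficiently small $\delta$; shrinking $\delta$ — permissible because, just as for (i), (ii) for $\delta$ implies (ii) for $\newalpha^n\delta$ — we may assume (ii) holds at this $\delta$. With $L:=\mathrm{Lip}(g)$ and $\rho:=\delta/\sqrt{1+L^2}$, every $\by'$ with $|\by'|<\rho$ has $(\by',g(\by'))\in\Gamma_\delta$, so (ii) forces $(\newalpha\by',\newalpha g(\by'))\in\Gamma_{\newalpha\delta}\subset\mathrm{graph}(g)$, whence $g(\newalpha\by')=\newalpha g(\by')$ on $\{|\by'|<\rho\}$. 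Now define $f:\R^{d-1}\to\R$ by $f(\by'):=\newalpha^{-n}g(\newalpha^n\by')$ for any $n$ with $|\newalpha^n\by'|<\rho$; the local homogeneity relation shows $f$ is well-defined independently of such $n$, satisfies $f(\newalpha\by')=\newalpha f(\by')$ everywhere, and agrees with $g$ on $\{|\by'|<\rho\}$; and $f$ is globally $L$-Lipschitz, since one can scale any two points into $\{|\by'|<\rho\}$, where $f=g$, and $f(\newalpha^n\cdot)=\newalpha^n f(\cdot)$ then reduces the Lipschitz estimate to that for $g$. Finally $\mathrm{graph}(f)$ and $\Gamma=\mathrm{graph}(g)$ coincide inside $B_\rho(\bx)$, because any point of either that lies in $B_\rho(\bx)$ has first coordinate of norm $<\rho$, where $f=g$; so (iii) holds with $\Gamma_\bx=\mathrm{graph}(f)$ and $\delta$ replaced by $\rho$. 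The delicate points are the well-definedness of $f$, the passage from local to global Lipschitz continuity via homogeneity, and the bookkeeping of the shrinking radius — harmless here because (iii) only asserts the existence of \emph{some} $\delta>0$.
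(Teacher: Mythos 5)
The paper states that the proof of this lemma ``is straightforward and is omitted,'' so there is no proof in the paper to compare against; this is a pure correctness check. Your argument is correct and complete. The cycle (iii) $\Rightarrow$ (i) $\Rightarrow$ (ii) $\Rightarrow$ (iii) plus the side equivalence (i) $\Leftrightarrow$ (iv) covers everything, and the only step with any content, (ii) $\Rightarrow$ (iii), is handled carefully: the key observations are that (ii) iterates down the geometric scale $\delta\mapsto\newalpha\delta$ (so one may shrink to a $\delta$ where the local Lipschitz-graph description holds), that the radial buffer $\rho=\delta/\sqrt{1+L^2}$ guarantees $(\by',g(\by'))\in\Gamma_\delta$ for $|\by'|<\rho$, and that the homogeneity relation $g(\newalpha\by')=\newalpha g(\by')$ then extends $g$ from $\{|\by'|<\rho\}$ to a globally defined, globally $L$-Lipschitz, $\newalpha$-homogeneous $f$ by rescaling, with consistency of the extension checked by the iterated local homogeneity. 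The (i) $\Leftrightarrow$ (iv) direction via the annular decomposition $\Gamma_\delta\setminus\{\bx\}=\bigcup_{n\geq0}\bigl(\Gamma_{\newalpha^n\delta}\setminus B_{\newalpha^{n+1}\delta}(\bx)\bigr)$ and the observation that intersecting \eqref{eqn:locdi2} with $B_{\newalpha\delta}(\bx)$ recovers $\newalpha\Gamma_\delta$ are both clean and correct. One small remark: the statement of (iii) in the paper has $f:\R^{d-1}\to\R^{d-1}$, which is evidently a typo for $f:\R^{d-1}\to\R$ (the graph must sit in $\R^d$); you correctly read it as a scalar-valued graph function.
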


It is helpful to state explicitly the following result about the invariance of  the normal vector on locally-dilation-invariant $\Gamma$.

\ble[Normal vector on locally-dilation-invariant $\Gamma$]\label{lem:normal}
If $\Gamma$ is locally dilation invariant at $\bx$ with scale factor $\newalpha\in (0,1)$, so that \eqref{eqn:locdi} holds for some $\delta>0$, then, for $\by \in \Gamma \cap B_\delta(\bx)$ such that the unit normal vector $\bn(\by)$ exists, $\bn(\by)=\bn(\bx + \newalpha (\by-\bx))$.
\ele

The following 2-d example of Definition \ref{def:locdi} (and see Figure \ref{fig:GammaEpsOmega}), which we return to later, may be instructive.
\begin{example} \label{ex:loccon}
Suppose that $\Gamma$ is the boundary of a 2-d Lipschitz domain $\Omega_-$, and that, locally near $\bze\in \Gamma$, $\Gamma$ is the graph of the Lipschitz continuous function $f:\R\to \R$ defined, for some $\newalpha \in (0,1)$, by
\beq\label{eq:graphf2}
f(s) =
\begin{cases}
0, & \quad s\leq 0,\\
\displaystyle{\frac{1+\newalpha}{1-\newalpha}\, (s-\newalpha^m)}, & \quad \newalpha^{m} \leq s \leq \newalpha^{m-1}(1+\newalpha)/2,\\
\displaystyle{ \frac{1+\newalpha}{1-\newalpha}\,(\newalpha^{m-1}-s),} & \quad \newalpha^{m-1}(1+\newalpha)/2 \leq s\leq \newalpha^{m-1},
\end{cases}
\eeq
for $m\in \Z$.
Then (by the equivalence of (i) and (iii) in Lemma \ref{lem:locdi}), $\Gamma$ is locally dilation invariant at $\bze$ with scale factor $\newalpha$.
\end{example}

The following is an important special case of Definition \ref{def:locdi}.

\begin{definition}[Locally conical]\label{def:loccon}
$\Gamma$ is {\em locally conical at $\bx\in \Gamma$} if, for some $\delta>0$,
\begin{equation} \label{eqn:loccon}
\newalpha(B_\delta(\bx)\cap \Gamma-\bx) = B_{\newalpha\delta}(\bx)\cap \Gamma-\bx, \quad \mbox{for all } \newalpha\in (0,1),
\end{equation}
equivalently if
\begin{equation} \label{eqn:loccon2}
B_\delta(\bx)\cap \Gamma = \Big\{\bx+t(\by-\bx):0\leq t < 1, \, \by\in \partial B_\delta(\bx)\cap \Gamma\Big\}.
\end{equation}
$\Gamma$ is {\em locally conical} if $\Gamma$ is locally conical at every $\bx\in \Gamma$.
\end{definition}

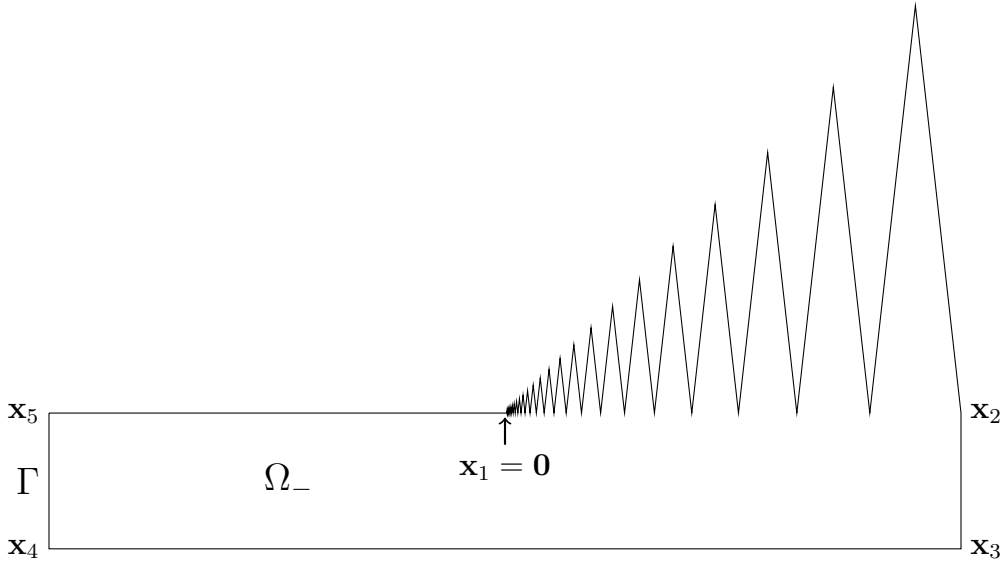
\begin{figure}
\begin{center}
\begin{tikzpicture}[scale=6]
\draw plot coordinates {(-1,0) (0.0037779,0) (0.0042501,0.0042501) (0.0047224,0) (0.0053127,0.0053127) (0.005903,0) (0.0066408,0.0066408) (0.0073787,0) (0.008301,0.008301) (0.0092234,0) (0.010376,0.010376) (0.011529,0) (0.01297,0.01297) (0.014412,0) (0.016213,0.016213) (0.018014,0) (0.020266,0.020266) (0.022518,0) (0.025333,0.025333) (0.028147,0) (0.031666,0.031666) (0.035184,0) (0.039582,0.039582) (0.04398,0) (0.049478,0.049478) (0.054976,0) (0.061848,0.061848) (0.068719,0) (0.077309,0.077309) (0.085899,0) (0.096637,0.096637) (0.10737,0) (0.1208,0.1208) (0.13422,0) (0.15099,0.15099) (0.16777,0) (0.18874,0.18874) (0.20972,0) (0.23593,0.23593) (0.26214,0) (0.29491,0.29491) (0.32768,0) (0.36864,0.36864) (0.4096,0) (0.4608,0.4608) (0.512,0) (0.576,0.576) (0.64,0) (0.72,0.72) (0.8,0) (0.9,0.9) (1,0)
(1,-0.3) (-1, -0.3) (-1,0)};
\draw (-1,-0.15) node[anchor=east] {\Large $\Gamma$};
\draw (-0.4,-0.15) node[anchor=east] {\Large $\Omega_-$};
\draw [thick,->] (0,-0.07) -- ++(90:0.06);
\draw (0,-0.12) node  {\large $\bx_1=\bze$};
\draw (1,0) node[anchor=west] {\large $\bx_2$};
\draw (1,-0.3) node[anchor=west] {\large $\bx_3$};
\draw (-1,-0.3) node[anchor=east] {\large $\bx_4$};
\draw (-1,0) node[anchor=east] {\large $\bx_5$};
\end{tikzpicture}
\end{center}
\caption{\label{fig:GammaEpsOmega} An example Jordan curve $\Gamma$ enclosing a bounded Lipschitz domain $\Omega_-$, and containing $\{(s,f(s)):-1\leq s\leq 1\}$, where $f$ is defined by \eqref{eq:graphf2} with $\newalpha=0.8$. $\Gamma$ is locally dilation invariant at $\bx_1=\bze$ with scale factor $\newalpha$. $\Gamma$ is locally conical (and so locally dilation invariant) at every other $\bx\in \Gamma$, so that $\Gamma$ is locally dilation invariant. The vertices $\{\bx_1,...,\bx_5\}$ are a set of generalised vertices of $\Gamma$ in the sense of Definition \ref{def:gv}. By the calculations at the end of \S\ref{sec:locdiExam}, $\half I \pm D$ and $\half I \pm D'$ cannot be written as sums of coercive plus compact operators for this particular $\Gamma$.}
\end{figure}

Thus, if $\Gamma$ is locally dilation invariant at $\bx\in \Gamma$, then, for some $\delta>0$ and $\newalpha\in (0,1)$, $\bx+\newalpha(\by-\bx)\in \Gamma$ for all $\by\in B_\delta(\bx)\cap \Gamma$, while if $\Gamma$ is locally conical at $\bx$ then this holds {\em for all} $\newalpha\in (0,1)$.

As Figure \ref{fig:GammaEpsOmega} illustrates, $\Gamma$ can have infinitely many vertices while at the same time being locally dilation invariant, and also locally conical at all but one $\bx\in \Gamma$. If $\Gamma$ is locally conical, i.e.\ is locally conical at {\em every} $\bx\in \Gamma$, $\Gamma$ cannot have infinitely many vertices as a consequence of a general result in metric-space geometry \cite{LePe:15}.

\begin{lemma}[Locally conical surfaces are polyhedra] \label{lem:locconPoly} Suppose that $\Gamma$ is the boundary of a bounded Lipschitz domain $\Omega_-$. Then $\Gamma$ is locally conical if and only if $\Gamma$ is a polygon ($d=2$) or a polyhedron ($d=3$).
\end{lemma}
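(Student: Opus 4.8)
The plan is as follows. The ``if'' direction is immediate from Definition~\ref{def:loccon}. If $\Gamma$ is a polygon ($d=2$) or a polyhedron ($d=3$), then for each $\bx\in\Gamma$ there is a $\delta>0$ such that, in a suitable rotated coordinate system with origin $\bx$, $B_\delta(\bx)\cap\Gamma$ is either $B_\delta(\bx)\cap\Pi$ for a hyperplane $\Pi$ through $\bx$ (when $\bx$ lies in the relative interior of a face), or $B_\delta(\bx)\cap W$ for a dihedral wedge $W$ whose edge passes through $\bx$ (when $d=3$ and $\bx$ lies in the relative interior of an edge), or a genuine cone with apex $\bx$ over a pair of points (when $d=2$ and $\bx$ is a vertex) resp.\ over a finite union of circular arcs (when $d=3$ and $\bx$ is a vertex). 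In every case $B_\delta(\bx)\cap\Gamma$ is a union of rays emanating from $\bx$, i.e.\ \eqref{eqn:loccon2} holds, so $\Gamma$ is locally conical.

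For the ``only if'' direction I would argue by a stratify-and-glue scheme. Suppose $\Gamma$ is locally conical. The first step is the elementary rigidity observation: if a relatively open piece $U\subset\Gamma$ is at once a cone with apex $\bx$ and a cone with apex $\by$ for two distinct points, then $U$ is a cone with apex at every point of the segment $[\bx,\by]$, and hence (using that $\Gamma$ locally separates $\R^d$ into two pieces) near each such point $U$ is either contained in a hyperplane or is a dihedral wedge with edge along the line through $\bx$ and $\by$. Using this, I would classify the points of $\Gamma$: let $\Gamma_{\mathrm{reg}}$ be the points with a ball-neighbourhood in which $\Gamma$ is a flat disc; let $\Sigma_{\mathrm{edge}}$ (relevant only for $d=3$) be the remaining points with a ball-neighbourhood in which $\Gamma$ is a dihedral wedge; and let $V$ be the rest. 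Propagating the flat-, resp.\ wedge-structure to nearby boundary points shows that $\Gamma_{\mathrm{reg}}$ and $\Gamma_{\mathrm{reg}}\cup\Sigma_{\mathrm{edge}}$ are open in $\Gamma$, so $V$ is compact. The rigidity observation then shows that no point of $\Gamma$ can be an accumulation point of $V$ (if $\by$ were close to but distinct from such an accumulation point $\bx^*$, with $\Gamma$ a cone with apex $\bx^*$ near $\bx^*$, then near $\by$ it would be flat or a wedge, so $\by\notin V$); thus $V$ is discrete, hence finite. This finiteness is exactly a form of the metric-geometry result of \cite{LePe:15}, which can simply be invoked at this point.

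Finally I would glue. Along each connected component of $\Gamma_{\mathrm{reg}}$ the property ``lies in a fixed hyperplane'' is both open and closed, so each such component is a planar region; similarly, for $d=3$, each component of $\Sigma_{\mathrm{edge}}$ is a straight segment. Running the rigidity observation on the spherical cross-section of the cone at each (of the finitely many) vertices shows that the cone at a vertex has only finitely many edges, and every edge-component of $\Sigma_{\mathrm{edge}}$ has both of its endpoints in $V$; hence $\overline{\Sigma_{\mathrm{edge}}}$ has finitely many components, each a segment joining two vertices. The relative boundary of each face component lies in $\overline{\Sigma_{\mathrm{edge}}}\cup V$, so each face is a bounded planar polygon, only finitely many faces occur, and $\Gamma$ is their union joined along these edges and vertices: a polygon when $d=2$ and a polyhedron when $d=3$. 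The main obstacle is precisely the finiteness of $V$ (and of the edge set), i.e.\ the rigidity of ``locally conical at every point'' against accumulation of singularities --- this is the content of \cite{LePe:15} together with the two-apex observation above --- while the stratification and gluing steps are routine.
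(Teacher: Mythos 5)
Your proposal is correct in outline but takes a genuinely different route from the paper, which has a much terser proof: equip $\Gamma$ with its geodesic (intrinsic) metric, so that it becomes a compact length space; observe that $\Gamma$ is a polygon/polyhedron if and only if it is a ``polyhedral space'' in the sense of \cite{LePe:15}, i.e.\ admits a finite triangulation by Euclidean simplices; and then invoke \cite[Theorem~1.1]{LePe:15} directly for the equivalence between being a polyhedral space and being locally conical. All the real work --- in particular the finiteness and regularity of the singular set --- is delegated to that metric-geometry theorem. Your stratify-and-glue scheme, by contrast, attempts to re-derive the hard finiteness content through the Euclidean ``two-apex'' rigidity observation, which is sound in spirit and, if carried through, gives a more self-contained and transparent proof. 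Two cautions. First, as stated your rigidity observation requires a single open $U$ that is simultaneously a cone at both $\bx^*$ and $\by$, whereas what the definition gives you are conical neighbourhoods $B_\delta(\bx^*)\cap\Gamma$ and $B_{\delta'}(\by)\cap\Gamma$ with unrelated radii $\delta$ and $\delta'$ (and $\delta'$ could be tiny compared with $|\by-\bx^*|$); the correct implementation is to compose the dilations $D_{\bx^*,t}$ and $D_{\by,1/t}$ (and their inverses, using that the cone condition gives two-sided radial scaling within the conical ball) to show that $\Gamma$ near $\by$ is invariant under small translations in both directions along $\by-\bx^*$, which then forces the flat/wedge dichotomy. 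The analogous care is needed in the ``spherical cross-section'' step that bounds the number of edges at each vertex. Second, your closing hedge (``this is the content of \cite{LePe:15} together with the two-apex observation'') is a little confused: if the rigidity argument goes through then \cite{LePe:15} is no longer needed, which is an advantage of your route rather than a place to fall back on the citation; conversely, the paper's route needs no rigidity observation at all because \cite[Theorem~1.1]{LePe:15} already subsumes it.
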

\begin{proof} Equip $\Gamma$ with the metric $d(\cdot,\cdot)$ in which $d(\bx,\by)$ is the geodesic distance from $\bx$ to $\by$ on $\Gamma$. $\Gamma$ with this metric is a compact length space (in the sense of \cite[\S2]{LePe:15}). We note that $\Gamma$ is a polygon or a polyhedron if and only if it is a polyhedral space in the sense of \cite[\S2]{LePe:15}, i.e.\ it admits a finite triangulation (in the standard metric space sense, e.g. \cite[\S78]{Munkes:00}), with each simplex isometric to a simplex in $\R^{d-1}$ (i.e.\ to an interval for $d=2$, a triangle for $d=3$). Further, it follows from \cite[Theorem 1.1]{LePe:15} that $\Gamma$ is a polyhedral space if and only if it is locally conical.
\end{proof}

If $\Gamma$ is locally dilation invariant at $\bx$ with scale factor $\newalpha\in (0,1)$, so that \eqref{eqn:locdi} holds for some $\delta>0$, let
\begin{equation} \label{eqn:dcone}
\Gamma_\bx := \Big\{\bx + \newalpha^n(\by-\bx) :\by \in  B_\delta(\bx)\cap \Gamma, \, n\in \Z\Big\} \quad \mbox{and} \quad \Gamma_\bx^s := B_s(\bx)\cap \Gamma_\bx, \quad s>0.
\end{equation}
(Explicitly, $\Gamma_\bx$ is given as in (iii) of Lemma \ref{lem:locdi}, in terms of the local parameterisation of $\Gamma$ near $\bx$.) If $\Gamma$ is locally conical at $\bx$, then
\begin{equation} \label{eqn:cone}
\Gamma_\bx = \Big\{\bx + t(\by-\bx) :\by \in B_\delta(\bx)\cap \Gamma, \, t\geq 0\Big\},
\end{equation}
i.e.\ $\Gamma_\bx$ is a (Lipschitz) cone.  Let $D_\bx^s$, for $s>0$, denote the double-layer potential operator on $\Gamma_\bx^s$, and $D_\bx$ the double-layer potential operator on $\Gamma_\bx$.

When $\Gamma$ is locally dilation invariant at $\bx\in \Gamma$ with scale factor $\newalpha\in (0,1)$ (in which case $\bx+\newalpha^n(\by-\bx)\in \Gamma_\bx$ for all $\by\in \Gamma_\bx$ and $n\in \Z$),  define $V^\newdelta_\bx:L^2(\Gamma_\bx)\rightarrow L^2(\Gamma_\bx)$, with $\newdelta=\newalpha^n$ for some $n\in \Z$, by
\beq\label{eqn:Vnewdeltadef}
V^\newdelta_\bx \phi(\by):= \phi(\bx + \newdelta (\by-\bx)), \quad \mbox{for }\by\in \Gamma_\bx \;\;\mbox{ and }\;\; \phi\in L^2(\Gamma_\bx).
\eeq
This definition immediately implies that: (i) $V^\newdelta_\bx$ is invertible, with $(V^\newdelta_\bx)^{-1} = V_\bx^{\newdelta^{-1}}$; and
(ii) $V^\newdelta_\bx:L^2(\Gamma_\bx^\newdelta)\rightarrow L^2(\Gamma_\bx^1)$ is a bijection.

\begin{lemma}\label{lem:Visometry}
If $\Gamma$ is locally dilation invariant at $\bx\in \Gamma$ with scale factor $\newalpha\in (0,1)$, and $\newdelta=\newalpha^n$, for some $n\in \Z$, then
$\newdelta^{(d-1)/2} V^\newdelta_\bx$ is a bijective isometry from $L^2(\Gamma_\bx^\newdelta)$ to $L^2(\Gamma_\bx^1)$, and also from $L^2(\Gamma_\bx)$ to itself.
\end{lemma}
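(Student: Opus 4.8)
The plan is to verify the isometry property by a direct change of variables, using the scaling behaviour of the surface measure under the dilation $\by \mapsto \bx + \newdelta(\by-\bx)$ on a locally dilation invariant surface. First I would recall that, since $\Gamma$ is locally dilation invariant at $\bx$ with scale factor $\newalpha \in (0,1)$ and $\newdelta = \newalpha^n$ for some $n \in \Z$, iterating \eqref{eqn:locdi} (equivalently, using (iv) of Lemma \ref{lem:locdi} and that $\Gamma_\bx$ is the full dilation-generated cone defined in \eqref{eqn:dcone}) gives $\newdelta(\Gamma_\bx - \bx) = \Gamma_\bx$ and $\newdelta(\Gamma_\bx^\newdelta - \bx) = \Gamma_\bx^1$; this holds for both positive and negative powers $n$ because $(V^\newdelta_\bx)^{-1} = V^{\newdelta^{-1}}_\bx$ and the two displayed properties stated just before the lemma. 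Thus the map $\Psi: \by \mapsto \bx + \newdelta(\by-\bx)$ is a bijection $\Gamma_\bx^\newdelta \to \Gamma_\bx^1$ (and $\Gamma_\bx \to \Gamma_\bx$), and $V^\newdelta_\bx \phi = \phi \circ \Psi^{-1}$ in the notation of \eqref{eqn:Vnewdeltadef}.

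Next I would compute the effect of $\Psi$ on surface measure. Since $\Psi$ is the restriction to $\Gamma_\bx$ of the affine dilation of $\R^d$ centred at $\bx$ with ratio $\newdelta$, and $\Gamma_\bx$ is a $(d-1)$-dimensional Lipschitz surface, the $(d-1)$-dimensional Hausdorff (surface) measure transforms by $\rd s(\Psi(\by)) = \newdelta^{d-1}\, \rd s(\by)$. Concretely, working in the rotated coordinate system of Lemma \ref{lem:locdi}(iii) in which $\Gamma_\bx$ is the graph $\{(\by', f(\by'))\}$ with $f(\newdelta \by') = \newdelta f(\by')$, the surface measure is $\rd s = \sqrt{1 + |\nabla f(\by')|^2}\, \rd \by'$, and the homogeneity of $f$ forces $\nabla f(\newdelta \by') = \nabla f(\by')$, so the Jacobian factor is exactly the scaling $\newdelta^{d-1}$ of the Lebesgue measure $\rd \by'$ in $\R^{d-1}$. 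Therefore, for $\phi \in L^2(\Gamma_\bx^1)$, substituting $\bz = \Psi(\by)$,
\begin{align*}
\big\| \newdelta^{(d-1)/2} V^\newdelta_\bx \phi \big\|_{L^2(\Gamma_\bx^\newdelta)}^2
&= \newdelta^{d-1} \int_{\Gamma_\bx^\newdelta} \big|\phi\big(\Psi(\by)\big)\big|^2 \, \rd s(\by) \\
&= \newdelta^{d-1} \int_{\Gamma_\bx^1} \big|\phi(\bz)\big|^2 \, \newdelta^{-(d-1)} \rd s(\bz)
= \big\| \phi \big\|_{L^2(\Gamma_\bx^1)}^2,
\end{align*}
which, together with bijectivity established above, shows $\newdelta^{(d-1)/2} V^\newdelta_\bx : L^2(\Gamma_\bx^\newdelta) \to L^2(\Gamma_\bx^1)$ is a bijective isometry. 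The identical computation with $\Gamma_\bx$ in place of $\Gamma_\bx^1$ and $\Gamma_\bx^\newdelta$ (noting $\newdelta(\Gamma_\bx - \bx) = \Gamma_\bx$) gives that $\newdelta^{(d-1)/2} V^\newdelta_\bx$ maps $L^2(\Gamma_\bx)$ isometrically onto itself.

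The only genuine subtlety — the main obstacle — is justifying the measure-scaling identity $\rd s(\Psi(\by)) = \newdelta^{d-1}\,\rd s(\by)$ for a merely Lipschitz surface, rather than just invoking it for smooth surfaces. This is handled cleanly by passing to the graph parameterisation of Lemma \ref{lem:locdi}(iii): there the claim reduces to the elementary scaling of $(d-1)$-dimensional Lebesgue measure under $\by' \mapsto \newdelta \by'$ combined with the scale-invariance $|\nabla f(\newdelta\by')| = |\nabla f(\by')|$ of the area element, which is an immediate consequence of the homogeneity $f(\newdelta \by') = \newdelta f(\by')$ (differentiate, or use difference quotients where $\nabla f$ exists a.e.). Everything else is routine, and the result for $L^2(\Gamma_\bx)$ follows from the same argument applied over the whole cone, or alternatively by writing $\Gamma_\bx$ as a countable disjoint union of dilated copies of the annular piece $\Gamma_\bx^1 \setminus \Gamma_\bx^\newalpha$ and summing.
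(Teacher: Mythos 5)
Your approach --- a direct change of variables exploiting the surface-measure scaling $\rd s(\bx+\newdelta(\by-\bx)) = \newdelta^{d-1}\,\rd s(\by)$ on a dilation-invariant Lipschitz graph --- is exactly the paper's, and your justification of that Jacobian via the graph parameterisation of Lemma \ref{lem:locdi}(iii) and the homogeneity $\nabla f(\newdelta\by') = \nabla f(\by')$ is a welcome addition (the paper just asserts it). There is, however, a bookkeeping slip you should fix: since $V^\newdelta_\bx\phi(\by) = \phi(\Psi(\by))$ with $\Psi(\by)=\bx+\newdelta(\by-\bx)$, we have $V^\newdelta_\bx\phi = \phi\circ\Psi$ (not $\phi\circ\Psi^{-1}$), and $\Psi$ maps $\Gamma_\bx^1$ bijectively onto $\Gamma_\bx^\newdelta$ (not vice versa); in your displayed computation the roles of $\Gamma_\bx^1$ and $\Gamma_\bx^\newdelta$ should therefore be swapped --- take $\phi\in L^2(\Gamma_\bx^\newdelta)$, integrate $|\phi(\Psi(\by))|^2$ over $\by\in\Gamma_\bx^1$, and substitute $\bz=\Psi(\by)\in\Gamma_\bx^\newdelta$ --- after which the isometry identity follows exactly as you intend.
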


\bpf
We have already noted that these mappings are bijections. To see that they are isometries, observe that, for $\psi,\phi \in L^2(\Gamma_\bx)$,
\begin{align*}
\big(V^\newdelta_\bx \psi, V^\newdelta_\bx \phi\big)_{L^2(\Gamma_\bx)} &= \int_{\Gamma_\bx} \psi(\bx+\newdelta (\by-\bx))\, \overline{\phi(\bx+\newdelta(\by-\bx))} \, \rd s(\by)= \newdelta^{-(d-1)} \int_{\Gamma_\bx} \psi(\bz)\,\overline{\phi(\bz)} \, \rd s(\bz),
\end{align*}
by letting $\bz = \bx+\newdelta (\by-\bx)$ and recalling that $\rd s(\bz)= \newdelta^{(d-1)}\rd s(\by)$.
\epf

The following property of the double-layer operator is well known in the case that $\Gamma_\bx$ is a cone; see, e.g., \cite[Equation B.5]{El:92}.

\ble[Dilation property of the double-layer operator]\label{lem:Vcommute}
If $\Gamma$ is locally dilation invariant at $\bx\in \Gamma$ with scale factor $\newalpha\in (0,1)$ and $\newdelta=\newalpha^n$, for some $n\in \Z$, then
\beq\label{eqn:Vcommute}
D_\bx^1 V^\newdelta_\bx = V^\newdelta_\bx D_\bx^\newdelta,
\eeq
where both sides map $L^2(\Gamma_\bx^\rho)\rightarrow L^2(\Gamma_\bx^1)$.
Similarly, $D_\bx V^\newdelta_\bx = V^\newdelta_\bx D_\bx$ on $L^2(\Gamma_\bx)$.
\ele

\bpf
We prove the first of these claims; the second follows similarly.
From \eqref{eq:DD'2}, for $\phi\in L^2(\Gamma_\bx^\rho)$ and $\by \in \Gamma_\bx^\newdelta$,
\begin{eqnarray*}
D_\bx^\newdelta \phi(\by) &= &\frac{1}{c_d} \int_{\Gamma_\bx^\newdelta}\frac{(\by-\bz)\cdot \bn(\bz)}{|\by-\bz|^d}\phi(\bz) \,\rd s(\bz)\\
&=& \frac{1}{c_d} \int_{\Gamma_\bx^1} \frac{ \newdelta(\tby- \tbz)\cdot\bn (\tbz) }{\newdelta^d |\tby-\tbz|^d}\phi(\bx+\newdelta (\tbz-\bx)) \newdelta^{d-1} \rd s(\tbz)= D_\bx^1 (V^\newdelta_\bx \phi)(\tby),
\end{eqnarray*}
where we have used the change of variables $\by= \bx+\newdelta (\tby-\bx)$, $\bz= \bx+\newdelta (\tbz-\bx)$, with $\tby,\tbz\in \Gamma_\bx^1$, and Lemma \ref{lem:normal}. Therefore, for $\tby \in \Gamma_\bx^1,$
\beqs
D_\bx^1 (V^\newdelta_\bx \phi)(\tby) = D_\bx^\newdelta \phi(\by)= D_\bx^\newdelta \phi(\bx+\newdelta(\tby-\bx)) = V^\newdelta_\bx\big(D_\bx^\newdelta \phi\big)(\tby).
\eeqs
\epf

\begin{lemma} \label{lem:loccon} Suppose that $\Gamma$ is locally dilation invariant at $\bx$ with scale factor $\newalpha\in (0,1)$, and $\Gamma_\bx$ and $\Gamma_\bx^s$ are given by \eqref{eqn:dcone}. Let $D_\bx^s$, for $s>0$, denote the double-layer potential operator on $\Gamma_\bx^s$,
let $D_\bx$ denote the double-layer potential operator on $\Gamma_\bx$,
and use the other notations in Theorem \ref{lem:local}.

(i) For all $\delta>0$ sufficiently small so that $B_\delta(\bx)\cap \Gamma=\Gamma_\bx^\delta$,
\beq\label{eqn:DLPnormsequal1}
 \|D_{\bx,\delta}\|_{L^2(\Gamma)} =  \|D_\bx^\delta\|_{L^2(\Gamma^\delta_\bx)} \eeq
and
\beq\label{eqn:DLPnumrangeequal1}
 W(D_{\bx,\delta}) =  W(D_\bx^\delta).
\eeq

(ii) For all $\newdelta>0$,
\beq\label{eqn:DLPnormsequal2}
\|D_\bx^\newdelta\|_{L^2(\Gamma^\newdelta_\bx),\ess} = \|D_\bx^\newdelta\|_{L^2(\Gamma^\newdelta_\bx)} = \|D_\bx\|_{L^2(\Gamma_\bx)} = \|D_\bx\|_{L^2(\Gamma_\bx),\ess}
\eeq
and
\beq\label{eqn:DLPnumrangeequal3}
W_\ess(D_\bx^\newdelta) = \overline{W(D_\bx^\newdelta)}=\overline{W(D_\bx)} = W_\ess(D_\bx).
\eeq
\end{lemma}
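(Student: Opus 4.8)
The plan is to prove part (i) from the block-diagonal structure of $D_{\bx,\delta}$, and part (ii) by a single application of Lemma~\ref{lem:Tlemma} to $D_\bx$ with the dilation operator that ``zooms in'' towards $\bx$. For (i), fix $\delta>0$ small enough that $B_\delta(\bx)\cap\Gamma=\Gamma_\bx^\delta$ (possible by local dilation invariance, cf.\ Lemma~\ref{lem:locdi}). Then $P_\delta(\bx)$ is multiplication by the characteristic function of $\Gamma_\bx^\delta$, and, with respect to the orthogonal decomposition $L^2(\Gamma)=L^2(\Gamma_\bx^\delta)\oplus L^2(\Gamma\setminus B_\delta(\bx))$, one has $D_{\bx,\delta}=D_\bx^\delta\oplus 0$: indeed, for $\phi\in L^2(\Gamma_\bx^\delta)$ and $\by\in\Gamma_\bx^\delta$ the Laplace double-layer kernels on $\Gamma$ and on $\Gamma_\bx^\delta$ agree (the surfaces, hence the normals, coincide in $B_\delta(\bx)$), so $D\phi(\by)=D_\bx^\delta\phi(\by)$. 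Hence $\|D_{\bx,\delta}\|_{L^2(\Gamma)}=\|D_\bx^\delta\|_{L^2(\Gamma_\bx^\delta)}$ is immediate, and $W(D_{\bx,\delta})=\mathrm{conv}\big(\{0\}\cup W(D_\bx^\delta)\big)=\bigcup_{0\leq t\leq 1}t\,W(D_\bx^\delta)$, which coincides with $W(D_\bx^\delta)$ once $0\in W(D_\bx^\delta)$. The latter follows by testing $D_\bx^\delta$ against a unit function supported on a relatively open portion of $\Gamma_\bx^\delta$ lying in a hyperplane (on which the double-layer kernel vanishes), such a portion being present for the polyhedral and piecewise-linear geometries to which this lemma is applied; in any case it is harmless downstream, since Theorem~\ref{lem:local} uses only $\overline{W(D_{\bx,\delta})}=\overline{W(D_\bx^\delta)}$, for which it suffices that $D_\bx^\delta$ is not coercive.

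For (ii), set $\cH:=L^2(\Gamma_\bx)$, $\cA:=D_\bx$, and let $\cT:=\newalpha^{-(d-1)/2}V_\bx^{\newalpha^{-1}}$, i.e.\ the dilation by the factor $\newalpha^{-1}=\newalpha^{(-1)}>1$. By Lemma~\ref{lem:Visometry} (with $n=-1$), $\cT$ is a bijective isometry of $\cH$, and $\cT^k=\newalpha^{-k(d-1)/2}V_\bx^{\newalpha^{-k}}$ for $k\in\NN$; by Lemma~\ref{lem:Vcommute}, $\cT$ commutes with $D_\bx$. Let $\cV:=L^2(\Gamma_\bx^\newdelta)$, a closed subspace of $\cH$, let $\cP:\cH\to\cV$ be orthogonal projection (multiplication by the characteristic function of $\Gamma_\bx^\newdelta$), $\cQ:=\cI-\cP$, and $\widetilde\cA:=\cP D_\bx|_{\cV}$. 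Exactly as in (i), for $\phi\in L^2(\Gamma_\bx^\newdelta)$ and $\by\in\Gamma_\bx^\newdelta$ one has $D_\bx\phi(\by)=D_\bx^\newdelta\phi(\by)$, so $\widetilde\cA=D_\bx^\newdelta$.

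It remains to verify the convergence hypotheses of Lemma~\ref{lem:Tlemma}. Writing $\cT^n\phi(\by)=\newalpha^{-n(d-1)/2}\phi(\bx+\newalpha^{-n}(\by-\bx))$ and using the scaling $\bz=\bx+\newalpha^{-n}(\by-\bx)$ (legitimate since $\Gamma_\bx$ is invariant under $\by\mapsto\bx+\newalpha^m(\by-\bx)$, $m\in\Z$, with $\rd s$ scaling by $\newalpha^{m(d-1)}$), one sees that for $\phi$ compactly supported in $\Gamma_\bx\setminus\{\bx\}$ the function $\cT^n\phi$ is supported on a spherical shell about $\bx$ of radii $\sim\newalpha^n$, hence on a set of surface measure $\to 0$; since $\|\cT^n\phi\|_\cH=\|\phi\|_\cH$, this forces $\cT^n\phi\rightharpoonup 0$, and the general case follows by density and $\|\cT^n\|_\cH=1$. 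The same change of variables gives $\|\cQ\cT^n\phi\|_\cH^2=\int_{\{\bz\in\Gamma_\bx\,:\,|\bz-\bx|\geq\newalpha^{-n}\newdelta\}}|\phi(\bz)|^2\,\rd s(\bz)\to 0$, because $\newalpha^{-n}\newdelta\to\infty$ and $\phi\in L^2(\Gamma_\bx)$; thus $\cQ\cT^n\to 0$ strongly. Lemma~\ref{lem:Tlemma} now yields $W_\ess(D_\bx)=\overline{W(D_\bx)}$, $\|D_\bx\|_{\cH,\ess}=\|D_\bx\|_\cH$, and, from the second part, $W_\ess(D_\bx^\newdelta)=\overline{W(D_\bx^\newdelta)}=\overline{W(D_\bx)}$ together with $\|D_\bx^\newdelta\|_{\cV,\ess}=\|D_\bx^\newdelta\|_{\cV}=\|D_\bx\|_\cH$. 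Chaining these identities gives exactly \eqref{eqn:DLPnormsequal2} and \eqref{eqn:DLPnumrangeequal3}.

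The main obstacle is conceptual rather than computational: one must recognise that the right isometry for Lemma~\ref{lem:Tlemma} is the \emph{zoom-in} dilation $\cT$ (factor $\newalpha^{-1}>1$), whose iterates concentrate an arbitrary $L^2(\Gamma_\bx)$-function at the cone vertex $\bx$. This simultaneously delivers weak nullity of $\cT^n$ (the mass sits on shrinking shells) and strong nullity of $\cQ\cT^n$ (the mass is eventually trapped inside $B_\newdelta(\bx)$) — two requirements that look antagonistic until one uses that $\Gamma_\bx$ is an \emph{unbounded} Lipschitz cone. The rest — tracking the Jacobian factors $\newalpha^{\pm n(d-1)/2}$ through the dilation, and the minor point $0\in W(D_\bx^\newdelta)$ in part (i) — is routine.
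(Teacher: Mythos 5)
Your proof is correct and follows essentially the same route as the paper's. Part (ii) is obtained by applying Lemma~\ref{lem:Tlemma} to $\cH=L^2(\Gamma_\bx)$, $\cA=D_\bx$, $\cV=L^2(\Gamma_\bx^\newdelta)$, with exactly the ``zoom-in'' isometry $\cT=\newalpha^{-(d-1)/2}V_\bx^{\newalpha^{-1}}$ (which is the paper's $r^{(d-1)/2}V_r$, $r=\newalpha^{-1}$), and the chain of identities is the same. The differences are in the small print. For the hypotheses of Lemma~\ref{lem:Tlemma} you verify $\cT^n\to 0$ weakly and $\cQ\cT^n\to 0$ strongly by an explicit change of variables (shrinking shells for weak nullity, a vanishing tail integral for strong nullity — the latter, incidentally, needing no density argument); the paper instead checks both hypotheses on the dense class $C_0(\Gamma_\bx\setminus\{\bx\})$, where for such test functions $(\cT^n\phi,\psi)=0$ and $\cQ\cT^n\phi=0$ for all large $n$ by disjointness of supports, and then extends by density. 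Both arguments are valid. For part (i), you flag that if $W(D_{\bx,\delta})$ is read literally via \eqref{eq:numrangedef} as the numerical range of $P_\delta(\bx)DP_\delta(\bx)$ over all of $L^2(\Gamma)$, then it equals $\conv(\{0\}\cup W(D_\bx^\delta))$, and you patch this by exhibiting $0\in W(D_\bx^\delta)$ on flat pieces. The intended reading, however — consistent with \eqref{eq:subspaces} and with the proof of Theorem~\ref{lem:local}(b), where points of $W(D_{\bx,\delta})$ are realised by unit-norm $\psi$ supported in $B_\delta(\bx)$ — is that $W(D_{\bx,\delta})$ means the numerical range of the compression of $D$ to $L^2(\Gamma\cap B_\delta(\bx))$, under which \eqref{eqn:DLPnumrangeequal1} really is a tautology and your geometry-specific patch is not needed (and would not extend to general locally-dilation-invariant $\Gamma$).
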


\bpf
Equations \eqref{eqn:DLPnormsequal1} and \eqref{eqn:DLPnumrangeequal1} are immediate from the definitions since $\Gamma \cap B_\delta(\bx) = \Gamma^\delta_{\bx}$.
Equations \eqref{eqn:DLPnormsequal2} and \eqref{eqn:DLPnumrangeequal3} follow by applying Lemma \ref{lem:Tlemma} with $\cH:= L^2(\Gamma_\bx)$, $\cA:=D_\bx$, $\cT:= r^{(d-1)/2}V_r$, with $r=\newalpha^{-1}>1$, $\cV := L^2(\Gamma_\bx^\newdelta)\subset L^2(\Gamma_\bx)$, $\cP:\cH\to \cV$ orthogonal projection, and $\cQ:=\cI-\cP$. By Lemma \ref{lem:Visometry}, $\cT$ is an isometry on $L^2(\Gamma_\bx)$, and, by Lemma \ref{lem:Vcommute}, $\cT$ commutes with $D_\bx$.  If $\phi, \psi \in C_{0}(\Gamma_\bx)$, the set of compactly supported continuous functions on $\Gamma_\bx$ whose support is contained in $\Gamma_\bx\setminus\{\bx\}$, then $(\cT^n\phi,\psi)_{L^2(\Gamma_\bx)}=0$ for all sufficiently large $n$ such that the supports of $\cT^n\phi$ and $\psi$ are disjoint; similarly $\cQ\cT^n \phi=0$ for all sufficiently large $n$.
 Since $C_{0}(\Gamma_\bx)$ is dense in $L^2(\Gamma_\bx)$ it follows that $\cT^n\to 0$ weakly and $\cQ\cT^n\to 0$ strongly as $n\to\infty$. Thus the conditions of Lemma \ref{lem:Tlemma} are satisfied and \eqref{eqn:DLPnormsequal2} and \eqref{eqn:DLPnumrangeequal3} follow.
\epf

\begin{remark} \label{rem:neigh}
Part (ii) of the above lemma tells us that, if $\Gamma$ is locally dilation invariant at $\bx$, then, given $\rho>0$,
\begin{equation} \label{eq:neighgen}
\|\widetilde D\|_{L^2(\Gamma_\bx\cap \cN)}= \|\widetilde D\|_{L^2(\Gamma_\bx\cap \cN),\ess} \quad \mbox{and} \quad \overline{W(\widetilde D)} = W_\ess(\widetilde D),
\end{equation}
for the neighbourhood $\cN = B_\newdelta(\bx)$ of $\bx$; here $\widetilde D:= PD_\bx|_{\Gamma_\bx \cap \cN}$, where $P:L^2(\Gamma_\bx)\to L^2(\Gamma_\bx\cap \cN)$ is orthogonal projection, so that $\widetilde D$ is the double-layer potential operator on $\Gamma_\bx \cap \cN$.
In fact there is nothing special about these particular neighbourhoods. The relationships \eqref{eq:neighgen} hold for any open neighbourhood, $\cN$, of $\bx$. This  can be seen by inspecting the proof of the above lemma. Alternatively, if $\cN$ is such a neighbourhood, then $B_\newdelta(\bx) \subset \cN$, for some $\newdelta>0$, so that, from the definition of the essential norm, \eqref{eqn:DLPnormsequal2}, and \eqref{eq:subspaces},
$$
\|\widetilde D\|_{L^2(\Gamma_\bx\cap \cN),\ess}\leq \|\widetilde D\|_{L^2(\Gamma_\bx\cap \cN)}\leq \|D_\bx\|_{L^2(\Gamma_\bx)} =  \|D_\bx^\newdelta\|_{L^2(\Gamma^\newdelta_\bx),\ess} \leq \|\widetilde D\|_{L^2(\Gamma_\bx\cap \cN),\ess};
$$
the same argument holds for the numerical range.
\end{remark}

Combining the above lemma with Theorem \ref{lem:local} (our general localisation result for Lipschitz domains), we obtain a stronger localisation result (Theorem \ref{thm:loccon} below) for the case when $\Gamma$ is locally dilation invariant, in particular when $\Gamma$ is Lipschitz polyhedral. In the case of Lipschitz polyhedral $\Gamma$, similar localisation results have been obtained for the essential spectrum of $D$ on $\LtG$ and other spaces; see, in particular, \cite[\S4]{El:92} and \cite[Theorem 4.16]{LeCoPer:21}. To state our localisation result we need the following definition.

\begin{definition}[Generalised vertices] \label{def:gv} When $\Gamma$ is locally dilation invariant, equipping $\Gamma$ with the Euclidean topology on $\R^d$ restricted to $\Gamma$, the finite set $V_N =\{\bx_1,...,\bx_N\}$ $\subset \Gamma$ is a set of {\em generalised vertices} of $\Gamma$ if every $\bx\in \Gamma$ is an interior point of $\Gamma\cap \Gamma_{\bx_j}$, for some $j\in \{1,...,N\}$. $V_N$ is a {\em minimal set of generalised vertices} if $V_N$ is a set of generalised vertices but $V_N\setminus \{\bx_j\}$ is not a set of generalised vertices for $j=1,...,N$.
\end{definition}

We note that every locally-dilation-invariant $\Gamma$ has a set of generalised vertices (an example is Figure \ref{fig:GammaEpsOmega}). To see this, for each $\bx\in \Gamma$ choose $s(\bx)>0$ such that \eqref{eqn:locdi} holds with $\delta = s(\bx)$. Then $\{B_{s(\bx)}(\bx):\bx\in \Gamma\}$ is an open cover of $\Gamma$ which has a finite subcover $\{B_{s(\bx_1)}(\bx_1), ...,B_{s(\bx_N)}(\bx_N)\}$. Further, $\{\bx_1,...,\bx_N\}$ is a set of generalised vertices, since every $\bx\in \Gamma$ is in one of the balls $B_{s(\bx_j)}(\bx_j)$, and $B_{s(\bx_j)}(\bx_j)\cap \Gamma \subset \Gamma_{\bx_j}\cap \Gamma$, $j=1,...,N$.

It is easy to see that  any set of generalised vertices has a subset that is a minimal set of generalised vertices. It is unclear to us whether there is a unique minimal set of generalised vertices for every locally-dilation-invariant $\Gamma$. However, if $\Gamma$ is a polygon or a polyhedron, there is a unique minimal set of generalised vertices, namely the usual set of vertices; this set is a unique minimum since each vertex, $\bx$, is on the boundary or in the exterior of $\Gamma_\by\cap \Gamma$ for every other $\by\in \Gamma$.

\begin{theorem}[Localisation for locally-dilation-invariant surfaces and polyhedra] \label{thm:loccon} \hfill
Suppose that $\Gamma$, the boundary of the bounded Lipschitz domain $\Omega_-$, is locally dilation invariant and $V_N=\{\bx_1,...,\bx_N\}$ is a set of generalised vertices of $\Gamma$. (This is the case, in particular, if $\Omega_-$ is a polygon or a polyhedron and $V_N$ is the set of vertices in the normal sense.) Then
\beq \label{eq:Dlocal}
\|D\|_{L^2(\Gamma), \,\mathrm{ess}} = \sup_{\bx\in \Gamma} \|D_{\bx}\|_{L^2(\Gamma_{\bx})}=\max_{\bx\in V_N} \|D_{\bx}\|_{L^2(\Gamma_{\bx})}
\eeq
and, for some $\bx_*\in \Gamma$,
\beq \label{eqn:erlocall0}
W_{\mathrm{ess}}(D)= \bigcup_{\bx\in \Gamma} \overline{W(D_{\bx})}  =\bigcup_{\bx\in V_N}\overline{W(D_{\bx})}= \overline{W(D_{\bx^*})}.
\eeq
\end{theorem}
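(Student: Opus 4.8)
The plan is to obtain \eqref{eq:Dlocal} and \eqref{eqn:erlocall0} from the general Lipschitz localisation result Theorem \ref{lem:local} by using the dilation-invariance identities of Lemma \ref{lem:loccon} to evaluate the local quantities appearing there, and then to cut the supremum and union over $\Gamma$ down to the finite set $V_N$ by a compression argument built on \eqref{eq:subspaces}.

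\emph{Step 1 (pointwise evaluation).} First I would fix $\bx\in\Gamma$. Since $\Gamma$ is locally dilation invariant there are $\beta\in(0,1)$ and $\delta_0>0$ (both depending on $\bx$) with $B_{\delta_0}(\bx)\cap\Gamma=\Gamma_\bx^{\delta_0}$, where $\Gamma_\bx$ and $\Gamma_\bx^s$ are as in \eqref{eqn:dcone}. By Lemma \ref{lem:loccon}(i)--(ii), for every $0<\delta\le\delta_0$ one has $\|D_{\bx,\delta}\|_{\LtG}=\|D_\bx^\delta\|_{L^2(\Gamma_\bx^\delta)}=\|D_\bx\|_{L^2(\Gamma_\bx)}$ and $\overline{W(D_{\bx,\delta})}=\overline{W(D_\bx^\delta)}=\overline{W(D_\bx)}$; together with the monotonicity in $\delta$ recorded in \eqref{eq:inclusionpropD} and \eqref{eq:inclusionpropW}, this gives $\lim_{\delta\to0}\|D_{\bx,\delta}\|_{\LtG}=\|D_\bx\|_{L^2(\Gamma_\bx)}$ and $\bigcap_{\delta>0}\overline{W(D_{\bx,\delta})}=\overline{W(D_\bx)}$. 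Substituting these two identities into Theorem \ref{lem:local}(a) and (b) then yields immediately $\|D\|_{\LtG,\ess}=\sup_{\bx\in\Gamma}\|D_\bx\|_{L^2(\Gamma_\bx)}$, which is the first equality of \eqref{eq:Dlocal}, and $W_\ess(D)=\bigcup_{\bx\in\Gamma}\overline{W(D_\bx)}=\overline{W(D_{\bx^*})}$ for the point $\bx^*\in\Gamma$ produced by Theorem \ref{lem:local}(b), which gives the first and last equalities of \eqref{eqn:erlocall0}.

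\emph{Step 2 (reduction to $V_N$).} Since $V_N\subset\Gamma$, the inequality $\max_{\bx\in V_N}\|D_\bx\|_{L^2(\Gamma_\bx)}\le\sup_{\bx\in\Gamma}\|D_\bx\|_{L^2(\Gamma_\bx)}$ and the inclusion $\bigcup_{\bx\in V_N}\overline{W(D_\bx)}\subseteq\bigcup_{\bx\in\Gamma}\overline{W(D_\bx)}$ are trivial (and the finite maximum is attained), so I would only need the reverse. Given $\bx\in\Gamma$, Definition \ref{def:gv} provides $\bx_j\in V_N$ with $\bx$ an interior point of $\Gamma\cap\Gamma_{\bx_j}$, i.e.\ $B_\eta(\bx)\cap\Gamma=B_\eta(\bx)\cap\Gamma_{\bx_j}$ for some $\eta>0$. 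Taking $0<s\le\min\{\eta,\delta_0\}$ I would check the set identity $\Gamma_\bx^s=B_s(\bx)\cap\Gamma_\bx=B_s(\bx)\cap\Gamma=B_s(\bx)\cap\Gamma_{\bx_j}$ (the middle step because $s\le\delta_0$, the last because $s\le\eta$). Hence $\Gamma_\bx^s$ is a subsurface of $\Gamma_{\bx_j}$ with inherited unit normal, so the double-layer operator $D_\bx^s$ on it equals $\cP D_{\bx_j}|_{L^2(\Gamma_\bx^s)}$, where $\cP$ is orthogonal projection of $L^2(\Gamma_{\bx_j})$ onto the closed subspace of functions supported in $\Gamma_\bx^s$ (identified with $L^2(\Gamma_\bx^s)$): extending a function on $\Gamma_\bx^s$ by zero, applying $D_{\bx_j}$, and restricting to $\Gamma_\bx^s$ returns $D_\bx^s$ of that function. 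Then \eqref{eq:subspaces} gives $\|D_\bx^s\|_{L^2(\Gamma_\bx^s)}\le\|D_{\bx_j}\|_{L^2(\Gamma_{\bx_j})}$ and $W(D_\bx^s)\subseteq W(D_{\bx_j})$, while Lemma \ref{lem:loccon}(ii) (or Remark \ref{rem:neigh}) gives $\|D_\bx^s\|_{L^2(\Gamma_\bx^s)}=\|D_\bx\|_{L^2(\Gamma_\bx)}$ and $\overline{W(D_\bx^s)}=\overline{W(D_\bx)}$. Combining, $\|D_\bx\|_{L^2(\Gamma_\bx)}\le\|D_{\bx_j}\|_{L^2(\Gamma_{\bx_j})}$ and $\overline{W(D_\bx)}\subseteq\overline{W(D_{\bx_j})}$, which closes the reduction and finishes \eqref{eq:Dlocal} and \eqref{eqn:erlocall0}. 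The polyhedral case costs nothing extra: by Lemma \ref{lem:locconPoly} a polygon or polyhedron is locally conical, hence locally dilation invariant, and its ordinary vertices form a set of generalised vertices as observed after Definition \ref{def:gv}.

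\emph{Expected main difficulty.} The argument is conceptually short; the work is in the bookkeeping of Step 2 --- choosing $\delta_0$, $\eta$, $s$ compatibly for a fixed $\bx$, verifying $\Gamma_\bx^s=B_s(\bx)\cap\Gamma_{\bx_j}$ together with equality of the outward normals so that the double-layer kernels genuinely agree, and confirming that $D_\bx^s=\cP D_{\bx_j}|$ is literally an instance of the compression set-up in \eqref{eq:subspaces}. This is the point at which the definitions of local dilation invariance (Definition \ref{def:locdi}), of $\Gamma_\bx$ in \eqref{eqn:dcone}, and of generalised vertices (Definition \ref{def:gv}) all have to be made to mesh.
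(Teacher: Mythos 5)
Your proof is correct and follows essentially the same two-step strategy as the paper: Step 1 substitutes the identities from Lemma \ref{lem:loccon} into Theorem \ref{lem:local}, and Step 2 uses the generalised-vertex property together with \eqref{eq:subspaces} to cut the supremum/union down to $V_N$. The only cosmetic difference is that you run the reduction of Step 2 for every $\bx\in\Gamma$ (showing $\|D_\bx\|_{L^2(\Gamma_\bx)}\le\max_{V_N}$ pointwise), whereas the paper applies the same compression argument only at the distinguished point $\bx^*$ supplied by Theorem \ref{lem:local} and then closes the chain of inequalities; both are sound and rest on the same ingredients.
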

\begin{proof} By Theorem \ref{lem:local}, for some $\bx^*\in \Gamma$,
$$
\|D\|_{\LtG,\ess}  = \sup_{\bx\in \Gamma} \lim_{\delta\to 0} \|D_{\bx,\delta}\|_\LtG = \lim_{\delta\to 0} \|D_{\bx^*,\delta}\|_\LtG,
$$
from which it follows, by \eqref{eqn:DLPnormsequal1} and \eqref{eqn:DLPnormsequal2} in Lemma \ref{lem:loccon}, that
$$
\|D\|_{\LtG,\ess} = \sup_{\bx\in \Gamma} \|D_{\bx}\|_{L^2(\Gamma_{\bx})}=\|D_{\bx^*}\|_{L^2(\Gamma_{\bx^*})} = \|D_{\bx^*}^\newdelta\|_{L^2(\Gamma_{\bx^*}^\newdelta)},
$$
for all $\newdelta>0$. Since $V_N$ is a set of generalised vertices, $\bx_*$ is an interior point of $\Gamma \cap \Gamma_{\bx}$, for some $\bx\in V_N$, so that $\Gamma_{\bx^*}^\newdelta \subset \Gamma \cap \Gamma_{\bx}$, for some $\rho>0$ and $\bx\in V_N$, so that
$\|D_{\bx^*}^\newdelta\|_{L^2(\Gamma_{\bx^*}^\newdelta)}\leq \|D_{\bx}\|_{L^2(\Gamma_{\bx})}$, by \eqref{eq:subspaces}. Thus
$$
\|D\|_{\LtG,\ess} =\|D_{\bx^*}\|_{L^2(\Gamma_{\bx^*})}= \|D_{\bx^*}^\newdelta\|_{L^2(\Gamma_{\bx^*}^\newdelta)}\leq \max_{\bx\in V_N} \|D_{\bx}\|_{L^2(\Gamma_{\bx})} \leq \sup_{\bx\in \Gamma} \|D_{\bx}\|_{L^2(\Gamma_{\bx})} = \|D\|_{\LtG,\ess}.
$$

Similarly, by Theorem \ref{lem:local}, for some $\bx^*\in \Gamma$,
$$
W_\ess(D) = \bigcup_{\bx\in \Gamma} \bigcap_{\delta>0} \overline{W(D_{\bx,\delta})} = \bigcap_{\delta>0} \overline{W(D_{\bx^*,\delta})},
$$
from which it follows, by \eqref{eqn:DLPnumrangeequal1} and \eqref{eqn:DLPnumrangeequal3} in Lemma \ref{lem:loccon}, that
$$
W_\ess(D) = \bigcup_{\bx\in \Gamma} \overline{W(D_{\bx})}  = \overline{W(D_{\bx^*})} = \overline{W(D^\newdelta_{\bx^*})},
$$
for all $\newdelta>0$. Again, $\Gamma_{\bx^*}^\newdelta \subset \Gamma \cap \Gamma_{\bx}$, for some $\rho>0$ and $\bx\in V_N$, so that, by \eqref{eq:subspaces},
$$
W_\ess(D)   = \overline{W(D_{\bx^*})} = \overline{W(D^\newdelta_{\bx^*})} \subset \bigcup_{\bx\in V_N}\overline{W(D_{\bx})} \subset \bigcup_{\bx\in \Gamma} \overline{W(D_{\bx})}=W_\ess(D).
$$
\end{proof}

We suspect, but have no proof of this conjecture, that the point $\bx_*$ in Theorem \ref{thm:loccon} is one of the generalised vertices, $\bx_1,...,\bx_N$.

\section{The essential norm and essential numerical range of the double-layer operator on Lipschitz domains (proofs of Theorems \ref{thm:Q1} and \ref{thm:Q2})}\label{sec:3}

In this section we prove two of our main theorems, Theorems \ref{thm:Q1} and \ref{thm:Q2}, constructing particular 2-d and 3-d Lipschitz domains $\Omega^M$, with Lipschitz constant $M$, for which we can prove rather sharp quantitative estimates for the essential norm and essential numerical range and their dependence on $M$. These constructions and the associated arguments have some subtleties. To get the main ideas across, we present in \S\ref{sec:locdiExam} calculations for the slightly-simpler 2-d example that we have met already in Figure \ref{fig:GammaEpsOmega} and Example \ref{ex:loccon}, this an example that illustrates our localisation result, Theorem \ref{thm:loccon}, for locally-dilation-invariant surfaces.

Nevertheless, throughout this section (and \S\ref{sec:4} to follow) we use, in the main, the same tools, namely:
\begin{itemize}
\item[i)] our localisation results, Theorems \ref{lem:local} and \ref{thm:loccon}, and the associated Lemma \ref{lem:loccon}, that replace calculation of the essential norm and essential range of the double-layer potential operator $D$ on $\Gamma$, with calculations of the ordinary norm and numerical range;
    \item[ii)] equations \eqref{eq:ANIncl} and \eqref{eq:NRlimits}, that reduce  calculation of the norm and numerical range of a bounded linear operator to calculation of the same quantities for matrices; and
        \item[iii)] Lemmas \ref{lem:CNprops} and \ref{lem:AN} and Corollary \ref{cor:CN} that provide estimates of the numerical ranges of the matrices that arise in our calculations.
\end{itemize}
Additionally, in Sections \ref{sec:Q1Q2_2d} and \ref{sec:Q1Q2_3d} we require calculations of the norms of infinite Toeplitz matrices that are associated with our operator $D$.

Related to ii), we use throughout this section and \S\ref{sec:4} the following construction, a particular instance of the general Hilbert space construction below \eqref{eq:ANGendef}. Suppose that $\widetilde \Gamma$ is some subset of $\Gamma$, measurable with respect to surface measure. For some $N\in \NN$,
let $\cH_N$ denote an $N$-dimensional subspace of $L^2(\widetilde \Gamma)\subset L^2(\Gamma)$ and let $\{\psi_1,...,\psi_N\}$ be an orthonormal basis for $\cH_N$, and define the Galerkin matrix $D_N \in \R^{N\times N}$ by
\begin{equation} \label{eq:DNdef}
(D_N)_{jm} := (D\psi_m,\psi_j), \quad 1\leq j,m\leq N.
\end{equation}
Then (see \eqref{eq:ANIncl})
\begin{equation}
\label{eq:DNIncl}
\|D_N\|_2 \leq \|\widetilde D\|_{L^2(\widetilde \Gamma)} \leq \|D\|_{L^2(\Gamma)} \quad \mbox{and} \quad  W(D_N) \subset W(\widetilde D) \subset W(D),
\end{equation}
where $\widetilde D$ and $D$ are the double-layer potential operators on $\widetilde \Gamma$ and $\Gamma$, respectively.
Suppose further that $(\cH_N)_{N=1}^\infty$ is a sequence of finite-dimensional subspaces, with $\cH_N\subset L^2(\widetilde D)$, $\cH_N$ of dimension $N$, and $\cH_1\subset \cH_2\subset ...$, and let
$$
V := \overline{\bigcup_{N=1}^\infty \cH_N} \subset L^2(\widetilde \Gamma).
$$
($V=\LtG$ if $(\cH_N)_{N=1}^\infty$ is asymptotically dense in $\LtG$.)
Then, where $D_V := PD|_V$ and $P:\LtG\to V$ is orthogonal projection, it follows  from \eqref{eq:NRlimits} and \eqref{eq:subspaces} that
\begin{eqnarray} \label{eq:DNsubsets}
&\|D_1\|_2 \leq \|D_2\|_2 \leq ... \leq \|D_V\|_V,\quad  W(D_1)\subset W(D_2) \subset ...\subset W(D_V),\\
 \label{eq:limits}
&\displaystyle{\lim_{N\to \infty}} \|D_N\|_2 = \|D_V\|_V \leq \|\widetilde D\|_{L^2(\widetilde \Gamma)}, \;\; \mbox{and} \;\; \overline{W(\widetilde D)} \supset \overline{W(D_V)} = \overline{\bigcup_{N=1}^\infty W(D_N)}.
\end{eqnarray}

\subsection{The essential numerical range of a domain with a locally-dilation-invariant boundary} \label{sec:locdiExam}

\begin{definition}[2-d graph $\Gamma_\newalpha$]\label{def:Gamma2d}
Choose $\newalpha \in (0,1)$. For $m\in \mathbb{N}$, let $\Gamma_{2m-1}$ denote the open straight line segment connecting $(\newalpha^{m-1},0)$ and $(\newalpha^{m-1}(1+\newalpha)/2,\newalpha^{m-1}(1+\newalpha)/2)$, and let $\Gamma_{2m}$ denote the open line segment connecting $(\newalpha^{m-1}(1+\newalpha)/2,\newalpha^{m-1}(1+\newalpha)/2)$ and $(\newalpha^{m},0)$.
 Let
$$
\Gamma_\newalpha := \{(s,0):-1\leq s\leq 0\} \cup \bigcup_{m\in \mathbb{N}} \overline{\Gamma_m};
$$
equivalently $\Gamma_\newalpha:=\{(s,f(s)):-1\leq s\leq 1\}$, where $f$ is defined by \eqref{eq:graphf2}.
\end{definition}

We consider the case where $\Gamma_\newalpha\subset \Gamma$ and $\Gamma$ is the boundary of a Lipschitz domain $\Omega_-$. (Such a case is shown, for $\newalpha=0.8$,  in Figure \ref{fig:GammaEpsOmega}, where $\Gamma_\newalpha$ is the part of $\Gamma$ between the points $\bx_5$ and $\bx_2$.)
From \eqref{eq:graphf2},
\begin{equation} \label{eq:fderiv}
|f^\prime(s)| = \frac{1+\newalpha}{1-\newalpha}, \quad \mbox{for almost all } s\in [0,1],
\end{equation}
so that the Lipschitz constant of $\Omega_-$ tends to infinity as $\newalpha\to 1^-$. We note also that, as discussed in Example \ref{ex:loccon}, $\Gamma$ is locally dilation invariant at $\bx=\bze$, with scale factor $\newalpha$. Thus, by Remark \ref{rem:neigh} and \eqref{eq:subspaces},
\begin{equation} \label{eq:GammaAl}
\overline{W(D_\newalpha)} = W_\ess(D_\newalpha) \subset W_\ess(D),
\end{equation}
where $D_\newalpha$ and $D$ are the double-layer potential operators on $\Gamma_\newalpha$ and $\Gamma$, respectively. We note further that if, as in Figure \ref{fig:GammaEpsOmega}, $\Gamma$ is locally dilation invariant, then $W_\ess(D)$ can be characterised precisely by Theorem \ref{thm:loccon}.

To establish a lower bound for $W(D_\newalpha)$ when $\newalpha\approx 1$ our method is to relate, in the sense of the following lemma,  $D_\newalpha$ to the matrix $B_N$ defined by \eqref{eq:aBdef}.

\ble[The relationship of $B_N$ to $D_\newalpha$]\label{lem:ANDLP}
Let $\Gamma_\newalpha$ be as in Definition \ref{def:Gamma2d}.
Given $N\in \NN$, define the orthonormal set $\{\psi_1,...,\psi_N\}\subset L^2(\Gamma_\newalpha)$ by
\beq\label{eq:defphi2d}
\psi_m(\bx):=
\begin{cases}
|\Gamma_m|^{-1/2}, & \bx \in \Gamma_m,\\
0, & \mbox{otherwise},
\end{cases}
\eeq
for $m=1,...,N$.
Suppose that the unit normal $\bn$ on $\Gamma_\newalpha$ is such that $n_2(\bx)>0$ for almost every $\bx \in \Gamma_\newalpha$, and define the Galerkin matrix $D_N$ by \eqref{eq:DNdef}, where $D=D_\newalpha$ is the double-layer potential operator on $\Gamma_\newalpha$, and define $B_N\in \R^{N\times N}$ by \eqref{eq:aBdef}.
Then
\beq\label{eq:DNform}
\big( D_N\big)_{jm}= \big( B_N\big)_{jm}d_{jm}, \quad 1\leq j,m\leq N,
\eeq
where $d_{jm}:= 0$ for $j=m$,
\beq \label{eq:djmDef}
d_{jm} := \frac{1}{2\pi |\Gamma_j|^{1/2}|\Gamma_m|^{1/2}}\int_{\Gamma_j} \alpha_m \, \rd s, \quad \mbox{for } j\neq m,
\eeq
 and
\beq\label{eq:DLPsolidangle}
 \alpha_m(\bx) := 2\pi \left|\int_{\Gamma_m} \frac{\partial \Phi(\bx,\by)}{\partial n(\by)} \rd s(\by)\right|= \int_{\Gamma_m} \frac{|(\bx-\by)\cdot \bn(\by)|}{|\bx-\by|^2}\rd s(\by), \quad \bx \in \R^2\setminus \overline{\Gamma_m},
\eeq
is the angle subtended at $\bx$ by $\Gamma_m$. Further, $d_{jm}=d_{mj}$ when $m-j$ is even, and
\beq \label{eq:WDNGal}
W(D_N)\cap \R \supset [-a,a],
\eeq
for $N\geq 3$, with equality when $N=3$,  where
\begin{equation} \label{eq:aForm}
a := \frac{1}{2}\sqrt{(d_{12}+d_{21})^2 + (d_{23}+d_{32})^2}.
\end{equation}
Moreoever, for $1\leq j,m \leq N$, with $j\neq m$, $d_{jm}\to 1/2$ as $\newalpha\to 1^-$, so that
\beq\label{eq:limitDA}
D_N \rightarrow \frac{1}{2}\, B_N.
\eeq
\ele
\bpf
For $m\in \mathbb{N}$ and $\bx\in \R^2\setminus \overline{\Gamma_m}$,
\beq\label{eq:DLPsolidangle2a}
 \int_{\Gamma_m} \frac{\partial \Phi(\bx,\by)}{\partial n(\by)} \rd s(\by) = \pm \frac{\alpha_m(\bx)}{2\pi},
\eeq
where the $+$ sign is taken if $\bx$ is on the side of $\Gamma_m$ to which the normal points, i.e.\ if, for some $\by\in \Gamma_m$, $(\bx-\by)\cdot \bn(\by)>0$, otherwise the $-$ sign is taken.
Thus, for $1\leq j,m\leq N$,
\begin{align}
D_\newalpha\psi_m(\bx) = \frac{1}{2\pi|\Gamma_m|^{1/2}} (-1)^{m+1}\mathrm{sign}(m-j)\alpha_m(\bx),  \quad \bx\in \Gamma_j,\label{Dphi}
\end{align}
and \eqref{eq:DNform} follows. When $m-j$ is even, $\Gamma_j$ and $\Gamma_m$ are parallel, so that $d_{jm}=d_{mj}$  by
the combination of \eqref{eq:djmDef} and \eqref{eq:DLPsolidangle} and the fact that
$|(\bx-\by)\cdot\bn(\by)|= |(\bx-\by)\cdot\bn(\bx)|$ for $\bx \in \Gamma_j$, $\by\in\Gamma_m$.
Then \eqref{eq:WDNGal} holds for $N\geq 3$, with equality when $N=3$, by Lemma \ref{lem:CNprops}(iii). As $\newalpha\to 1^-$, $d_{jm}\to 1/2$ (for $j\neq m$)
implying \eqref{eq:limitDA}, since
$$
|\Gamma_j|\sim |\Gamma_m| \to 1, \quad \mbox{and} \quad \int_{\Gamma_j} \alpha_m \, \rd s\to \pi
$$
by the dominated convergence theorem.
\epf

\begin{theorem}\label{thm:main2-d}
Given $R>0$, there exists $\newalpha_0\in (0,1)$ such that, if $\Omega_-$ is a bounded Lipschitz domain with boundary $\Gamma$ that contains $\Gamma_\newalpha$ and $\newalpha_0\leq \newalpha<1$, then
\beq \label{eq:WessIncl}
W_{\ess}(D) \supset \big\{ z \in \Com : |z| <R\big\},
\eeq
so that also $\|D\|_{\LtG,\ess} \geq R$, and $\lambda I +D$ and $\lambda I + D'$ cannot be written as the sum of coercive and compact operators for any $\lambda\in \C$ with $|\lambda|\leq R$.
\end{theorem}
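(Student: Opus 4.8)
The plan is to reduce, via the localisation results of \S\ref{sec:localisation} and the matrix calculations of \S\ref{sec:CompNR}, to the numerical range of a single finite Galerkin matrix that converges to a multiple of the matrix $B_N$ of \eqref{eq:aBdef} as $\newalpha\to 1^-$. By \eqref{eq:GammaAl} we already know that $\overline{W(D_\newalpha)}=W_\ess(D_\newalpha)\subset W_\ess(D)$, where $D_\newalpha$ is the double-layer operator on $\Gamma_\newalpha$, regarded as the compression $PD|_{L^2(\Gamma_\newalpha)}$ of $D$; so it suffices to show that, once $\newalpha$ is close enough to $1$, $\overline{W(D_\newalpha)}$ contains the closed disc $\{z\in\C:|z|\le R\}$, which is stronger than \eqref{eq:WessIncl}. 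By replacing, if necessary, the outward normal by its negative (which replaces $D,D'$ by $-D,-D'$ and $W_\ess(D)=W_\ess(D')$ by $-W_\ess(D)$, leaving the symmetric target disc unaffected), we may assume the unit normal on $\Gamma_\newalpha$ has positive second component almost everywhere, as required for Lemma~\ref{lem:ANDLP}.

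Now fix an odd integer $N$ with $\sqrt{(N-1)/2}>2R$ and put $\epsilon:=\sqrt{(N-1)/2}-2R>0$. Let $\{\psi_1,\dots,\psi_N\}$ be the orthonormal, panel-wise constant family of \eqref{eq:defphi2d} and let $D_N$ be the associated Galerkin matrix \eqref{eq:DNdef} of $D_\newalpha$. By Lemma~\ref{lem:ANDLP}, $(D_N)_{jm}=(B_N)_{jm}d_{jm}$ with $d_{jj}=0$, $d_{jm}>0$ for $j\ne m$, and $d_{jm}\to\tfrac12$ as $\newalpha\to1^-$ for each of the finitely many pairs $j\ne m$ (this is \eqref{eq:limitDA}). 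Hence there is $\newalpha_0\in(0,1)$ such that, for $\newalpha_0\le\newalpha<1$, the numbers $e_{jm}:=2d_{jm}$ ($j\ne m$), $e_{jj}:=1$, satisfy $0<e_{jm}$ and $|e_{jm}-1|\le\epsilon/(N-1)$ for all $1\le j,m\le N$. Since $(B_N)_{jj}=0$, the matrix $C_N$ built from these $e_{jm}$ via \eqref{eq:CNdef} is exactly $2D_N$.

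Corollary~\ref{cor:CN} (with this $\epsilon$, using that $N$ is odd) then gives $\{z:|z|\le\sqrt{(N-1)/2}-\epsilon\}\subset W(C_N)=2\,W(D_N)$, i.e.\ $\{z:|z|\le R\}\subset W(D_N)$. By \eqref{eq:DNIncl}, $W(D_N)\subset W(D_\newalpha)$, so $\{z:|z|\le R\}\subset\overline{W(D_\newalpha)}=W_\ess(D_\newalpha)\subset W_\ess(D)$, which proves \eqref{eq:WessIncl}. Consequently $w_\ess(D)\ge R$ and hence $\|D\|_{\LtG,\ess}\ge w_\ess(D)\ge R$ by \eqref{eq:NormEssNorm}. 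Finally, for any $\lambda\in\C$ with $|\lambda|\le R$ we have $-\lambda\in W_\ess(D)=W_\ess(D')$, hence $0\in\lambda+W_\ess(D)=W_\ess(\lambda I+D)$ and likewise $0\in W_\ess(\lambda I+D')$; Corollary~\ref{cor:essnum} then shows that neither $\lambda I+D$ nor $\lambda I+D'$ is the sum of a coercive and a compact operator.

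I do not anticipate a genuinely deep step here: the substantive inputs — the localisation identity \eqref{eq:GammaAl} and the explicit limit $D_N\to\tfrac12 B_N$ of Lemma~\ref{lem:ANDLP} — are already established. The points that need care (and that I regard as the main obstacle) are the bookkeeping that couples $N$, $\epsilon$ and $R$ so that Corollary~\ref{cor:CN} returns precisely the disc of radius $R$; verifying that the corollary's hypotheses $e_{jm}>0$ and $|e_{jm}-1|\le\epsilon/(N-1)$ hold for $\newalpha$ near $1$, which is exactly where \eqref{eq:limitDA} is invoked uniformly over the finitely many matrix entries; the identification $C_N=2D_N$; and the initial normalisation of the orientation of the normal on $\Gamma_\newalpha$.
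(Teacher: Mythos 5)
Your proof is correct and follows essentially the same route as the paper's: both reduce via \eqref{eq:GammaAl} to $W(D_\newalpha)$, apply Lemma \ref{lem:ANDLP} and Corollary \ref{cor:CN} to the Galerkin matrix $D_N$, and finish with \eqref{eq:NormEssNorm} and Corollary \ref{cor:essnum}. The only differences are cosmetic bookkeeping — you take $\epsilon=\sqrt{(N-1)/2}-2R$ where the paper fixes $\epsilon=1$ and instead enlarges $N$ until $\sqrt{(N-1)/2}\geq 2R+1$, and you normalise the orientation of the normal at the outset rather than treating the two signs as separate cases at the end — and your explicit note that $e_{jj}$ may be set to $1$ (since $(B_N)_{jj}=0$ makes its value irrelevant) is a small point the paper leaves implicit.
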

\begin{proof}
Given $R>0$, choose the smallest odd $N\geq 3$ such that $\sqrt{(N-1)/2}\geq 2R+1$, and then  choose $\newalpha_0\in (0,1)$ such that $|2d_{jm}-1| \leq 1/(N-1)$ for $1\leq j,m \leq N$ and  $\newalpha_0\leq \newalpha<1$, with this possible by Lemma \ref{lem:ANDLP}. Then, by Corollary \ref{cor:CN} applied with $\epsilon =1$,
$$
W(2D_N) \supset \big\{ z \in \Com : |z| \leq\sqrt{(N-1)/2}-\epsilon\big\} \supset \big\{ z \in \Com : |z| <2R\big\},
$$
for $\newalpha_0\leq \newalpha<1$, so that $W(D_N) \supset \{ z \in \Com : |z| <R\}$. When the outward-pointing normal, $\bn$, on $\Gamma$, has $n_2>0$ almost everywhere on $\Gamma_\newalpha$, this inclusion, together with  \eqref{eq:DNIncl}, applied with $\widetilde \Gamma=\Gamma_\newalpha$ and $\widetilde D=D_\newalpha$, and \eqref{eq:GammaAl}, implies \eqref{eq:WessIncl}. When $n_2<0$, we have instead that $-W(D_N)=W(-D_N) \subset W_\ess(D)$, and the same conclusion holds.
It follows that $\|D\|_{\LtG,\ess} \geq R$ by \eqref{eq:NormEssNorm}. Further, \eqref{eq:WessIncl} implies that also $W_\ess(D') \supset \{ z \in \Com : |z| <R\}$, so that $0$ is in both $W_\ess(\lambda I + D)$ and $W_\ess(\lambda I + D')$, for $|\lambda|\leq R$, so that, by Corollary \ref{cor:essnum}, $\lambda I +D$ and $\lambda I + D'$ cannot be written as sums of coercive and compact operators.
\end{proof}

 The above theorem guarantees that \eqref{eq:WessIncl} holds, in particular that $\pm 1/2\in W_\ess(D)$, if $\Gamma\supset \Gamma_\newalpha$ and $\newalpha$ is close enough to 1, but gives no idea of how close to one $\newalpha$ needs to be. We can estimate this using \eqref{eq:aForm}. Arguing as in the proof of the above theorem and using \eqref{eq:WDNGal}, we have, under the same assumptions as the theorem, that
 $$
 [-a,a] \subset W_\ess(D)=W_\ess(D'),
 $$
where $a$ is given by \eqref{eq:aForm} with the coefficients $d_{jm}$ given by \eqref{eq:djmDef}. In Figure \ref{fig:aValpha} we plot $a$ against $\newalpha$, computing the integrals \eqref{eq:djmDef} accurately by numerical quadrature. As expected, since $d_{jm}\to 1/2$ as $\newalpha\to 1^-$ by Lemma \ref{lem:ANDLP} provided $j\neq m$, $a$ approaches the limit $\sqrt{2}/2 \approx 0.7071$ as $\newalpha\to 1^-$. The data we plot suggest that $[-a,a]\supset \{-\half,\half\}$ provided $\newalpha > 0.7903$, so that, by Corollary \ref{cor:essnum}, $\half I \pm D$ and $\half I \pm D'$ cannot be written as sums of coercive and compact operators for $\newalpha$ above this value. (This is the case, in particular, for the $\Gamma$ in Figure \ref{fig:GammaEpsOmega}.) The value $\newalpha = 0.7903$ corresponds, by \eqref{eq:fderiv}, to a Lipschitz constant for the surface $\Gamma_\newalpha\subset \Gamma$ of $(1+\newalpha)/(1-\newalpha)\approx 8.54$.

\begin{figure}
\begin{center}
\includegraphics[width=.5\textwidth]{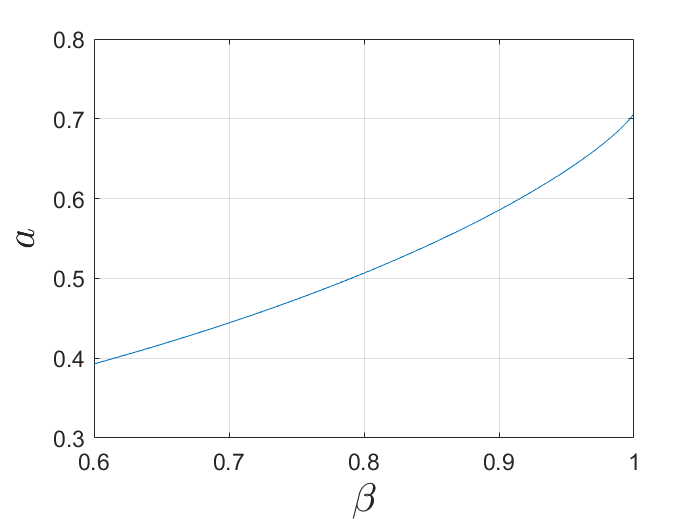}
\end{center}
\caption{\label{fig:aValpha} Plot of $a$, given by \eqref{eq:aForm}, against $\newalpha$. Note that $[-a,a]\subset \R\cap W(D_N)$, for $N\geq 3$, with equality when $N=3$, so that also $[-a,a]\subset W_\ess(D_\newalpha)\subset W_\ess(D)$. }
\end{figure}

\subsection{Proof of Theorems \ref{thm:Q1} and \ref{thm:Q2} in the 2-d case} \label{sec:Q1Q2_2d}

\subsubsection{The norm and numerical range of the double-layer potential operator  on a periodic Lipschitz graph} \label{sec:pergraph}

As the main step in the proofs of  Theorems \ref{thm:Q1} and \ref{thm:Q2}, we calculate in this section lower bounds on the norm and numerical range (and their essential variants)  for the double-layer potential operator $D$ on a particular ``sawtooth'' periodic Lipschitz graph.

\begin{definition}[The ``sawtooth'' Lipschitz graph $\Gamma^M$] \label{def:GammaM}
Given $M> 0$ (the Lipschitz constant) define $f_M:\R\to\R$ by
\begin{equation}\label{eq:Gammaeps}
f_M(s) := \left\{\begin{array}{ll}
                          Ms-2m, & 2m\leq Ms \leq 2m+1, \\
                          2m+2-Ms, & 2m+1\leq Ms \leq 2m+2,
                        \end{array}
\right.
\end{equation}
for $m\in \Z$, so that $f_M$ is periodic with period $2/M$ and
$$
|f^\prime_M(s)| = M,
$$
for almost all $s\in \R$. Let $\Gamma^M:= \{(s,f_M(s)):s\in \R\}$ be the graph of $f_M$, so that
$$
\Gamma^M = \bigcup_{m\in \Z} \overline{\Gamma_m},
$$
where $\Gamma_m$ is the open line segment connecting $((1-m)/M,0)$ and $(-m/M,1)$, for $m$ odd, connecting $((1-m)/M,1)$ and $(-m/M,0)$, for $m$ even. (See Figure \ref{fig:Saw} for $\Gamma^M$ when $M=1$.)
\end{definition}

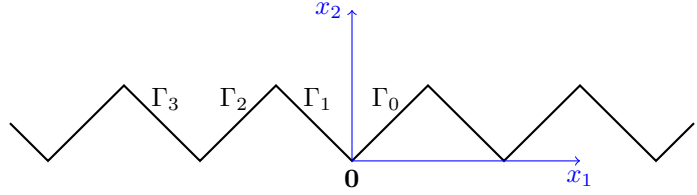
\begin{figure}
\begin{center}
\begin{tikzpicture}[scale=1]
\draw[thick] plot coordinates {(4.5,0.5) (4,0) (3,1) (2,0) (1,1) (0,0) (-1,1) (-2,0) (-3,1) (-4,0) (-4.5,0.5)};
\draw [blue,->] (0,0) -- ++(0:3);
\draw [blue,->] (0,0) -- ++(90:2);
\draw (0,0) node[anchor=north] {$\bze$};
\draw (0.45,0.55) node[anchor=south] {$\Gamma_0$};
\draw (-0.45,0.55) node[anchor=south] {$\Gamma_1$};
\draw (-1.55,0.55) node[anchor=south] {$\Gamma_2$};
\draw (-2.45,0.55) node[anchor=south] {$\Gamma_3$};
\draw[blue] (3,0) node[anchor=north] {$x_1$};
\draw[blue] (0,2) node[anchor=east] {$x_2$};
\end{tikzpicture}
\end{center}
\caption{\label{fig:Saw} The ``sawtooth'' curve $\Gamma^M$, as specified in Definition \ref{def:GammaM}, when $M=1$.}
\end{figure}

Let $D^M:L^2(\Gamma^M)\to L^2(\Gamma^M)$ denote the double-layer potential operator on $\Gamma^M$, defined with the unit normal $\bn$ on $\Gamma^M$ such that $n_2(\bx)>0$ for almost all $\bx\in \Gamma^M$. Define $V^M_+\subset V^M\subset L^2(\Gamma^M)$ by
\begin{equation} \label{eq:Vdef}
V^M := \{\phi\in L^2(\Gamma^M): \phi \mbox{ is constant on } \Gamma_m \mbox{ for } m\in \Z\}, \quad V^M_+ := \{\phi\in V^M: \phi|_{\Gamma_m} = 0, \mbox{ for } m\leq 0\}.
\end{equation}
Let $P^M:L^2(\Gamma^M)\to V^M$ and $P^M_+:L^2(\Gamma^M)\to V_+^M$ be orthogonal projections, and let $D_{V^M}:= P^MD^M|_{V^M}$ and $D_{V_+^M}:= P_+^MD^M|_{V_+^M}$.

The next two lemmas do most of the quantitative work for us in proving Theorems \ref{thm:Q1} and \ref{thm:Q2}. In particular, Lemma \ref{lem:ANDLPrime} provides precise characterisations for  $\|D_{V^M}\|_{V^M} \leq \|D^M\|_{L^2(\Gamma^M)}$ and  $\overline{W(D_{V^M})}\subset \overline{W(D^M)}$ in terms of symbols of associated infinite Toeplitz matrices.

\begin{lemma} \label{lem:DM} For $M>0$,
\begin{eqnarray*}
\|D^M\|_{L^2(\Gamma^M),\ess} &=& \|D^M\|_{L^2(\Gamma^M)} \geq \|D_{V^M}\|_{V^M} = \|D_{V_+^M}\|_{V_+^M} \quad \mbox{and}\\
W_\ess(D^M) &=& \overline{W(D^M)} \supset \overline{W(D_{V^M})} = \overline{W(D_{V^M_+})}.
\end{eqnarray*}
\end{lemma}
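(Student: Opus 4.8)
The plan is to exploit the structure of $\Gamma^M$ as a periodic graph, which makes $D^M$ translation-invariant under shift by one period, together with Lemma \ref{lem:Tlemma}. First I would observe that the map $\cT$ on $L^2(\Gamma^M)$ given by $\cT\phi(\bx) := \phi(\bx - (2/M)\be_1)$ (translation by one period along the $x_1$-axis) is a bijective isometry of $L^2(\Gamma^M)$. Because $\Gamma^M$ is invariant under this translation and the normal $\bn$ likewise transforms equivariantly, the double-layer kernel $(\bx-\by)\cdot\bn(\by)/|\bx-\by|^2$ is invariant under translating both arguments, so $\cT$ commutes with $D^M$. Moreover $\cT^n\to 0$ weakly as $n\to\infty$: this follows because functions supported on a bounded subset of $\Gamma^M$ are dense, and for any two such functions the supports of $\cT^n\phi$ and $\psi$ become disjoint for large $n$. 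Lemma \ref{lem:Tlemma} (first part) then gives immediately $W_\ess(D^M)=\overline{W(D^M)}$ and $\|D^M\|_{L^2(\Gamma^M),\ess}=\|D^M\|_{L^2(\Gamma^M)}$.

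Next I would handle the subspace $V^M$. The key point is that $V^M$ is itself invariant under $\cT$ (shifting a function constant on each $\Gamma_m$ by one period produces another such function), and $\cT$ restricted to $V^M$ is still an isometry commuting with $D_{V^M} = P^M D^M|_{V^M}$ — here one uses that $P^M$ commutes with $\cT$ since $\cT$ permutes the pieces $\Gamma_m$. Applying the first part of Lemma \ref{lem:Tlemma} with $\cH = V^M$, $\cA = D_{V^M}$, $\cT = \cT|_{V^M}$ gives $W_\ess(D_{V^M}) = \overline{W(D_{V^M})}$ and $\|D_{V^M}\|_{V^M,\ess} = \|D_{V^M}\|_{V^M}$. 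The inclusions $\overline{W(D_{V^M})} \subset \overline{W(D^M)}$ and $\|D_{V^M}\|_{V^M} \leq \|D^M\|_{L^2(\Gamma^M)}$ are just instances of \eqref{eq:subspaces}.

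The final equalities $\|D_{V^M}\|_{V^M} = \|D_{V_+^M}\|_{V_+^M}$ and $\overline{W(D_{V^M})} = \overline{W(D_{V_+^M})}$ require the second part of Lemma \ref{lem:Tlemma}. The idea is to apply it inside $\cH = V^M$ with $\cA = D_{V^M}$, the isometry $\cT = \cT|_{V^M}$ as above, the closed subspace $\cV = V^M_+$, and $\cQ = \cI - \cP^M_+$. One checks $\cQ\cT^n\to 0$ strongly: for $\phi$ supported on finitely many pieces $\Gamma_m$, $\cT^n\phi$ is supported on pieces $\Gamma_{m-n}$, all with sufficiently large index (hence in $V^M_+$) once $n$ is large, so $\cQ\cT^n\phi = 0$ eventually; density of such $\phi$ in $V^M$ finishes this. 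Since $\widetilde\cA := \cP^M_+ D_{V^M}|_{V^M_+}$ coincides with $D_{V^M_+}$ — because for $\phi$ supported on $\Gamma_m$, $m\geq 1$, the operator $D^M\phi$ restricted and projected onto $V^M_+$ agrees with $D_{V^M_+}\phi$ — the second part of Lemma \ref{lem:Tlemma} yields $\overline{W(D_{V^M_+})} = \overline{W(D_{V^M})}$ and $\|D_{V^M_+}\|_{V^M_+} = \|D_{V^M}\|_{V^M}$, as required.

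The main obstacle I anticipate is the bookkeeping needed to verify carefully that the projections $P^M$ and $P^M_+$ genuinely commute (respectively intertwine) with the shift $\cT$ — i.e.\ that $\cT$ really does permute the pieces $\Gamma_m$ in the way claimed so that $V^M$ is $\cT$-invariant and $P^M\cT = \cT P^M$ — and that the identification $\cP^M_+ D_{V^M}|_{V^M_+} = D_{V^M_+}$ holds exactly; the geometric translation-invariance of the double-layer kernel and normal vector is intuitively clear but should be stated precisely. Everything else is a direct invocation of Lemma \ref{lem:Tlemma} and \eqref{eq:subspaces}.
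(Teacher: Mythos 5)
Your approach is essentially the same as the paper's: both proofs rely on Lemma~\ref{lem:Tlemma} applied with translation by one period, first on $L^2(\Gamma^M)$ (first part of the lemma) and then on $V^M$ with $\cV=V^M_+$ (second part), and both use \eqref{eq:subspaces} for the intermediate inequality and inclusion. The observations you flag as "bookkeeping to verify" — that $\cT$ permutes the pieces $\Gamma_m$, that $V^M$ is $\cT$-invariant, and that $\cP^M_+ D_{V^M}|_{V^M_+}=D_{V^M_+}$ (which holds because $V^M_+\subset V^M$ gives $P^M_+P^M=P^M_+$) — are exactly the facts that make the construction work, so you have correctly identified the load-bearing steps.

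However there is a direction error in the strong-convergence argument that would make the proof fail as written. From Definition~\ref{def:GammaM}, the piece $\Gamma_m$ sits at $x_1$-locations around $(\tfrac{1}{2}-m)/M$, so \emph{increasing} $m$ moves the pieces toward $x_1\to-\infty$. Since one period of $f_M$ is $2/M$, which spans two pieces, the operator $\cT\phi(\bx)=\phi(\bx-(2/M)\be_1)$ carries a function supported on $\Gamma_m$ to one supported on $\Gamma_{m-2}$ (not $\Gamma_{m-1}$ as you write), and $\cT^n\phi$ is supported on $\Gamma_{m-2n}$. As $n\to\infty$ these indices tend to $-\infty$, so $\cT^n\phi$ eventually lands \emph{outside} $V^M_+=\{\phi\in V^M:\phi|_{\Gamma_m}=0\text{ for }m\le 0\}$, not inside it; the claim that the supports end up "in $V^M_+$ once $n$ is large" is the opposite of what happens, and $\cQ\cT^n$ does not tend to zero strongly. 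The fix is to apply the second part of Lemma~\ref{lem:Tlemma} with the inverse shift $\cT^{-1}\phi(\bx)=\phi(\bx+(2/M)\be_1)$, which is equally an isometry commuting with $D_{V^M}$ and for which $\cT^{-n}\phi$ has support on $\Gamma_{m+2n}$, eventually lying in $V^M_+$, so that $\cQ\cT^{-n}\to 0$ strongly as required. (The same direction convention appears in the paper's own proof, so this is as much a correction of the paper as of your proposal; still, the argument you gave does not close the loop as stated.) Finally, your intermediate invocation of the first part of Lemma~\ref{lem:Tlemma} to conclude $\|D_{V^M}\|_{V^M,\ess}=\|D_{V^M}\|_{V^M}$ is not needed for the stated lemma — the conclusion only concerns $D^M$, $D_{V^M}$, $D_{V_+^M}$, not the essential norm of $D_{V^M}$ — though it is harmless.
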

\begin{proof}
The first equalities in each line follow by applying Lemma \ref{lem:Tlemma} with $\cH=L^2(\Gamma^M)$, $\cA=D^M$, and $\cT= T$, where $T:L^2(\Gamma^M)\to L^2(\Gamma^M)$ is a right-shift operator defined by
$$
T \phi(\bx) = \phi\left((x_1-2/M,x_2)\right), \quad \mbox{for } \bx=(x_1,x_2)\in \Gamma^M, \quad \phi\in L^2(\Gamma^M).
$$
Similarly, the last equalities in each line follow by applying Lemma \ref{lem:Tlemma} with $\cH=V^M$, $\cA=D_{V^M}$, $\cT= T|_{V^M}$, $\cV= V^M_+$, and $\cP = P^M_+|_{V^M}$. The other statements of the lemma (the ``$\geq$'' and the ``$\supset$'') follow from \eqref{eq:subspaces}.
\end{proof}

In the following lemma we use the notations of \S\ref{sec:CompNR}. In particular, $w_r(C_N)$ denotes the numerical abscissa of a square matrix $C_N$, and $B_N$ is the matrix defined by \eqref{eq:aBdef}. We denote the norm on $L^\infty(-\pi,\pi)$ by $\|\cdot\|_\infty$.

\ble[The double-layer operator on $\Gamma^M$]\label{lem:ANDLPrime}
Given $N\in \NN$, define the orthonormal set $\{\psi_1,...,\psi_N\}\subset V_+^M\subset L^2(\Gamma^M)$ by \eqref{eq:defphi2d}, but with $\Gamma_m$ as in Definition \ref{def:GammaM}, and define the Galerkin matrix $D_N$ by \eqref{eq:DNdef}, where $D=D^M$ is the double-layer potential operator on $\Gamma^M$.
Then:
\begin{itemize}
\item[(a)]
\beq\label{eq:DNformP}
\big( D_N\big)_{jm}= \big( B_N\big)_{jm}d^\prime_{j-m}, \quad 1\leq j,m\leq N,
\eeq
where $d^\prime_0:= 0$,
\beq \label{eq:djmDef2}
d^\prime_{\ell} := \frac{1}{2\pi \left|\Gamma_{|\ell|}\right|}\int_{\Gamma_{|\ell|}} \alpha \, \rd s, \quad \ell\in \Z\setminus\{0\},
\eeq
and
\beq\label{eq:DLPsolidangle2}
 \alpha(\bx) := 2\pi \left|\int_{\Gamma_0} \frac{\partial \Phi(\bx,\by)}{\partial n(\by)} \rd s(\by)\right|, \quad \bx \in \R^2\setminus \overline{\Gamma_0},
\eeq
is the angle subtended at $\bx$ by $\Gamma_0$. Further, for each $\ell\in \Z\setminus\{0\}$,
\beq\label{eq:limitDAP1}
d_\ell^\prime \to \half \quad \mbox{so that} \quad D_N \rightarrow \frac{1}{2}\, B_N \quad \mbox{as} \quad M \to\infty,
\eeq
and
\begin{equation} \label{eq:dl_lim}
d_{\ell}^\prime = \frac{M}{2\pi |\ell|} + O(\ell^{-2}), \quad \mbox{as}\quad|\ell|\to \infty.
\end{equation}
\item[(b)]
\begin{equation} \label{eq:DNormlim}
\|D_N\|_2 \to \|D_{V^M}\|_{V^M} = \|E\|_{\ell^2(\NN)} = \|e\|_\infty, \quad \mbox{as} \quad N\to \infty,
\end{equation}
where $E:\ell^2(\NN)\to\ell^2(\NN)$ is (multiplication by) the infinite Toeplitz matrix defined by
$$
\left(E\right)_{jm} := \sign(m-j) d^\prime_{m-j}, \quad j,m\in \NN,
$$
and $e\in L^\infty(-\pi,\pi)$ is its symbol, given by
\begin{equation} \label{eq:symb}
e(t) = -2\ri \sum_{m=1}^\infty d_m^\prime \sin(m t) = -\frac{\ri M\sign(t)(\pi-|t|)}{2\pi} + e_r(t), \quad -\pi \leq t \leq \pi,
\end{equation}
where $e_r\in C(\R)$ is $2\pi$-periodic with $e_r(0)=0$, so that
\begin{equation} \label{eq:enorm}
\|D_{V^M}\|_{V^M} = \|E\|_{\ell^2(\NN)} =\|e\|_\infty \geq \frac{M}{2}.
\end{equation}
\item[(c)] For $N\geq 3$,
\begin{equation} \label{eq:WDNsup}
W(D_N)\cap\R\supset [-a,a], \quad \mbox{where} \quad a:= \sqrt{2}\, d_1^\prime,
\end{equation}
with equality when $N=3$. Further, where $D_N^\theta$ denotes the real part of $\re^{\ri \theta} D_N$, $w_r(\re^{\ri\theta}D_N)=w_r(\re^{-\ri\theta}D_N)=w_r(-\re^{\ri\theta}D_N)=\|D_N^\theta\|_2$, $\theta\in \R$, and
\begin{equation} \label{eq:WDN}
w_r(\re^{\ri\theta}D_N) =\|D_N^\theta\|_2 \to \|H^\theta\|_2 = \|h^\theta\|_\infty, \quad \mbox{as} \quad N\to\infty,
\end{equation}
where $H^\theta:\ell^2(\NN)\to\ell^2(\NN)$ is (multiplication by) the infinite Toeplitz matrix defined by
$$
\left(H^\theta\right)_{jm} := \frac{1-\re^{-2\ri\theta}(-1)^{j-m}}{2}\,\sign(m-j) d^\prime_{m-j}, \quad j,m\in \NN,
$$
and $h^\theta\in L^\infty(-\pi,\pi)$ is its symbol, given by
\begin{equation} \label{eq:hsymb}
h^\theta(t) = -\ri \sum_{m=1}^\infty (1-\re^{-2\ri\theta}(-1)^m)d_m^\prime \sin(mt) = -\ri M\,\frac{\sign(t)(\pi-|t|) + \re^{-2\ri\theta}t}{4\pi} + h^\theta_r(t),
\end{equation}
for $-\pi < t <\pi$, where $h^\theta_r\in C(\R)$ is $2\pi$-periodic with $h^\theta_r(0)=0$, so that
\begin{equation} \label{eq:hnorm}
\lim_{N\to\infty} w_r(\re^{\ri\theta}D_N)=\|h^\theta\|_\infty \geq \frac{M}{4}, \quad \theta \in \R.
\end{equation}
Moreover,
\begin{equation} \label{eq:finalW}
\overline{W(D_{V^M})} =\overline{\bigcup_{N=1}^\infty W(D_N)}= \bigcap_{0\leq \theta \leq 2\pi} \left\{\lambda\in \C: \Re(\re^{\ri\theta}\lambda) \leq \|h^\theta\|_\infty\right\}\supset \{\lambda\in \C:|\lambda| \leq M/4\}.
\end{equation}
\end{itemize}
\ele
\bpf
Part (a). Arguing as in the proof of Lemma \ref{lem:ANDLP}, $(D_N)_{jm}$ is given by the right hand side of \eqref{eq:DNform}, but with $d_{jm}$ replaced by $d^\prime_{jm}$, defined by
\eqref{eq:djmDef} and \eqref{eq:DLPsolidangle}
 with $\Gamma_m$ as in Definition \ref{def:GammaM}. By symmetry of $\Gamma^M$ and since $f_M$ is periodic with period $2/M$, $d^\prime_{jm} = d^\prime_{|j-m|,0}=d^\prime_{|j-m|}$, for $1\leq j,m\leq N$. That $d_{-\ell}^\prime = d_\ell^\prime = d_{\ell,0}^\prime\to 1/2$ as $M\to\infty$, for each $\ell\in \NN$, so that \eqref{eq:limitDAP1} holds, follows exactly as in the proof of Lemma \ref{lem:ANDLP}, and the asymptotics \eqref{eq:dl_lim} follow easily from the definitions \eqref{eq:djmDef2} and \eqref{eq:DLPsolidangle2}.

Part (b). $\|D_{V^M}\|_{V^M}=\|D_{V_+^M}\|_{V^M}$ by Lemma \ref{lem:DM}, and $\|D_N\|_2\to \|D_{V_+^M}\|_{V^M}$ as $N\to\infty$ by \eqref{eq:limits}, since $\cH_1\subset \cH_2\subset ...$, where $\cH_N$ is the space spanned by $\{\psi_1,...,\psi_N\}$, and
$V_+^M=\overline{\cup_{N=1}^\infty \cH_N}$.
Moreover, it is easy to see that, for each $N\in \NN$, $\|D_N\|_2=\|E_N\|_2$, where $E_N$ is the order $N$ finite section of $E$. That $\|E_N\|_2\to \|E\|_2 = \|e\|_\infty$, with $e$ given by the first of the equalities in \eqref{eq:symb}, are standard properties of infinite Toeplitz matrices with bounded symbols \cite{BottSilb98}. The last equation in \eqref{eq:symb}, with
\begin{equation} \label{eq:erdef}
e_r(t) := -2\ri \sum_{m=1}^\infty (d_m^\prime-M/(2\pi m)) \sin(mt), \quad t\in \R,
\end{equation}
follows since
\begin{equation} \label{eq:FS1}
\sign(t)(\pi-|t|) = 2\sum_{m=1}^\infty \frac{\sin(mt)}{m}, \quad -\pi\leq t \leq \pi,
\end{equation}
and that $e_r$ is continuous and $e_r(0)=0$ follows since the series \eqref{eq:erdef} is absolutely and uniformly convergent by \eqref{eq:dl_lim}. The bound \eqref{eq:enorm} follows since $\|e\|_\infty \geq \lim_{t\to 0}|e(t)|=M/2$.

Part (c). Where $C_N$ is the matrix in Lemma \ref{lem:CNprops}, $D_N=C_N$ if we set $e_{jm}=d^\prime_{|j-m|}$, for $1\leq j,m\leq N$. Thus \eqref{eq:WDNsup} follows from Lemma \ref{lem:CNprops}(iii), and that $w_r(\re^{\ri\theta}D_N)=w_r(\re^{-\ri\theta}D_N)=w_r(-\re^{\ri\theta}D_N)=\|D_N^\theta\|_2$, for $\theta\in \R$, from Lemma \ref{lem:CNprops}(ii) and  (iv). Using \eqref{eq:CNtheta}, we see also that $\left(H^\theta\right)_{jm} = \re^{-\ri\theta}(-1)^{m+1}(D^\theta_N)_{jm}$, $1\leq j,m\leq N$, so that $\|D_N^\theta\|_2=\|H^\theta_N\|_2$, where $H^\theta_N$ is the order $N$ finite section of $H^\theta$. The remaining results up to and including \eqref{eq:hnorm} follow in the same way as we proved \eqref{eq:DNormlim}-\eqref{eq:enorm}, using \eqref{eq:FS1} and that
$$
-t = 2\sum_{m=1}^\infty (-1)^m\frac{\sin(mt)}{m}, \quad -\pi<t < \pi.
$$
The final statement \eqref{eq:finalW} follows from \eqref{eq:limits}, \eqref{eq:WCN}, and \eqref{eq:hnorm}.
\epf

Combining Lemma \ref{lem:DM} and \ref{lem:ANDLPrime} we see that, for $M>0$,
\begin{eqnarray} \label{eq:DMbounds}
\|D^M\|_{L^2(\Gamma^M)} &\geq & \lim_{N\to\infty} \|D_N\|_2 = \|D_{V^M}\|_{V^M} \geq \frac{M}{2} \quad \mbox{and}\\ \label{eq:WDMbounds}
\overline{W(D^M)} &\supset& \overline{\bigcup_{N=1}^\infty W(D_N)} =\overline{W(D_{V^M})} \supset  \left\{\lambda\in \C: |\lambda|\leq M/4\right\},
\end{eqnarray}
so that $w(D^M)\geq M/4$. On the other hand, by \eqref{eq:NormEssNorm} and Theorem \ref{thm:bounded}, for some $C, \mu>0$,
$$
\half \|D^M\|_{L^2(\Gamma^M)}\leq w(D^M)\leq \|D^M\|_{L^2(\Gamma^M)} \leq CM(1+M)^\mu, \quad M>0.
$$
For sufficiently large $M>0$ it appears that the last of the bounds in \eqref{eq:DMbounds} and the last inclusion in \eqref{eq:WDMbounds} are in fact equalities.
Indeed, it follows from \eqref{eq:enorm} and \eqref{eq:hnorm} that \eqref{eq:DMbounds} holds with the last ``$\geq$'' replaced by ``$=$'' if $\|e\|_\infty = M/2$, and \eqref{eq:WDMbounds} holds with the last ``$\supset$'' replaced by ``$=$'' if $\|h_\infty\|_\infty= M/4$, for $\theta\in \R$, and the plots in Figure \ref{fig:symbols} suggest the conjecture that
$$
\|e\|_\infty = M/2, \quad \mbox{at least for $M\geq 1$}, \quad \mbox{and} \quad \|h^\theta\|_\infty = M/4, \quad \mbox{for $\theta\in \R$, at least for $M\geq 2$}.
$$

\begin{figure}
\includegraphics[width=.5\textwidth]{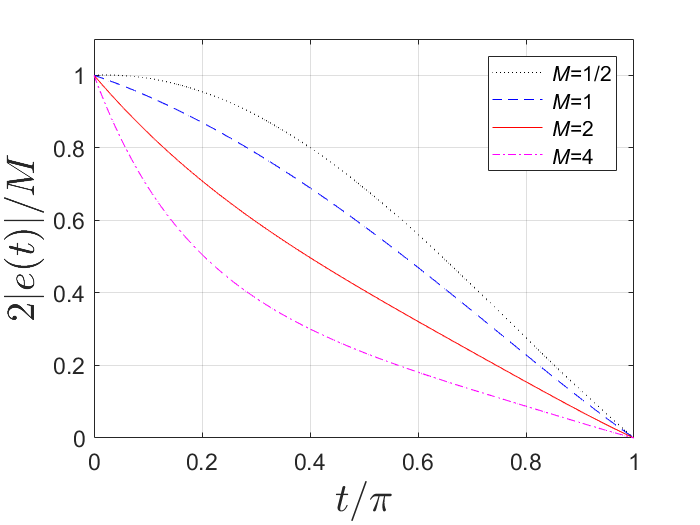}\includegraphics[width=.5\textwidth]{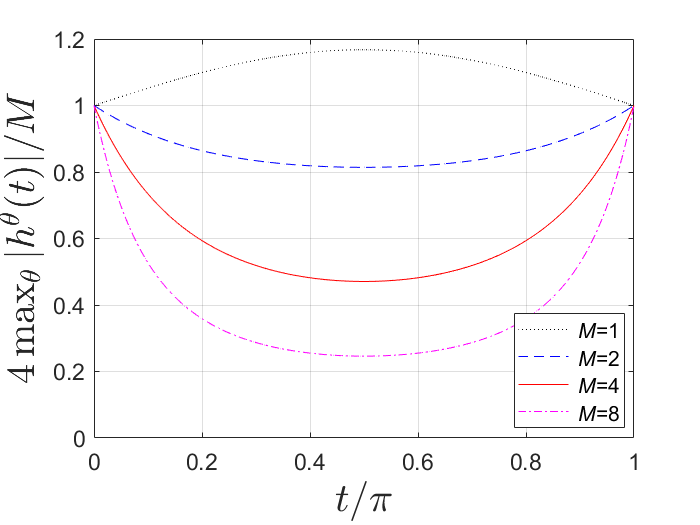}
\caption{\label{fig:symbols} Graphs of $2|e(t)|/M$ and $4\max_{0\leq \theta\leq 2\pi}|h^\theta(t)|/M$ against $t/\pi$, where $e$ and $h^\theta$ are the symbols of the infinite Toeplitz matrices $E$ and $H^\theta$, given by \eqref{eq:symb} and \eqref{eq:hsymb}, respectively.}
\end{figure}

\subsubsection{The proof of Theorems \ref{thm:Q1} and \ref{thm:Q2}}

We now use the above results to prove Theorems \ref{thm:Q1} and \ref{thm:Q2} in the 2-d case, as Theorem \ref{thm:Q1Q22d} below. Let
\begin{equation} \label{eq:GammaMm}
\Gamma^{M,m} := \overline{\bigcup_{j=1}^{2m} \Gamma_j}, \quad m\in \NN,
\end{equation}
where $\Gamma_j$ is as defined in Definition \ref{def:GammaM} (and see Figure \ref{fig:Saw}).

\begin{definition}[$\Gamma^{M}_d$ and $\Omega_d^M$ for $d=2$] \label{def:OmegaM2d} Choose $\beta\in (0,1)$  and, given $M>0$, define $(\gamma_j)_{j=1}^\infty\subset (0,\infty)$ by
$$
\gamma_j := \frac{M(\beta^{j-1}-\beta^j)}{2j}, \quad j\in \NN.
$$
Define $f^M_\beta:\R\to [0,\infty)$ by
\begin{equation} \label{eq:deffmam}
f^M_{\beta}(s) := \left\{\begin{array}{cc}
                             0, & \mbox{if }s\leq 0 \mbox{ or } s\geq 1, \\
                             \gamma_j f_M((s-\beta^j)/\gamma_j),& \mbox{if } \beta^j\leq s < \beta^{j-1},
                           \end{array}
\right.
\end{equation}
for $j\in \NN$, where $f_M$ is defined by \eqref{eq:Gammaeps}, and note that the definition we make for $\gamma_j$ ensures that $f^M_{\beta}\in C^{0,1}(\R)$, with $|(f^M_{\beta})^\prime(s)| = M$ for almost all $s\in (0,1)$, and that
\begin{equation} \label{eq:fMint}
\{f^M_{\beta}(s):\beta^j\leq s\leq \beta^{j-1}\} = (\beta^{j-1},0) + \gamma_j \Gamma^{M,j}, \quad j\in \NN,
\end{equation}
where $\Gamma^{M,j}$ is defined by \eqref{eq:GammaMm}. Let $\varepsilon:=0.1$ and let
\begin{eqnarray} \nonumber
\Gamma^M_{2} &:= &\left\{\left(s,f^M_{\beta}(s)\right):-\varepsilon \leq  s\leq 1+\varepsilon\right\}\\ \label{eq:GammaMbabm}
& = &\{(s,0): -\varepsilon \leq s \leq 0 \mbox{ or } 1\leq s\leq 1+\varepsilon\} \,\cup\, \bigcup_{j\in \NN}\left((\beta^{j-1},0) + \gamma_j \Gamma^{M,j}\right);
\end{eqnarray}
see Figure \ref{fig:GammaM*} for $\Gamma^1_2$ when  $\beta = 0.6$.
Set $\bx^\prime:= (-\varepsilon,-\varepsilon)$ and $\bx^{\prime\prime}:= (1+\varepsilon,-\varepsilon)$, and let
$$
\Omega_2^M := \left\{(x_1,x_2):-\varepsilon< x_1< 1+\varepsilon \mbox{ and } -2\varepsilon < x_2 < f^M_{\beta}(x_1)\right\} \cup B_\varepsilon(\bx^\prime) \cup B_\varepsilon(\bx^{\prime\prime});
$$
see Figure \ref{fig:OmegaM} for $\Omega^1_2$ when $\beta = 0.6$. Note that $\Omega_2^M\subset \R^2$ is a simply-connected Lipschitz domain with Lipschitz constant $M$, and the boundary $\Gamma$ of $\Omega_2^M$ contains $\Gamma^M_2$ and is $C^1$ except at a countable set of points on $\big\{\big(s,f^M_{\beta}(s)\big):0 \leq  s\leq 1\big\}\subset \Gamma^M_2$.
\end{definition}

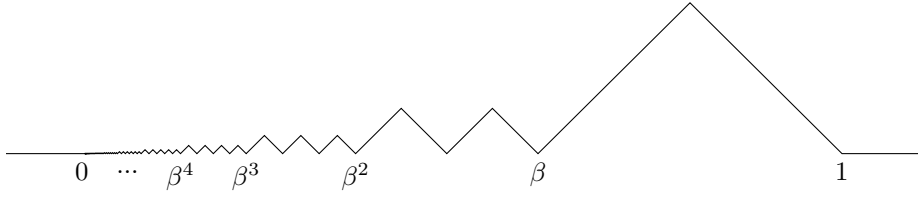
\begin{figure}
\begin{center}
\begin{tikzpicture}[scale=10]
\draw plot coordinates {(-0.1,0)
(0,0) (0.00047018,0) (0.00048063,1.0449e-05) (0.00049108,0) (0.00050153,1.0449e-05) (0.00051198,0) (0.00052243,1.0449e-05) (0.00053288,0) (0.00054332,1.0449e-05) (0.00055377,0) (0.00056422,1.0449e-05) (0.00057467,0) (0.00058512,1.0449e-05) (0.00059557,0) (0.00060602,1.0449e-05) (0.00061646,0) (0.00062691,1.0449e-05) (0.00063736,0) (0.00064781,1.0449e-05) (0.00065826,0) (0.00066871,1.0449e-05) (0.00067916,0) (0.0006896,1.0449e-05) (0.00070005,0) (0.0007105,1.0449e-05) (0.00072095,0) (0.0007314,1.0449e-05) (0.00074185,0) (0.0007523,1.0449e-05) (0.00076274,0) (0.00077319,1.0449e-05) (0.00078364,0) (0.0008023,1.8658e-05) (0.00082096,0) (0.00083962,1.8658e-05) (0.00085827,0) (0.00087693,1.8658e-05) (0.00089559,0) (0.00091425,1.8658e-05) (0.00093291,0) (0.00095156,1.8658e-05) (0.00097022,0) (0.00098888,1.8658e-05) (0.0010075,0) (0.0010262,1.8658e-05) (0.0010449,0) (0.0010635,1.8658e-05) (0.0010822,0) (0.0011008,1.8658e-05) (0.0011195,0) (0.0011381,1.8658e-05) (0.0011568,0) (0.0011755,1.8658e-05) (0.0011941,0) (0.0012128,1.8658e-05) (0.0012314,0) (0.0012501,1.8658e-05) (0.0012688,0) (0.0012874,1.8658e-05) (0.0013061,0) (0.0013396,3.3489e-05) (0.001373,0) (0.0014065,3.3489e-05) (0.00144,0) (0.0014735,3.3489e-05) (0.001507,0) (0.0015405,3.3489e-05) (0.001574,0) (0.0016075,3.3489e-05) (0.001641,0) (0.0016744,3.3489e-05) (0.0017079,0) (0.0017414,3.3489e-05) (0.0017749,0) (0.0018084,3.3489e-05) (0.0018419,0) (0.0018754,3.3489e-05) (0.0019089,0) (0.0019424,3.3489e-05) (0.0019758,0) (0.0020093,3.3489e-05) (0.0020428,0) (0.0020763,3.3489e-05) (0.0021098,0) (0.0021433,3.3489e-05) (0.0021768,0) (0.0022372,6.0466e-05) (0.0022977,0) (0.0023582,6.0466e-05) (0.0024186,0) (0.0024791,6.0466e-05) (0.0025396,0) (0.0026,6.0466e-05) (0.0026605,0) (0.002721,6.0466e-05) (0.0027814,0) (0.0028419,6.0466e-05) (0.0029024,0) (0.0029628,6.0466e-05) (0.0030233,0) (0.0030838,6.0466e-05) (0.0031442,0) (0.0032047,6.0466e-05) (0.0032652,0) (0.0033256,6.0466e-05) (0.0033861,0) (0.0034466,6.0466e-05) (0.003507,0) (0.0035675,6.0466e-05) (0.003628,0) (0.0037379,0.00010994) (0.0038478,0) (0.0039578,0.00010994) (0.0040677,0) (0.0041777,0.00010994) (0.0042876,0) (0.0043975,0.00010994) (0.0045075,0) (0.0046174,0.00010994) (0.0047274,0) (0.0048373,0.00010994) (0.0049472,0) (0.0050572,0.00010994) (0.0051671,0) (0.005277,0.00010994) (0.005387,0) (0.0054969,0.00010994) (0.0056069,0) (0.0057168,0.00010994) (0.0058267,0) (0.0059367,0.00010994) (0.0060466,0) (0.0062482,0.00020155) (0.0064497,0) (0.0066513,0.00020155) (0.0068528,0) (0.0070544,0.00020155) (0.0072559,0) (0.0074575,0.00020155) (0.007659,0) (0.0078606,0.00020155) (0.0080622,0) (0.0082637,0.00020155) (0.0084653,0) (0.0086668,0.00020155) (0.0088684,0) (0.0090699,0.00020155) (0.0092715,0) (0.009473,0.00020155) (0.0096746,0) (0.0098761,0.00020155) (0.010078,0) (0.010451,0.00037325) (0.010824,0) (0.011197,0.00037325) (0.011571,0) (0.011944,0.00037325) (0.012317,0) (0.01269,0.00037325) (0.013064,0) (0.013437,0.00037325) (0.01381,0) (0.014183,0.00037325) (0.014557,0) (0.01493,0.00037325) (0.015303,0) (0.015676,0.00037325) (0.01605,0) (0.016423,0.00037325) (0.016796,0) (0.017496,0.00069984) (0.018196,0) (0.018896,0.00069984) (0.019596,0) (0.020295,0.00069984) (0.020995,0) (0.021695,0.00069984) (0.022395,0) (0.023095,0.00069984) (0.023795,0) (0.024494,0.00069984) (0.025194,0) (0.025894,0.00069984) (0.026594,0) (0.027294,0.00069984) (0.027994,0) (0.029327,0.001333) (0.03066,0) (0.031993,0.001333) (0.033326,0) (0.034659,0.001333) (0.035992,0) (0.037325,0.001333) (0.038658,0) (0.039991,0.001333) (0.041324,0) (0.042657,0.001333) (0.04399,0) (0.045323,0.001333) (0.046656,0) (0.049248,0.002592) (0.05184,0) (0.054432,0.002592) (0.057024,0) (0.059616,0.002592) (0.062208,0) (0.0648,0.002592) (0.067392,0) (0.069984,0.002592) (0.072576,0) (0.075168,0.002592) (0.07776,0) (0.082944,0.005184) (0.088128,0) (0.093312,0.005184) (0.098496,0) (0.10368,0.005184) (0.10886,0) (0.11405,0.005184) (0.11923,0) (0.12442,0.005184) (0.1296,0) (0.1404,0.0108) (0.1512,0) (0.162,0.0108) (0.1728,0) (0.1836,0.0108) (0.1944,0) (0.2052,0.0108) (0.216,0) (0.24,0.024) (0.264,0) (0.288,0.024) (0.312,0) (0.336,0.024) (0.36,0) (0.42,0.06) (0.48,0) (0.54,0.06) (0.6,0) (0.8,0.2) (1,0)
(1.1,0)};
\draw (0,0) node[anchor=north] {$0$};
\draw (1,0) node[anchor=north] {$1$};
\draw (0.6,0) node[anchor=north] {$\beta$};
\draw (0.36,0) node[anchor=north] {$\beta^2$};
\draw (0.216,0) node[anchor=north] {$\beta^3$};
\draw (0.1296,0) node[anchor=north] {$\beta^4$};
\draw (0.06,-0.01) node[anchor=north] {$...$};
\end{tikzpicture}
\end{center}
\caption{\label{fig:GammaM*} The curve $\Gamma^M_2$, as specified in Definition \ref{def:OmegaM2d}, in the case $M=1$ and $\beta=0.6$. The labels are the $x_1$-coordinates of the point $\bze=(0,0)$ and of the first 5 of the points $(\beta^j,0)$, $j=0,1,...$. All these points lie on $\Gamma^M_2$.}
\end{figure}

The proofs of Theorems \ref{thm:Q1} and \ref{thm:Q2}, in both the 2-d and 3-d cases, depend on \eqref{eq:DMbounds} and \eqref{eq:WDMbounds}, and on the localisation result Theorem \ref{lem:local}. They also depend on the simple observation that the norm and numerical range of the double-layer potential on a curve or surface $\Gamma^\prime$ are the same as those of the double-layer potential operator on its translate, $\Gamma^\prime + \bx$, for $\bx\in \R^d$. These quantities are also invariant under scaling in the sense of the following lemma.

\begin{lemma} \label{lem:dil2} Suppose that $\Gamma^\prime := \{(\by^\prime,f(\by^\prime)):\by^\prime=(y_1,...,y_{d-1}) \in \overline{\cN}\}$, for some open $\cN\subset \R^{d-1}$, $d=2$ or $3$, and some $f\in C^{0,1}(\R^{d-1})$, and let $D_\kappa$ denote the double-layer potential operator on $\kappa \Gamma^\prime$ for $\kappa>0$. Then
$$
\|D_\kappa\|_{L^2(\kappa \Gamma^\prime)} = \|D_1\|_{L^2(\Gamma^\prime)} \;\mbox{ and } \; W(D_\kappa) = W(D_1), \quad \kappa>0.
$$
\end{lemma}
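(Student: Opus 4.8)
The plan is to show that $D_\kappa$ and $D_1$ are unitarily equivalent, so that they automatically share both norm and numerical range. This mirrors exactly the strategy of Lemmas \ref{lem:Visometry} and \ref{lem:Vcommute} in the locally-dilation-invariant setting, with the pair $(\Gamma^\prime,\kappa\Gamma^\prime)$ playing the role there played by $(\Gamma_\bx^\newdelta,\Gamma_\bx^1)$. First I would note that $\kappa\Gamma^\prime$ is the graph over $\kappa\cN$ of $\by^\prime\mapsto \kappa f(\by^\prime/\kappa)$, which is Lipschitz with the same Lipschitz constant as $f$; hence, by Theorem \ref{thm:bounded}, both $D_1$ and $D_\kappa$ are bounded, so the statement is meaningful. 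Then I would introduce the dilation operator $U_\kappa:L^2(\Gamma^\prime)\to L^2(\kappa\Gamma^\prime)$, $(U_\kappa\phi)(\bx):=\kappa^{-(d-1)/2}\phi(\bx/\kappa)$ for $\bx\in\kappa\Gamma^\prime$.

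The first step is to check that $U_\kappa$ is a bijective isometry: bijectivity is immediate (the inverse is $\phi\mapsto(\bx\mapsto\kappa^{(d-1)/2}\phi(\kappa\bx))$), and the isometry property follows from the change of variables $\bz=\kappa\bv$, $\bv\in\Gamma^\prime$, using the scaling $\rd s(\bz)=\kappa^{d-1}\rd s(\bv)$ of $(d-1)$-dimensional surface measure — precisely the computation in the proof of Lemma \ref{lem:Visometry}.

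The key step is the intertwining identity $D_\kappa U_\kappa = U_\kappa D_1$. This is a direct change of variables in the kernel formula \eqref{eq:DD'2}: writing $\bz=\kappa\bv$ and $\bx=\kappa\by$ with $\bv,\by\in\Gamma^\prime$, I would use (i) $\rd s(\bz)=\kappa^{d-1}\rd s(\bv)$; (ii) the invariance of the unit normal under the homothety, $\bn_{\kappa\Gamma^\prime}(\kappa\bv)=\bn_{\Gamma^\prime}(\bv)$ for almost every $\bv$ (the analogue of Lemma \ref{lem:normal}, which holds since a dilation is conformal and orientation-preserving, or by a direct computation with the graph parametrisation); and (iii) the homogeneity $\big(\kappa\by-\kappa\bv\big)\cdot\bn(\bv)\big/|\kappa\by-\kappa\bv|^d = \kappa^{1-d}\,(\by-\bv)\cdot\bn(\bv)\big/|\by-\bv|^d$ of the double-layer kernel. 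The powers of $\kappa$ combine as $\kappa^{1-d}\cdot\kappa^{d-1}=1$, yielding $(D_\kappa U_\kappa\phi)(\bx)=\kappa^{-(d-1)/2}(D_1\phi)(\bx/\kappa)=(U_\kappa D_1\phi)(\bx)$.

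Finally, from $D_\kappa=U_\kappa D_1 U_\kappa^{-1}$ with $U_\kappa$ a bijective isometry, the conclusion follows at once: $\|D_\kappa\|_{L^2(\kappa\Gamma^\prime)}=\|D_1\|_{L^2(\Gamma^\prime)}$, and, since $(D_\kappa U_\kappa\phi,U_\kappa\phi)=(U_\kappa D_1\phi,U_\kappa\phi)=(D_1\phi,\phi)$ and $\|U_\kappa\phi\|=\|\phi\|$, we get $W(D_\kappa)=\{(D_1\phi,\phi):\|\phi\|=1\}=W(D_1)$. I do not anticipate any real obstacle here; the only thing needing care is the bookkeeping of the powers of $\kappa$ and of which surface $\bn$ refers to, all of which is routine and essentially already carried out in the proofs of Lemmas \ref{lem:Visometry} and \ref{lem:Vcommute}.
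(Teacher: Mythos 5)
Your proof is correct and takes essentially the same approach as the paper: the paper defines $V:L^2(\kappa\Gamma^\prime)\to L^2(\Gamma^\prime)$ by $V\phi(\by)=\kappa^{(d-1)/2}\phi(\kappa\by)$ and shows $D_1V=VD_\kappa$ via the arguments of Lemmas \ref{lem:Visometry} and \ref{lem:Vcommute}, which is your $U_\kappa = V^{-1}$ with the same intertwining relation. The bookkeeping of the $\kappa$-powers and the invariance of the normal is exactly as you describe.
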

\begin{proof} Define $V:L^2(\kappa\Gamma^\prime)\to L^2(\Gamma^\prime)$ by $V\phi(\by)=\kappa^{(d-1)/2}\phi(\kappa\by)$, for $\phi\in L^2(\Gamma^\prime)$, $\by\in \Gamma^\prime$. Then, arguing exactly as in the proofs of Lemmas \ref{lem:Visometry} and \ref{lem:Vcommute}, we see that $V$ is an isometric isomorphism and $D_1V=VD_\kappa$, so that $D_1$ and $D_\kappa$ are unitarily equivalent, and the result follows.
\end{proof}

\begin{theorem}(Theorems \ref{thm:Q1} and \ref{thm:Q2} in the 2-d case) \label{thm:Q1Q22d}  Suppose that $\Gamma$ is the boundary of $\Omega_2^M$, defined as in Definition \ref{def:OmegaM2d}, for some $M>0$ and $\beta\in (0,1)$. Then
$$
\|D\|_{L^2(\Gamma),\ess} \geq \frac{M}{2} \quad \mbox{and} \quad W_\ess(D) \supset \{\lambda\in \C: |\lambda| \leq M/4\}.
$$
\end{theorem}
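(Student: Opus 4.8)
The plan is to exploit the self-similar ``sawtooth'' structure of $\Gamma$ near the point $\bze$, combining the general localisation result Theorem~\ref{lem:local} with the norm and numerical-range bounds \eqref{eq:DMbounds}--\eqref{eq:WDMbounds} for the double-layer operator $D^M$ on the periodic graph $\Gamma^M$. The point is that, magnified at $\bze$, $\Gamma$ looks like longer and longer finite pieces of $\Gamma^M$, so the local essential behaviour of $D$ at $\bze$ inherits the lower bounds for $D^M$.

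First I would record the geometric observation that, by \eqref{eq:fMint}, for each $j\in\NN$ the piece $\Gamma^{(j)}:=(\beta^{j-1},0)+\gamma_j\Gamma^{M,j}$ of $\Gamma$ is an exact scaled (by the factor $\gamma_j$) and translated copy of $\Gamma^{M,j}$, the curve made up of the first $2j$ segments $\Gamma_1,\dots,\Gamma_{2j}$ of $\Gamma^M$; moreover $\Gamma^{(j)}$ lies in the box $[\beta^j,\beta^{j-1}]\times[0,\gamma_j]$, so these copies shrink to $\{\bze\}$ as $j\to\infty$, and one checks that the outward unit normal of $\Omega_2^M$ on $\Gamma^{(j)}$ has positive second component, matching the orientation convention used for $\Gamma^M$. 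Fix an even $N\in\NN$ and the orthonormal, piecewise-constant system $\psi_1,\dots,\psi_N$ of Lemma~\ref{lem:ANDLPrime}. For any $n$ with $2n\geq N$, let $\widetilde\psi^{(n)}_1,\dots,\widetilde\psi^{(n)}_N\in L^2(\Gamma)$ be the unit-norm lifts of $\psi_1,\dots,\psi_N$ to the corresponding segments of $\Gamma^{(n)}$ (unit norm because, after renormalisation, dilation acts isometrically, as in Lemma~\ref{lem:dil2}). The crucial point --- which I would verify via the change of variables in the proof of Lemma~\ref{lem:dil2}, using that the double-layer kernel $\partial\Phi(\bx,\by)/\partial n(\by)$ depends only on $\bx$, $\by$ and $\bn(\by)$ and is therefore insensitive to the rest of $\Gamma$ --- is that the Galerkin matrix of $D$ with respect to $\{\widetilde\psi^{(n)}_1,\dots,\widetilde\psi^{(n)}_N\}$ equals exactly the matrix $D_N$ of Lemma~\ref{lem:ANDLPrime}. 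Hence, writing $P_{n,N}$ for orthogonal projection onto $Q_{n,N}:=\mathrm{span}\{\widetilde\psi^{(n)}_1,\dots,\widetilde\psi^{(n)}_N\}$ and $\widetilde D_{n,N}:=P_{n,N}D|_{Q_{n,N}}$, we get $\|\widetilde D_{n,N}\|=\|D_N\|_2$ and $W(\widetilde D_{n,N})=W(D_N)$, by the definitions in \S\ref{sec:CompNR}.

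Next I would feed this into Theorem~\ref{lem:local}. For the essential norm, part (a) gives $\|D\|_{\LtG,\ess}=\sup_{\bx\in\Gamma}\lim_{\delta\to0}\|D_{\bx,\delta}\|_{\LtG}\geq\lim_{\delta\to0}\|D_{\bze,\delta}\|_{\LtG}$; for each $\delta>0$ we have $\Gamma^{(n)}\subset B_\delta(\bze)\cap\Gamma$ for all sufficiently large $n$, so $Q_{n,N}$ is contained in the range of $P_\delta(\bze)$, whence compressing $D_{\bze,\delta}=P_\delta(\bze)DP_\delta(\bze)$ to $Q_{n,N}$ (and using $P_{n,N}P_\delta(\bze)=P_{n,N}$) gives $\widetilde D_{n,N}$ and so $\|D_{\bze,\delta}\|_{\LtG}\geq\|D_N\|_2$ by \eqref{eq:subspaces}. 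Letting $\delta\to0$, then $N\to\infty$, and invoking \eqref{eq:DMbounds} yields $\|D\|_{\LtG,\ess}\geq\sup_N\|D_N\|_2=\|D_{V^M}\|_{V^M}\geq M/2$. For the essential numerical range, part (b) of Theorem~\ref{lem:local} gives $W_\ess(D)\supset\bigcap_{\delta>0}\overline{W(D_{\bze,\delta})}$, and the same compression argument gives $W(D_N)=W(\widetilde D_{n,N})\subset W(D_{\bze,\delta})$ for every even $N$ once $\delta$ is fixed and $n$ is large enough; hence $\overline{W(D_{\bze,\delta})}\supset\overline{\bigcup_N W(D_N)}=\overline{W(D_{V^M})}\supset\{\lambda\in\C:|\lambda|\leq M/4\}$ by \eqref{eq:WDMbounds}, and intersecting over $\delta>0$ gives $W_\ess(D)\supset\{\lambda\in\C:|\lambda|\leq M/4\}$.

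The main obstacle is the ``crucial point'' above: carefully justifying that the compression of $D$ (the operator on the \emph{full} boundary $\Gamma$ of $\Omega_2^M$) to the space of piecewise constants supported on $\Gamma^{(n)}$ reproduces the matrix $D_N$ computed for the periodic graph. This needs (i) the locality of the double-layer kernel, so that only the geometry of $\Gamma^{(n)}$ enters; (ii) the dilation/translation invariance from Lemma~\ref{lem:dil2}, identifying $\Gamma^{(n)}$ with $\Gamma^{M,n}$; and (iii) the identity, implicit in the proof of Lemma~\ref{lem:ANDLPrime}, that the entries $(D^{M}\psi_m,\psi_j)$ depend only on the finitely many segments carrying the $\psi$'s and hence agree whether computed on $\Gamma^{M,n}$ or on $\Gamma^M$, together with the check that normal orientations match. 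Everything else is a direct assembly of results already in hand; in particular, no new Toeplitz-symbol analysis is required here, the bounds $\|D_{V^M}\|_{V^M}\geq M/2$ and $\overline{W(D_{V^M})}\supset\{\lambda\in\C:|\lambda|\leq M/4\}$ being exactly \eqref{eq:DMbounds} and \eqref{eq:WDMbounds}.
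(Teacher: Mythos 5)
Your proof is correct and follows essentially the same route as the paper's: localize at $\bze$ via Theorem~\ref{lem:local}, use the scaled-and-translated copies \eqref{eq:fMint} of $\Gamma^{M,m}$ sitting inside $B_\delta(\bze)\cap\Gamma$ together with Lemma~\ref{lem:dil2} (and locality of the double-layer kernel plus the normal-orientation check) to transfer the Galerkin matrices $D_N$ of Lemma~\ref{lem:ANDLPrime}, and then invoke \eqref{eq:DMbounds}, \eqref{eq:WDMbounds}. The only differences are cosmetic: the paper phrases the comparison at the operator level ($\|D_N\|_2\leq\|\widetilde D\|_{L^2(\widetilde\Gamma)}=\|\widehat D\|_{L^2(\widehat\Gamma)}\leq\|D_{\bze,\delta}\|_{\LtG}$ and the analogous numerical-range chain) rather than directly at the Galerkin-matrix level, and your restriction to even $N$ is unnecessary though harmless given the monotonicity in \eqref{eq:DNsubsets}.
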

\begin{proof}
Let $\bx^*:= \bze\in \Gamma^M_2\subset \Gamma$. By Theorem \ref{lem:local},
$$
\|D\|_{L^2(\Gamma),\ess} \geq \lim_{\delta\to 0}\|D_{\bx^*,\delta}\|_\LtG \quad \mbox{and} \quad W_\ess(D) \supset \bigcap_{\delta>0} \overline{W(D_{\bx^*,\delta})}.
$$
Thus the result follows if we show that $\|D_{\bx^*,\delta}\|_\LtG\geq M/2$ and $\overline{W(D_{\bx^*,\delta})}\supset \{\lambda\in \C: |\lambda| \leq M/4\}$. By \eqref{eq:DMbounds} and \eqref{eq:WDMbounds}, this in turn follows if we can show that $\|D_{\bx^*,\delta}\|_\LtG\geq \|D_N\|_2$ and $\overline{W(D_{\bx^*,\delta})}\supset W(D_N)$, for every $N\in \NN$ and $\delta>0$, where $D_N$ is as defined in Lemma \ref{lem:ANDLPrime}. Given $m\in \NN$, set $\widetilde \Gamma:= \Gamma^{M,m}$, defined by \eqref{eq:GammaMm}. Then, by \eqref{eq:DNsubsets} and \eqref{eq:DNIncl}, $\|D_N\|_2 \leq \|\widetilde D\|_{L^2(\widetilde \Gamma)}$ and $W(D_N)\subset W(\widetilde D)$, for $N=1,...,2m$, where $\widetilde D$ denotes the double-layer potential operator on $\widetilde \Gamma$. But, by construction of $\Gamma^M_2$ in Definition \ref{def:OmegaM2d} (see \eqref{eq:fMint}), for every $\delta>0$ there exists $\kappa>0$ and $s\in \R$ such that $\widehat \Gamma := (s,0)+\kappa \widetilde \Gamma \subset B_\delta(\bx^*)\cap \Gamma$, so that
$$
\|\widehat D\|_{L^2(\widehat \Gamma)}  \leq \|D_{\bx^*,\delta}\|_\LtG \quad \mbox{and} \quad W(\widehat D)\subset W(D_{\bx^*,\delta}),
$$
by \eqref{eq:subspaces}, where $\widehat D$ denotes the double-layer potential operator on $\widehat \Gamma$.
Furthermore, by Lemma \ref{lem:dil2},
$$
\|\widehat D\|_{L^2(\widehat \Gamma)} = \|\widetilde D\|_{L^2(\widetilde \Gamma)} \quad \mbox{and} \quad W(\widehat D) = W(\widetilde D),
$$
so that
$$
\|D_{\bx^*,\delta}\|_\LtG\geq \|D_N\|_2 \quad \mbox{and} \quad W(D_{\bx^*,\delta})\supset W(D_N),
$$
for $N=1,...,2m$. Since this holds for every $m\in \NN$ and $\delta>0$, the proof is complete.
\end{proof}

\subsection{Proof of Theorems \ref{thm:Q1} and \ref{thm:Q2} in the 3-d case} \label{sec:Q1Q2_3d}
We now prove Theorems \ref{thm:Q1} and \ref{thm:Q2} in the 3-d case as Theorem \ref{thm:Q1Q23d} below, which proves the bounds \eqref{eq:Q1Q2bounds} for the domain $\Omega_3^M$ specified in the following definition. The proof builds on the 2-d case. Indeed the bounds \eqref{eq:Q1Q2bounds} hold if we define $\Omega_3^d$ alternatively as
\begin{equation} \label{eq:simpledef}
[-1,1]\times \Omega_2^M,
\end{equation}
where $\Omega_2^M$ is as in Definition \ref{def:OmegaM2d}; all that is needed for \eqref{eq:Q1Q2bounds} to hold is that $\Gamma$, the boundary of $\Omega_3^M$, contains $[-\epsilon,\epsilon]\times \Gamma_2^M$, for some $\epsilon>0$. But $\Omega_3^M$ given by the simple definition \eqref{eq:simpledef} has Lipschitz constant larger than $M$. The following more elaborate definition, which makes  a smoother cut-off in the $x_1$-direction, constructs $\Omega_3^M$ so that $[-1,1]\times \Gamma_2^M\subset \Gamma$ while ensuring that the Lipschitz constant of $\Omega_3^M$ does not exceed $M$.

\begin{figure}
\begin{center}
\includegraphics[width=0.7\textwidth]{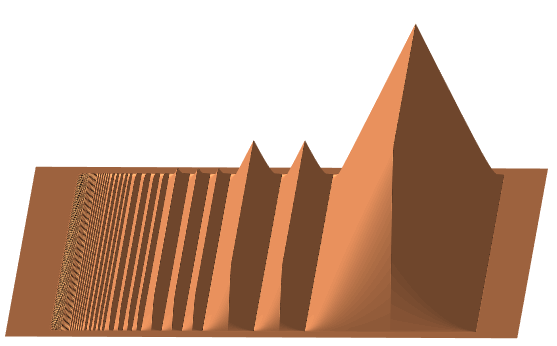}
\end{center}
\caption{\label{fig:GM3} View from above of $\Gamma_3^M$ when $M=2$ and $\beta=0.6$.}.
\end{figure}

\begin{definition}[$\Gamma_{d}^M$ and $\Omega_d^M$ for $d=3$] \label{def:OmegaM3d} For $M>0$, define $F_M:\R\to [0,\infty)$ by
\begin{equation} \label{eq:FlowM}
F_M(s) = \left\{\begin{array}{ll}
                  1, & |s|\leq 1, \\
                  \cos(2(|s|-1)/(1-\beta)), & 1<|s|<1+\vartheta, \\
                  0, & |s|\geq 1+\vartheta,
                \end{array}
\right.
\end{equation}
where $\vartheta:= \pi(1-\beta)/4$.
Fix $\beta\in (0,1)$ and, for $M>0$, define $f^M_{\text{3-d}}:\R^2\to [0,\infty)$ by
\begin{equation} \label{eq:fM3d}
f^M_{\text{3-d}}(x_1,x_2) = F_M(x_1)f^M_{\beta}(x_2), \quad x_1,x_2\in \R,
\end{equation}
where $f^M_{\beta}$ is defined by \eqref{eq:deffmam}. Then $F_M\in C^{0,1}(\R)$ and $f^M_{\text{3-d}}\in C^{0,1}(\R^2)$. Further, since $|(f^M_{\beta})^\prime(s)|=M$ and $0\leq f^M_{\beta}(s)\leq \gamma_1=M(1-\beta)/2$, for almost all $s\in [0,1]$, $|\nabla f^M_{\text{3-d}}|\leq M$, for almost all $(x_1,x_2)\in \R^2$, with equality for almost all $(x_1,x_2)\in (-1,1)\times(0,1)$. Let $\varepsilon := 0.1$ and let
\begin{equation} \label{eq:GM*3d}
\Gamma_{3}^M := \left\{\left(x_1,x_2,f^M_{\text{3-d}}(x_1,x_2)\right):  |x_1| \leq 1+\varepsilon+\vartheta, -\varepsilon \leq x_2\leq 1+ \varepsilon \right\}.
\end{equation}
$\Gamma_3^M$ is shown in Figure \ref{fig:GM3};
note also that, where $\Gamma_2^{M}$ is as in Definition \ref{def:OmegaM2d},
\begin{equation} \label{eq:keyG3M}
\left\{\bx=(x_1,x_2,x_3)\in\Gamma_{3}^M:|x_1|\leq 1\right\} = [-1,1] \times \Gamma_2^{M},
\end{equation}
so that $\Gamma_2^{M}$, shown in Figure \ref{fig:GammaM*}, is a cross-section of $\Gamma_{3}^M$ through the plane $x_1=c$, for any $c\in [-1,1]$. Let $C:=2+2\varepsilon + \vartheta$, so that
$$
[-1-\varepsilon-\vartheta,1+\varepsilon+\vartheta]\times [-\varepsilon,1+ \varepsilon] \subset
E_C:=
 \left\{(x_1,x_2)\in \R^2: x_1^2+x_2^2< C^2\right\}.
$$
Let $G \subset \{ (x_1,x_2,x_3) \in \Rea^3 : x_3<0\}$ be any $C^1$ domain such that $E_C\times\{0\}\subset \partial G$,
 e.g.,
\begin{equation} \label{eq:GC1def}
G := \left\{\bx=(x_1,x_2,x_3)\in \R^3:r:=(x_1^2+x_2^2)^{1/2}  <  C+\varepsilon, -g(r)-\varepsilon < x_3< g(r)-\varepsilon\right\},
\end{equation}
where
$$
g(s) := \left\{\begin{array}{ll}
                  \varepsilon, & 0\leq s\leq C, \\
                  \left(\varepsilon^2-(s-C)^2\right)^{1/2}, & C< s\leq C+\varepsilon.
                \end{array}
\right.
$$
Set
$$
\Omega_3^M := G \, \cup \, \left\{(x_1,x_2,x_3)\in \R^3: |x_1| < 1+\varepsilon+\vartheta, -\varepsilon < x_2< 1+ \varepsilon, \,
0\leq   x_3 < f^M_{\text{3-d}}(x_1,x_2)\right\}.
$$
Then $\Gamma_{3}^M\subset \Gamma$, where $\Gamma$ is the boundary of $\Omega_3^M$, $\Omega_3^M$ is a simply-connected Lipschitz domain with Lipschitz constant $M$, and $\Gamma$ is locally $C^1$ at every point on $\overline{\Gamma\setminus\Gamma_{3}^M}$.
\end{definition}

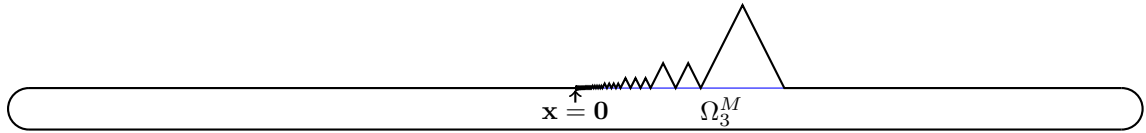
\begin{figure}[h!]
\begin{tikzpicture}[scale=2.75]
\draw[blue,thin] plot coordinates {(0,0) (1,0)};
\draw[thick] (-2.6142,0) arc(90:270:0.1);
\draw[thick] (2.6142,-0.2) arc(-90:90:0.1);
\draw[thick] plot coordinates {(-2.6142,-0.2) (2.6142,-0.2)};
\draw[thick] plot coordinates {(-2.6142,0)
 (0,0) (0.00047018,0) (0.00048063,2.0897e-05) (0.00049108,0) (0.00050153,2.0897e-05) (0.00051198,0) (0.00052243,2.0897e-05) (0.00053288,0) (0.00054332,2.0897e-05) (0.00055377,0) (0.00056422,2.0897e-05) (0.00057467,0) (0.00058512,2.0897e-05) (0.00059557,0) (0.00060602,2.0897e-05) (0.00061646,0) (0.00062691,2.0897e-05) (0.00063736,0) (0.00064781,2.0897e-05) (0.00065826,0) (0.00066871,2.0897e-05) (0.00067916,0) (0.0006896,2.0897e-05) (0.00070005,0) (0.0007105,2.0897e-05) (0.00072095,0) (0.0007314,2.0897e-05) (0.00074185,0) (0.0007523,2.0897e-05) (0.00076274,0) (0.00077319,2.0897e-05) (0.00078364,0) (0.0008023,3.7316e-05) (0.00082096,0) (0.00083962,3.7316e-05) (0.00085827,0) (0.00087693,3.7316e-05) (0.00089559,0) (0.00091425,3.7316e-05) (0.00093291,0) (0.00095156,3.7316e-05) (0.00097022,0) (0.00098888,3.7316e-05) (0.0010075,0) (0.0010262,3.7316e-05) (0.0010449,0) (0.0010635,3.7316e-05) (0.0010822,0) (0.0011008,3.7316e-05) (0.0011195,0) (0.0011381,3.7316e-05) (0.0011568,0) (0.0011755,3.7316e-05) (0.0011941,0) (0.0012128,3.7316e-05) (0.0012314,0) (0.0012501,3.7316e-05) (0.0012688,0) (0.0012874,3.7316e-05) (0.0013061,0) (0.0013396,6.6978e-05) (0.001373,0) (0.0014065,6.6978e-05) (0.00144,0) (0.0014735,6.6978e-05) (0.001507,0) (0.0015405,6.6978e-05) (0.001574,0) (0.0016075,6.6978e-05) (0.001641,0) (0.0016744,6.6978e-05) (0.0017079,0) (0.0017414,6.6978e-05) (0.0017749,0) (0.0018084,6.6978e-05) (0.0018419,0) (0.0018754,6.6978e-05) (0.0019089,0) (0.0019424,6.6978e-05) (0.0019758,0) (0.0020093,6.6978e-05) (0.0020428,0) (0.0020763,6.6978e-05) (0.0021098,0) (0.0021433,6.6978e-05) (0.0021768,0) (0.0022372,0.00012093) (0.0022977,0) (0.0023582,0.00012093) (0.0024186,0) (0.0024791,0.00012093) (0.0025396,0) (0.0026,0.00012093) (0.0026605,0) (0.002721,0.00012093) (0.0027814,0) (0.0028419,0.00012093) (0.0029024,0) (0.0029628,0.00012093) (0.0030233,0) (0.0030838,0.00012093) (0.0031442,0) (0.0032047,0.00012093) (0.0032652,0) (0.0033256,0.00012093) (0.0033861,0) (0.0034466,0.00012093) (0.003507,0) (0.0035675,0.00012093) (0.003628,0) (0.0037379,0.00021988) (0.0038478,0) (0.0039578,0.00021988) (0.0040677,0) (0.0041777,0.00021988) (0.0042876,0) (0.0043975,0.00021988) (0.0045075,0) (0.0046174,0.00021988) (0.0047274,0) (0.0048373,0.00021988) (0.0049472,0) (0.0050572,0.00021988) (0.0051671,0) (0.005277,0.00021988) (0.005387,0) (0.0054969,0.00021988) (0.0056069,0) (0.0057168,0.00021988) (0.0058267,0) (0.0059367,0.00021988) (0.0060466,0) (0.0062482,0.00040311) (0.0064497,0) (0.0066513,0.00040311) (0.0068528,0) (0.0070544,0.00040311) (0.0072559,0) (0.0074575,0.00040311) (0.007659,0) (0.0078606,0.00040311) (0.0080622,0) (0.0082637,0.00040311) (0.0084653,0) (0.0086668,0.00040311) (0.0088684,0) (0.0090699,0.00040311) (0.0092715,0) (0.009473,0.00040311) (0.0096746,0) (0.0098761,0.00040311) (0.010078,0) (0.010451,0.0007465) (0.010824,0) (0.011197,0.0007465) (0.011571,0) (0.011944,0.0007465) (0.012317,0) (0.01269,0.0007465) (0.013064,0) (0.013437,0.0007465) (0.01381,0) (0.014183,0.0007465) (0.014557,0) (0.01493,0.0007465) (0.015303,0) (0.015676,0.0007465) (0.01605,0) (0.016423,0.0007465) (0.016796,0) (0.017496,0.0013997) (0.018196,0) (0.018896,0.0013997) (0.019596,0) (0.020295,0.0013997) (0.020995,0) (0.021695,0.0013997) (0.022395,0) (0.023095,0.0013997) (0.023795,0) (0.024494,0.0013997) (0.025194,0) (0.025894,0.0013997) (0.026594,0) (0.027294,0.0013997) (0.027994,0) (0.029327,0.0026661) (0.03066,0) (0.031993,0.0026661) (0.033326,0) (0.034659,0.0026661) (0.035992,0) (0.037325,0.0026661) (0.038658,0) (0.039991,0.0026661) (0.041324,0) (0.042657,0.0026661) (0.04399,0) (0.045323,0.0026661) (0.046656,0) (0.049248,0.005184) (0.05184,0) (0.054432,0.005184) (0.057024,0) (0.059616,0.005184) (0.062208,0) (0.0648,0.005184) (0.067392,0) (0.069984,0.005184) (0.072576,0) (0.075168,0.005184) (0.07776,0) (0.082944,0.010368) (0.088128,0) (0.093312,0.010368) (0.098496,0) (0.10368,0.010368) (0.10886,0) (0.11405,0.010368) (0.11923,0) (0.12442,0.010368) (0.1296,0) (0.1404,0.0216) (0.1512,0) (0.162,0.0216) (0.1728,0) (0.1836,0.0216) (0.1944,0) (0.2052,0.0216) (0.216,0) (0.24,0.048) (0.264,0) (0.288,0.048) (0.312,0) (0.336,0.048) (0.36,0) (0.42,0.12) (0.48,0) (0.54,0.12) (0.6,0) (0.8,0.4) (1,0)
(2.6142,0)};

\draw (0.7,0) node[anchor=north] { $\Omega_3^M$};
\draw [thick,->] (0,-0.07) -- ++(90:0.06);
\draw (0,-0.02) node[anchor=north] { $\bx = \bze$};
\end{tikzpicture}
\caption{\label{fig:OmegaM3d} Cross-section in the plane $x_1=0$ through the domain $\Omega_3^M$, as specified in Definition \ref{def:OmegaM3d}, for Lipschitz constant $M=2$ (and $\beta=0.6$). The thinner blue line shows the part of the boundary of the $C^1$ domain $G$ given by \eqref{eq:GC1def} in Definition \ref{def:OmegaM3d} that is not shared with $\Omega_3^M$.
See also Figure \ref{fig:GM3} for the key, non-$C^1$, part $\Gamma_3^M$ of the upper surface of $\Omega_3^M$.}
\end{figure}

For $a>0$ and $m\in \NN$ let
\begin{equation}
\Gamma_{m,a} := (-a,a)\times \Gamma_m \quad \mbox{and} \quad \Gamma^{M,m}_a := [-a,a] \times \Gamma^{M,m},
\end{equation}
where $\Gamma_m$ and $\Gamma^{M,m}$ are as  in Definition \ref{def:GammaM} and \eqref{eq:GammaMm}, respectively.

\begin{lemma}[3-d version of Lemma \ref{lem:ANDLPrime}] \label{lem:3dvANDLPrime} Given $N\in \NN$ and $a>0$, define the orthonormal set $\{\psi_1,...,\psi_N\}\subset [-a,a]\times \Gamma^M$ by \eqref{eq:defphi2d}, but with $\Gamma_m$ replaced by $\Gamma_{m,a}$, and define the Galerkin matrix $D_{N,a}$ by \eqref{eq:DNdef} (with $D_N$ replaced by $D_{N,a}$), where $D$ is the double-layer potential operator on $[-a,a]\times \Gamma^M$. Then
\beq\label{eq:DNformP3d}
\big( D_{N,a}\big)_{jm}= \big( B_N\big)_{jm}d^\prime_{j-m,a}, \quad 1\leq j,m\leq N,
\eeq
where $d^\prime_{0,a}:= 0$,  and, for $\ell\in \NN$,
\begin{eqnarray} \label{eq:djmDef3d}
d^\prime_{-\ell,a}:=d^\prime_{\ell,a} &:=& \frac{1}{\left|\Gamma_{\ell,a}\right|}\int_{\Gamma_{\ell,a}}\int_{\Gamma_{0,a}} \left|\frac{\partial \Phi(\bx,\by)}{\partial n(\by)}\right| \rd s(\by)\rd s(\bx)\\ \label{eq:3dto2d}
 &=& \frac{1}{2\pi \left|\Gamma_{\ell}\right|}\int_{\Gamma_{\ell}}\int_{\Gamma_{0}}\frac{|(\by-\bx)\cdot \bn(\by)|}{|\bx-\by|^2}\left(1+\frac{|\bx-\by|^2}{4a^2}\right)^{1/2} \rd s(\by)\rd s(\bx).
\end{eqnarray}
Furthermore,
\begin{equation} \label{eq:dlbounds}
d^\prime_\ell \leq d^\prime_{\ell,a} \leq
\left(1+\frac{1}{4a^2}\left(1+ \frac{(1+\ell)^2}{M^2}\right)\right)^{1/2}
d^\prime_\ell, \quad \ell\in \Z, \;\; a>0,
\end{equation}
where $d^\prime_\ell$ is as defined in Lemma \ref{lem:ANDLPrime}, and thus
\begin{equation} \label{eq:DNalim}
D_{N,a} \to D_N \quad \tas \quad a\to\infty,
\end{equation}
where $D_N$ is defined by \eqref{eq:DNformP}.
\end{lemma}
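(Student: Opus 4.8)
The plan is to inherit the two-dimensional analysis of Lemma~\ref{lem:ANDLP}, carrying out the two line integrals along the prism axis explicitly. First I would compute the Galerkin entries: each face $\Gamma_{m,a}=(-a,a)\times\Gamma_m$ is flat, so its unit normal is constant and of the form $\bn(\by)=(0,\bn'(\by'))$, where $\bn'$ is the constant $2$-d normal on $\Gamma_m$; hence, for $\bx=(x_1,\bx')\in\Gamma_{j,a}$ and $\by=(y_1,\by')\in\Gamma_{m,a}$,
\[
\frac{\partial\Phi(\bx,\by)}{\partial n(\by)}=\frac{1}{4\pi}\,\frac{(\bx'-\by')\cdot\bn'(\by')}{\big((x_1-y_1)^2+|\bx'-\by'|^2\big)^{3/2}},
\]
and the \emph{sign} of this kernel over $\Gamma_{j,a}\times\Gamma_{m,a}$ is the sign of $(\bx'-\by')\cdot\bn'(\by')$ over $\Gamma_j\times\Gamma_m$, i.e.\ the constant $(-1)^{m+1}\sign(m-j)$ identified in the proof of Lemma~\ref{lem:ANDLP}. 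Pulling this sign out of $(D\psi_m,\psi_j)$ gives \eqref{eq:DNformP3d} with $d'_{j-m,a}\ge0$ a double integral of $|\partial_{n(\by)}\Phi|$; the translation invariance and reflection symmetry of $\Gamma^M$, used exactly as for the coefficients $d'_{jm}$ in Lemma~\ref{lem:ANDLPrime}, reduce this integral to the reference faces $\Gamma_{0,a}$, $\Gamma_{\ell,a}$, giving \eqref{eq:djmDef3d}.

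The key new step is the reduction of the $3$-d integral \eqref{eq:djmDef3d} to a $2$-d one. Writing the surface measure on each flat face as $\rd s=\rd x_1\,\rd\sigma$ (with $\rd\sigma$ arc length on the relevant segment of $\Gamma^M$) and inserting the kernel formula above, the dependence on $(x_1,y_1)$ separates off into the inner integral $\int_{-a}^{a}\int_{-a}^{a}\big((x_1-y_1)^2+R^2\big)^{-3/2}\rd x_1\,\rd y_1$, with $R:=|\bx'-\by'|$, which I would evaluate in closed form by one-variable calculus (using the antiderivatives $u\mapsto u/(R^2\sqrt{u^2+R^2})$, then $u\mapsto\sqrt{u^2+R^2}/R^2$). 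Dividing by $|\Gamma_{\ell,a}|=2a|\Gamma_\ell|$ and comparing with the $2$-d quantity $d'_\ell$ of \eqref{eq:djmDef2} then exhibits $d'_{\ell,a}$ as the $2$-d integrand $\tfrac{1}{2\pi}|(\bx'-\by')\cdot\bn'(\by')|/R^2$ reweighted by an explicit factor $\omega_a(R)$ that tends to $1$ as $a\to\infty$ uniformly for $R$ in bounded sets, which is \eqref{eq:3dto2d}. The bounds \eqref{eq:dlbounds} would then follow by estimating $\omega_a(R)$ and using that $R^2\le1+(1+\ell)^2/M^2$ for any $\bx'\in\Gamma_\ell$, $\by'\in\Gamma_0$ — the $x_2$-separation of the two segments being at most $1$ and the $x_1$-separation at most $(1+\ell)/M$ from the endpoint data in Definition~\ref{def:GammaM}.

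Finally, \eqref{eq:DNalim} is immediate: for fixed $N$, $D_{N,a}$ and $D_N$ differ only in the finitely many entries with $|j-m|\le N-1$, and for each such $\ell$ the bound \eqref{eq:dlbounds} gives $|d'_{\ell,a}-d'_\ell|\le\big((1+\tfrac{1}{4a^2}(1+(1+\ell)^2/M^2))^{1/2}-1\big)d'_\ell\to0$ as $a\to\infty$, so $\|D_{N,a}-D_N\|_2\to0$. I expect the only genuinely non-routine point to be keeping the bookkeeping of signs and of the translation/reflection symmetries of $\Gamma^M$ straight, so that the reduction really lands on the same reference faces — and hence the same matrix $B_N$ — as in the two-dimensional Lemmas~\ref{lem:ANDLP} and~\ref{lem:ANDLPrime}; the one-variable integral and the final finite-dimensional convergence are routine.
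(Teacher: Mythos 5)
Your proposal follows essentially the same route as the paper's proof: identify the sign pattern on each pair of strips to pull out the factor $(B_N)_{jm}$, reduce to the reference strips $\Gamma_{0,a}$, $\Gamma_{\ell,a}$ by the translation and reflection invariance of $\Gamma^M$, separate the prism-axis variables, evaluate the $(x_1,y_1)$-integral in closed form with precisely the antiderivatives you name, bound $R=|\bx-\by|$ geometrically from the endpoint data of Definition~\ref{def:GammaM}, and let $a\to\infty$.

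One thing worth flagging, though: carrying out your one-variable-calculus step carefully gives
$\int_{-a}^{a}(a\pm y_1)(R^2+(a\pm y_1)^2)^{-1/2}\,\rd y_1=\sqrt{R^2+4a^2}-R$,
so the reweighting factor you call $\omega_a(R)$ is $\omega_a(R)=\sqrt{1+R^2/(4a^2)}-R/(2a)$, which is $\le 1$ for every $R>0$, rather than $\sqrt{1+R^2/(4a^2)}\ge 1$ as written in \eqref{eq:3dto2d}; the paper's calculation drops the $-R$ term from this $y_1$-integral. As a result you would not land exactly on \eqref{eq:3dto2d}, and the first inequality in \eqref{eq:dlbounds} comes out reversed, with the upper bound replaced by a lower bound of the form $(\sqrt{1+R_{\max}^2/(4a^2)}-R_{\max}/(2a))\,d'_\ell\le d'_{\ell,a}\le d'_\ell$, where $R_{\max}^2:=1+(1+\ell)^2/M^2$. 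None of this affects what the lemma is used for: $\omega_a(R)\to 1$ as $a\to\infty$ uniformly for $R$ in bounded sets, so the convergence \eqref{eq:DNalim} -- the only consequence invoked in the proof of Theorem~\ref{thm:Q1Q23d} -- still follows, by exactly the finite-dimensional argument you give.
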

\begin{proof}
That \eqref{eq:DNformP3d} and \eqref{eq:djmDef3d} hold follows exactly as in the proof of Lemma \ref{lem:ANDLPrime}. To see \eqref{eq:3dto2d}, note that \eqref{eq:djmDef3d} implies, since $|\Gamma_{\ell,a}|=2a|\Gamma_\ell|$, that
\begin{eqnarray*}
d^\prime_{\ell,a} &=& \frac{1}{8\pi a\left|\Gamma_{\ell}\right|}\int_{\Gamma_{\ell,a}}\int_{\Gamma_{0,a}} \frac{|(\widehat\by-\widehat \bx)\cdot \bn(\widehat \by)|}{|\widehat\bx-\widehat\by|^3}\rd s(\widehat \by)\rd s(\widehat\bx), \quad \ell\in \NN.
\end{eqnarray*}
Writing $\widehat \bx = x_1 \be_1 + \bx$ and $\widehat \by = y_1 \be_1 + \by$, where  $\be_1$ is the unit vector in the $x_1$-direction and $x_1:=\be_1\cdot \widehat \bx$, $y_1 := \be_1\cdot \widehat \by$, it follows, since $\be_1\cdot \bn(\widehat \by)=0$, that, for $\ell\in\NN$ and $a>0$,
\begin{eqnarray*}
d^\prime_{\ell,a} &=& \frac{1}{8\pi a\left|\Gamma_{\ell}\right|}\int_{\Gamma_{\ell}}\int_{\Gamma_{0}}\int_{-a}^a\int_{-a}^a \frac{|(\by-\bx)\cdot \bn(\by)|}{\left(|\bx-\by|^2+(x_1-y_1)^2\right)^{3/2}}\rd x_1\rd y_1\rd s(\by)\rd s(\bx).
\end{eqnarray*}
Now
\begin{eqnarray*}
\int_{-a}^a \frac{\rd x_1}{\left(|\bx-\by|^2+(x_1-y_1)^2\right)^{3/2}} = \frac{1}{|\bx-\by|^2}\left(\frac{a-y_1}{\sqrt{|\bx-\by|^2+(a-y_1)^2}}+\frac{a+y_1}{\sqrt{|\bx-\by|^2+(a+y_1)^2}}\right)
\end{eqnarray*}
and
\begin{eqnarray*}
\int_{-a}^a \frac{a\pm y_1}{\sqrt{|\bx-\by|^2+(a\pm y_1)^2}}\rd y_1 = \sqrt{|\bx-\by|^2+4a^2},
\end{eqnarray*}
so \eqref{eq:3dto2d} follows. Using \eqref{eq:djmDef2} and \eqref{eq:DLPsolidangle2}, we see that
\begin{eqnarray}\label{eq:dell}
d^\prime_{\ell} &=& \frac{1}{2\pi \left|\Gamma_{\ell}\right|}\int_{\Gamma_{\ell}}\int_{\Gamma_{0}}\frac{|(\by-\bx)\cdot \bn(\by)|}{|\bx-\by|^2} \rd s(\by)\rd s(\bx), \quad \ell\in \NN.
\end{eqnarray}
For $\bx\in \Gamma_0$, $\by\in \Gamma_\ell$, $|\bx-\by|^2\leq 1+(1+\ell)^2/M^2$.
Combining this inequality with \eqref{eq:3dto2d} and \eqref{eq:dell}, we obtain the bounds \eqref{eq:dlbounds} for $\ell\in \NN$. These bounds also hold for $\ell\in \Z$, since $d^\prime_{0,a}=d^\prime_0=0$ and $d^\prime_{-\ell,a}=d^\prime_{\ell,a}$ and $d^\prime_{-\ell}=d^\prime_{\ell}$, for $\ell\in \NN$ and $a>0$. Clearly, \eqref{eq:dlbounds} implies \eqref{eq:DNalim}.
\end{proof}

The proof of the following theorem follows that of Theorem \ref{thm:Q1Q22d}, but also uses Lemma \ref{lem:3dvANDLPrime}.
\begin{theorem}[Theorems \ref{thm:Q1} and \ref{thm:Q2} in the 3-d case] \label{thm:Q1Q23d}  Suppose that $\Gamma$ is the boundary of $\Omega_3^M$, defined as in Definition \ref{def:OmegaM2d}, for some $M>0$ and $\beta\in (0,1)$. Then
\begin{equation} \label{eq:Q1Q2bounds}
\|D\|_{L^2(\Gamma),\ess} \geq \frac{M}{2} \quad \mbox{and} \quad W_\ess(D) \supset \{\lambda\in \C: |\lambda| \leq M/4\}.
\end{equation}
\end{theorem}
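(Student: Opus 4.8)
The plan is to follow closely the argument for the 2-d case, Theorem~\ref{thm:Q1Q22d}. Put $\bx^*:=\bze\in\Gamma_3^M\subset\Gamma$ and apply the localisation result Theorem~\ref{lem:local}: it then suffices to show that
\[
\lim_{\delta\to 0}\|D_{\bze,\delta}\|_\LtG \geq \frac{M}{2}\qquad\text{and}\qquad \bigcap_{\delta>0}\overline{W(D_{\bze,\delta})}\supset\bigl\{\lambda\in\C:|\lambda|\leq M/4\bigr\}.
\]
Concretely, I would establish that for every $N\in\NN$, every $a>0$ and every $\delta>0$ one has $\|D_{\bze,\delta}\|_\LtG\geq\|D_{N,a}\|_2$ and $\overline{W(D_{\bze,\delta})}\supset W(D_{N,a})$, where $D_{N,a}$ is the finite Galerkin matrix of Lemma~\ref{lem:3dvANDLPrime}. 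Granting this, one lets $a\to\infty$ (so $D_{N,a}\to D_N$ by \eqref{eq:DNalim}, with $D_N$ the matrix of Lemma~\ref{lem:ANDLPrime}) and then $N\to\infty$, invoking the lower bounds \eqref{eq:DMbounds} and \eqref{eq:WDMbounds} already proved for the periodic sawtooth $\Gamma^M$.

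The heart of the proof is the geometric fitting. By \eqref{eq:keyG3M}, near $\bze$ the boundary contains $[-1,1]\times\Gamma_2^M\subset\Gamma$, and by the construction of $\Gamma_2^M$ (see \eqref{eq:fMint}) the graph of $f^M_\beta$ over $[\beta^j,\beta^{j-1}]$ equals $(\beta^{j-1},0)+\gamma_j\Gamma^{M,j}$, where $\gamma_j=M(\beta^{j-1}-\beta^j)/(2j)\to 0$ and $\Gamma^{M,j}$ is the $2j$-tooth sawtooth of \eqref{eq:GammaMm}. Fix $j$ with $2j\geq N$ and $\gamma_j a\leq 1$; restricting the $x_1$-variable to $|x_1|\leq\gamma_j a$ and substituting $x_1=\gamma_j t$ shows that
\[
\widehat\Gamma_j := \bigl\{(x_1,x_2,x_3)\in[-1,1]\times\bigl((\beta^{j-1},0)+\gamma_j\Gamma^{M,j}\bigr):|x_1|\leq\gamma_j a\bigr\} = (0,\beta^{j-1},0)+\gamma_j\bigl([-a,a]\times\Gamma^{M,j}\bigr)\subset\Gamma.
\]
Since $\diam\widehat\Gamma_j\to 0$ and $\beta^{j-1}\to 0$ as $j\to\infty$, for every $\delta>0$ we may choose $j$ so large that also $\widehat\Gamma_j\subset B_\delta(\bze)$, hence $\widehat\Gamma_j\subset B_\delta(\bze)\cap\Gamma$. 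Now $[-a,a]\times\Gamma^{M,j}$ contains the supports of the orthonormal functions $\psi_1,\dots,\psi_N$ defining $D_{N,a}$, so by \eqref{eq:DNIncl} and \eqref{eq:subspaces} we have $\|D_{N,a}\|_2\leq\|\widetilde D\|_{L^2([-a,a]\times\Gamma^{M,j})}$ and $W(D_{N,a})\subset W(\widetilde D)$, for $\widetilde D$ the double-layer operator on $[-a,a]\times\Gamma^{M,j}$; translation invariance together with the dilation invariance of Lemma~\ref{lem:dil2} identify these quantities with the corresponding ones for the double-layer operator on $\widehat\Gamma_j$; and a final application of \eqref{eq:subspaces}, using $\widehat\Gamma_j\subset B_\delta(\bze)\cap\Gamma$, gives the two displayed inequalities.

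To conclude: the first inequality, valid for all $\delta>0$, gives $\lim_{\delta\to 0}\|D_{\bze,\delta}\|_\LtG\geq\|D_{N,a}\|_2$; letting $a\to\infty$ and then $N\to\infty$ and using $\lim_{N\to\infty}\|D_N\|_2=\|D_{V^M}\|_{V^M}\geq M/2$ from \eqref{eq:DMbounds} yields $\|D\|_{\LtG,\ess}\geq M/2$ by Theorem~\ref{lem:local}(a). For the numerical range, $\bigcap_{\delta>0}\overline{W(D_{\bze,\delta})}$ is closed and contains $W(D_{N,a})$ for every $N$ and every $a>0$; since $\ba^H D_{N,a}\ba\to\ba^H D_N\ba$ as $a\to\infty$ for each unit vector $\ba$, it contains $\overline{\bigcup_N W(D_N)}$, which by \eqref{eq:WDMbounds} contains $\{\lambda\in\C:|\lambda|\leq M/4\}$; Theorem~\ref{lem:local}(b) then gives $W_\ess(D)\supset\{\lambda\in\C:|\lambda|\leq M/4\}$. (If the outward normal points the other way on $\Gamma_3^M$ near $\bze$, one obtains $-D_{N,a}$ and $-W(D_{N,a})$ instead, which is harmless since both target sets are invariant under $\lambda\mapsto-\lambda$, exactly as in Theorem~\ref{thm:Q1Q22d}.)

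I expect the only genuine difficulty to be the geometric fitting of the second paragraph: one must verify that the engineered cut-off profile $F_M$ (identically $1$ on $[-1,1]$) and the scaling factors $\gamma_j$ of the cross-sections conspire to place, inside every ball $B_\delta(\bze)\cap\Gamma$, exact rescaled translates of $[-a,a]\times\Gamma^{M,j}$ with both $a$ and $j$ unbounded --- this unboundedness of $a$ is precisely what makes the limit $D_{N,a}\to D_N$ of Lemma~\ref{lem:3dvANDLPrime} usable. Everything else is bookkeeping parallel to the 2-d proof.
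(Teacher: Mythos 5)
Your proof is correct and follows essentially the same route as the paper's: localise at $\bze$ via Theorem~\ref{lem:local}, reduce to the Galerkin matrices $D_{N,a}$ of Lemma~\ref{lem:3dvANDLPrime}, then fit translated and rescaled copies of $\Gamma_a^{M,j}=[-a,a]\times\Gamma^{M,j}$ inside $B_\delta(\bze)\cap\Gamma$ using \eqref{eq:keyG3M}, \eqref{eq:fMint} and Lemma~\ref{lem:dil2}, and finally take $a\to\infty$ and $N\to\infty$ to invoke the lower bounds \eqref{eq:DMbounds}, \eqref{eq:WDMbounds}. The only cosmetic difference is the order of quantifiers — the paper fixes $\epsilon>0$, extracts a single $N$ then a single $a$ before the geometric fitting, whereas you establish the inequalities for all $N,a,\delta$ first and then pass to limits — but the two are logically interchangeable, and your worked-out identification $\widehat\Gamma_j=(0,\beta^{j-1},0)+\gamma_j\,\Gamma_a^{M,j}$ with the constraint $\gamma_j a\le 1$ is exactly the paper's choice $\kappa=\gamma_j$, $s=\beta^{j-1}$, $m=j$, made more explicit.
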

\begin{proof}
Let $\bx^*:= \bze\in \Gamma^M_3\subset \Gamma$. Arguing as in the proof of Theorem \ref{thm:Q1Q22d}, we obtain that
$$
\|D\|_{L^2(\Gamma),\ess} \geq \lim_{\delta\to 0}\|D_{\bx^*,\delta}\|_\LtG \quad \mbox{and} \quad W_\ess(D) \supset \bigcap_{\delta>0} \overline{W(D_{\bx^*,\delta})},
$$
by Theorem \ref{lem:local}.
Hence the result follows if we show that $\|D_{\bx^*,\delta}\|_\LtG\geq M/2-\epsilon$ and $\overline{W(D_{\bx^*,\delta})}\supset \{\lambda\in \C: |\lambda| \leq M/4-\epsilon\}$, for every $\epsilon>0$ and $\delta>0$.

So suppose that $\epsilon>0$ and $\delta>0$. With $D_N$ as defined in Lemma \ref{lem:DM}, $\|D_1\|_2\leq \|D_2\|_2\leq ...$ and $W(D_1)\subset W(D_2)\subset ...$ by \eqref{eq:DNsubsets}. It follows from \eqref{eq:DMbounds} and  \eqref{eq:WDMbounds} and the convexity of each $W(D_N)$ that, for some $N\in \NN$, $\|D_N\|_2\geq M/2-\epsilon/2$ and $W(D_N)\supset \{\lambda\in \C: |\lambda| \leq M/4-\epsilon/2\}$ and then, by Lemma \ref{lem:3dvANDLPrime}, that there exists $a>0$ such that  $\|D_{N,a}\|_2\geq M/2-\epsilon$ and $W(D_{N,a})\supset \{\lambda\in \C: |\lambda| \leq M/4-\epsilon\}$. So the result is proved if we can show that $\|D_{\bx^*,\delta}\|_\LtG\geq \|D_{N,a}\|_2$ and $W(D_{\bx^*,\delta})\supset W(D_{N,a})$.

Setting $\widetilde \Gamma := \Gamma_a^{M,m}$ for some $m\geq N/2$, it follows from \eqref{eq:subspaces} that $\|D_{N,a}\|_2 \leq \|\widetilde D\|_{L^2(\widetilde \Gamma)}$ and $W(D_{N,a})\subset W(\widetilde D)$, where $\widetilde D$ denotes the double-layer potential operator on $\widetilde \Gamma$.
 But, by construction of $\Gamma^M_3$ in Definition \ref{def:OmegaM3d} (see \eqref{eq:keyG3M} and \eqref{eq:fMint}), there exists $\kappa>0$ and $s\in \R$ such that $\widehat \Gamma := (0,s,0)+\kappa \widetilde \Gamma \subset B_\delta(\bx^*)\cap \Gamma_3^M\subset B_\delta(\bx^*)\cap \Gamma$, so that
$$
\|\widehat D\|_{L^2(\widehat \Gamma)}  \leq \|D_{\bx^*,\delta}\|_\LtG \quad \mbox{and} \quad W(\widehat D)\subset W(D_{\bx^*,\delta}),
$$
by \eqref{eq:subspaces}, where $\widehat D$ denotes the double-layer potential operator on $\widehat \Gamma$.
But also, by Lemma \ref{lem:dil2},
$$
\|\widehat D\|_{L^2(\widehat \Gamma)} = \|\widetilde D\|_{L^2(\widetilde \Gamma)} \quad \mbox{and} \quad W(\widehat D) = W(\widetilde D),
$$
so that
$$
\|D_{\bx^*,\delta}\|_\LtG\geq \|D_{N,a}\|_2 \quad \mbox{and} \quad W(D_{\bx^*,\delta})\supset W(D_{N,a})
$$
and the proof is complete.
\end{proof}

\section{The essential numerical range of the double-layer operator on polyhedra (proof of Theorem \ref{thm:Q2Poly})}\label{sec:4}

We now consider Lipschitz polyhedral $\Gamma$, proving Theorem \ref{thm:Q2Poly}, showing that the essential numerical range of the open-book Lipschitz polyhedron $\Omega_{\theta,n}$ contains an arbitrarily large disc centred on zero in the complex plane if $n\geq 2$, the number of pages, is large enough, and the opening angle $\theta\in (0,\pi]$ is small enough. In \S\ref{sec:3-dconstruction} we define $\Omega_{\theta,n}$. In \S\ref{sec:Thm13Proof} we give the proof of Theorem \ref{thm:Q2Poly}.

\subsection{Definition of the family of ``open-book" polyhedra}\label{sec:3-dconstruction}

In this section we define $\Omega_{\theta,n}$, the open-book polyhedron with $n\geq 2$ pages and opening angle $\theta\in (0,\pi]$ (see Definition \ref{def:Gamma3-d} below and Figures \ref{fig:book} and \ref{fig:book2}). This star-shaped, Lipschitz polyhedron lies between the planes $x_3=0$ and $x_3=-1$. Indeed its $5n+1$ vertices comprise $2n+1$ vertices (the ``top'' vertices) that lie in the plane $x_3=0$ and $3n$ vertices (the ``bottom'' vertices) that lie in the plane $x_3=-1$. We first specify, in Definitions \ref{def:top} and \ref{def:bottom}, these top and bottom vertices, and we specify underneath these definitions (and see Figures \ref{fig:toppoints} and \ref{fig:bottompoints}) the polygons $\Gamma_0$ and $\Gamma_{-1}$ that form the top and bottom faces of $\Omega_{\theta,n}$. We then show, in Lemma \ref{lem:planes}, that certain groups of four vertices, each group comprising two top vertices and two bottom vertices, lie in planes (under certain constraints on the parameters $r_1$ and $r_2$ in Definition \ref{def:bottom}). The boundary $\Gamma_{\theta,n}$ of the open-book polyhedron $\Omega_{\theta,n}$ has $3n+2$ faces; $3n$ of these are the convex hulls of these groups of four vertices, each of these a convex quadrilateral; the remaining two are the top and bottom faces $\Gamma_0$ and $\Gamma_{-1}$.

\begin{definition}[The vertices $\newbx^m$, $m=1,\ldots,3n$ (the ``top" vertices)] \label{def:top}
Given $\newangle\in (0,\pi]$ and $n\in \mathbb{N}$ with $n\geq 2$, let
\begin{equation} \label{eq:thetan}
\theta_n := \frac{\theta}{2n-1},
\end{equation}
\begin{align*}
&\newbx^{3j-2}:=\bze, \qquad
\newbx^{3j-1}:=\left(\cos\left((2j-2)\newangle_n\right), \sin\left((2j-2)\newangle_n\right),0\right), \\
&\text{ and }\quad  \newbx^{3j}:=
\left(\cos\left((2j-1)\newangle_n\right), \sin\left((2j-1)\newangle_n\right),0\right), \quad\tfor j=1,\ldots,n.
\end{align*}
\end{definition}

Let $\Gamma_0$ be the polygon formed by the union of $n$ (congruent) triangles, where the $j$th triangle has vertices $\by^{3j-2}$, $\by^{3j-1}$, and $\by^{3j}$, for $j=1,...,n$; $\Gamma_0$ forms the ``top" face of our polyhedron (see Figure \ref{fig:toppoints}). Observe that these triangles meet at the common vertex $\bzero$ where each triangle has angle $\newangle_n$.

\begin{figure}
\begin{center}
\begin{tikzpicture}[scale=6]
\fill[lightgray] plot coordinates {(0,0) (1,0) (0.97493,0.22252) (0,0)};
\fill[lightgray] plot coordinates {(0,0) (0.90097,0.43388) (0.78183,0.62349) (0,0)};'
\fill[lightgray] plot coordinates {(0,0) (0.62349,0.78183) (0.43388,0.90097) (0,0)};'
\fill[lightgray] plot coordinates {(0,0) (0.22252,0.97493) (6.1232e-17,1) (0,0)};'
\draw[thick] plot coordinates {(0,0) (1,0) (0.97493,0.22252) (0,0)};
\draw[thick] plot coordinates {(0,0) (0.90097,0.43388) (0.78183,0.62349) (0,0)};'
\draw[thick] plot coordinates {(0,0) (0.62349,0.78183) (0.43388,0.90097) (0,0)};'
\draw[thick] plot coordinates {(0,0) (0.22252,0.97493) (6.1232e-17,1) (0,0)};'
\draw[red] (1,0) arc(0:90:1);
\draw (0,0) node[anchor=north] {$\by^{3j-2}=\bze$};
\draw (0,-0.08) node[anchor=north] {$j=1,...,n$};
\draw (1,0) node[anchor=west] {$\by^2=(1,0,0)$};
\draw (0,1) node[anchor=east] {$\by^{3n}=(\cos(\theta),\sin(\theta),0)$};
\draw (0.2225,0.9749) node[anchor=south west] {$\by^{3n-1}=(\cos((2n-2)\theta_n),\sin((2n-2)\theta_n),0)$};
\draw (0.9749,0.2225) node[anchor=west] {$\by^{3}=(\cos(\theta_n),\sin(\theta_n),0)$};
\draw (0.90097,0.43388) node[anchor=west] {$\by^{5}=(\cos(2\theta_n),\sin(2\theta_n),0)$};
\draw (0.57,0) arc(0:25.71:0.57);
\draw (0.497,0.056) node {$\theta_n$};
\draw (0.472,0.165) node {$\theta_n$};

\end{tikzpicture}
\end{center}
\caption{\label{fig:toppoints} The polygon $\Gamma_0$, consisting of $n$ congruent triangles, that forms the ``top'' face of the open-book polyhedron $\Omega_{\theta,n}$, i.e.\ the part of its boundary, $\Gamma_{\theta,n}$, that lies in the plane $x_3=0$. Shown is the case $n=4$ and $\theta=\pi/2$. The vertices of the $n$ triangles that are not $\bze$ lie on the circle of radius one centred at $\bze$ (shown in red).}
\end{figure}
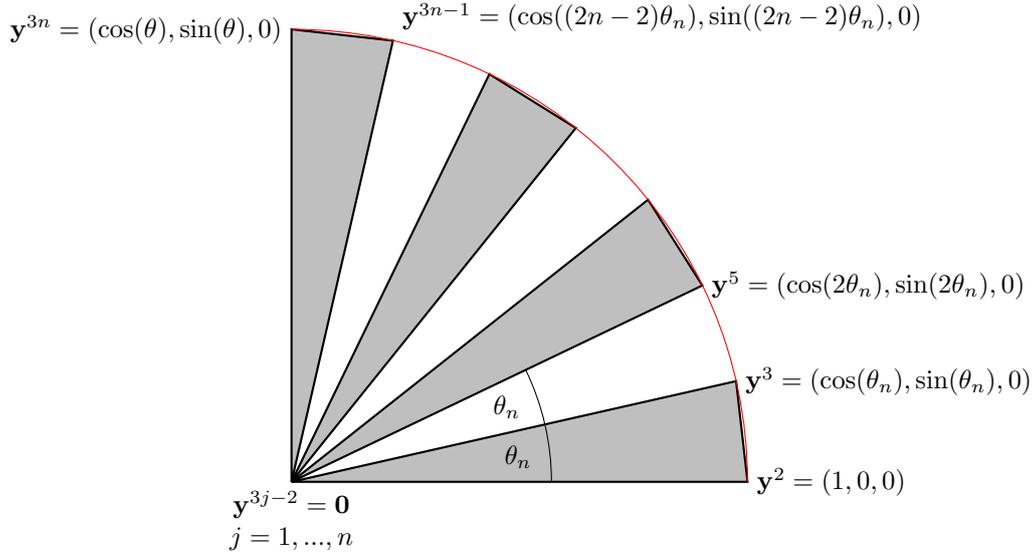

\begin{figure}
\begin{center}
\begin{tikzpicture}[scale=6]

\fill[lightgray] plot coordinates {(0,0) (1,0) (0.97493,0.22252) (0,0)};
\fill[lightgray] plot coordinates {(0,0) (0.90097,0.43388) (0.78183,0.62349) (0,0)};'
\fill[lightgray] plot coordinates {(0,0) (0.62349,0.78183) (0.43388,0.90097) (0,0)};'
\fill[lightgray] plot coordinates {(0,0) (0.22252,0.97493) (6.1232e-17,1) (0,0)};'
\fill[pink] plot coordinates { (0,0) (0.99645,0) (0.9683,0.24978) (0.23644,0.082732) (0.91278,0.40844) (0.76404,0.64517) (0.17712,0.17712) (0.64517,0.76404) (0.40844,0.91278) (0.082732,0.23644) (0.24978,0.9683) (6.1015e-17,0.99645) (0,0)};
\fill[lightgray] plot coordinates {(0,0) (1,0) (0.97493,0.22252) (0,0)};
\fill[lightgray] plot coordinates {(0,0) (0.90097,0.43388) (0.78183,0.62349) (0,0)};'
\fill[lightgray] plot coordinates {(0,0) (0.62349,0.78183) (0.43388,0.90097) (0,0)};'
\fill[lightgray] plot coordinates {(0,0) (0.22252,0.97493) (6.1232e-17,1) (0,0)};'
\draw[thick] plot coordinates { (0,0) (0.99645,0) (0.9683,0.24978) (0.23644,0.082732) (0.91278,0.40844) (0.76404,0.64517) (0.17712,0.17712) (0.64517,0.76404) (0.40844,0.91278) (0.082732,0.23644) (0.24978,0.9683) (6.1015e-17,0.99645) (0,0)};
\draw (0,0) node[anchor=north] {$\bz^{1}=(0,0,-1)$};
\draw (1,0) node[anchor=west] {$\bz^2=(r_1,0,-1)$};
\draw (0,1) node[anchor=south east] {$\bz^{3n}=(r_1\cos(\theta),r_1\sin(\theta),-1)$};
\draw (0.2225,0.9749) node[anchor=south west] {$\bz^{3n-1}$};
\draw (0.9683,0.24978) node[anchor=west] {$\bz^{3}$};
\draw (0.23644,0.082732) node[anchor=east] {$\bz^{4}$};
\draw (0.91278,0.40844) node[anchor=west] {$\bz^{5}$};

\end{tikzpicture}
\end{center}
\caption{\label{fig:bottompoints} The $3n$-sided polygon $\Gamma_{-1}$, that forms the ``bottom'' face of the open-book polyhedron $\Omega_{\theta,n}$, i.e.\ the part of its boundary, $\Gamma_{\theta,n}$, that lies in the plane $x_3=-1$. Shown is the case $n=4$ and $\theta=\pi/2$. Also shown is the polygon $\Gamma_0$ (in light gray). The vertices $\bz^1,\ldots,\bz^{3n}$ of $\Gamma_{-1}$ are given as in Definition \ref{def:bottom}, with the parameters $r_1$, $r_2$, and $\eta$ given by \eqref{eq:r_1}, \eqref{eq:r_2}, and \eqref{eq:etadef}, respectively. Note that $\Omega_{\theta,n}$ is shown in 3-d with the same parameter values in Figure \ref{fig:book}.}
\end{figure}
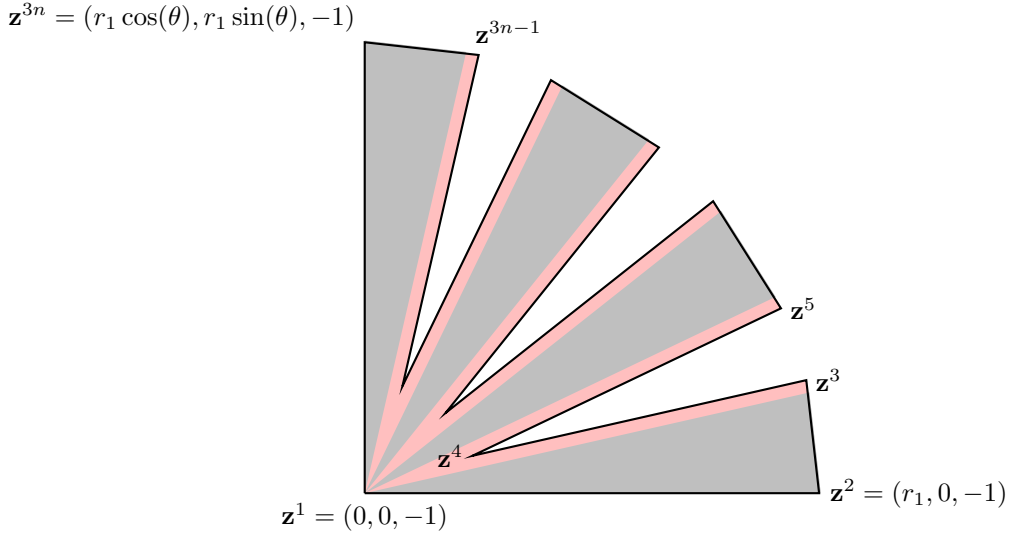

We now define $\newbz^m$, $m=1,\ldots,3n$, depending on three parameters $r_1, r_2,$ and $\eta$. The idea is that each $\newbz^m$ is defined as a perturbation of $\newbx^m - (0,0,1)$, so that, for certain parameter values, the $\newbx^m$ and $\newbz^m$ are the vertices of a polyhedron, where the perturbations are such that the polyhedron is star-shaped and Lipschitz.

\begin{definition}[The vertices $\newbz^m$, $m=1,\ldots,3n$ (the ``bottom" vertices)] \label{def:bottom}
Given $\newangle\in (0,\pi]$, $n\in \mathbb{N}$ with $n\geq 2$, $r_j >0$, $j=1,2,$ and $\eta$ in the range
\beq\label{eq:eta}
0< \eta <\frac{\theta_n}{2},
\eeq
where $\theta_n$ is given by \eqref{eq:thetan}, let
\begin{align*}
&\newbz^1 := (0,0,-1), \quad \newbz^2 := (r_1, 0, -1), \\
&\newbz^{3j}:=\left(\cos\left((2j-1)\theta_n+\eta\right), \sin\left((2j-1)\theta_n+\eta\right),-1\right), \quad j=1,\ldots n-1, \\
&\newbz^{3j-2}:=\left(r_2\cos\left((2j-5/2)\theta_n\right), r_2\sin\left((2j-5/2)\theta_n\right),-1\right), \quad j=2,\ldots n, \\
&\newbz^{3j-1}:=\left(\cos\left((2j-2)\theta_n-\eta\right), \sin\left((2j-2)\theta_n-\eta\right),-1\right), \quad j=2,\ldots n, \quad\tand\\
&\newbz^{3n}:= (r_1 \cos\newangle, r_1 \sin\newangle, -1).
\end{align*}
\end{definition}

Observe that the points $\newbz^{3j}, j=1,\ldots n-1$, are rotations of $\newbx^{3j}- (0,0,1)$ by the angle $\eta$ in the  $x_3=-1$ plane, and similarly for $\newbz^{3j-1}$ and $\newbx^{3j}-(0,0,1)$. If $r_2$ is small, then $\newbz^{3j-2}, j=2, \ldots, n$, can be considered as perturbations of $\newbx^{3j-2}- (0,0,-1)= (0,0,-1)$ (see Figures \ref{fig:toppoints} and \ref{fig:bottompoints}).
Let $\Gamma_{-1}$ be the polygon in the plane $x_3=-1$ formed by connecting the vertices $\bz^1$, $\bz^2$,...,$\bz^{3n}$, $\bz^1$, in that order; $\Gamma_{-1}$ forms the ``bottom" face of our polyhedron (see Figure \ref{fig:bottompoints}).

We join pairs of vertices of $\Gamma_0$ with pairs of vertices in $\Gamma_{-1}$ to form the open-book polyhedron $\Omega_{\theta,n}$ (so-called because it resembles -- see Figures \ref{fig:book} and \ref{fig:book2} -- an open book with $n$ pages). Before proceeding with the definition, we check that the pairs of vertices that we propose to join to create quadrilateral faces of $\Omega_{\theta,n}$ do indeed lie in the same planes.

\ble[Particular groups of four vertices lie in planes]\label{lem:planes}

\

(i) (``Front of Page $1$ and back of Page $n$.") For all $r_1>0$, the points $\newbx^1, \newbx^2, \newbz^1,$ and $\newbz^2$ lie in the $x_2=0$ plane, and the points $\newbx^{3n}, \newbx^1, \newbz^{3n},$ and $\newbz^1$ lie in the
$x_2=0$ plane rotated by angle $\newangle$ clockwise about the $x_3$ axis.

(ii) (``The ends of Pages $1$ and $n$.") Given $\eta$ satisfying \eqref{eq:eta}, if
\beq\label{eq:r_1}
r_1 := \frac{\cos(\theta_n/2+\eta)}{\cos(\theta_n/2)},
\eeq
then $\newbx^2, \newbx^3, \newbz^2,$ and $\newbz^3$ lie in a plane, and $\newbx^{3n-1}, \newbx^{3n}, \newbz^{3n-1},$ and $\newbz^{3n}$ also lie in a plane.

(iii) (``The backs of Pages $1, \ldots, n-1$ and the fronts of Pages $2, \ldots, n$.") Given $\eta$ satisfying \eqref{eq:eta}, if
\beq\label{eq:r_2}
r_2 := \frac{
\sin(\eta)
}{
\sin\left(\theta_n/2\right)
},
\eeq
then $\newbx^{3j}, \newbx^{3j+1}, \newbz^{3j},$ and $\newbz^{3j+1}$, $j=1,\ldots, n-1$, lie in a plane, and
$\newbx^{3j-2}, \newbx^{3j-1}, \newbz^{3j-2},$ and $\newbz^{3j-1}$, $j=2,\ldots, n$, also lie in a plane.

(iv) (``The ends of Pages $2, \ldots, n-1$.") For all $\eta$ satisfying \eqref{eq:eta}, the points $\newbx^{3j-1}, \newbx^{3j}, \newbz^{3j-1},$ and $\newbz^{3j}$, $j=2,\ldots, n-1$, lie in a plane.
\ele

The proof of Lemma \ref{lem:planes} is straightforward, and so is omitted.

\bre \label{rem:r_asymp} The constraints on $\theta$ and $n$ in Definitions \ref{def:top} and \ref{def:bottom}, together with \eqref{eq:eta}, ensure that $r_j\in (0,1)$, for $j=1,2$, where $r_1$ and $r_2$ are defined by \eqref{eq:r_1} and \eqref{eq:r_2}, respectively, and imply that
\beqs
r_1\rightarrow 1 \quad\tand \quad r_2  \sim \frac{2\eta}{\theta_n}=\frac{2(2n-1)\eta}{\theta} \quad \mbox{as } \newangle\tendo.
\eeqs
\ere

\

We now define the polygonal faces (other than $\Gamma_0$ and $\Gamma_{-1}$) of our polyhedron.

\begin{definition}[Faces $\Gamma_j$, $j=1,\ldots, 3n$]

\
(i) (``Front of pages.") Let $\Gamma_{3j-2}, j=1,\ldots, n$, be the convex hull of $\newbx^{3j-2}, \newbx^{3j-1}, \newbz^{3j-2},$ and $\newbz^{3j-1}$.

(ii) (``Ends of pages.") Let $\Gamma_{3j-1}, j=1,\ldots, n$, be the convex hull of $\newbx^{3j-1}, \newbx^{3j}, \newbz^{3j-1},$ and $\newbz^{3j}$.

(iii) (``Back of pages.") Let $\Gamma_{3j}, j=1,\ldots, n$, be the convex hull of $\newbx^{3j}, \newbx^{3j+1}, \newbz^{3j},$ and $\newbz^{3j+1}$, for $j<n$, the convex hull of $\newbx^{3n}, \newbx^{1}, \newbz^{3n},$ and $\newbz^{1}$ for $j=n$.
\end{definition}

\begin{corollary}
Provided the parameters $r_1$, $r_2$ and $\eta$ satisfy \eqref{eq:r_1}, \eqref{eq:r_2}, and \eqref{eq:eta}, $\Gamma_j$, $j=1,\ldots,3n$, are polygons.
\end{corollary}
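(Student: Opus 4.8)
The claim is that each $\Gamma_j$, $j=1,\ldots,3n$, is a (planar) polygon, which amounts to showing that the four defining vertices of $\Gamma_j$ lie in a common plane \emph{and} that their convex hull is a genuine two-dimensional polygon (a non-degenerate quadrilateral or, in a degenerate case, a triangle). The plan is to simply invoke Lemma \ref{lem:planes}, which precisely establishes coplanarity of the relevant groups of four vertices once $r_1$ and $r_2$ are chosen according to \eqref{eq:r_1} and \eqref{eq:r_2} and $\eta$ satisfies \eqref{eq:eta}. Indeed, the faces split into exactly the four families treated in parts (i)--(iv) of Lemma \ref{lem:planes}: the ``front of pages'' faces $\Gamma_{3j-2}$ ($j=1$ is part (i), $j=2,\ldots,n$ is part (iii)); the ``back of pages'' faces $\Gamma_{3j}$ ($j<n$ is part (iii), $j=n$ is part (i)); and the ``ends of pages'' faces $\Gamma_{3j-1}$ ($j=1$ and $j=n$ are part (ii), $j=2,\ldots,n-1$ is part (iv)). So coplanarity is immediate.

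It remains to rule out degeneracy. First I would check that, within each face, the four vertices are genuinely distinct: this follows from Definitions \ref{def:top} and \ref{def:bottom} together with the parameter ranges $\theta\in(0,\pi]$, $n\geq 2$, $0<\eta<\theta_n/2$, and the fact (Remark \ref{rem:r_asymp}) that $r_1,r_2\in(0,1)$, which separate the angular arguments of the various $\newbx^m$ and $\newbz^m$ and keep them off the origin as appropriate. Second, I would note that two of the four vertices of each $\Gamma_j$ are ``top'' vertices (in the plane $x_3=0$) and two are ``bottom'' vertices (in the plane $x_3=-1$); since these two pairs lie in distinct parallel planes, no three of the four vertices are collinear unless one of the pairs coincides, which the distinctness check already excludes. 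Hence the convex hull of the four vertices is a non-degenerate planar quadrilateral (or, if a vertex falls on the segment joining two others within the common plane, still at worst a triangle) — in all cases a polygon.

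The only mild subtlety — and the step I expect to require the most care — is confirming that each quadrilateral is \emph{convex}, i.e.\ that the four coplanar vertices, taken in the natural cyclic order $\newbx$, $\newbx'$, $\newbz'$, $\newbz$, are in convex position rather than forming a self-intersecting (``bowtie'') or reflex configuration. For this I would work in the common plane of the four points and use the explicit coordinates from Definitions \ref{def:top} and \ref{def:bottom}: since the two top vertices have the same $x_3$-coordinate and the two bottom vertices have the same (lower) $x_3$-coordinate, and since the horizontal projections are controlled by the monotonically increasing angular arguments $(2j-2)\theta_n$, $(2j-1)\theta_n+\eta$, etc., the four points form a trapezoid-like quadrilateral with the two ``horizontal'' edges not crossing. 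This is the content of the adjective ``convex'' in the definition of the faces, and is routine to verify case by case using the geometry already set up; I would carry it out by checking the signs of the relevant cross products (orientation) around the boundary of each quadrilateral. Once convexity is confirmed for all $3n$ faces, the corollary follows.
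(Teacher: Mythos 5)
Your proposal follows the same central route as the paper's: break the $3n$ faces into the four families covered by parts (i)--(iv) of Lemma~\ref{lem:planes} and invoke that lemma for coplanarity. The paper's proof does exactly this and stops there. The non-degeneracy observations you add (distinct vertices; two in the plane $x_3=0$ and two in $x_3=-1$ ruling out collinearity) are correct and a reasonable bit of extra care, even if the paper leaves them implicit.

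However, the step you flag as ``the only mild subtlety'' and ``requiring the most care'' -- verifying that the four points are in convex cyclic position, rather than a self-intersecting bowtie -- is not actually an issue, and spending effort on it reflects a misreading of the definition. Each $\Gamma_j$ is \emph{defined} as the convex hull of its four vertices, not as the region bounded by the closed polygonal path $\newbx\to\newbx'\to\newbz'\to\newbz\to\newbx$. A convex hull is convex by construction, and the convex hull of coplanar points is automatically a convex polygon (a quadrilateral, or a triangle if one point is in the hull of the other three). No orientation or cross-product check is needed. The only thing to verify beyond coplanarity is that the hull is genuinely two-dimensional, which your distinctness and non-collinearity argument already handles. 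So the substance of your proof is right and matches the paper, but the emphasis is off: coplanarity (Lemma~\ref{lem:planes}) is the whole content, and convexity is free.
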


\bpf
For $\Gamma_{3j-2}, j=1,\ldots, n$, this follows from Part (i) (for $j=1$) and Part (iii) (for $j=2,\ldots, n$) of Lemma \ref{lem:planes}.
For $\Gamma_{3j-1}, j=1,\ldots, n$, this follows from Part (ii) (for $j=1$, $n$) and Part (iv) (for $j=2,\ldots, n-1$) of Lemma \ref{lem:planes}.
For $\Gamma_{3j}, j=1,\ldots, n$, this follows from Part (i) (for $j=n$) and Part (iii) (for $j=1,\ldots, n-1$) of Lemma \ref{lem:planes}.
\epf

\begin{definition}[Open-book polyhedron $\Omega_{\theta,n}$]\label{def:Gamma3-d}
With $\Gamma_j$, $j=-1,0, 1,\dots, 3n$, as above, $r_1$ given by \eqref{eq:r_1}, $r_2$ given by \eqref{eq:r_2}, and $\eta$ given by
\begin{equation} \label{eq:etadef}
\eta := \frac{\theta\theta_n}{4\pi} = \frac{\theta^2}{4\pi(2n-1)},
\end{equation}
so that $\eta$ satisfies \eqref{eq:eta}, let $\Gamma_{\theta,n}$ denote the closed surface
\beq\label{eq:Gamma3-d}
\Gamma_{\newangle, n}:= \bigcup_{j=-1}^{3n}\Gamma_j,
\eeq
and let $\Omega_{\theta,n}$ (the {\em open-book polyhedron with $n$ pages and opening angle $\theta$}) denote the interior of $\Gamma_{\theta,n}$ (see Figures \ref{fig:book} and \ref{fig:book2}).
\end{definition}

\begin{remark}[Closing the book] \label{rem:closing} Key to the proof below of Theorem \ref{thm:Q2Poly} is the limit $\theta\to 0$ (``closing the book''). In this limit (see \eqref{eq:etadef} and Remark \ref{rem:r_asymp}) $r_1\to 1$ and $r_2\to 0$ and the ``fronts'' and ``backs'' of the pages of the book (i.e.\ the faces $\Gamma_{3j-2}$ and $\Gamma_{3j}$, $j=1,\ldots,n$) collapse onto the unit square $[0,1]\times\{0\}\times [0,1]$. Precisely, for $j=1,...,n$, $\by^{3j-2}=\bze$, $\by^{3j-1}\to (1,0,0)$, $\bz^{3j-2}\to (0,0,-1)$, $\bz^{3j-1}\to (1,0,-1)$, $\by^{3j}\to (1,0,0)$, and $\bz^{3j}\to (1,0,-1)$.
\end{remark}

\begin{figure}
\begin{center}
\includegraphics[width=.35\textwidth]{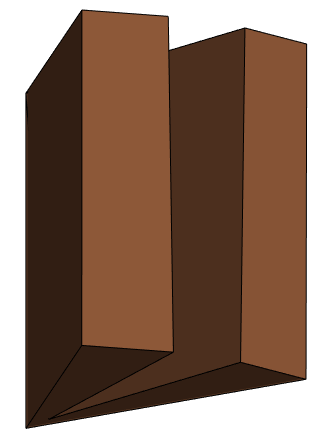}\includegraphics[width=.4\textwidth]{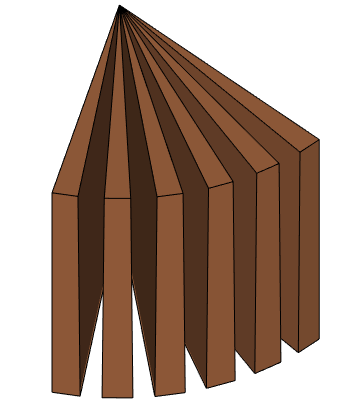}
\caption{\label{fig:book2} The open-book polyhedron $\Omega_{\theta,n}$ with: $n=2$ pages, opening angle $\newangle = \pi/4$ (left); $n=6$ pages, opening angle $\theta=\pi/3$ (right).}
\end{center}
\end{figure}

The open-book polyhedron is star-shaped with respect to a ball, in the following standard sense.

\begin{definition}[Star-shaped and star-shaped with respect to a ball] \label{def:ss}
A bounded domain $\Omega\subset \R^d$ is {\em star-shaped with respect to $\bx_0\in \Omega$} if the line $[\bx,\bx_0]\subset \Omega$, for every $\bx\in \Omega$, where $[\bx,\bx_0]:= \mathrm{conv}\left(\{\bx,\bx_0\}\right)$. $\Omega$ is {\em star-shaped} if it is star-shaped with respect to some $\bx_0\in \Omega$.  $\Omega$ is {\em star-shaped with respect to the ball $B_\epsilon(\by)$}, for some $\epsilon>0$ and $\by\in \Omega$, if $[\bx,\bx_0]\subset \Omega$ for every $\bx\in \Omega$ and $\bx_0\in B_\epsilon(\by)$.  $\Omega$ is {\em star-shaped with respect to a ball} if it is star-shaped with respect to $B_\epsilon(\by)$, for some $\epsilon>0$ and $\by\in \Omega$.
\end{definition}

\ble\label{lem:polyhedron}
For $\theta\in (0,\pi]$ and $n\in \NN$ with $n\geq 2$, $\Omega_{\theta,n}$ is a bounded Lipschitz polyhedron  that is star-shaped with respect to a ball, and $\Gamma_{\theta,n}$ is its boundary.
\ele

\bpf
By construction and Lemma \ref{lem:planes}, $\Omega_{\theta,n}$ is a bounded polyhedron with boundary $\Gamma_{\theta,n}$. To see that $\Omega_{\theta,n}$ is star-shaped with a respect to a ball, which implies that $\Omega_{\theta,n}$ is Lipschitz, it is enough, by \cite[Lemma 3.1]{HiMo:11}, to show that $(\bx-\bx_0)\cdot \bn(\bx)\geq \gamma$, for some $\bx_0\in \Omega$ and $\gamma>0$ and all $\bx\in \Gamma_{\theta,n}$ for which the outward-pointing unit normal $\bn(\bx)$ is defined. If $\bx_0=(0,0,-1)$, it is easy to see that this condition holds for all $\bx\in \Gamma_j$, with $j=0$ or $j=2,\ldots,3n-1$; indeed, by the symmetry of $\Gamma_{\theta,n}$, it is enough to check this holds for $j=0$ and $j=2,\ldots,6$. But this implies that this condition also holds, on the same subset of $\Gamma_{\theta,n}$, if we take $\bx_0=(\epsilon,\epsilon,\epsilon-1)\in \Omega_{\theta,n}$ for any sufficiently small $\epsilon$. Moreover, with this choice of $\bx_0$ it holds also, for some $\gamma>0$, that $(\bx-\bx_0)\cdot \bn(\bx)\geq \gamma$ for $\bx\in \Gamma_j$ with $j\in \{-1,1,3n\}$.
\epf

\subsection{Proof of Theorem \ref{thm:Q2Poly}} \label{sec:Thm13Proof}

We first state and prove an analogue of Lemma \ref{lem:ANDLP} with $\Gamma_{\newangle,n}$ instead of $\Gamma_\beta$.
For this purpose it is convenient to introduce a notation for the $2n$ faces of $\Gamma_{\newangle,n}$ that are ``fronts" and ``backs" of pages (and not ``ends", or the top, $\Gamma_0,$ or the bottom, $\Gamma_{-1}$).

\begin{definition}[Relabelling of ``front" and ``back" pages of $\Gamma_{\newangle,n}$]\label{def:widetildeGamma}
Let
\beqs
\widetilde{\Gamma}_m := \Gamma_{3j-2} \quad \tfor m=2j-1,\,\, j=1,\ldots,n,
\eeqs
and
\beqs
\widetilde{\Gamma}_m := \Gamma_{3j} \quad \tfor m=2j,\,\, j=1,\ldots,n.
\eeqs
\end{definition}

\ble[The relationship of $B_N$ to the double-layer potential on $\Gamma_{\theta,n}$]\label{lem:ANDLP3-d}
Let $\Gamma_{\theta,n}$ be as in Definition \ref{def:Gamma3-d}, with $\widetilde{\Gamma}_{m}, m=1,\ldots, 2n,$ as in Definition \ref{def:widetildeGamma}.
Setting $N=2n-1$ (which makes $N$ odd),
define the orthonormal set $\{\psi_1,...,\psi_N\}\subset L^2(\Gamma_{\theta,n})$ by
$$
\psi_m(\bx):=
\begin{cases}
|\widetilde\Gamma_m|^{-1/2}, & \bx \in \widetilde\Gamma_m,\\
0, & \mbox{otherwise},
\end{cases}
$$
for $m=1,...,N$.
Define the Galerkin matrix $D_N$ by \eqref{eq:DNdef}, where $D$ is the double-layer potential operator on $\Gamma_{\theta,n}$, and define $B_N\in \R^{N\times N}$ by \eqref{eq:aBdef}.
Then
\beq\label{eq:DNform3d}
\big( D_N\big)_{jm}= \big( B_N\big)_{jm}d_{jm}, \quad 1\leq j,m\leq N,
\eeq
where $d_{jm}:= 0$ for $j=m$,
$$
d_{jm} := \frac{1}{4\pi |\widetilde \Gamma_j|^{1/2}|\widetilde\Gamma_m|^{1/2}}\int_{\widetilde \Gamma_j} \alpha_m \, \rd s, \quad \mbox{for } j\neq m,
$$
 and
$$
 \alpha_m(\bx) := 4\pi \left|\int_{\widetilde \Gamma_m} \frac{\partial \Phi(\bx,\by)}{\partial n(\by)} \rd s(\by)\right|= \int_{\widetilde \Gamma_m} \frac{|(\bx-\by)\cdot \bn(\by)|}{|\bx-\by|^3}\rd s(\by), \quad \bx \in \R^3\setminus \widetilde \Gamma_m,
$$
is the solid angle subtended at $\bx$ by $\widetilde \Gamma_m$.
Further, for $1\leq j,m \leq N$, with $j\neq m$, $d_{jm}\to 1/2$ as $\theta\to 0$, so that
\beq\label{eq:limitDA3d}
D_N \rightarrow \frac{1}{2}\, B_N.
\eeq
\ele
\bpf
For $m=1,\ldots,N$ and $\bx\in \R^3\setminus \widetilde \Gamma_m$,
\beq\label{eq:DLPsolid3d2}
 \int_{\widetilde \Gamma_m} \frac{\partial \Phi(\bx,\by)}{\partial n(\by)} \rd s(\by) = \pm \frac{\alpha_m(\bx)}{4\pi},
\eeq
where the $+$ sign is taken if $\bx$ is on the side of $\widetilde \Gamma_m$ to which the normal points, otherwise the $-$ sign is taken.
Thus, for $1\leq j,m\leq N$,
\begin{align}
D\psi_m(\bx) = \frac{1}{4\pi|\widetilde \Gamma_m|^{1/2}} (-1)^{m+1}\mathrm{sign}(m-j)\alpha_m(\bx),  \quad \bx\in \widetilde \Gamma_j,\label{Dphi3d}
\end{align}
and \eqref{eq:DNform3d} follows.  Let $\Gamma^*:= \cup_{j=1}^N \widetilde \Gamma_j$. As $\theta\to 0$, the vertices of each quadrilateral $\widetilde \Gamma_j$ tend to the corresponding vertices of the unit square $[0,1]\times\{0\}\times[0,1]$ (see Remark \ref{rem:closing}), so that $|\widetilde \Gamma_j|\to 1$ and $\alpha_m(\bx)\to 2\pi$ for almost all $\bx\in \Gamma^*\setminus \widetilde \Gamma_m$. Thus, as $\theta\to 0$, for $j\neq m$,
$$
|\widetilde \Gamma_j|\sim |\widetilde \Gamma_m| \to 1 \quad \mbox{and} \quad \int_{\widetilde \Gamma_j} \alpha_m \, \rd s\to 2\pi
$$
by the dominated convergence theorem, so that $d_{jm}\to 1/2$,
implying \eqref{eq:limitDA3d}.
\epf

\

\begin{proof}[Proof of Theorem \ref{thm:Q2Poly}] Suppose $n\in \NN$ with $n\geq 2$, and that $\Gamma=\Gamma_{\theta,n}$, with $\theta\in (0,\pi]$, and $D$ is the double-layer potential operator on $\Gamma$. By Theorem \ref{thm:loccon},
$$
W_\ess(D) = \bigcup_{\bx\in V} \overline{W(D_\bx)},
$$
where $V$ is the set of vertices of $\Gamma$ and $\Gamma_\bx$ is the cone at $\bx$ given by \eqref{eqn:cone}. In particular, since $\bx^*:=\bze$ is a vertex,
$$
W_\ess(D)\supset W(D_{\bx^*}).
$$
But, recalling Definition \ref{def:widetildeGamma} and setting $N=2n-1$, we have
$$
\Gamma_{\bx^*}\supset \Gamma^* := \bigcup_{j=1}^{N}\widetilde \Gamma_m,
$$
so that, by \eqref{eq:ANIncl}, $W(D_N)\subset W(D_{\bx^*})$, where $D_N$ is defined by \eqref{eq:DNform3d}. But, by Lemma \ref{lem:ANDLP3-d}, $2D_N\to B_N$ as $\theta\to 0$, so, by Corollary \ref{cor:CN}, for every $\epsilon>0$ there exists $\theta_0>0$ such that
$$
\left[\epsilon-\sqrt{(N+1)/2},\sqrt{(N+1)/2}-\epsilon\right]\,\cup \,\left\{z\in \C:|z|\leq \sqrt{\frac{N-1}{2}}-\epsilon\right\} \subset W(2D_N)= 2W(D_N),
$$
for $0<\theta\leq \theta_0$, and the result follows since $W_\ess(D)$ is convex.
\end{proof}

\section{The essential norm of the double-layer operator on polyhedra in weighted spaces of continuous functions (proof of Theorem \ref{thm:Q3})}

 If the bounded Lipschitz domain $\Omega_-\subset \R^d$ is a polygon or polyhedron, then $D$ is also a bounded linear operator on $C(\Gamma)$ (equipped with the usual supremum norm). Indeed this is true (e.g., \cite[\S4]{We:09}, \cite[Chapter 4, \S2]{Ma:91}) if and only if\footnote{We note that, in the 2-d case, $\Omega_-$ in Figure \ref{fig:GammaEpsOmega} is an example of a bounded Lipschitz domain for which \eqref{eq:necsuff} does not hold.  For, using the notation of Definition \ref{def:Gamma2d} and \eqref{eq:DLPsolidangle}, for every $m\in \NN$ the side $\Gamma_m\subset \Gamma_\beta\subset \Gamma$ subtends the same angle $\alpha_m(\bze)$ at $\bx=\bze$, so that \eqref{eq:necsuff} blows up as $\bx\to \bze$.}
\begin{equation} \label{eq:necsuff}
\sup_{\bx\in \R^d\setminus\Gamma} \int_{\Gamma} \left|\frac{\partial \Phi(\bx,\by)}{\partial n(\by)}\right| \rd s(\by) < \infty,
\end{equation}
in which case the norm of $D$ as an operator on $C(\Gamma)$ is
$$
\|D\|_{C(\Gamma)} = \sup_{\bx\in \Gamma} \int_{\Gamma} \left|\frac{\partial \Phi(\bx,\by)}{\partial n(\by)}\right| \rd s(\by).
$$
When $\phi\in C(\Gamma)$ and \eqref{eq:necsuff} holds, the integrals \eqref{eq:DD'}/\eqref{eq:DD'2} are well-defined for all $\bx\in \Gamma$, and the function $D\phi$ so-defined is equal almost everywhere to a continuous function; indeed $D\phi$ is continuous if the definition of $D\phi$ is adjusted on a set of (surface) measure zero to read
\beq\label{eq:DCG}
D \phi(\bx) := \left(\Theta(\bx)-\half\right)\phi(\bx) + \int_\Gamma \pdiff{\Phi(\bx,\by)}{n(\by)} \phi(\by)\, \rd s(\by),
\quad \bx\in \Gamma,
\eeq
where
$$
\Theta(\bx) := \lim_{\delta\to 0} \frac{|B_\delta(\bx)\cap \Omega_-|}{|B_\delta(\bx)|}, \quad \bx\in \Gamma,
$$
is the {\em density of $\Omega_-$ at $\bx$} (e.g., \cite[Lemma 2.9]{Kral:80}, \cite{KrMe:00}), with $\Theta(\bx)=1/2$ almost everywhere on $\Gamma$ (everywhere that the normal $\bn$ is well-defined).
An explicit expression, similar in flavour to Theorem \ref{lem:local}, is also known for the essential norm of $D$ on $C(\Gamma)$, that
 (\cite[\S4]{Kral:80},  \cite[Chapter 4, Theorem 10]{Ma:91}, or \cite[Theorem 4.1]{We:09})
\begin{equation} \label{eq:essCG}
\|D\|_{C(\Gamma),\mathrm{ess}} = \lim_{\delta\to 0}\sup_{\bx\in \Gamma} \int_{\Gamma\cap B_\delta(\bx)} \left|\frac{\partial \Phi(\bx,\by)}{\partial n(\by)}\right| \rd s(\by).
\end{equation}

Suppose now that $w\in L^\infty(\Gamma)$ satisfies \eqref{eq:wbound} for some $c_->0$ and define by \eqref{eq:wnormDef}  the weighted norm $\|\cdot\|_{C_w(\Gamma)}$ on $C(\Gamma)$, this norm equivalent to the supremum norm. Generalisations of \eqref{eq:essCG} to the case when $C(\Gamma)$ is equipped with a weighted norm $\|\cdot\|_{C_w(\Gamma)}$ have been discussed by Kr\'al and Wendland \cite{KrWe88}, Kr\'al and Medkov\'a \cite{KrMe:00}, and Wendland \cite{We:09}, who state the formula \eqref{eq:ENweighted} below for cases where the weight $w$ is lower semi-continuous (in particular see \cite[Theorem 18]{KrMe:00}), when the $\esssup$ in \eqref{eq:ENweighted} can be replaced by a supremum. So that our results apply to the class of weights considered by Hansen \cite{Ha:01} (see the discussion in \S\ref{sec:open}), we prove that the formula \eqref{eq:ENweighted} holds when $w\in L^\infty(\Gamma)$, a more general class of weights than in \cite{KrWe88,KrMe:00} (see, e.g.,  \cite[Theorem 5]{Zink:65}). Our proof starts from that of \eqref{eq:essCG} in \cite[\S4]{Kral:80} for the unweighted case, this also the starting point for the proof when $w$ is lower semicontinuous in \cite{KrMe:00}. But the proof for the case $w\in L^\infty(\Gamma)$ has new difficulties. We need to design the proof so as to avoid computing the action of a general Borel measure on our $L^\infty(\Gamma)$ weight (this action can be sensibly defined when $w$ is lower semicontinous). Furthermore, the proofs in \cite{Kral:80,KrMe:00} implicitly (and trivially) reverse the order of suprema over $\phi\in C(\Gamma)$ and $x\in \Gamma$; to justify this when the supremum over $x\in \Gamma$ is replaced by an essential supremum we prove first the following lemma\footnote{If $f:S\times T\to \R$ then, trivially, $\sup_{s\in S}\sup_{t\in T} f(s,t) = \sup_{(s,t)\in S\times T} f(s,t)=\sup_{t\in T}\sup_{s\in S} f(s,t)$. But this need not hold if a $\sup$ is replaced by an $\esssup$. E.g., if $S=T=\R$ (equipped with Lebesgue measure) and $f(s,t):= 1$ if $s=t$, $:= 0$ otherwise, then $\sup_{s\in S} \esssup_{t\in T} f(s,t) = 0$ but $\esssup_{t\in T} \sup_{s\in S} f(s,t) = 1$.}.

\begin{lemma} \label{lem:swap} Suppose that $z\in L^\infty(\Gamma)$ and, for some index set $S$, $\{f_\phi:\phi\in S\}$ is a bounded subset of $C(\Gamma)$. Then
$$
A:=\sup_{\phi\in S} \,\esssup_{\bx\in \Gamma} \left(|z(\bx)f_\phi(\bx)|\right) =  \esssup_{\bx\in \Gamma} \left(|z(\bx)|\sup_{\phi\in S}|f_\phi(\bx)|\right)=:B.
$$
\end{lemma}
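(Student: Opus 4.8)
The plan is to prove the two inequalities $A\le B$ and $B\le A$ separately, the first being essentially trivial and the second requiring a short measure-theoretic argument to deal with the interaction between the essential supremum over $\bx$ and the ordinary supremum over the index set $S$.

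First I would dispose of $A\le B$. Fix $\phi\in S$. For almost every $\bx\in\Gamma$ we have $|z(\bx)f_\phi(\bx)|\le |z(\bx)|\sup_{\psi\in S}|f_\psi(\bx)|$, and the right-hand side is a fixed measurable function of $\bx$ (it is the product of $|z|\in L^\infty(\Gamma)$ with the measurable — indeed lower semicontinuous, being a pointwise supremum of continuous functions — function $\bx\mapsto \sup_{\psi\in S}|f_\psi(\bx)|$, which is bounded since $\{f_\psi\}$ is a bounded subset of $C(\Gamma)$). Taking $\esssup$ over $\bx$ gives $\esssup_{\bx\in\Gamma}|z(\bx)f_\phi(\bx)|\le B$, and then taking $\sup$ over $\phi\in S$ yields $A\le B$.

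For the reverse inequality $B\le A$, the point is that the map $\bx\mapsto\sup_{\phi\in S}|f_\phi(\bx)|$ need not be achieved by any single $\phi$, so I cannot simply pull a maximiser out. Instead I would use separability: $C(\Gamma)$ is separable (since $\Gamma$ is a compact metric space), hence the subset $\{f_\phi:\phi\in S\}\subset C(\Gamma)$ is separable in the supremum norm, so there is a countable subset $S_0\subseteq S$ such that $\{f_\phi:\phi\in S_0\}$ is dense in $\{f_\phi:\phi\in S\}$. Density in supremum norm gives, for \emph{every} $\bx\in\Gamma$, $\sup_{\phi\in S_0}|f_\phi(\bx)|=\sup_{\phi\in S}|f_\phi(\bx)|$. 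Now enumerate $S_0=\{\phi_1,\phi_2,\dots\}$ and set $g_k(\bx):=\max_{1\le j\le k}|f_{\phi_j}(\bx)|$, so $g_k\uparrow g:=\sup_{\phi\in S}|f_\phi|$ pointwise on $\Gamma$; hence $|z|g_k\uparrow |z|g$ pointwise, and monotone convergence of essential suprema (or simply the fact that $\esssup$ of an increasing sequence is the limit of the $\esssup$'s) gives $\esssup_{\bx}(|z(\bx)|g_k(\bx))\uparrow \esssup_{\bx}(|z(\bx)|g(\bx))=B$. So it suffices to bound $\esssup_{\bx}(|z(\bx)|g_k(\bx))$ by $A$ for each fixed $k$. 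For fixed $k$, partition $\Gamma$ (up to measure zero) into the measurable sets $E_j:=\{\bx:|f_{\phi_j}(\bx)|=g_k(\bx)\}\setminus\bigcup_{i<j}E_i$, $j=1,\dots,k$; on $E_j$ we have $|z(\bx)|g_k(\bx)=|z(\bx)f_{\phi_j}(\bx)|\le \esssup_{\by}|z(\by)f_{\phi_j}(\by)|\le A$ for a.e.\ $\bx\in E_j$. Since the $E_j$ cover $\Gamma$ up to a null set, $\esssup_{\bx}(|z(\bx)|g_k(\bx))\le A$, and letting $k\to\infty$ gives $B\le A$.

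Combining the two inequalities gives $A=B$. The only mildly delicate point — the main (small) obstacle — is the separability step: one must observe that approximating $f_\phi$ uniformly by $f_{\phi_j}$'s forces \emph{pointwise} (everywhere, not just a.e.) agreement of the two suprema $\sup_{\phi\in S_0}$ and $\sup_{\phi\in S}$, which is what lets one reduce to a countable, and then to a finite, index set before taking the essential supremum; everything else is routine measure theory. (Alternatively, one could phrase the countable-reduction step via the fact that the lower semicontinuous envelope is unchanged, but the separability argument is the cleanest.)
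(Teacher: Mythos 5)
Your proof is correct, and it takes a genuinely different route from the paper's. The paper proves $A\ge B$ (the nontrivial direction) by first pulling out a positive-measure set $G$ on which $|z|\sup_\phi|f_\phi|\ge B-\epsilon$, choosing for each $\bx\in G$ a near-optimal $\psi_\bx\in S$, then invoking Lusin's theorem to replace $z$ by a continuous $\widetilde z$ off a small set, using continuity of $\widetilde z f_{\psi_\bx}$ to propagate the pointwise lower bound to a small ball around $\bx$, and finally a Lindel\"of/countable-subcover argument to land on a single ball whose intersection with $\widehat G$ has positive measure, yielding a \emph{single} $\psi_{\bx_m}$ that witnesses $A\ge B-3\epsilon$. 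Your argument instead exploits the separability of $C(\Gamma)$ to cut $S$ down to a countable $S_0$ with the same pointwise envelope, then truncates to a finite family via the monotone limit $g_k\uparrow g$ (together with the standard fact that $\esssup$ of a monotone increasing sequence of measurable functions converges to $\esssup$ of the limit), and handles the finite case by a measurable partition of $\Gamma$ according to which $|f_{\phi_j}|$ attains the maximum. The two proofs buy slightly different things: the paper's Lusin argument is more robust (it does not hinge on being able to replace $\sup_{\phi\in S}$ by a countable supremum, and is the form of argument that \cite{Kral:80,KrMe:00} use in the broader context of general Borel measures), whereas yours is cleaner and more elementary for the lemma as stated, dispensing with Lusin and the open-cover step entirely. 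Both are valid; yours would serve equally well in the paper.
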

\begin{proof} Given  $\epsilon>0$ we can choose $\psi\in S$ so that  $\esssup_{\bx\in \Gamma} |(z(\bx)f_\psi(\bx)|)\geq A-\epsilon$. But $|f_\psi(\bx)| \leq \sup_{\phi\in S}|f_\phi(\bx)|$, for every $\bx\in \Gamma$, so it follows that $A-\epsilon \leq B$. Since this holds for all $\epsilon>0$, $B\geq A$.

Conversely, given $\epsilon>0$ there exists $G\subset \Gamma$ with surface measure $|G| >0$ such that
$|z(\bx)|$ $\sup_{\phi\in S}|f_\phi(\bx)|\geq B-\epsilon$, for each $\bx\in G$. Thus, for each $\bx\in G$ there exists $\psi_\bx\in S$ such that $|z(\bx)f_{\psi_\bx}(\bx)|\geq B-2\epsilon$, for $\bx\in G$. By Lusin's theorem (e.g., \cite[\S2.24]{RudRCA}) there exists $\widetilde z\in C(\Gamma)$ and $\widetilde G\subset \Gamma$ such that $z(\bx)=\widetilde z(\bx)$ for all $\bx\in \Gamma\setminus \widetilde G$ and $|\widetilde G| < |G|$. Thus $|\widehat G|>0$, where $\widehat G:= G\setminus \widetilde G$, and $|\widetilde z(\bx)f_{\psi_\bx}(\bx)|\geq B-2\epsilon$, for $\bx\in \widehat G$. Since each $\widetilde zf_{\psi_\bx}$ is continuous, for every $\bx\in \widehat G$ there exists $\varepsilon(\bx)>0$ such that $|\widetilde z(\by)f_{\psi_\bx}(\by)|\geq B-3\epsilon$, $\by\in \Gamma\cap B_{\varepsilon(\bx)}(\bx)$. Let
$$
O := \bigcup_{\bx\in \widehat G} \Gamma\cap B_{\varepsilon(\bx)}(\bx).
$$
Then  $\{ \Gamma\cap B_{\varepsilon(\bx)}(\bx):\bx\in \widehat G\}$ is an open cover for $O\supset \widehat G$ (in the Euclidean topology on $\R^d$ restricted to $\Gamma$) which has a countable subcover $\{ \Gamma\cap B_{\varepsilon(\bx_n)}(\bx_n):n\in \NN\}$, with each $\bx_n\in \widehat G$ (to see this, use that the Euclidean topology is second countable, or that $O = \cup_{n\in \NN} G_n$, where each $G_n:= \{\bx\in O:\mathrm{dist}(\bx,\Gamma\cap \partial O)\leq n^{-1}\}$ is compact and so has a finite subcover). Thus
$$
0<|\widehat G|  = \left| \bigcup_{n=1}^\infty \widehat G\cap B_{\varepsilon(\bx_n)}(\bx_n)\right| \leq \sum_{n=1}^\infty \left|\widehat G\cap B_{\varepsilon(\bx_n)}(\bx_n)\right|.
$$
Thus, for some $m\in \NN$, $\left|\widehat G\cap B_{\varepsilon(\bx_m)}(\bx_m)\right|>0$. Since
$$
|z(\by)f_{\psi_{\bx_m}}(\by)|=|\widetilde z(\by)f_{\psi_{\bx_m}}(\by)|\geq B-3\epsilon, \quad \by\in \widehat G\cap B_{\varepsilon(\bx_m)}(\bx_m),
$$
$A\geq \esssup_{\by\in \Gamma}|z(\by)f_{\psi_{\bx_m}}(\by)| \geq B-3\epsilon$. Since this holds for all $\epsilon>0$ the proof is concluded.
\end{proof}

\begin{theorem} \label{thm:ENweighted} Suppose that $\Omega_-$ is a bounded Lipschitz domain with boundary $\Gamma$, that \eqref{eq:necsuff} holds, and that $w\in L^\infty(\Gamma)$ satisfies \eqref{eq:wbound} for some $c_->0$. Then
\begin{equation} \label{eq:ENweighted}
\|D\|_{C_w(\Gamma), \mathrm{ess}} = \lim_{\delta\to 0}\esssup_{\bx\in \Gamma} \int_{\Gamma\cap B_\delta(\bx)} \frac{w(\by)}{w(\bx)}\left|\frac{\partial \Phi(\bx,\by)}{\partial n(\by)}\right| \rd s(\by).
\end{equation}
\end{theorem}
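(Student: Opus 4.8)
The essential-norm formula \eqref{eq:ENweighted} will be proved by adapting the argument of Kr\'al \cite[\S4]{Kral:80} for the unweighted case \eqref{eq:essCG}, together with the swapping Lemma \ref{lem:swap} above, which is precisely the new ingredient required to handle a general $w\in L^\infty(\Gamma)$ rather than a lower semicontinuous one. Write $D_\delta^w$ for the operator on $C(\Gamma)$ with kernel $k_\delta^w(\bx,\by) := \chi_{B_\delta(\bx)}(\by)\,(w(\by)/w(\bx))\,\partial_{n(\by)}\Phi(\bx,\by)$, and set $\ell(\delta) := \esssup_{\bx\in \Gamma} \int_{\Gamma\cap B_\delta(\bx)} (w(\by)/w(\bx))|\partial_{n(\by)}\Phi(\bx,\by)|\,\rd s(\by)$, a non-increasing function of $\delta$ whose limit as $\delta\to 0$ is the claimed right-hand side. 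The two inequalities will be handled separately.

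\textbf{Step 1: the lower bound $\|D\|_{C_w(\Gamma),\mathrm{ess}} \geq \lim_{\delta\to 0}\ell(\delta)$.} As in \eqref{eq:essCG}, I will show that for any compact operator $K$ on $C(\Gamma)$ and any $\delta>0$, $\|D-K\|_{C_w(\Gamma)} \geq \ell(\delta)$; letting $\delta\to 0$ gives the bound. Fix $\epsilon>0$; by the definition of $\ell(\delta)$ there is a positive-measure set of $\bx$ at which the truncated weighted kernel has total mass $\geq \ell(\delta)-\epsilon$. As in Kr\'al's argument one selects, for $\bx$ in this set, a function $\phi_\bx\in C(\Gamma)$ with $\|\phi_\bx\|_{C_w(\Gamma)}\leq 1$, supported close to $B_\delta(\bx)$, for which $|(D\phi_\bx)(\bx)|/w(\bx)$ is close to $\ell(\delta)-\epsilon$ --- here using the representation \eqref{eq:DCG} and that $\Theta=1/2$ a.e., and approximating $\mathrm{sign}$ of the kernel by a continuous function via Lusin's theorem. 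One then shows $K\phi_\bx$ is small for suitably chosen $\bx$: since the $\phi_\bx$ can be arranged to converge weakly to $0$ (their supports shrink as $\delta$ or auxiliary parameters are taken to a limit) and $K$ is compact, $\|K\phi_\bx\|_{C_w(\Gamma)}\to 0$ along a subsequence. This forces $\|(D-K)\phi_\bx\|_{C_w(\Gamma)} \geq \ell(\delta)-2\epsilon$ for some admissible $\bx$, and $\epsilon>0$ is arbitrary.

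\textbf{Step 2: the upper bound $\|D\|_{C_w(\Gamma),\mathrm{ess}} \leq \lim_{\delta\to 0}\ell(\delta)$.} The standard device is to write $D = D_\delta^w + (D-D_\delta^w)$ where $D_\delta^w$ is the ``near'' part (kernel supported in $B_\delta(\bx)$) and the ``far'' part $D-D_\delta^w$ has a kernel that is bounded away from the diagonal, hence --- after checking continuity in $\bx$ of the far kernel using \eqref{eq:necsuff} and dominated convergence --- is a compact operator on $C(\Gamma)$ (an integral operator with continuous kernel on the compact set $\Gamma$, or a uniform limit of such). Therefore $\|D\|_{C_w(\Gamma),\mathrm{ess}} \leq \|D_\delta^w\|_{C_w(\Gamma)}$ for every $\delta>0$, and it remains to identify $\|D_\delta^w\|_{C_w(\Gamma)}$ with $\ell(\delta)$ up to a quantity tending to $0$. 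For a genuine weighted supremum norm, $\|D_\delta^w\|_{C_w(\Gamma)} = \sup_{\|\phi\|_{C_w}\leq 1} \esssup_{\bx} |w(\bx)^{-1}(D_\delta^w\phi)(\bx)|$; writing $(D_\delta^w\phi)(\bx) = \int (w(\by)/w(\bx))(\partial_{n(\by)}\Phi)\phi(\by)\,\rd s(\by)$ and bounding $|\phi(\by)|\leq w(\by)$ gives $\leq \ell(\delta)$ trivially. The reverse requires choosing $\phi$ to nearly achieve, \emph{simultaneously over a positive-measure set of $\bx$}, the pointwise bound $\ell(\delta)$; this is exactly where Lemma \ref{lem:swap} enters, allowing the interchange $\sup_\phi \esssup_\bx = \esssup_\bx \sup_\phi$ of the supremum over a suitable family $\{f_\phi\}$ of continuous functions (built from approximate $\mathrm{sign}$-of-kernel functions via Lusin, as in Step 1) against the $L^\infty$ weight $z := 1/w$. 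This yields $\|D_\delta^w\|_{C_w(\Gamma)} = \ell(\delta) + o(1)$ and completes the bound upon $\delta\to 0$.

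\textbf{Main obstacle.} The routine parts (compactness of the far part, the elementary $\leq \ell(\delta)$ direction) are as in \cite{Kral:80,KrMe:00}; the genuinely new difficulty, and the reason the argument must be reorganised relative to the lower-semicontinuous case, is controlling the pointwise near-optimal test functions across a positive-measure set of base points $\bx$ when $w$ is merely $L^\infty$. One cannot evaluate a Borel measure against $w$, so the proof must stay at the level of $C(\Gamma)$ test functions and extract the needed ``essential'' statement through Lusin's theorem plus the covering argument in Lemma \ref{lem:swap}; getting the quantifiers in the right order --- first fixing a continuous modification $\widetilde z$ of $1/w$ off a small set, then running a second-countability/compact-exhaustion covering argument on the remaining positive-measure set --- is the delicate step, and essentially all of it is already isolated in the proof of Lemma \ref{lem:swap}, so the remaining work is to fit the $\mathrm{sign}$-approximation and support-shrinking constructions of \cite{Kral:80} into that framework.
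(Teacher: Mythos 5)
Your plan correctly identifies the two ingredients that distinguish this from the unweighted result of Kr\'al --- Lusin-type approximation and the swap Lemma \ref{lem:swap} --- and your upper bound (Step 2) is essentially the paper's, modulo the fact that you should use a \emph{continuous} cutoff function rather than the sharp characteristic function $\chi_{B_\delta(\bx)}(\by)$: the latter does not produce a far-part operator with continuous kernel, so compactness would not follow directly. Note also that you have placed the swap lemma in the wrong half of the argument: the upper bound $\|D-K_\delta\|_{C_w(\Gamma)}\leq R_\delta$ is a one-line estimate using $|\phi(\by)|\leq w(\by)\|\phi\|_{C_w(\Gamma)}$, with no need to identify the exact norm of the near part. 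The swap lemma is needed for the \emph{lower} bound.

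The genuine gap is in Step 1. For a fixed $\delta>0$, the test functions $\phi_\bx$ have supports of diameter $\sim\delta$, which do not shrink, so they do not converge weakly to $0$ in $C(\Gamma)$; and if instead you let $\delta\to 0$ so that supports do shrink, weak convergence to zero in $C(\Gamma)$ still fails for any compact $K$ represented by measures with atoms, since $\phi_\bx(\bx_0)$ need not vanish as $\bx\to\bx_0$. This is precisely the obstruction the paper's proof is designed to handle: it first reduces to finite-rank $K$ (a dense subclass of compacts), further reduces to measures $\mu_j = \mu_j^c + \mu_j^d$ with $\mu_j^d$ a finite sum of Dirac masses, and then proves the inequality $\|D-K\|_{C_w(\Gamma)}\geq\|D-K^c\|_{C_w(\Gamma)}$ by restricting to the subspace $C_\Delta(\Gamma)$ of functions vanishing at the finitely many atoms --- which kills the $K^d$ contribution --- and showing that restriction is asymptotically norm-achieving. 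Only then is Lemma \ref{lem:swap} applied to the operator $D-K^c$, whose continuous measures satisfy $\sup_{\bx}|\mu_j^c|(\Gamma\cap B_\delta(\bx))\to 0$ as $\delta\to 0$, so that the $K^c$ term is uniformly negligible in the localized functional. Without this discrete/continuous split your compactness argument does not close, and the claim ``$K\phi_\bx\to 0$ along a subsequence'' is unjustified for general compact $K$.
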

\begin{proof}
Given $\delta >0$ choose $\epsilon\in (0,\delta)$ and  $\chi\in C(\Gamma\times\Gamma)$ such that $0\leq \chi(\bx,\by)\leq 1$ for $\bx,\by\in \Gamma$, $\chi(\bx,\by)=1$ for $|\bx-\by|\geq \delta$, and $\chi(\bx,\by)=0$ for $|\bx-\by|\leq \epsilon$. Define $K_\delta:C(\Gamma)\to C(\Gamma)$ by
\beq\label{eq:Kdelta}
K_\delta \phi(\bx) := \int_\Gamma \chi(\bx,\by)\pdiff{\Phi(\bx,\by)}{n(\by)} \phi(\by)\, \rd s(\by),
\quad \bx\in \Gamma.
\eeq
Then $K_\delta$ is an integral operator with kernel $\sum_{j=1}^d k_j(\bx,\by)\bn_j(\by)$, with each $k_j$ continuous, and so is compact on $C(\Gamma)$. Further, for $\phi\in C(\Gamma)$,
\begin{eqnarray*}
\|(D-K_\delta)\phi\|_{C_w(\Gamma)}&\leq &\esssup_{\bx\in \Gamma} \int_{\Gamma\cap B_\delta(\bx)} \frac{|\phi(\by)|}{w(\bx)}\left|\frac{\partial \Phi(\bx,\by)}{\partial n(\by)}\right| \rd s(\by) \leq  R_\delta \|\phi\|_{C_w(\Gamma)},
\end{eqnarray*}
where
$$
R_\delta := \esssup_{\bx\in \Gamma}\int_{\Gamma\cap B_\delta(\bx)} \frac{w(\by)}{w(\bx)}\left|\frac{\partial \Phi(\bx,\by)}{\partial n(\by)}\right| \rd s(\by).
$$
Thus $\|D\|_{C_w(\Gamma), \mathrm{ess}}\leq \|(D-K_\delta)\|_{C_w(\Gamma)}\leq R_\delta$. Since this holds for every $\delta>0$, $\|D\|_{C_w(\Gamma),\ess}\leq \lim_{\delta\to 0}R_\delta$.

Let $C^\prime(\Gamma)$ denote the dual space of $C(\Gamma)$, the space of regular complex Borel measures on $\Gamma$ (see, e.g., \cite[Theorem 6.19]{RudRCA}). Arguing as in \cite[p.~107]{Kral:80}, to see that $\|D\|_{C_w(\Gamma),\ess}\geq \lim_{\delta\to 0}R_\delta$, it is enough to show that $\|D-K\|_{C_w(\Gamma)}\geq \lim_{\delta\to 0}R_\delta$ for every finite rank operator $K$ on $C(\Gamma)$, since this set of operators is dense in the space of compact operators on $C(\Gamma)$
\footnote{Recall that there exists a sequence $(R_n)_{n\in \NN}$ of bounded linear operators on $C(\Gamma)$ that converges strongly to the identity with the range of $R_n$ finite-dimensional for each $n$, so that, for every compact operator $K$, $R_nK$ has finite rank and (see, e.g., \cite[Theorem 10.10]{Kr:14}) $\|R_nK-K\|_{C(\Gamma)}\to 0$ as $n\to\infty$. Explicitly, see, e.g., \cite[p.~186]{Jo:82}, we can take $R_n\phi(\bx) := \sum_{\bx\in G}\chi_\bx \phi(\bx)$, $\phi\in C(\Gamma)$, where $G$ and $\chi_\bx$ are as defined in and above \eqref{eqn:pou2}, with $\varepsilon=1/n$.}. Further, if $K$ is finite rank then
$$
K\phi = \sum_{j=1}^N \phi_j\int_\Gamma \phi \,\rd \mu_j, \quad \phi\in C(\Gamma),
$$
for some $N\in \NN$, $\phi_j\in C(\Gamma)$, $\mu_j\in C^\prime(\Gamma)$, $j=1,...,N$.
Moreover, as in \cite[p.~108]{Kral:80}, we can assume that $\{\bx\in \Gamma:|\mu_j|(\{\bx\})>0\}$ is finite since any $\mu\in C^\prime(\Gamma)$ can be approximated arbitrarily well in norm by such a measure. Thus it is enough to consider the case that $\mu_j = \mu_j^c + \mu_j^d$, where each $\mu^d_j$ is a finite sum of Dirac delta measures and each $\mu_j^c$ is continuous, i.e.~$|\mu_j^c|(\{\bx\})= 0$ for each $\bx\in \Gamma$, which implies also that $|\mu_j^c|(\Gamma\cap B_\delta(\bx))\to 0$ as $\delta\to 0$ for each $\bx\in \Gamma$, and hence, using the compactness of $\Gamma$, also that
\begin{equation} \label{eq:delta}
\lim_{\delta \to0}\sup_{\bx \in \Gamma}|\mu^c_j|(\Gamma\cap B_\delta(\bx)) \to 0.
\end{equation}

Let $K^c:C(\Gamma)\to C(\Gamma)$ denote the ``continuous'' part of $K$, meaning that
$$
K^c\phi := \sum_{j=1}^N \phi_j\int_\Gamma \phi \,\rd \mu^c_j, \quad \phi\in C(\Gamma);
$$
similarly, let $K^d:= K-K^c$ denote the ``discrete" part of $K$.
We show that $\|D-K\|_{C_w(\Gamma)}\geq \lim_{\delta\to 0}R_\delta$ by showing that  $\|D-K^c\|_{C_w(\Gamma)}\geq \lim_{\delta\to 0}R_\delta$ and that
\begin{equation} \label{eq:KKc}
\|D-K\|_{C_w(\Gamma)}\geq \|D-K^c\|_{C_w(\Gamma)}.
\end{equation}

To see that \eqref{eq:KKc} holds, let $\Delta:=\{\bx\in \Gamma: \sum_{j=1}^N|\mu_j^d|(\{\bx\})>0\}$, a finite subset of $\Gamma$, and let $C_\Delta(\Gamma):=\{\phi\in C(\Gamma):\phi(\bx)=0 \mbox{ for } \bx\in \Delta\}$.
Then, since $K^d\phi=0$ for all $\phi \in C_\Delta(\Gamma)$,
\begin{eqnarray}\label{eq:first}
\|D-K\|_{C_w(\Gamma)}& \geq& \sup_{\stackrel{\phi \in C_\Delta(\Gamma)}{\|\phi\|_{C_w(\Gamma)}} \leq 1} \|(D-K)\phi\|_{C_w(\Gamma)}
=\sup_{\stackrel{\phi \in C_\Delta(\Gamma)}{\|\phi\|_{C_w(\Gamma)}} \leq 1} \|(D-K^c)\phi\|_{C_w(\Gamma)}.
\end{eqnarray}
Thus \eqref{eq:KKc} holds if we can show that
\begin{equation} \label{eq:68a}
\sup_{\stackrel{\phi \in C_\Delta(\Gamma)}{\|\phi\|_{C_w(\Gamma)}} \leq 1} \|(D-K^c)\phi\|_{C_w(\Gamma)}\geq \sup_{\|\phi\|_{C_w(\Gamma)} \leq 1} \|(D-K^c)\phi\|_{C_w(\Gamma)}= \|D-K^c\|_{C_w(\Gamma)}.
\end{equation}
Given $\epsilon>0$, choose $\phi\in C(\Gamma)$ with $\|\phi\|_{C_w(\Gamma)}\leq 1$ such that
$$
\|(D-K^c)\phi\|_{C_w(\Gamma)}\geq \|D-K^c\|_{C_w(\Gamma)} -\epsilon.
$$
Then there exists $G\subset \Gamma$ of positive measure with $\mathrm{dist}(G,\Delta)>0$ such that $\Theta(\bx)=1/2$, \eqref{eq:wbound} holds, and
$$
\left|\frac{1}{w(\bx)}(D-K^c)\phi(\bx)\right|\geq \|D-K^c\|_{C_w(\Gamma)}-2\epsilon, \quad \mbox{for } \bx\in G.
$$
Further, for every $\delta>0$ there exists $\phi_\delta\in C_\Delta(\Gamma)$ with $|\phi_\delta(\bx)|\leq |\phi(\bx)|$, $\bx\in \Gamma$, so that $\|\phi_\delta\|_{C_w(\Gamma)}\leq 1$, and such that  $\mathrm{supp}(\phi-\phi_\delta)\subset \cup_{\bx\in \Delta} (\Gamma\cap B_\delta(\bx))$. This set is disjoint from $G$ if $\delta$ is small enough, so that, for some $\delta^\prime>0$ and all sufficiently small $\delta>0$, $D(\phi-\phi_\delta)(\bx)=K_{\delta^\prime}(\phi-\phi_\delta)(\bx)$, for $\bx\in G$, where $K_{\delta^\prime}$ is defined by \eqref{eq:Kdelta}. It is easy to see that  $\|K_{\delta^\prime}(\phi-\phi_\delta)\|_{C(\Gamma)}\to 0$ as $\delta\to 0$; moreover, $\|K^c(\phi-\phi_\delta)\|_{C(\Gamma)}\to 0$ as $\delta\to 0$, since $\lim_{\delta\to 0}|\mu_j^c|(\Gamma\cap B_\delta(\bx))=0$, for $j=1,...,N$ and each $\bx\in \Delta$. Thus
$$
\left|\frac{1}{w(\bx)}(D-K^c)\phi_\delta(\bx)\right|\geq \|D-K^c\|_{C_w(\Gamma)}-3\epsilon, \quad \bx\in G,
$$
if $\delta$ is small enough, so that
$$
\sup_{\stackrel{\phi \in C_\Delta(\Gamma)}{\|\phi\|_{C_w(\Gamma)}} \leq 1} \|(D-K^c)\phi\|_{C_w(\Gamma)} \geq \esssup_{\bx\in \Gamma} \left|\frac{1}{w(\bx)}(D-K^c)\phi_\delta(\bx)\right|\geq \|D-K^c\|_{C_w(\Gamma)}-3\epsilon.
$$
 Since this holds for all $\epsilon>0$, \eqref{eq:68a} holds and \eqref{eq:KKc} follows.

To see that $\|D-K^c\|_{C_w(\Gamma)}\geq \lim_{\delta\to 0}R_\delta$, note that, using Lemma \ref{lem:swap}, which applies since $D-K^c$ is a bounded operator on $C(\Gamma)$,
\begin{eqnarray*}
\|D-K^c\|_{C_w(\Gamma)} &=& \sup_{\|\phi\|_{C_w(\Gamma)}\leq 1} \esssup_{x\in \Gamma}\left|\frac{1}{w(\bx)}(D-K^c)\phi(\bx)\right|\\
& = &  \esssup_{x\in \Gamma}\left(\frac{1}{w(\bx)}\sup_{\|\phi\|_{C_w(\Gamma)}\leq 1}|(D-K^c)\phi(\bx)|\right)\\
& = &  \esssup_{x\in \Gamma}\left(\frac{1}{w(\bx)}\sup_{\|\phi\|_{C_w(\Gamma)}\leq 1}|(\lambda_\bx-\mu_\bx^c)(\phi)|\right),
\end{eqnarray*}
where
$$
\lambda_\bx (\psi) := \int_\Gamma \pdiff{\Phi(\bx,\by)}{n(\by)} \psi(\by)\, \rd s(\by), \quad \mu^c_\bx(\psi) :=  \sum_{j=1}^N \phi_j(\bx)\int_\Gamma \psi \, \rd\mu^c_j, \quad \psi \in C(\Gamma).
$$
Now, for $\bx\in \Gamma$ and $\delta>0$, where $S_\delta(\bx):=  \{\phi\in C(\Gamma): \|\phi\|_{C_w(\Gamma)}\leq 1, \, \mathrm{supp}(\phi)\subset B_\delta(\bx)\}$,
\begin{eqnarray*}
\sup_{\|\phi\|_{C_w(\Gamma)}\leq 1}|(\lambda_\bx-\mu_\bx^c)(\phi)|&\geq &\sup_{\phi\in S_\delta(\bx)}|(\lambda_\bx-\mu_\bx^c)(\phi)|\\
&\geq &\sup_{\phi\in S_\delta(\bx)}|\lambda_\bx(\phi)| -|\mu_\bx^c|(\Gamma\cap B_\delta(\bx))\\
& = & \int_{\Gamma\cap B_\delta(\bx)}  w(\by)\left|\frac{\partial \Phi(\bx,\by)}{\partial n(\by)}\right| \rd s(\by)  -|\mu_\bx^c|(\Gamma\cap B_\delta(\bx)),
\end{eqnarray*}
this last line following as a corollary of Lusin's theorem (see \cite[p.~56]{RudRCA}) and the dominated convergence theorem. Thus
\begin{eqnarray*}
\|D-K^c\|_{C_w(\Gamma)} &\geq & R_\delta - c_-^{-1} \sup_{\bx\in \Gamma} |\mu_\bx^c|(\Gamma\cap B_\delta(\bx)), \quad \delta>0.
\end{eqnarray*}
That $\|D-K\|_{C_w(\Gamma)} \geq \lim_{\delta\to 0} R_\delta$ follows from this bound, \eqref{eq:KKc}, and   \eqref{eq:delta}.
\end{proof}

In the following lemma we use again, in the case that $\Gamma$ is Lipschitz polyhedral, the notation $\Gamma_\bx$ of \eqref{eqn:cone} for the cone that coincides with $\Gamma$ in a neighbourhood of $\bx\in \Gamma$.

\begin{lemma} \label{lem:mostwork} Suppose that $\Omega_-$ is a bounded Lipschitz polyhedron with boundary $\Gamma$ and that, for some $\bx\in \Gamma$, some relatively open $\Gamma^\times\subset \Gamma_\bx \cap \Gamma$, and some $C>0$,
\begin{equation} \label{infG}
\inf_{\bx\in \Gamma^\times} \int_{\Gamma^\times} \left|\frac{\partial \Phi(\bx,\by)}{\partial n(\by)}\right| \rd s(\by)\geq C.
\end{equation}
Then, for every $w\in L^\infty(\Gamma)$ that satisfies \eqref{eq:wbound} for some $c_->0$,
$$
\|D\|_{C_w(\Gamma),\ess} \geq C.
$$
\end{lemma}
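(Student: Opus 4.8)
The plan is to deduce the bound from the localisation formula \eqref{eq:ENweighted} in Theorem~\ref{thm:ENweighted}, exploiting the scale-invariance of the kernel integral $\int|\partial\Phi/\partial n|\,\rd s$ under dilations about the apex of the cone $\Gamma_\bx$. Since $\Omega_-$ is a bounded Lipschitz polyhedron it has finitely many faces, so \eqref{eq:necsuff} holds and Theorem~\ref{thm:ENweighted} applies, giving
\beqs
\|D\|_{C_w(\Gamma),\ess} = \inf_{\delta>0} R_\delta, \qquad R_\delta := \esssup_{\bx'\in\Gamma}\int_{\Gamma\cap B_\delta(\bx')}\frac{w(\by)}{w(\bx')}\Big|\frac{\partial\Phi(\bx',\by)}{\partial n(\by)}\Big|\,\rd s(\by),
\eeqs
the limit in \eqref{eq:ENweighted} being an infimum because $R_\delta$ is non-decreasing in $\delta$ (the integrand is non-negative). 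Hence it suffices to show $R_\delta\geq C$ for every $\delta>0$.

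So fix $\delta>0$. Since $\Gamma$ is a polyhedron it is locally conical (Lemma~\ref{lem:locconPoly}), so $\Gamma_\bx$ is a cone with apex $\bx$ and $B_{\delta_0}(\bx)\cap\Gamma=B_{\delta_0}(\bx)\cap\Gamma_\bx$ for some $\delta_0>0$. For $t>0$ set $\Gamma^\times_t:=\bx+t(\Gamma^\times-\bx)$. Because $\Gamma^\times\subset\Gamma_\bx$ and $\Gamma^\times$ is bounded, for all sufficiently small $t$ we have $\Gamma^\times_t\subset B_{\delta_0}(\bx)\cap\Gamma_\bx=B_{\delta_0}(\bx)\cap\Gamma\subset\Gamma$ and also $\Gamma^\times_t\subset B_{\delta/2}(\bx)$, so that $\Gamma^\times_t\subset B_\delta(\bx')$ for every $\bx'\in\Gamma^\times_t$. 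Fix such a $t$. The dilation $\tau_t:\bz\mapsto\bx+t(\bz-\bx)$ maps $\Gamma_\bx$ bijectively to itself, preserves the unit normal (Lemma~\ref{lem:normal}), and scales surface measure by $t^{d-1}$, while $\partial\Phi(\tau_t\bu,\tau_t\bv)/\partial n(\tau_t\bv)=t^{1-d}\,\partial\Phi(\bu,\bv)/\partial n(\bv)$; combining these with \eqref{infG} we obtain, for every $\bx'\in\Gamma^\times_t$,
\beqs
\int_{\Gamma^\times_t}\Big|\frac{\partial\Phi(\bx',\by)}{\partial n(\by)}\Big|\,\rd s(\by) = \int_{\Gamma^\times}\Big|\frac{\partial\Phi(\tau_t^{-1}\bx',\bv)}{\partial n(\bv)}\Big|\,\rd s(\bv) \geq C.
\eeqs

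With this the weight ratio is harmless. Restricting the essential supremum defining $R_\delta$ to the measurable, positive-measure subset $\Gamma^\times_t$ of $\Gamma$ and shrinking the integration domain (the integrand is non-negative) gives $R_\delta\geq\esssup_{\bx'\in\Gamma^\times_t}\int_{\Gamma^\times_t}\frac{w(\by)}{w(\bx')}|\partial\Phi(\bx',\by)/\partial n(\by)|\,\rd s(\by)$. Writing $m:=\essinf_{\Gamma^\times_t}w$, which satisfies $m\geq c_->0$ by \eqref{eq:wbound}, and using $w(\by)\geq m$ for a.e.\ $\by\in\Gamma^\times_t$ together with the last display, the inner integral is $\geq (m/w(\bx'))\int_{\Gamma^\times_t}|\partial\Phi(\bx',\by)/\partial n(\by)|\,\rd s(\by)\geq mC/w(\bx')$ for a.e.\ $\bx'\in\Gamma^\times_t$, so
\beqs
R_\delta \geq mC\,\esssup_{\bx'\in\Gamma^\times_t}\frac{1}{w(\bx')} = \frac{mC}{\essinf_{\Gamma^\times_t}w} = C.
\eeqs
Since $\delta>0$ was arbitrary, $\|D\|_{C_w(\Gamma),\ess}\geq C$.

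The conceptual content is entirely in the scaling step: self-similarity of the cone $\Gamma_\bx$ lets us place arbitrarily small affine copies of $\Gamma^\times$ inside $\Gamma$ near $\bx$ without degrading the uniform lower bound \eqref{infG}, after which one chooses the centre $\bx'$ of the localising ball near a point where $w$ is close to its essential infimum on the copy, which cancels the factor $1/w(\bx')$ against $w(\by)\geq m$. I expect the only points needing care to be the routine bookkeeping --- checking that $\Gamma^\times_t$ genuinely lies in $\Gamma$ (this is exactly where local conicality at $\bx$ and the hypothesis $\Gamma^\times\subset\Gamma_\bx$ enter) and that it is a measurable set of positive surface measure (it is the image of the nonempty relatively open subset $\Gamma^\times$ of a $(d-1)$-dimensional set under an affine bijection), and confirming the kernel scaling identity, which follows from the dilation-invariance of the normal on $\Gamma_\bx$ exactly as in Lemmas~\ref{lem:Visometry}, \ref{lem:Vcommute} and \ref{lem:dil2}.
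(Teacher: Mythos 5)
Your proof is correct and takes essentially the same route as the paper's: both use scale invariance of the kernel integral under dilations about the apex to place arbitrarily small affine copies of $\Gamma^\times$ inside $\Gamma\cap B_\delta$, then cancel the weight factors via the identity $\esssup(1/w)=1/\essinf w$ on the same set, concluding via Theorem~\ref{thm:ENweighted}. The only cosmetic differences are that the paper normalises $\bx=\bze$ before dilating and leaves the $\esssup/\essinf$ cancellation implicit, whereas you keep $\bx$ general and spell that step out.
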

\begin{proof} Suppose that \eqref{infG} holds and, without loss of generality, suppose that $\bx=\bze$. Then, making a change of variables as in the proof of Lemma \ref{lem:Vcommute}, we see that
\begin{equation} \label{eq:infG2}
\inf_{\bx\in \beta\Gamma^\times} \int_{\beta\Gamma^\times} \left|\frac{\partial \Phi(\bx,\by)}{\partial n(\by)}\right| \rd s(\by)\geq C,
\end{equation}
for all $\beta>0$. Given $\delta>0$, choose $\beta>0$ so that $\beta\Gamma^\times \subset \Gamma \cap B_{\delta/2}(\bze)$. Then
\begin{eqnarray*}
& & \esssup_{\bx\in B_{\delta/2}(\bze)\cap \Gamma}\int_{\Gamma\cap B_\delta(\bx)} \frac{w(\by)}{w(\bx)}\left|\frac{\partial \Phi(\bx,\by)}{\partial n(\by)}\right| \rd s(\by) \\ & &\geq
\esssup_{\bx\in \beta\Gamma^\times}\left(\frac{1}{w(\bx)}\int_{\beta\Gamma^\times} w(\by)\left|\frac{\partial \Phi(\bx,\by)}{\partial n(\by)}\right| \rd s(\by)\right)\geq C,
\end{eqnarray*}
since the last integral above is $\geq C\,\essinf_{\by\in \beta\Gamma^\times} w(\by)$, for all $\bx \in \beta \Gamma^\times$. Thus, for all $\delta>0$,
$$
\esssup_{\bx\in \Gamma}\int_{\Gamma\cap B_\delta(\bx)} \frac{w(\by)}{w(\bx)}\left|\frac{\partial \Phi(\bx,\by)}{\partial n(\by)}\right| \rd s(\by) \geq C,
$$
and the result follows from Theorem \ref{thm:ENweighted}.
\end{proof}

\

\begin{proof}[Proof of Theorem \ref{thm:Q3}] Suppose that $n\in \NN$ with $n\geq 2$, $\theta \in (0,\pi/4]$ and $\Omega_-:= \Omega_{\theta,n}$, the open book polyhedron as in Definition \ref{def:Gamma3-d}, and note that $\bze$ is one of the vertices. Relabelling the ``front'' and ``back'' pages as in Definition \ref{def:widetildeGamma}, let $\Gamma^*:= \cup_{m=1}^{2n} \widetilde \Gamma_m$, so that $\Gamma^*$ is contained in the cone $\Gamma_\bze$ with vertex $\bze$. Let
$$
\Gamma^\times := \{\bx\in \Gamma^*: (x_1-1/2)^2+(x_3+1/2)^2<1/64\}.
$$
Then $\Gamma^\times= \cup_{m=1}^{2n} \Gamma_m^\times$, where each $\Gamma_m^\times\subset \widetilde \Gamma_m$ is an ellipse, in particular
$$
\Gamma_1^\times =  \{\bx=(x_1,x_2,x_3): (x_1-1/2)^2+(x_3+1/2)^2<1/64, \,x_2=0\}
$$
is the circular disc of radius $1/8$ in the plane $x_2=0$ centred on $x_1=1/2$, $x_3=-1/2$.
Further (cf.\ Lemma \ref{lem:ANDLP3-d}), for each $\bx\in \Gamma_j^\times\subset \Gamma^\times$, $j=1,\ldots, 2n$,
$$
\alpha(\bx) := 4\pi \int_{\Gamma^\times} \left|\frac{\partial \Phi(\bx,\by)}{\partial n(\by)}\right| \rd s(\by)
$$
is the sum of the solid angles subtended at $\bx$ by the $2n-1$ ellipses $\Gamma_m^\times$, with $m\in \{1,\ldots,2n\}$, $m\neq j$. As $\theta\to 0$ (``closing the book'', Remark \ref{rem:closing}), each ellipse $\Gamma_m^\times$ approaches the disc $\Gamma_1^\times$, so that $\Gamma^\times$ comprises asymptotically $2n$ circular discs of radius $1/8$ that are concentric and approximately parallel. Thus, as $\theta\to 0$,
$$
\inf_{\bx\in \Gamma^\times} 4\pi \int_{\Gamma_m^\times} \left|\frac{\partial \Phi(\bx,\by)}{\partial n(\by)}\right| \rd s(\by) \to \pi, \quad m=1,\ldots, 2n,
$$
this being the solid angle subtended by the disc $\Gamma_1^\times$ at each point of the circle $\{\bx=(x_1,x_2,x_3):(x_1-1/2)^2+(x_3+1/2)^2=1/64, \, x_2=p\}$, in the limit $p\to 0$.
Thus,  given any $\epsilon >0$ there exists $\theta_0\in (0,\pi/4]$ such that
$$
\inf_{\bx\in \Gamma^\times} \int_{\Gamma^\times} \left|\frac{\partial \Phi(\bx,\by)}{\partial n(\by)}\right| \rd s(\by) = \inf_{\bx\in \Gamma^\times} \frac{\alpha(\bx)}{4\pi} \geq \frac{2n-1}{4} - \epsilon, \quad 0<\theta\leq \theta_0.
$$
The result follows by applying Lemma \ref{lem:mostwork}.
\end{proof}

\section*{Acknowledgements}

This paper is dedicated, on the occasion of his 85th birthday, to Wolfgang Wendland (Stuttgart) who has had a leading role in our PDE and BIE community for many years. In particular, we thank Wolfgang for his insightful survey paper \cite{We:09}, which prompted the current work, and for many enjoyable and illuminating discussions of second-kind integral equations on non-smooth domains, dating back
to the Joint IMA/SIAM Conference on the State of the Art in Numerical Analysis in 1986.
The authors thank Johannes Elschner (WIAS, Berlin), Raffael Hagger and Karl-Mikael Perfekt (both University of Reading), and Eugene Shargorodsky (King's College London) for a number of very useful discussions.
EAS was supported by UK Engineering and Physical Sciences Research Council grant EP/R005591/1.

\footnotesize{
\bibliographystyle{plain}

}
\end{document}